\documentclass[11pt]{amsart}

\usepackage{amsmath}
\usepackage{amsfonts}
\usepackage{amssymb}
\usepackage{graphicx}
\usepackage{mathrsfs}
\usepackage{pb-diagram}
\usepackage{epstopdf}

\theoremstyle{plain}
\newtheorem{prop}{Proposition}[section]
\newtheorem{theorem}[prop]{Theorem}

\newtheorem{conjecture}[prop]{Conjecture}
\newtheorem{corollary}[prop]{Corollary}
\newtheorem{definition}[prop]{Definition}

\newtheorem{lemma}[prop]{Lemma}
\newtheorem{remark}[prop]{Remark}

\numberwithin{equation}{section}

\newcommand{\conn}{\nabla}

\newcommand{\der}{\mathrm{d}}

\newcommand{\pairing}[2]{\left( #1\, , \, #2 \right)}

\newcommand{\nat}{\mathbb{N}}
\newcommand{\integer}{\mathbb{Z}}
\newcommand{\rat}{\mathbb{Q}}
\newcommand{\real}{\mathbb{R}}
\newcommand{\cpx}{\mathbb{C}}

\newcommand{\consti}{\mathbf{i}\,}
\newcommand{\conste}{\mathbf{e}}

\newcommand{\sphere}[1]{\mathbf{S}^{#1}}
\newcommand{\torus}[1]{\mathbf{T}^{#1}}
\newcommand{\proj}{\mathbb{P}}

\newcommand{\isomto}{\overset{\sim}{\longrightarrow}}

\newcommand{\Aut}{\mathrm{Aut}}

\newcommand{\Hom}{\mathrm{Hom}}
\newcommand{\Image}{\mathrm{Im}}
\newcommand{\Kernel}{\mathrm{Ker}}

\newcommand{\GL}{\mathrm{GL}}

\begin{document}
\title[SYZ for toric CY]{SYZ mirror symmetry for\\toric Calabi-Yau manifolds}

\author[K. Chan]{Kwokwai Chan}
\address{Department of Mathematics, The Chinese University of Hong Kong, Shatin, N.T., Hong Kong}
\email{kwchan@math.cuhk.edu.hk}

\author[S.-C. Lau]{Siu-Cheong Lau}
\address{Institute for the Physics and Mathematics of the Universe (IPMU), University of Tokyo, Kashiwa, Chiba 277-8583, Japan}
\email{siucheong.lau@ipmu.jp}

\author[N.C. Leung]{Naichung Conan Leung}
\address{Department of Mathematics and the Institute of Mathematical Sciences, The Chinese University of Hong Kong, Shatin, N.T., Hong Kong}
\email{leung@math.cuhk.edu.hk}

\begin{abstract}
We investigate mirror symmetry for toric Calabi-Yau manifolds from the perspective of the SYZ conjecture. Starting with a non-toric special Lagrangian torus fibration on a toric Calabi-Yau manifold $X$, we construct a complex manifold $\check{X}$ using T-duality modified by quantum corrections. These corrections are encoded by Fourier transforms of generating functions of certain open Gromov-Witten invariants. We conjecture that this complex manifold $\check{X}$, which belongs to the Hori-Iqbal-Vafa mirror family, is inherently written in canonical flat coordinates. In particular, we obtain an enumerative meaning for the (inverse) mirror maps, and this gives a geometric reason for why their Taylor series expansions in terms of the K\"ahler parameters of $X$ have integral coefficients. Applying the results in \cite{Chan10} and \cite{LLW10}, we compute the open Gromov-Witten invariants in terms of local BPS invariants and give evidences of our conjecture for several 3-dimensional examples including $K_{\proj^2}$ and $K_{\proj^1\times\proj^1}$.
\end{abstract}

\maketitle

\tableofcontents

\section{Introduction}

For a pair of mirror Calabi-Yau manifolds $X$ and $\check{X}$, the Strominger-Yau-Zaslow (SYZ) conjecture \cite{syz96} asserts that there exist special Lagrangian torus fibrations $\mu:X\to B$ and $\check{\mu}:\check{X}\to B$ which are fiberwise-dual to each other. In particular, this suggests an intrinsic construction of the mirror $\check{X}$ by fiberwise dualizing a special Lagrangian torus fibration on $X$. This process is called \textit{T-duality}.

The SYZ program has been carried out successfully in the semi-flat case \cite{kontsevich00, LYZ, boss01}, where the discriminant loci of special Lagrangian torus fibrations are empty (i.e. all fibers are regular) and the base $B$ is a smooth integral affine manifold. On the other hand, mirror symmetry has been extended to non-Calabi-Yau settings, and the SYZ construction has been shown to work in the toric case \cite{auroux07, auroux09, chan08}, where the discriminant locus appears as the boundary of the base $B$ (so that $B$ is an integral affine manifold with boundary).

In general, by fiberwise dualizing a special Lagrangian torus fibration $\mu:X\rightarrow B$ away from the discriminant locus, one obtains a manifold $\check{X}_0$ equipped with a complex structure $J_0$, the so-called semi-flat complex structure. In both the semi-flat and toric cases, $(\check{X}_0,J_0)$ already serves as the complex manifold mirror to $X$. However, when the discriminant locus $\Gamma$ appears inside the interior of $B$ (so that $B$ is an integral affine manifold with singularities), $\check{X}_0$ is contained in the mirror manifold $\check{X}$ as an open dense subset and the semi-flat complex structure $J_0$ does not extend to the whole $\check{X}$. It is expected that the genuine mirror complex structure $J$ on $\check{X}$ can be obtained by deforming $J_0$ using instanton corrections and wall-crossing formulas, which come from symplectic enumerative information on $X$ (see Fukaya \cite{fukaya05}, Kontsevich-Soibelman \cite{kontsevich-soibelman04} and Gross-Siebert \cite{gross07}\footnote{This is related to the so-called ``reconstruction problem". This problem was first attacked by Fukaya \cite{fukaya05} using heuristic arguments in the two-dimensional case, which was later given a rigorous treatment by Kontsevich-Soibelman in \cite{kontsevich-soibelman04}; the general problem was finally solved by the important work of Gross-Siebert \cite{gross07}.}). This is one manifestation of the mirror phenomenon that the complex geometry of the mirror $\check{X}$ encodes symplectic enumerative data of $X$.

To go beyond the semi-flat and toric cases, a good starting point is to work with non-toric special Lagrangian torus fibrations\footnote{Here, ``non-toric" means the fibrations are not those provided by moment maps of Hamiltonian torus actions on toric varieties.} on toric Calabi-Yau manifolds constructed by Gross \cite{gross_examples} \footnote{These fibrations were also constructed by Goldstein \cite{goldstein} independently, but they were further analyzed by Gross from the SYZ perspective.}, which serve as local models of Lagrangian torus fibrations on compact Calabi-Yau manifolds. Interior discriminant loci are present in these fibrations, leading to wall-crossing phenomenon of disk counting invariants and nontrivial quantum corrections of the mirror complex structure. In this paper, we construct the instanton-corrected mirrors of toric Calabi-Yau manifolds by running the SYZ program for these non-toric fibrations. This generalizes the work of Auroux \cite{auroux07, auroux09}, in which he considered non-toric Lagrangian torus fibrations on $\cpx^n$ and constructed instanton-corrected mirrors by studying the wall-crossing phenomenon of disk counting invariants.

What follows is an outline of our main results. We first need to fix some notations. Let $N \cong \integer^n$ be a lattice and $M=\textrm{Hom}(N,\integer)$ be its dual. For a $\integer$-module $R$, we let $N_R:=N\otimes_\integer R$, $M_R:=M\otimes_\integer R$ and denote by $\pairing{\cdot}{\cdot}:M_R\times N_R\to R$ the natural pairing.

Let $X=X_\Sigma$ be a toric manifold defined by a fan $\Sigma$ in $N_\real$, and $v_0,v_1,\ldots,v_{m-1}\in N$ be the primitive generators of the 1-dimensional cones of $\Sigma$. Suppose that $X$ is Calabi-Yau. This condition is equivalent to the existence of $\underline\nu\in M$ such that
$$\pairing{\underline\nu}{v_i}=1$$
for $i=0,1,\ldots,m-1$.  As in \cite{gross_examples}, we also assume that the fan $\Sigma$ has convex support, so that $X$ is a crepant resolution of an affine toric variety with Gorenstein canonical singularities.  Equip $X$ with a toric K\"ahler structure $\omega$.

We study the SYZ aspect of mirror symmetry for every toric Calabi-Yau manifold $X$, which is usually called ``local mirror symmetry" in the literature, because it was derived by considering certain limits in the K\"ahler and complex moduli spaces of Calabi-Yau hypersurfaces in toric varieties (see Katz-Klemm-Vafa \cite{KKV97}).  Chiang-Klemm-Yau-Zaslow \cite{CKYZ} verified by direct computations that closed Gromov-Witten invariants of a local Calabi-Yau match with the period integrals in the mirror side;  in \cite{HIV00}, Hori-Iqbal-Vafa wrote down the following formula for the mirror $\check{X}$ of $X$:
\begin{equation}\label{HIV}
\check{X}=\left\{(u,v,z_1,\ldots,z_{n-1})\in\cpx^2\times(\cpx^\times)^{n-1}:uv=\sum_{i=0}^{m-1}C_iz^{v_i}\right\},
\end{equation}
where $C_i\in\cpx$ are some constants (which determine the complex structure of $\check{X}$) and $z^{v_i}$ denotes the monomial $\prod_{j=1}^{n-1} z_j^{\pairing{\nu_j}{v_i}}$.  Here $\{\nu_j\}_{j=0}^{n-1} \subset M$ is the dual basis of $\{v_j\}_{j=0}^{n-1} \subset N$.  One of the aims of this paper is to explain why, from the SYZ viewpoint, the mirror $\check{X}$ should be written in this form.

We now outline our SYZ mirror construction. To begin with, fix a constant $K_2$, and let $D\subset X$ be the hypersurface $\{x\in X: w(x)-K_2=0\}$, where $w:X\rightarrow\cpx$ is the holomorphic function corresponding to the lattice point $\underline{\nu}\in M$. In Section 4, we consider a non-toric special Lagrangian torus fibration $\mu:X\to B$ constructed by Gross \cite{gross_examples}, where $B$ is a closed upper half space in $\real^n$.  As shown in \cite{gross_examples}, the discriminant locus $\Gamma$ of this fibration consists of $\partial B$ together with a codimension two subset contained in a hyperplane $H \subset B$.  We will show that the special Lagrangian torus fibers over $H$ are exactly those which bound holomorphic disks of Maslov index zero (Lemma 4.24).  $H$, which is called `the wall', separates $B_0 := B - \Gamma$ into two chambers:
$$B_0 - H =B_+\cup B_-.$$

As we have discussed above, fiberwise dualizing the torus bundle over $B_0$ gives a complex manifold $\check{X}_0$ which is called the semi-flat mirror.  Yet this procedure ignores the singular fibers of $\mu$, and `quantum corrections' are needed to construct the mirror $\check{X}$ out from $\check{X}_0$.  To do this, we consider virtual counting of Maslov index two holomorphic disks with boundary in special Lagrangian torus fibers.  The result for the counting is different for fibers over the chambers $B_+$ and $B_-$ (see Propositions \ref{disk counting + in X} and \ref{disk counting + in X}). This leads to a wall-crossing formula for disk counting invariants, which is exactly the correct formula we need to glue the torus bundles over $B_+$ and $B_-$.  This wall-crossing phenomenon has been studied by Auroux \cite{auroux07, auroux09} in various examples including $\proj^2$ and the Hirzebruch surfaces $\mathbb{F}_2, \mathbb{F}_3$.\footnote{We shall emphasize that the wall-crossing formulas (or gluing formulas) studied by Auroux and us here are special cases of those studied by Kontsevich-Soibelman \cite{kontsevich-soibelman04} and Gross-Siebert \cite{gross07}.}

Now, one of the main results of this paper is that by the SYZ construction (see Section \ref{mir_construct} for the details), the instanton-corrected mirror of $X$ is given by the following noncompact Calabi-Yau manifold (Theorem \ref{mir_thm}):
\begin{equation} \label{SYZ}
\begin{split}
\check{X} &= \left\{(u,v,z) \in \cpx^2 \times (\cpx^\times)^{n-1}: uv = G(z) \right\}, \textrm{ where}\\
G(z) &= (1+\delta_0) + \sum_{j=1}^{n-1} (1 + \delta_j) z_j + \sum_{i=n}^{m-1} (1 + \delta_i) q_{i-n+1} z^{v_i}
\end{split}
\end{equation}
where
$$\delta_i (q) =\sum_{\alpha\in H_2^{\textrm{eff}}(X,\integer)-\{0\}}n_{\beta_i+\alpha}q^\alpha$$
and $q_a$ are K\"ahler parameters of $X$.  Here, $H_2^{\textrm{eff}}(X,\integer)$ is the cone of effective classes, $q^\alpha$ denotes $\exp(-\int_\alpha\omega)$ which can be expressed in terms of the K\"ahler parameters $q_a$, $\beta_i\in\pi_2(X,\mathbf{T})$ are the basic disk classes (see Section \ref{gen_disks_X}), and the coefficients $n_{\beta_i+\alpha}$ are one-pointed genus zero open Gromov-Witten invariants defined by Fukaya-Oh-Ohta-Ono \cite{FOOO1} (see Definition \ref{open_GW}). Furthermore, we can show that the symplectic structure $\omega$ on $X$ is transformed to a holomorphic volume form on the semi-flat mirror $\check{X}_0$, which naturally extends to a holomorphic volume form $\check{\Omega}$ on $\check{X}$ (Proposition 4.38).

Note that the instanton-corrected mirror \eqref{SYZ} that we write down is of the form \eqref{HIV} suggested by Hori-Iqbal-Vafa \cite{HIV00}. Yet \eqref{SYZ} contains more information: it is explicitly expressed in terms of symplectic data, namely, the K\"ahler parameters and open Gromov-Witten invariants on $X$. Morally speaking, the semi-flat complex structure is the constant term in the fiberwise Fourier expansion of the corrected complex structure $J$. The higher Fourier modes correspond to genus-zero open Gromov-Witten invariants $n_{\beta_i+\alpha}$ in $X$, which are virtual counts of Maslov index two holomorphic disks with boundary in Lagrangian torus fibers.

Local mirror symmetry asserts that there is (at least locally near the large complex structure limits) a canonical isomorphism
$$\psi:\mathcal{M}_C(\check{X})\to\mathcal{M}_K(X),$$
called the \textit{mirror map}, from the complex moduli space $\mathcal{M}_C(\check{X})$ of $\check{X}$ to the (complexified) K\"ahler moduli space $\mathcal{M}_K(X)$ of $X$, which gives flat coordinates on $\mathcal{M}_C(\check{X})$. The mirror map is defined by periods as follows. For a point $\check{q}=(\check{q}_1,\ldots,\check{q}_l)\in\mathcal{M}_C(\check{X})$, where $l=m-n$, let
$$\check{X}_{\check{q}} = \left\{(u,v,z) \in \cpx^2 \times (\cpx^\times)^{n-1}: uv = 1 + \sum_{j=1}^{n-1} z_j + \sum_{i=n}^{m-1} \check{q}_{i-n+1} z^{v_i} \right\}$$
be the corresponding mirror Calabi-Yau manifold equipped with a holomorphic volume form $\check{\Omega}_{\check{q}}$. Then, for any n-cycle $\gamma\in H_n(\check{X},\integer)$, the period
$$\Pi_\gamma(\check{q}):=\int_\gamma\check{\Omega}_{\check{q}},$$
as a function of $\check{q}\in\mathcal{M}_C(\check{X})$, satisfies the $A$-hypergeometric system of linear differential equations associated to $X$ (see e.g. Hosono \cite{hosono06}). Let $\Phi_1(\check{q}),\ldots,\Phi_l(\check{q})$ be a basis of the solutions of this system with a single logarithm. Then there is a basis $\gamma_1,\ldots,\gamma_l$ of $H_n(\check{X},\integer)$ such that
$$\Phi_a(\check{q})=\int_{\gamma_a}\check{\Omega}_{\check{q}}$$
for $a=1,\ldots,l$, and the mirror map $\psi$ is given by
\begin{align*}
\psi(\check{q}) &= (q_1(\check{q}),\ldots,q_l(\check{q}))\in\mathcal{M}_K(X), \textrm{ where} \\
q_a(\check{q}) &= \exp(-\Phi_a(\check{q}))=\exp\left(-\int_{\gamma_a}\check{\Omega}_{\check{q}}\right)
\end{align*}
for $a=1,\ldots,l$.

A striking feature of our instanton-corrected mirror family (\ref{SYZ}) is that it is inherently written in flat coordinates.\footnote{This was first observed by Gross and Siebert \cite{gross07}; see Remark \ref{remark1.2} below.} We formulate this as a conjecture as follows. By considering Equation \eqref{SYZ}, one obtains a map $\phi:\mathcal{M}_K(X)\to\mathcal{M}_C(\check{X}), q=(q_1,\ldots,q_l)\mapsto\phi(q)=(\check{q}_1(q),\ldots,\check{q}_l(q))$ defined by
$$\check{q}_a(q)=q_a (1 + \delta_{a+n-1}) \prod_{j=0}^{n-1} (1+\delta_j)^{-\pairing{\nu_j}{v_{a+n-1}}},\ a=1,\ldots,l.$$
Then we claim that $\check{q}_1(q),\ldots,\check{q}_l(q)$ are flat coordinates on $\mathcal{M}_C(\check{X})$ (see Conjecture \ref{can_coords} for more details):
\begin{conjecture}\label{can_coords}
The map $\phi$ is an inverse of the mirror map $\psi$. In other words, there exists a basis $\gamma_1,\ldots,\gamma_l$ of $H_n(\check{X},\integer)$ such that
$$q_a=\exp\left(-\int_{\gamma_a}\check{\Omega}_{\check{q}}\right),$$
for $a=1,\ldots,l$, where $\check{q}=\phi(q)$ is defined as above.
\end{conjecture}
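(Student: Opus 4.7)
The plan is to identify an explicit basis $\gamma_1,\ldots,\gamma_l$ of $H_n(\check{X},\integer)$ whose period integrals produce the single-logarithm solutions of the $A$-hypergeometric system, compute those periods as power series in $\check{q}$, and then verify that after substituting $\check{q}=\phi(q)$ each period reduces to $-\log q_a$. The natural cycles come from the conic bundle $\pi:\check{X}\to(\cpx^\times)^{n-1}$, $(u,v,z)\mapsto z$, which degenerates along the hypersurface $\{G(z)=0\}$. For each charge vector indexing the Mori cone of $X$, I would lift a real $(n-1)$-cycle $\sigma_a$ in $(\cpx^\times)^{n-1}$ linking $\{G=0\}$ combinatorially, and fiber the $S^1$ locus $\{|u|=|v|\}\subset\pi^{-1}(\mathrm{pt})$ over it to obtain a tube cycle $\gamma_a\subset\check{X}$.

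Using the holomorphic volume form $\check{\Omega}$ from Proposition~4.38, a standard residue computation (as in Hori--Iqbal--Vafa \cite{HIV00}) reduces $\int_{\gamma_a}\check{\Omega}_{\check{q}}$ to $(2\pi\consti)^{n-1}$ times a contour integral of $\log G(z)$ against a monomial differential over $\sigma_a$. Expanding $\log G(z)$ about the large complex structure point $\check{q}=0$ and applying the Frobenius/Mellin--Barnes machinery used classically in local mirror symmetry \cite{hosono06, CKYZ}, I would obtain an expression
$$\Pi_{\gamma_a}(\check{q}) = -\log\check{q}_a + f_a(\check{q}),$$
where $f_a$ is a convergent power series vanishing at $\check{q}=0$, whose coefficients are explicit combinatorial sums determined by the fan of $X$. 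Inverting this standard formula encodes the classical inverse mirror map $\psi^{-1}$ of local mirror symmetry.

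Substituting $\check{q}=\phi(q)$ as defined in the excerpt, the desired identity $\Pi_{\gamma_a}(\phi(q))=-\log q_a$ becomes
$$\log(1+\delta_{a+n-1}) - \sum_{j=0}^{n-1}\pairing{\nu_j}{v_{a+n-1}}\log(1+\delta_j) = f_a(\phi(q)),$$
an equality of formal power series in $q$. The left side is built from generating functions of open Gromov--Witten invariants $n_{\beta_i+\alpha}$ on $X$, while the right side, by classical local mirror symmetry, is a generating function of closed Gromov--Witten/BPS invariants of $X$. The crux of the proof is therefore an open--closed correspondence at the level of generating series.

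The main obstacle is exactly this open--closed comparison, and it is what prevents a uniform proof in all dimensions. My strategy is to invoke the results of \cite{Chan10} and \cite{LLW10}, which rewrite each $n_{\beta_i+\alpha}$ in terms of local BPS invariants, i.e.\ closed genus-zero invariants of $X$; after this rewrite both sides are expressed in the same enumerative language and a term-by-term comparison becomes meaningful. For general toric Calabi--Yau $X$ a conceptual identification presumably requires placing the computation inside the Gross--Siebert reconstruction framework \cite{gross07}, which is why the statement is offered as a conjecture. Pragmatically, I would first specialize to the three-dimensional cases $X=K_{\proj^2}$ and $X=K_{\proj^1\times\proj^1}$, where both the hypergeometric solutions and the low-degree BPS numbers are known in closed form, and verify the identity above order by order in $q$, yielding the evidence the paper reports.
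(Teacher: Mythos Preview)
The statement is a conjecture, and the paper does not prove it; Section~\ref{period} only supplies evidence in low-dimensional examples. Your proposal correctly recognizes this and lands on essentially the same strategy the paper uses: reduce the open side via \cite{Chan10} and \cite{LLW10} to local BPS invariants, and then compare with the inverse mirror map in the examples $K_{\proj^2}$ and $K_{\proj^1\times\proj^1}$.

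The one methodological difference is on the period side. You propose to build explicit tube $n$-cycles $\gamma_a$ from the conic-bundle structure $\pi:(u,v,z)\mapsto z$ and compute $\int_{\gamma_a}\check{\Omega}_{\check{q}}$ by a residue reducing to a contour integral of $\log G(z)$. The paper does not do this (except in the $K_{\proj^1}$ case, where it writes down an embedded $S^2$ and integrates directly). Instead the paper bypasses any cycle construction: it simply writes down the Picard-Fuchs/$A$-hypergeometric system, identifies the single-logarithm solutions $\Phi_a(\check{q})$ as the relevant periods, computes the mirror map $\psi$ from them, inverts it as a power series, and checks agreement with $\phi$ order by order. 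Your route is more geometric and would, if carried out, actually exhibit the cycles whose existence the conjecture asserts; the paper's route is purely analytic and only verifies that $\phi$ matches $\psi^{-1}$ numerically to some order. Either approach yields the same evidence, and neither closes the gap you correctly flag: the open--closed identity of generating series is the missing ingredient for a full proof (cf.\ the paper's Remark~1.2 and the later reference to \cite{CLT}).
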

In particular, our construction of the instanton-corrected mirror via SYZ provides an enumerative meaning to the inverse mirror map. This also shows that the integrality of the coefficients of the Taylor series expansions of the functions $\check{q}_a$ in terms of $q_1,\ldots,q_l$ (see, e.g. \cite{Z10}) is closely related to enumerative meanings of the coefficients.

In Section 5, we shall provide evidences to Conjecture \ref{can_coords} in some 3-dimensional examples including $K_{\proj^2}$ and $K_{\proj^1\times\proj^1}$. This is done by computing the one-pointed genus zero open Gromov-Witten invariants of a toric Calabi-Yau 3-fold of the form $X=K_Z$, where $Z$ is a toric del Pezzo surface, in terms of local BPS invariants of the toric Calabi-Yau 3-fold $K_{\tilde{Z}}$, where $\tilde{Z}$ is a toric blow-up of $Z$ at a toric fixed point. The computation is an application of the results in \cite{Chan10} and \cite{LLW10}. In \cite{Chan10}, the first author of this paper shows that the open Gromov-Witten invariants of $X=K_Z$ are equal to certain closed Gromov-Witten invariants of a suitable compactification $\bar{X}$ (see Theorem \ref{cptification}). In the joint work \cite{LLW10} of the second and third authors with Wu, these closed Gromov-Witten invariants are shown to be equal to certain local BPS invariants of $K_{\tilde{Z}}$, where $\tilde{Z}$ denotes the blow-up of $Z$, by employing blow-up and flop arguments. Now, these latter invariants have already been computed by Chiang-Klemm-Yau-Zaslow in \cite{CKYZ}. Hence, by comparing with period computations such as those done by Graber-Zaslow in \cite{graber-zaslow01}, we can give evidences to the above conjecture for $K_{\proj^2}$ and $K_{\proj^1\times\proj^1}$.\\

\begin{remark}
Recently in a joint work of the first and second authors with
Hsian-Hua Tseng \cite{CLT}, Conjecture 1.1 was proved for $X = K_Y$, where
$Y$ is any toric Fano manifold. This paper takes a different approach and
the proof was by a computation of open Gromov-Witten invariants via
J-functions and a study of solutions of A-hypergeometric systems.
\end{remark}

\noindent\textbf{Example: $X=K_{\proj^2}$.} The primitive generators of the 1-dimensional cones of the fan $\Sigma$ defining $X=K_{\proj^2}$ are given by
$$v_0=(0,0,1), v_1=(1,0,1), v_2=(0,1,1), v_3=(-1,-1,1)\in N=\integer^3.$$
We equip $X$ with a toric K\"{a}hler structure $\omega$ associated to the moment polytope $P$ given as
\begin{align*}
P=\{&(x_1,x_2,x_3)\in\real^3:\\
&x_3\geq0,x_1+x_3\geq0,x_2+x_3\geq0,-x_1-x_2+x_3\geq-t_1\},
\end{align*}
where $t_1=\int_l\omega_1>0$ and $l\in H_2(X,\integer)=H_2(\proj^2,\integer)$ is the class of a line in $\proj^2$. To complexify the K\"ahler class, we set $\omega^\cpx=\omega+2\pi\sqrt{-1}B$, where $B$ is a real two-form (the $B$-field). We let $t=\int_l\omega^\cpx\in\cpx$.

Fix $K_2>0$ and let $D=\{x\in X:w(x)-K_2=0\}$. Then the base $B$ of the Gross fibration $\mu:X\to B$ is given by $B=\real^2\times\real_{\geq K_2}$. The wall is the real codimension one subspace $H=\real^2\times\{0\}\subset B$. The discriminant loci $\Gamma$ is a codimension two subset contained in $H$ as shown in Figure \ref{KP2_base}.

\begin{figure}[htp]
\caption{The base of the Gross fibration on $K_{\proj^2}$, which is an upper half space in $\real^3$.}
\label{KP2_base}
\begin{center}
\includegraphics{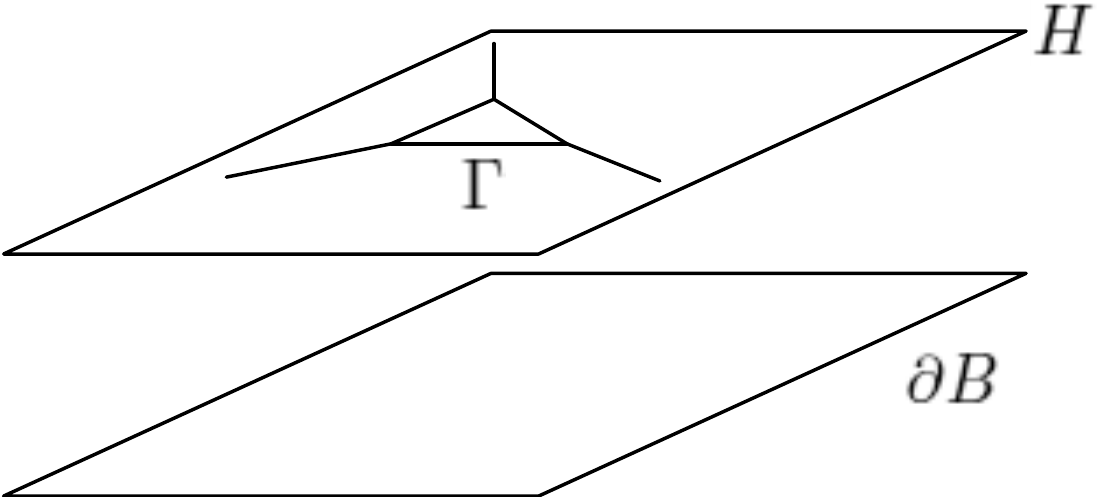}
\end{center}
\end{figure}

By Equation \eqref{SYZ}, the instanton-corrected mirror is given by
\begin{equation*}
\check{X}=\left\{
\begin{aligned}
&(u,v,z_1,z_2)\in \cpx^2\times(\cpx^\times)^2: \\
& uv= \left(1+\sum_{k=1}^\infty n_{\beta_0+kl}q^k\right)+z_1+z_2+\frac{q}{z_1z_2}
\end{aligned}
\right\}
\end{equation*}
where $q=\exp(-t)$ and $\beta_0$ is the basic disk class corresponding to the compact toric divisor $\proj^2 \subset X$.  By Corollary \ref{mirror_KS}, we can express the open Gromov-Witten invariants $n_{\beta_0+kl}$ in terms of the local BPS invariants of $K_{\mathbb{F}_1}$, where $\mathbb{F}_1$ is the blowup of $\proj^2$ at one point. More precisely, let $e,f\in H_2(\mathbb{F}_1,\integer)=H_2(K_{\mathbb{F}_1},\integer)$ be the classes represented by the exceptional divisor and fiber of the blowing up $\mathbb{F}_1\to\proj^2$ respectively. Then $n_{\beta_0+kl}$ is equal to the local BPS invariant $\textrm{GW}_{0,0}^{K_{\mathbb{F}_1},kf+(k-1)e}$ for the class $kf+(k-1)e\in H_2(K_{\mathbb{F}_1},\integer)$. These latter invariants have been computed by Chiang-Klemm-Yau-Zaslow and listed in the `sup-diagonal' of Table 10 on p. 56 in \cite{CKYZ}:
\begin{align*}
n_{\beta_0+l}  & = -2,\\
n_{\beta_0+2l} & = 5,\\
n_{\beta_0+3l} & = -32,\\
n_{\beta_0+4l} & = 286,\\
n_{\beta_0+5l} & = -3038,\\
n_{\beta_0+6l} & = 35870,\\
               &\vdots&
\end{align*}
Hence, the instanton-corrected mirror for $X=K_{\proj^2}$ is given by
$$\check{X}=\left\{(u,v,z_1,z_2)\in\cpx^2\times(\cpx^\times)^2:uv= c(q) + z_1+z_2+\frac{q}{z_1z_2}\right\},$$
where
$$c(q) = 1-2q+5q^2-32q^3+286q^4-3038q^5+\ldots.$$
By a change of coordinates the above defining equation can also be written as
$$ uv= 1 + z_1+z_2+\frac{q}{c(q)^3 z_1z_2}.$$
According to our conjecture above, the inverse mirror map $\phi:\mathcal{M}_K(X)\to\mathcal{M}_C(\check{X})$ is then given by
$$q\mapsto\check{q}:=q(1-2q+5q^2-32q^3+286q^4-3038q^5+\ldots)^{-3}.$$

On the other hand, the $A$-hypergeometric system of linear differential equations associated to $K_{\proj^2}$ is equivalent to the Picard-Fuchs equation
$$[\theta_{\check{q}}^3+3\check{q}\theta_{\check{q}}(3\theta_{\check{q}}+1)(3\theta_{\check{q}}+2)]\Phi(\check{q})=0,$$
where $\theta_{\check{q}}$ denotes $\check{q}\frac{\partial}{\partial\check{q}}$ (see, e.g. Graber-Zaslow \cite{graber-zaslow01}.) The solution of this equation with a single logarithm is given by
$$\Phi(\check{q})=-\log\check{q}-\sum_{k=1}^\infty\frac{(-1)^k}{k}\frac{(3k)!}{(k!)^3}\check{q}^k.$$
Then, $\Phi(\check{q})$ is the period of some 3-cycle $\gamma\in H_3(\check{X},\integer)$ and we have the mirror map
$$\psi:\mathcal{M}_C(\check{X})\to\mathcal{M}_K(X),\ q = \check{q}\exp\left(\sum_{k=1}^\infty\frac{(-1)^k}{k}\frac{(3k)!}{(k!)^3}\check{q}^k\right).$$
We can then invert the mirror map and express $\check{q}=\psi^{-1}(q)$ as a function of $q\in\mathcal{M}_K(X)$. One can check by direct computation that
$$(\exp(-\Phi(\check{q}))/\check{q})^{\frac{1}{3}}=1-2q+5q^2-32q^3+286q^4-3038q^5+\ldots.$$
This shows that $\phi$ agrees with the inverse mirror map at least up to the order $q^5$, thus providing ample evidence to Conjecture \ref{can_coords} for $X=K_{\proj^2}$. In fact, by using a computer, one can verify that $\phi$ agrees with the inverse mirror map up to a much larger order. \hfill $\square$

\begin{remark}\label{remark1.2}
The relevance of the work of Graber-Zaslow \cite{graber-zaslow01} to the relationship between the series $1-2q+5q^2-32q^3+286q^4-3038q^5+\ldots$ and canonical coordinates was first mentioned in Remark 5.1 of the paper \cite{gross07} by Gross and Siebert. They were also the first to observe that the coefficients of the above series have geometric meanings; namely, they show that these coefficients can be obtained by imposing the ``normalization" condition for slabs, which is a condition necessary to run their program and construct toric degenerations of Calabi-Yau manifolds. They predict that these coefficients are counting certain tropical disks. See Conjecture 0.2 in \cite{gross07} for more precise statements.

After reading a draft of our paper, Gross informed us that they have long been expecting that the slabs are closely related to 1-pointed open Gromov-Witten invariants, so they also expect that a version of our Conjecture \ref{can_coords} is true. While the Gross-Siebert program constructs the mirror B-model starting from tropical data (on which they impose the normalization condition) on the base of the Lagrangian torus fibration, we start from the A-model on a toric Calabi-Yau manifold and use symplectic enumerative data (holomorphic disk counting invariants) directly to construct the mirror B-model. Our approach is in a way complementary to that of Gross-Siebert.
\end{remark}

The organization of this paper is as follows. Section \ref{A-side} reviews the general concepts in the symplectic side needed in this paper  and gives the T-duality procedure with quantum corrections.  It involves a family version of the Fourier transform, which is defined in Section \ref{FT}. Then we carry out the SYZ construction of instanton-corrected mirrors for toric Calabi-Yau manifolds in details in Section \ref{torCY}. The (inverse) mirror maps and their enumerative meanings are discussed in Section \ref{period}.\\

\noindent\textbf{Acknowledgments.} We are heavily indebted to Baosen Wu for generously sharing his ideas and insight. In particular he was the first to observe that there is a relation between the closed Gromov-Witten invariants of $K_Z$ and the local BPS invariants of $K_{\tilde Z}$, which was studied in more details in the joint work Lau-Leung-Wu \cite{LLW10}. We are grateful to Mark Gross and Bernd Siebert for many useful comments and for informing us about their related work and thoughts. We would also like to thank Denis Auroux, Cheol-Hyun Cho, Kenji Fukaya, Yong-Geun Oh, Hiroshi Ohta, Kaoru Ono and Shing-Tung Yau for numerous helpful discussions and the referees for suggestions which greatly improve the exposition of this paper. The second author thanks Mark Gross for inviting him to visit UCSD in February 2010 and for many enlightening discussions on wall-crossing. He is also grateful to Cheol-Hyun Cho for the invitation to Seoul National University in June 2010 and for the joyful discussions on disk-counting invariants.

The research of the first author was partially supported by Harvard University and the Croucher Foundation Fellowship. The research of the second author was supported by Institute for the Physics and Mathematics of the Universe. The work of the third author described in this paper was substantially supported by a grant from the Research Grants Council of the Hong Kong Special Administrative Region, China (Project No. CUHK401809).

\section{The SYZ mirror construction} \label{A-side}
This section gives the procedure to construct the instanton-corrected mirror using SYZ. First, we give a review of the symplectic side of SYZ mirror symmetry. Section \ref{open GW section} introduces the open Gromov-Witten invariants defined by Fukaya-Oh-Ohta-Ono \cite{FOOO_I, FOOO_II}, which are essential to our mirror construction. The construction procedure of the instanton-corrected mirror is given in Section \ref{mir_construct}. We will apply this procedure to produce the instanton-corrected mirrors of toric Calabi-Yau manifolds in Section \ref{mirror}.

\subsection{Proper Lagrangian fibrations and semi-flat mirrors} \label{Lag fib}
This subsection is devoted to introduce the notion of a Lagrangian fibration, which is central to the SYZ program.  The setting introduced here includes moment maps on toric manifolds as examples, whose bases are polytopes which are manifolds with corners, but we also allow singular fibers.

Let $X^{2n}$ be a smooth connected manifold of dimension $2n$, and $\omega$ be a closed non-degenerate two-form on $X$. The pair $(X,\omega)$ is called a symplectic manifold. In our setup, $X$ is allowed to be \emph{non-compact}. This is important for us since a toric Calabi-Yau manifold can \emph{never} be compact.

We consider a fibration $\mu:X\to B$ (i.e. a smooth map such that $\mu(X)=B$), whose base $B$ is a smooth manifold with corners:
\begin{definition}\label{mfd_corner}
A Hausdorff topological space $B$ is a smooth $n$-manifold with corners if
\begin{enumerate}
\item For each $r\in B$, there exists an open set $U\subset B$ containing $r$, and a homeomorphism
$$\phi:U\to V\cap(\real_{\geq0}^k\times\real^{n-k})$$
for some $k=0,\ldots,n$, where $V$ is an open subset of $\real^n$ containing $0$, and $\phi(r)=0$. Such $r$ is called a $k$-corner point of $B$.
\item The coordinate changes are diffeomorphisms.
\end{enumerate}
\end{definition}

Basic examples of manifolds with corners are given by moment map polytopes of toric manifolds (see Figure \ref{KP1_poly} for an example). A manifold $B$ with corners is stratified by the subsets $B^{(k)}$ consisting of all $k$-corner points of $B$. $r\in B$ is called a boundary point of $B$ if it is a $k$-corner point for $k\geq1$.  Let
\begin{equation}
B^{\mathrm{int}}:=B^{(0)}=\{r\in B:r\textrm{ is not a boundary point}\}
\end{equation}
be the open stratum, and
\begin{equation}
\partial B:=B-B^{\mathrm{int}}.
\end{equation}

We will be dealing with those fibrations $\mu:X \to B$ which are \emph{proper} and \emph{Lagrangian}. Recall that $\mu$ is proper if $\mu^{-1}(K)$ is compact for every compact set $K \subset B$. And Lagrangian means the following:
\begin{definition}
Let $(X^{2n},\omega)$ be a symplectic manifold of dimension $2n$, and $B^n$ be a smooth $n$-fold with corners. A fibration $\mu:X\to B$ is said to be Lagrangian if at every regular point $x\in X$ with respect to the map $\mu$, the subspace $\Kernel(\der\mu(x))\subset T_x X$ is Lagrangian, that is,
$$\omega|_{\Kernel(\der\mu (x))}=0.$$
\end{definition}

>From now on we always assume that $\mu:X \to B$ is a Lagrangian fibration, whose fibers are denoted by
$$F_r:=\mu^{-1}(\{r\}),\ r\in B.$$
$F_r$ is called a regular fiber when $r$ is a regular value of $\mu$; otherwise it is called a singular fiber. It is the presence of singular fibers which makes the SYZ construction of instanton-corrected mirrors non-trivial.

The SYZ program asserts that the mirror of $X$ is given by the `dual torus fibration'. This dualizing procedure can be made precise if one restricts only to regular fibers. In view of this we introduce the following notations:
\begin{align}
\label{Gamma} \Gamma &:=\{r\in B:r\textrm{ is a critical value of }\mu\}; \\
B_0 &:= B-\Gamma; \\
\label{X_0} X_0 &:=\mu^{-1}(B_0).
\end{align}
$\Gamma$ is called the discriminant locus of $\mu$.

Now the restriction $\mu:X_0\to B_0$ is a proper Lagrangian submersion with connected fibers. Using the following theorem of Arnold-Liouville (see Section 50 of \cite{arnold_book}) on action-angle coordinates, it turns out that this can only be a torus bundle:

\begin{theorem}[Arnold-Liouville \cite{arnold_book}] \label{act_ang}
Let $\mu:X_0\to B_0$ be a proper Lagrangian submersion with connected fibers. Then $\mu$ is a torus bundle. Moreover, an integral affine structure is induced on $B_0$ in a canonical way.
\end{theorem}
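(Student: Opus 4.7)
My plan would follow the classical route, splitting the statement into (a) fibers are tori, and (b) the period lattice gives an integral affine structure.

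First I would extract commuting vector fields on the fibers from the Lagrangian condition. Since $\mu$ is a proper submersion with connected fibers, Ehresmann's theorem makes $\mu:X_0 \to B_0$ a smooth locally trivial bundle, and each fiber is a closed Lagrangian submanifold. Fix $r \in B_0$ and local coordinates $(r_1,\ldots,r_n)$ on a neighborhood $U\ni r$; set $f_i := r_i\circ\mu$. The Hamiltonian vector fields $X_{f_i}$ defined by $\contract{X_{f_i}}{\omega}= \der f_i$ are tangent to the fibers (since $\der f_i$ annihilates $\Kernel(\der\mu)$, which is Lagrangian), and they Poisson-commute because $\{f_i,f_j\}=\omega(X_{f_i},X_{f_j})=0$ for the same Lagrangian reason. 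Properness of $\mu$ makes their flows complete on each fiber, so $X_{f_1},\ldots,X_{f_n}$ integrate to a smooth $\real^n$-action on $F_r$. The action is locally free because the $\der f_i$ are linearly independent, and transitive on the connected compact fiber $F_r$; hence $F_r \cong \real^n / \Lambda_r$ for a rank-$n$ stabilizer lattice $\Lambda_r$, and $\mu$ is a torus bundle.

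Next I would organize the lattices into an intrinsic local system on $B_0$. It is natural to regard $\Lambda_r$ as sitting in $T^*_r B_0$: given $\xi\in T^*_r B_0$, pick any local function $f$ on $U$ with $\der f|_r = \xi$ and declare $\xi\in\Lambda_r$ iff the time-one flow of $X_{f\circ\mu}$ acts trivially on $F_r$. Independence of the extension follows because two choices differ by a function vanishing along $F_r$, whose Hamiltonian vector field is tangent to $F_r$ and averages out over the compact torus. A smooth local frame $\xi_1,\ldots,\xi_n$ of $\Lambda\subset T^*B_0$ then exists on any contractible $U$, and I would verify the crucial fact that each $\xi_i$ is a closed 1-form: this is where the symplectic structure is essential, and it follows by computing $\der\xi_i$ in terms of the commutators $[X_{f_i},X_{f_j}]$ which vanish, or equivalently by observing that the period map sends closed loops in fibers to closed 1-forms on the base. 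Hence on $U$ we may write $\xi_i = \der a_i$ for functions $a_1,\ldots,a_n$ on $U$; these are the action coordinates, and since any two frames of $\Lambda$ differ by an element of $\GL(n,\integer)$, the transition maps between such coordinate systems lie in $\GL(n,\integer)\ltimes \real^n$, which is exactly an integral affine structure on $B_0$.

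The main obstacle I anticipate is proving the smoothness of $r\mapsto\Lambda_r$ and the closedness of its local sections in a clean, coordinate-free way; both reduce to a careful use of the non-degeneracy of $\omega$ together with the properness/compactness of the fibers. Once these are in place, one can combine the action coordinates $a_i$ on $U$ with angle coordinates $\theta_i$ along the fibers (obtained by trivializing the $\real^n$-action) to produce Darboux charts $(a_i,\theta_i)$ in which $\omega = \sum_i \der a_i \wedge \der\theta_i$, giving a completely explicit local model $\mu^{-1}(U) \cong U\times \torus{n}$ and completing the proof.
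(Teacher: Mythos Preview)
Your proposal is correct and follows exactly the classical route the paper sketches: the paper does not prove this cited theorem but only records the key observation that each cotangent vector at $r$ induces, via contraction with $\omega$, a vector field on $F_r$, so that $T^*_r B_0$ acts on the compact connected fiber with lattice isotropy, giving $F_r \cong T^*_r B_0/\mathrm{L}$. Your argument is simply a coordinate-based elaboration of this same idea (your $X_{f_i}$ are precisely the vector fields induced by a basis of $T^*_r B_0$), together with the standard verification that the period lattice yields the integral affine structure, which the paper omits entirely.
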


An integral affine structure on $B_0$ is an atlas of coordinate charts such that the coordinate changes belong to $\GL(n,\integer)\ltimes\real^n$. The key to proving Theorem \ref{act_ang} is the observation that every cotangent vector at $r\in B_0$ induces a tangent vector field on $F_r$ by contracting with the symplectic two-form $\omega$ on $X$. Since $F_r$ is smooth and compact, a vector field integrates (for time $1$) to a diffeomorphism on $F_r$. In this way we get an action of $T^*_r B_0$ on $F_r$, and the isotropy subgroup of a point $x\in F_r$ can only be a lattice $\mathrm{L}$ in $T^*_r B_0$. Thus $T^*_r B_0/\mathrm{L}\cong F_r$.

Knowing that $\mu:X_0\to B_0$ is a torus bundle, we may then take its dual defined in the following way:
\begin{definition} \label{dual torus bundle}
Let $\mu:X_0\to B_0$ be a torus bundle. Its dual is the space $\check{X}_0$ of pairs $(F_r, \conn)$ where $r\in B_0$ and $\conn$ is a flat $U(1)$-connection on the trivial complex line bundle over $F_r$ up to gauge. There is a natural map $\check{\mu}:\check{X}_0\to B_0$ given by forgetting the second coordinate.
\end{definition}
The fiber of $\check{\mu}$ at $r$ is denoted as $\check{F}_r$.

\begin{prop}
In Definition \ref{dual torus bundle}, $\check{\mu}:\check{X}_0\to B_0$ is a torus bundle.
\end{prop}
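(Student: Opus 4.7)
The strategy is to identify each fibre $\check{F}_r$ with the dual torus of $F_r$, assemble these into a smooth torus bundle using the integral affine structure provided by Theorem \ref{act_ang}, and finally check local triviality.

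First, I would describe $\check{F}_r$ algebraically. Any flat $\U(1)$-connection on the trivial complex line bundle over $F_r$ is of the form $\der + \consti\alpha$ for a real closed $1$-form $\alpha$ on $F_r$, and two such connections are gauge-equivalent exactly when their difference equals $\consti\,\der\phi$ for a $\U(1)$-valued function $\phi$, i.e. for a closed $1$-form with periods in $2\pi\integer$. Therefore
\[
\check{F}_r \;\cong\; H^1(F_r,\real)/H^1(F_r,2\pi\integer) \;\cong\; \Hom\bigl(H_1(F_r,\integer),\U(1)\bigr),
\]
which is the dual torus to $F_r$, and in particular has the structure of a compact abelian Lie group of dimension $n$.

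Next I would make this geometric using action-angle coordinates. By Theorem \ref{act_ang}, $F_r$ is canonically identified with $T^*_r B_0/\mathrm{L}_r$ for a lattice $\mathrm{L}_r \subset T^*_r B_0$, and the lattices $\mathrm{L}_r$ fit into a smooth lattice subbundle $\Lambda^{\vee} \subset T^*B_0$ (its local sections are the closed $1$-forms $\der I^j$ arising from action coordinates). Dualising produces a lattice subbundle $\Lambda \subset TB_0$ whose fibre over $r$ is $\mathrm{L}_r^{*}$. Since $H_1(F_r,\integer)\cong \mathrm{L}_r$, the identification of the previous paragraph becomes $\check{F}_r \cong T_r B_0/\mathrm{L}_r^{*}$, and therefore, as a set,
\[
\check{X}_0 \;=\; TB_0/\Lambda,
\]
with $\check{\mu}$ being the induced projection to $B_0$.

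Finally I would check local triviality. Over a contractible open $U \subset B_0$ one can pick a smooth frame $\der I^1,\ldots,\der I^n$ of $\Lambda^{\vee}$; its dual frame gives a smooth trivialisation $\Lambda|_U \cong U\times \integer^n$, and hence
\[
\check{\mu}^{-1}(U) \;\cong\; U\times(\real^n/\integer^n) \;=\; U\times\torus{n}.
\]
On overlaps, the transition functions for $\Lambda$ take values in $\GL(n,\integer)$, so the resulting transition functions for $\check{\mu}$ preserve the torus structure, making $\check{\mu}:\check{X}_0\to B_0$ a smooth $\torus{n}$-bundle.

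The only real point to watch, rather than a serious obstacle, is the smooth dependence on $r$: one needs the fibrewise identification between ``gauge classes of flat connections'' and ``$\Hom(\mathrm{L}_r,\U(1))$'' to be smooth in $r$. This follows because the holonomy of $\der+\consti\alpha$ around a loop in $\mathrm{L}_r$ is simply the exponentiated period pairing, which depends smoothly on $r$ through the smooth sections of $\Lambda^{\vee}$ supplied by Theorem \ref{act_ang}.
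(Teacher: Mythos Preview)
Your argument is correct, but it is considerably more elaborate than what the paper actually does. The paper's proof is very short: it identifies $\check{F}_r$ with $\Hom(\pi_1(F_r),\U(1))\cong(\real/\integer)^n$ via holonomy, and then for local triviality simply takes a local trivialisation $\mu^{-1}(U)\cong U\times T$ of the original bundle (whose existence is already guaranteed by Theorem~\ref{act_ang}) and observes that this immediately gives $\check{\mu}^{-1}(U)\cong U\times\Hom(\pi_1(T),\U(1))\cong U\times T^*$. No explicit use of the lattice bundles $\Lambda,\Lambda^\vee$ or the global model $TB_0/\Lambda$ is made at this point.

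Your route, by contrast, builds the global description $\check{X}_0\cong TB_0/\Lambda$ from the action-angle lattice and then reads off local triviality from a frame of $\Lambda$. This is a genuinely different (and more constructive) packaging of the same ingredients: it gives you, for free, the standard semi-flat model of the dual fibration and the $\GL(n,\integer)$ structure group, information the paper only brings in implicitly later when writing down semi-flat complex coordinates in Section~\ref{semi-flat}. The paper's approach is quicker for the bare statement of the proposition; yours yields a more explicit picture that pays dividends downstream. Both are perfectly valid.
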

\begin{proof}
For each $r\in B_0$, $\mathrm{Hom}(\pi_1(F_r),U(1))$ parameterizes all flat $U(1)$-connections on the trivial complex line bundle over $F_r$ by recording their holonomy. Thus
$$\check{F}_r\cong\mathrm{Hom}(\pi_1(F_r), U(1)).$$
Since $F_r$ is an $n$-torus, one has $\pi_1(F_r)\cong\integer^n$, and so
$$\mathrm{Hom}(\pi_1(F_r),U(1))\cong\real^n/\integer^n.$$
This shows that each fiber $\check{F}_r$ is a torus (which is dual to $F_r$).

To see that $\check{\mu}$ is locally trivial, take a local trivialization $\mu^{-1}(U)\cong U\times T$ of $\mu$, where $U$ is an open set containing $r$ and $T$ is a torus.  Then
$$\check{\mu}^{-1}(U)\cong U\times\mathrm{Hom}(\pi_1(T),U(1))\cong U\times T^*,$$
where $T^*$ denotes the dual torus to $T$.
\end{proof}

For a Lagrangian torus bundle $\mu:X_0\to B_0$, its dual $\check{X}_0$ has a canonical complex structure \cite{boss01} (we will write this down explicitly for toric Calabi-Yau manifolds in Section \ref{mirror}). Moreover, when the monodromies of the torus bundle $\mu:X_0\to B_0$ belong to $SL(n,\integer)$, there is a holomorphic volume form on $\check{X}_0$. Thus, by torus duality, a symplectic manifold with a Lagrangian bundle structure gives rise to a complex manifold. This exhibits the mirror phenomenon.

However, the above dualizing procedure takes place only away from the singular fibers (see Equation \eqref{X_0}) and hence it loses information. $\check{X}_0$ is called the \textit{semi-flat mirror}, which is only the `zeroth-order part' of the mirror of $X$ (see Remark \ref{approx_rem}). To remedy this, we need to `add back' the information coming from the singular fibers. This is precisely captured by the open Gromov-Witten invariants of $X$, which is discussed in the next subsection.

\subsection{Open Gromov-Witten invariants} \label{open GW section}
A crucial difference between $X_0$ and $X$ is that loops in the fibers of $X_0$ which represent non-trivial elements in $\pi_1$ never shrink, that is, $\pi_2(X_0,F_r)=0$ for every $r\in B_0$; while this is not the case for $X$ in general. To quantify this difference one needs to equip $X$ with an almost complex structure compatible with $\omega$ and count pseudo-holomorphic disks $(\Delta,\partial\Delta)\to(X,F_r)$. This gives the genus-zero open Gromov-Witten invariants defined by Fukaya-Oh-Ohta-Ono \cite{FOOO_I}.

In this section, we give a brief review on these invariants (Definition \ref{open_GW}). We also explain how to pack these invariants to form a generating function under the setting of Section \ref{Lag fib} (Definition \ref{Lambda*}).  We will see that via the fiberwise Fourier transform defined in Section \ref{FT}, these data serve as `quantum corrections' to the semi-flat complex structure of the mirror.

Let $(X,\omega)$ be a symplectic manifold equipped with an almost complex structure compatible with $\omega$. First of all, we need the following basic topological notions:
\begin{definition}
\begin{enumerate}
\item For a submanifold $L\subset X$, $\pi_2(X,L)$ is the group of homotopy classes of maps
$$u:(\Delta,\partial\Delta)\to(X,L),$$
where $\Delta:=\{z\in\cpx:|z|\leq1\}$ denotes the closed unit disk in $\cpx$. We have a natural homomorphism
$$\partial:\pi_2(X,L)\to\pi_1(L)$$
defined by $\partial[u]:=[u|_{\partial\Delta}]$.

\item For two submanifolds $L_0, L_1\subset X$, $\pi_2(X,L_0,L_1)$ is the set of homotopy classes of maps
$$u: ([0,1]\times\sphere{1},\{0\}\times\sphere{1},\{1\}\times\sphere{1})\to(X,L_0,L_1).$$
Similarly we have the natural boundary maps $\partial_+:\pi_2(X,L_0,L_1)\to\pi_1(L_1)$ and $\partial_-:\pi_2(X,L_0,L_1)\to\pi_1(L_0)$.
\end{enumerate}
\end{definition}

>From now on, we shall always assume that $L\subset X$ is a compact Lagrangian submanifold. Given a disk class $\beta\in\pi_2(X,L)$, an important topological invariant for $\beta$ is its Maslov index:
\begin{definition} \label{Maslov_index}
Let $L$ be a Lagrangian submanifold and $\beta\in\pi_2(X,L)$. Let $u:(\Delta,\partial\Delta)\to(X,L)$ be a representative of $\beta$. Then one may trivialize the symplectic vector bundle
$$u^*TX\cong\Delta\times V,$$
where $V$ is a symplectic vector space. Thus the subbundle $(\partial u)^* TL\subset(\partial u)^*TX$ induces the Gauss map
$$\partial\Delta\to U(n)/O(n)\to U(1)/O(1)\cong\sphere{1},$$
where $U(n)/O(n)$ parameterizes all Lagrangian subspaces in $V$. The degree of this map is called the Maslov index, which is independent of the choice of representative $u$.
\end{definition}

The Maslov index of $\beta$ is denoted by $\mu(\beta)\in\integer$.\footnote{It should be clear from the context whether $\mu$ refers to a Lagrangian fibration or the Maslov index.} $\mu(\beta)$ is important for open Gromov-Witten theory because it determines the expected dimension of the moduli space of holomorphic disks (see Equation \eqref{exp_dim}).

Now we are going to define the genus-zero open Gromov-Witten invariants. First of all, we have the notion of a pseudoholomorphic disk:
\begin{definition}
\begin{enumerate}
\item A pseudoholomorphic disk bounded by a Lagrangian $L\subset X$ is a smooth map $u:(\Delta,\partial\Delta)\to(X, L)$ such that $u$ is holomorphic with respect to the almost complex structure $J$, that is,
    $$(\partial u)\circ j=J\circ \partial u,$$
    where $j$ is the standard complex structure on the disk $\Delta\subset\cpx$.

\item The moduli space $\mathcal{M}^\circ_k(L,\beta)$ of pseudoholomorphic disks representing $\beta\in\pi_2(X,L)$ with $k$ ordered boundary marked points is defined as the quotient by $\Aut(\Delta)$ of the set of all pairs $(u,(p_i)_{i=0}^{k-1})$, where
    $$u:(\Delta,\partial\Delta)\to(X, L)$$
    is a pseudoholomorphic disk bounded by $L$ with homotopy class $[u]=\beta$, and $(p_i\in\partial\Delta:i=0,\ldots,k-1)$ is a sequence of boundary points respecting the cyclic order of $\partial\Delta$. For convenience the notation $(u,(p_i)_{i=0}^{k-1})$ is usually abbreviated as $u$.

\item The evaluation map $\mathrm{ev}_i:\mathcal{M}^\circ_k(L,\beta)\to L$ for $i=0,\ldots,k-1$ is defined as
    $$\mathrm{ev}_i([u,(p_i)_{i=0}^{k-1}]):=u(p_i).$$
\end{enumerate}
\end{definition}

$\mathcal{M}^\circ_k(L,\beta)$ has expected dimension
\begin{equation} \label{exp_dim}
\dim_{\textrm{virt}}(\mathcal{M}^\circ_k(L,\beta))=n+\mu(\beta)+k-3,
\end{equation}
where the shorthand `virt' stands for the word `virtual' (which refers to `virtual fundamental chain' discussed below).

To define open Gromov-Witten invariants, one requires an intersection theory on the moduli spaces. This involves various issues:\\

\noindent {\large 1. \textit{Compactification of moduli.}}

$\mathcal{M}^\circ_k(L,\beta)$ is non-compact in general, and one needs to compactify the moduli. Analogous to closed Gromov-Witten theory, this involves the concept of stable disks. A \textit{stable disk} bounded by a Lagrangian $L$ with $k$ ordered boundary marked points is a pair $(u,(p_i)_{i=0}^{k-1})$, where
$$u:(\Sigma,\partial\Sigma)\to(X,L)$$
is a pseudoholomorphic map whose domain $\Sigma$ is a `semi-stable' Riemann surface of genus-zero (which may have several disk and sphere components) with a non-empty connected boundary $\partial\Sigma$ and $(p_i\in\partial\Sigma)$ is a sequence of boundary points respecting the cyclic order of the boundary, which satisfies the stability condition: If a component $C$ of $\Sigma$ is contracted under $u$, then $C$ contains at least three marked or singular points of $\Sigma$.

A compactification of $\mathcal{M}^\circ_k(L,\beta)$ is then given by the moduli space of stable disks:
\begin{definition}[Definition 2.27 of \cite{FOOO_I}]
Let $L$ be a compact Lagrangian submanifold in $X$ and $\beta\in\pi_2(X,L)$. Then $\mathcal{M}_k(L,\beta)$ is defined to be the set of isomorphism classes of stable disks representing $\beta$ with $k$ ordered boundary marked points.  Two stable disks $(u,(p_i))$ and $(u',(p'_i))$ are isomorphic if the maps $u$ and $u'$ have the same domain $\Sigma$ and there exists $\phi\in\Aut(\Sigma)$ such that $u'=u\circ\phi$ and $\phi(p'_i)=p_i$.
\end{definition}

\begin{remark}
In the above definition we require that the ordering of marked points respects the cyclic order of $\partial\Sigma$.  In the terminologies and notations of \cite{FOOO_I}, the above moduli is called the main component and is denoted by $\mathcal{M}^{\textrm{main}}_k(\beta)$ instead.
\end{remark}

The moduli space $\mathcal{M}_k(L,\beta)$ has a Kuranishi structure (\cite{FOOO_I}). We briefly recall its construction in the following. First of all, let us recall the definition of a Kuranishi structure. See Appendix A1 of the book \cite{FOOO_II} for more details.

Let $\mathcal{M}$ be a compact metrizable space.
\begin{definition}[Definitions A1.1, A1.3, A1.5 in \cite{FOOO_II}]
A Kuranishi structure on $\mathcal{M}$ of (real) virtual dimension $d$ consists of the following data:
\begin{enumerate}
\item[(1)] For each point $\sigma\in\mathcal{M}$,
           \begin{enumerate}
           \item[(1.1)] A smooth manifold $V_\sigma$ (with boundary or corners) and a finite group $\Gamma_\sigma$ acting smoothly and effectively on $V_\sigma$.
           \item[(1.2)] A real vector space $E_\sigma$ on which $\Gamma_\sigma$ has a linear representation and such that $\textrm{dim }V_\sigma-\textrm{dim }E_\sigma=d$.
           \item[(1.3)] A $\Gamma_\sigma$-equivariant smooth map $s_\sigma:V_\sigma\to E_\sigma$.
           \item[(1.4)] A homeomorphism $\psi_\sigma$ from $s_\sigma^{-1}(0)/\Gamma_\sigma$ onto a neighborhood of $\sigma$ in $\mathcal{M}$.
           \end{enumerate}
\item[(2)] For each $\sigma\in\mathcal{M}$ and for each $\tau\in\textrm{Im }\psi_\sigma$,
           \begin{enumerate}
           \item[(2.1)] A $\Gamma_\tau$-invariant open subset $V_{\sigma\tau}\subset V_\tau$ containing $\psi_\tau^{-1}(\tau)$.\footnote{Here we regard $\psi_\tau$ as a map from $s_\tau^{-1}(0)$ to $\mathcal{M}$ by composing with the quotient map $V_\tau\to V_\tau/\Gamma_\tau$.}
           \item[(2.2)] A homomorphism $h_{\sigma\tau}:\Gamma_\tau\to\Gamma_\sigma$.
           \item[(2.3)] An $h_{\sigma\tau}$-equivariant embedding $\varphi_{\sigma\tau}:V_{\sigma\tau}\to V_\sigma$ and an injective $h_{\sigma\tau}$-equivariant bundle map $\hat\varphi_{\sigma\tau}:E_\tau\times V_{\sigma\tau}\to E_\sigma\times V_\sigma$ covering $\varphi_{\sigma\tau}$.
           \end{enumerate}
\end{enumerate}
Moreover, these data should satisfy the following conditions:
\begin{enumerate}
\item[(i)] $\hat\varphi_{\sigma\tau}\circ s_\tau=s_\sigma\circ\varphi_{\sigma\tau}$.\footnote{Here and after, we also regard $s_\sigma$ as a section $s_\sigma:V_\sigma\to E_\sigma\times V_\sigma$.}
\item[(ii)] $\psi_\tau=\psi_\sigma\circ\varphi_{\sigma\tau}$.
\item[(iii)] If $\xi\in\psi_\tau(s_\tau^{-1}(0)\cap V_{\sigma\tau}/\Gamma_\tau)$, then in a sufficiently small neighborhood of $\xi$,
    $$\varphi_{\sigma\tau}\circ\varphi_{\tau\xi}=\varphi_{\sigma\xi},\ \hat\varphi_{\sigma\tau}\circ\hat\varphi_{\tau\xi}=\hat\varphi_{\sigma\xi}.$$
\end{enumerate}
\end{definition}
The spaces $E_\sigma$ are called obstruction spaces (or obstruction bundles), the maps $\{s_\sigma:V_\sigma\to E_\sigma\}$ are called Kuranishi maps, and $(V_\sigma,E_\sigma,\Gamma_\sigma,s_\sigma,\psi_\sigma)$ is called a Kuranishi neighborhood of $\sigma\in\mathcal{M}$.

Now we come back to the setting of open Gromov-Witten invariants.  Let $(u,(p_i)_{i=0}^{k-1})$ represent a point $\sigma\in\mathcal{M}_k(L,\beta)$. Let $W^{1,p}(\Sigma;u^*(TX);L)$ be the space of sections $v$ of $u^*(TX)$ of $W^{1,p}$ class such that the restriction of $v$ to $\partial\Sigma$ lies in $u^*(TL)$, and $W^{0,p}(\Sigma;u^*(TX)\otimes\Lambda^{0,1})$ be the space of $u^*(TX)$-valued $(0,1)$-forms of $W^{0,p}$ class. Then consider the linearization of the Cauchy-Riemann operator $\bar\partial$
$$D_u\bar\partial:W^{1,p}(\Sigma;u^*(TX);L)\to W^{0,p}(\Sigma;u^*(TX)\otimes\Lambda^{0,1}).$$
This map is not always surjective (i.e. $u$ may not be regular), and this is why we need to introduce the notion of Kuranishi structures.  Nevertheless the cokernel of $D_u\bar\partial$ is finite-dimensional, and so we may choose a finite-dimensional subspace $E_\sigma$ of $W^{0,p}(\Sigma;u^*(TX)\otimes\Lambda^{0,1})$ such that
$$ W^{0,p}(\Sigma;u^*(TX)\otimes\Lambda^{0,1}) = E_\sigma \oplus D_u\bar\partial(W^{1,p}(\Sigma;u^*(TX);L)). $$
Define $\Gamma_\sigma$ to be the automorphism group of $(u,(p_i)_{i=0}^{k-1})$.

To construct $V_\sigma$, first let $V'_{\textrm{map},\sigma}$ be the space of solutions of the equation
$$D_u\bar\partial\, v=0\textrm{ mod }E_\sigma.$$
Now, the Lie algebra $\textrm{Lie(Aut}(\Sigma,(p_i)_{i=0}^{k-1}))$ of the automorphism group of $(\Sigma,(p_i)_{i=0}^{k-1})$ can naturally be embedded in $V'_{\textrm{map},\sigma}$. Take its complementary subspace and let $V_{\textrm{map},\sigma}$ be a neighborhood of its origin. On the other hand, let $V_{\textrm{domain},\sigma}$ be a neighborhood of the origin in the space of first order deformations of the domain curve $(\Sigma,(p_i)_{i=0}^{k-1})$. Now, $V_\sigma$ is given by $V_{\textrm{map},\sigma}\times V_{\textrm{domain},\sigma}$.

Next, one needs to prove that there exist a $\Gamma_\sigma$-equivariant smooth map $s_\sigma:V_\sigma\to E_\sigma$ and a family of smooth maps $u_{v,\zeta}:(\Sigma_\zeta,\partial\Sigma_\zeta)\to(X,L)$ for $(v,\zeta)\in V_\sigma$ such that $\bar\partial u_{v,\zeta}=s_\sigma(v,\zeta)$, and there is a map $\psi_\sigma$ mapping $s_\sigma^{-1}(0)/\Gamma_\sigma$ onto a neighborhood of $\sigma\in\mathcal{M}_k(L,\beta)$. The proofs of these are very technical and thus omitted.

This finishes the review of the construction of the Kuranishi structure on $\mathcal{M}_k(L,\beta)$.\\

\noindent {\large 2. \textit{Orientation.}}

According to Chapter 9 of \cite{FOOO_II}, $\mathcal{M}_k(L,\beta)$ is canonically oriented by fixing a relative spin structure on $L$. Thus the issue of orientation can be avoided by assuming that the Lagrangian $L$ is relatively spin, which we shall always do from now on. Indeed, in this paper, $L$ is always a torus, and so this assumption is satisfied.\\

\noindent {\large 3. \textit{Transversality.}}

An essential difficulty in Gromov-Witten theory is that in general, the moduli space $\mathcal{M}_k(L, \beta)$ is not of the expected dimension, which indicates the issue of non-transversality. To construct the virtual fundamental chains, a generic perturbation is needed to resolve this issue. This is done by Fukaya-Oh-Ohta-Ono \cite{FOOO_I, FOOO_II} using the so-called \textit{Kuranishi multi-sections}. We will not give the precise definition of multi-sections here. See Definitions A1.19, A1.21 in \cite{FOOO_II} for details. Roughly speaking, a multi-section $\mathfrak{s}$ is a system of multi-valued perturbations $\{s_\sigma':V_\sigma\to E_\sigma\}$ of the Kuranishi maps $\{s_\sigma:V_\sigma\to E_\sigma\}$ satisfying certain compatibility conditions. For a Kuranishi space with certain extra structures (this is the case for $\mathcal{M}_k(L,\beta)$), there exist multi-sections $\mathfrak{s}$ which are transversal to 0. Furthermore, suppose that $\mathcal{M}$ is oriented. Let $ev:\mathcal{M}\to Y$ be a strongly smooth map to a smooth manifold $Y$, i.e. a family of $\Gamma_\sigma$-invariant smooth maps $\{ev_\sigma:V_\sigma\to Y\}$ such that $ev_\sigma\circ\varphi_{\sigma\tau}=ev_\tau$ on $V_{\sigma\tau}$. Then, using these transversal multisections, one can define the virtual fundamental chain $ev_*([\mathcal{M}]^{\textrm{vir}})$ as a $\rat$-singular chain in $Y$ (Definition A1.28 in \cite{FOOO_II}).\\

\noindent {\large 4. \textit{Boundary strata of the moduli space.}}

Another difficulty in the theory is that in general $\mathcal{M}_k(L,\beta)$ has codimension-one boundary strata, which consist of stable disks whose domain $\Sigma$ has more than one disk components. Then intersection theory on $\mathcal{M}_k(L,\beta)$ is still not well-defined (which then depends on the choice of perturbation). Fortunately, for our purposes, it suffices to consider the case when $k=1$ and $\mu(\beta)=2$. In this case, the moduli space of stable disks has empty codimension-one boundary. Let us first introduce the concept of `minimal Maslov index':
\begin{definition}
The minimal Maslov index of a Lagrangian submanifold $L$ is defined as
$$\min\{\mu(\beta)\in\integer:\beta\neq0\textrm{ and }\mathcal{M}_0(L,\beta)\textrm{ is non-empty}\}.$$
\end{definition}
Then one has the following proposition.
\begin{prop} \label{no_boundary}
Let $L\subset X$ be a compact Lagrangian submanifold which has minimal Maslov index at least two, that is, $L$ does not bound any non-constant stable disks of Maslov index less than two. Also let $\beta\in\pi_2(X,L)$ be a class with $\mu(\beta)=2$. Then $\mathcal{M}_k(L,\beta)$ has no codimension-one boundary stratum.
\end{prop}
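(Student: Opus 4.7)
The plan is to identify the codimension-one boundary strata of the compactified moduli space $\mathcal{M}_k(L,\beta)$ and then rule them out using additivity of the Maslov index under degeneration, combined with the minimal Maslov index hypothesis on $L$.

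First, I would recall from the Gromov-type compactness for bordered stable maps (see Chapter 7 of \cite{FOOO_I}) that a codimension-one degeneration of a stable disk with connected domain arises from the formation of a single node on the boundary of the domain. The resulting nodal stable disk consists of two disk components $\Sigma_1$ and $\Sigma_2$ joined at a boundary node, with the restrictions of the map representing classes $\beta_1,\beta_2\in\pi_2(X,L)$ satisfying $\beta_1+\beta_2=\beta$, and the $k$ boundary marked points distributed between $\Sigma_1$ and $\Sigma_2$ in a way respecting the cyclic order. Interior nodes and sphere bubbles contribute strata of codimension at least two and so can be ignored here.

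Next I would invoke additivity of the Maslov index, which gives $\mu(\beta_1)+\mu(\beta_2)=\mu(\beta)=2$. For such a stratum to be non-empty, each factor $\mathcal{M}_{k_j+1}(L,\beta_j)$ (the extra marked point being the node) must be non-empty, so whenever $\beta_j\neq 0$ the class $\beta_j$ is represented by a non-constant stable pseudoholomorphic disk bounded by $L$. The minimal Maslov index hypothesis then forces $\mu(\beta_j)\geq 2$. If both $\beta_1,\beta_2$ are non-zero one gets $\mu(\beta_1)+\mu(\beta_2)\geq 4$, contradicting $\mu(\beta)=2$, so this case is impossible.

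The only remaining possibility is that one of the factors, say $\beta_1$, is zero, so that the associated component is a ghost disk. Such a component is stable only when it carries at least three special points, hence at least two of the original boundary marked points together with the node; contracting it, which is harmless since it carries no area and no Maslov index, yields a stable disk representing $\beta$ lying in the smooth locus of $\mathcal{M}_k(L,\beta)$, so that no genuine codimension-one face is produced by this degeneration. This bookkeeping of ghost bubbles is the technically delicate point I expect, but once it is settled the Maslov index argument above finishes the proof.
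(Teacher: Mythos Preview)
Your core argument is exactly the paper's: a codimension-one boundary configuration splits into two disk components with classes $\beta_1+\beta_2=\beta$, additivity gives $\mu(\beta_1)+\mu(\beta_2)=2$, and if both components are non-constant the minimal Maslov index hypothesis forces each $\mu(\beta_j)\geq 2$, a contradiction. The paper's proof stops here.

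You go further and worry about the case $\beta_1=0$ (a ghost disk), which the paper does not explicitly address. This is a legitimate concern, but your resolution is incorrect: contracting a ghost bubble carrying $k_1\geq 2$ marked points does \emph{not} land in the smooth locus of $\mathcal{M}_k(L,\beta)$, because after contraction those marked points all sit at the former node location and are no longer distinct. In fact, for $k\geq 2$ such ghost-disk strata \emph{are} genuine codimension-one boundary faces of the compactified moduli space (they are precisely the loci where adjacent boundary marked points collide), so the proposition as literally stated is too strong for general $k$. The paper only invokes it for $k=1$ (see Definition~\ref{open_GW} and the surrounding discussion), where a ghost component would need at least two boundary marked points in addition to the node but only one marked point is available; hence no stable ghost can form, both components are automatically non-constant, and the Maslov index argument goes through cleanly. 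Your proof is complete once you note this restriction to $k=1$ rather than attempting to argue away the ghost bubbles.
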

\begin{proof}
Let $u\in\beta$ be a stable disk belonging to a codimension-one boundary stratum of $\mathcal{M}_k(L,\beta)$. Then, by the results of \cite{FOOO_I, FOOO_II}, $u$ is a union of two stable disks $u_1$ and $u_2$. Since $L$ does not bound any non-constant stable disks of Maslov index less than two, $\mu([u_1]),\mu([u_2])\geq2$. But then $2=\mu([u])=\mu([u_1])+\mu([u_2])\geq4$ which is impossible.
\end{proof}

When $\mathcal{M}_k(L,\beta)$ is compact oriented without codimension-one boundary strata, the virtual fundamental chain is a \textit{cycle}. Hence, we have the virtual fundamental cycle $ev_*[\mathcal{M}_k(L,\beta)]\in H_d(L^k,\rat)$, where $d=\dim_{\textrm{virt}}\mathcal{M}_k(L,\beta)$. While one cannot do intersection theory on the moduli due to non-transversality, by introducing the virtual fundamental cycles, one may do intersection theory on $L^k$ instead. We can now define one-pointed genus-zero open Gromov-Witten invariants as follows.
\begin{definition} \label{open_GW}
Let $L\subset X$ be a compact relatively spin Lagrangian submanifold which has minimal Maslov index at least two.  For a class $\beta\in\pi_2(X,L)$ with $\mu(\beta)=2$, we define
$$n_\beta:=\mathrm{P.D.}(ev_*[\mathcal{M}_1(L,\beta)])\cup\mathrm{P.D.}([\mathrm{pt}])\in\rat,$$
where $[\mathrm{pt}]\in H_0(L,\rat)$ is the point class in $L$, $\mathrm{P.D.}$ denotes the Poincar\'e dual, and $\cup$ is the cup product on $H^{*}(L,\rat)$.
\end{definition}
The number $n_\beta$ is invariant under deformation of complex structure and under Lagrangian isotopy in which all Lagrangian submanifolds in the isotopy have minimal Maslov index at least two (see Remark 3.7 of \cite{auroux07}). Hence, $n_\beta$ is indeed a one-pointed genus-zero open Gromov-Witten invariant. Also, notice that the virtual dimension of $\mathcal{M}_1(L,\beta)$ equals $n+\mu(\beta)-2\geq n$ and it is equal to $n=\dim L$ only when $\mu(\beta)=2$. So we set $n_\beta=0$ if $\mu(\beta)\neq2$.

As in closed Gromov-Witten theory, a good way to pack the data of open Gromov-Witten invariants is to form a generating function. This idea has been used a lot in the physics literature.
\begin{definition} \label{gen_fcn}
Let $L\subset X$ be a compact relatively spin Lagrangian submanifold with minimal Maslov index at least two. For each $\lambda\in\pi_1(L)$, we have the generating function
\begin{equation} \label{F(L)}
\mathcal{F}(L,\lambda):=\sum_{\beta\in\pi_2(X,L)_\lambda} n_\beta\exp\left(-\int_\beta\omega\right)
\end{equation}
where
\begin{equation} \label{pi_lambda}
\pi_2(X,L)_\lambda:=\{\beta\in\pi_2(X,L):\partial\beta=\lambda\}.
\end{equation}
\end{definition}

Intuitively $\mathcal{F}(L,\lambda)$ is a weighted count of stable disks bounded by the loop $\lambda$ which pass through a generic point in $L$. In general, the above expression for $\mathcal{F}(L,\lambda)$ can be an infinite series, and one has to either take care of convergence issues or bypass the issues by considering the Novikov ring $\Lambda_0(\rat)$, as done by Fukaya-Oh-Ohta-Ono in their works.

\begin{definition} \label{Novikov}
The Novikov ring $\Lambda_0 (\rat)$ is the set of all formal series
$$\sum_{i=0}^\infty a_i T^{\lambda_i}$$
where $T$ is a formal variable, $a_i \in \rat$ and $\lambda_i \in \real_{\geq 0}$ such that $\lim_{i \to \infty} \lambda_i = \infty$.
\end{definition}

Then $\mathcal{F}(L,\lambda) \to \Lambda_0 (\rat)$ is defined by
$$\mathcal{F}(L,\lambda) = \sum_{\beta \in \pi_2(X,L)_\lambda} n_\beta T^{\int_\beta \omega}.$$
The evaluation $T = \conste^{-1}$ recovers Equation \eqref{F(L)}, if the corresponding series converges. In the rest of this paper, Equation \eqref{F(L)} will be used, while we keep in mind that we can bypass the convergence issues by invoking the Novikov ring $\Lambda_0 (\rat)$.

Now let's come back to the setting developed in Section \ref{Lag fib} and restrict to the situation that $L = F_r$ is a torus fiber of $\mu$ at $r \in B_0$.  To make sense of open Gromov-Witten invariants, we restrict our attention to those fibers with minimal Maslov index at least two:

\begin{definition} \label{wall}
Let $\mu: X \to B$ be a proper Lagrangian fibration under the setting of Section \ref{Lag fib}.
The subset $H \subset B_0$ which consists of all $r \in B_0$ such that $F_r$ has minimal Maslov index less than two is called the wall.
\end{definition}

We will see that when $\mu$ is the Gross fibration of a toric Calabi-Yau manifold $X$, the wall $H$ is indeed a hypersurface in $B_0$ (Proposition \ref{Maslov-zero disk}).  This explains why such a subset is called a `wall'.  Then for $r \in B_0 - H$ and $\beta \in \pi_2 (X, F_r)$, the open Gromov-Witten invariant $n_\beta$ is well-defined.

Under the setting of Lagrangian fibration, the generating functions given in Definition \ref{gen_fcn} pack together to give a function on the fiberwise homotopy loop space:

\begin{definition} \label{Lambda*}
Given a proper Lagrangian fibration $\mu: X \to B$ under the setting of Section \ref{Lag fib},
\begin{enumerate}
\item The fiberwise homotopy loop space $\Lambda^*$ is defined as the lattice bundle over $B_0$ whose fiber at $r$ is $\Lambda^*_r = \pi_1(F_r) \cong \integer^n$.

\item The generating function for $\mu$ is
$\mathcal{F}_X: \Lambda^*|_{B_0 - H} \to \real$ defined by
\begin{equation} \label{F_X}
\mathcal{F}_X(\lambda) := \mathcal{F}(F_r, \lambda) = \sum_{\beta \in \pi_2(X,F_r)_\lambda} n_\beta \exp\left(-\int_\beta \omega\right)
\end{equation}
where $r \in B_0 - H$ is the image of $\lambda$ under the bundle map $\Lambda^* \to B_0$.
\item Let $D \subset X$ be a codimension-two submanifold which has empty intersection with every fiber $F_r$ for $r \in B_0 - H$.  The corresponding generating function
$\mathcal{I}_D: \Lambda^*|_{B_0 - H} \to \real$ is defined by
\begin{equation} \label{I_D}
\mathcal{I}_D(\lambda) := \sum_{\beta \in \pi_2(X,F_r)_\lambda} (\beta \cdot D) n_\beta \exp\left(-\int_\beta \omega\right)
\end{equation}
where $r \in B_0 - H$ is the image of $\lambda$ under the bundle map $\Lambda^* \to B_0$; $\beta \cdot D$ is the intersection number between $\beta$ and $D$, which is well-defined because $D \cap F_r = \emptyset$.
\end{enumerate}
\end{definition}

Intuitively speaking, $\mathcal{I}_D$ is a weighted count of stable disks bounded by $\lambda$ emanating from $D$.  In the next section, we'll take Fourier transform of these generating functions to obtain the complex coordinates of the mirror.

\subsection{T-duality with corrections} \label{mir_construct}

Now we are ready for introducing a construction procedure which employs the SYZ program.  A family version of Fourier transform is needed, and it will be discussed in Section \ref{FT for family}.

>From now on, we will make the additional assumption that \emph{the base $B$ of the proper Lagrangian fibration $\mu$ is a polyhedral set in $\real^n$ with at least $n$ distinct codimension-one faces}, which are denoted as $\Psi_j$ for $j=0,\ldots,m-1$.  Moreover, \emph{the preimage
$$D_j := \mu^{-1}(\Psi_j)$$
of each $\Psi_j$ is assumed to be a codimension-two submanifold in $X$}.  An important example is given by a toric moment map $\mu$ on a toric manifold whose fan is strictly convex.  The Lagrangian fibrations constructed in Section \ref{add boundary} also satisfy these assumptions.

Our construction procedure is the following:
\begin{enumerate}
\item Take the dual torus bundle (see Definition \ref{dual torus bundle})
$$\check{\mu}: \check{X}_0 \to B_0$$
of $\mu:X_0 \to B_0$ .  $\check{X}_0$ has a canonical complex structure, and it is called the semi-flat mirror of $X$.

The semi-flat complex structure only captures the symplectic geometry of $X_0$, and it has to be corrected to capture additional information (which are the open Gromov-Witten invariants) carried by the symplectic geometry of $X$.

\item We have the generating functions $\mathcal{I}_{D_i}: \Lambda^*|_{B_0 - H} \to \real$ (Equation \eqref{I_D}) defined by
\begin{equation} \label{I_i}
\mathcal{I}_{D_i}(\lambda) := \sum_{\beta \in \pi_2(X,F_r)_\lambda} (\beta \cdot D_i) n_\beta \exp\left(-\int_\beta \omega\right).
\end{equation}
We'll abbreviate $\mathcal{I}_{D_i}$ as $\mathcal{I}_{i}$ for $i = 0, \ldots, m-1$.  Applying a family version of Fourier transform on each $\mathcal{I}_{i}$ (see Section \ref{FT for family}), one obtains $m$ holomorphic functions $\tilde{z}_i$ defined on $\check{\mu}^{-1}(B_0 - H) \subset \check{X}_0$.

These $\tilde{z}_i$ serve as the `corrected' holomorphic functions.  In general $\check{\mu}^{-1}(B_0 - H) \subset \check{X}_0$ consists of several connected components, and $\tilde{z}_i$ changes dramatically from one component to another component, and this is called the wall-crossing phenomenon.  This phenomenon will be studied in Section \ref{mirror} in the case of toric Calabi-Yau manifolds.

\item Let $R$ be the subring of holomorphic functions on $(\check{\mu})^{-1}(B_0 - H) \subset \check{X}_0$ generated by constant functions and $\{\tilde{z}^{\pm 1}_i\}_{i=0}^{m-1}$.  One defines $Y = \mathrm{Spec} R$.
\end{enumerate}

In Section \ref{mirror}, the above procedure will be carried out in details for toric Calabi-Yau manifolds.

\section{Fourier transform} \label{FT}
This is a short section on Fourier transform from the torus bundle aspect.  We start with the familiar Fourier transform for functions on tori.  Then we define fiberwise Fourier transform for functions on torus bundle.  Indeed Fourier transform discussed here fits into a more general framework for differential forms which gives the correspondence between Floer complex and the mirror Ext complex.  This will be discussed in a separate paper.

\subsection{Fourier transform on tori} \label{Mukai}
Let $\underline{\Lambda}$ be a lattice, and $V := \underline{\Lambda} \otimes \real$ be the corresponding real vector space.  Then $\mathbf{T} := V / \underline{\Lambda}$ is an $n$-dimensional torus.  We use $V^*$, $\underline{\Lambda}^*$ and $\mathbf{T}^*$ to denote the dual of $V$, $\underline{\Lambda}$ and $\mathbf{T}$ respectively.  There exists a unique $\mathbf{T}$-invariant volume form $\der \mathrm{Vol}$ on $\mathbf{T}$ such that $\int_{\mathbf{T}} \der \mathrm{Vol} = 1$.  One has the following well-known Fourier transform for complex-valued functions:
\begin{eqnarray*}
l^2(\underline{\Lambda}^*) & \cong & L^2(\mathbf{T})\\
f & \leftrightarrow & \check{f}
\end{eqnarray*}
where for each $\check{\theta} \in \mathbf{T}$,
\begin{equation} \label{FT1}
\check{f}(\check{\theta}) = \sum_{\lambda \in \underline{\Lambda}^*} f(\lambda) \conste^{2\pi \consti \pairing{\lambda}{\check{\theta}}}
\end{equation}
and for each $\lambda \in \underline{\Lambda}^*$,
\begin{equation} \label{FT2}
f(\lambda) = \int_{\mathbf{T}} \check{f}(\check{\theta}) \conste^{-2\pi \consti \pairing{\lambda}{\check{\theta}}} \der \mathrm{Vol}(\check{\theta}).
\end{equation}

The above familiar expression comes up naturally as follows.  $\underline{\Lambda}^* = \Hom(\mathbf{T},U(1))$ parametrizes all characters of the Abelian group $\mathbf{T}$, and conversely $\mathbf{T} = \Hom(\underline{\Lambda}^*, U(1))$ parametrizes all characters of $\underline{\Lambda}^*$.  Consider the following diagram:

$$
\begin{diagram} \label{FM diagram 1}
	\node[2]{\underline{\Lambda}^* \times \mathbf{T}} \arrow{sw,t}{\pi_1} \arrow{se,t}{\pi_2} \\
	\node{\underline{\Lambda}^*} \node[2]{\mathbf{T}}
\end{diagram}
$$

$\underline{\Lambda}^* \times \mathbf{T}$ admits the universal character function $\chi:\underline{\Lambda}^* \times \mathbf{T} \to U(1)$ defined by $$\chi(\lambda, \check{\theta}) := \conste ^{2\pi\consti\pairing{\lambda}{\check{\theta}}}$$
which has the property that $\chi|_{\{\lambda\} \times \mathbf{T}}$ is exactly the character function on $\mathbf{T}$ corresponding to $\lambda$, and $\chi|_{\underline{\Lambda}^* \times \{\check{\theta}\}}$ is the character function on $\underline{\Lambda}^*$ corresponding to $\check{\theta}$.  For a function $f: \underline{\Lambda}^* \to \cpx$, we have the following natural transformation
$$\check{f} := (\pi_2)_*\big((\pi_1^* f) \cdot \chi\big) $$
where $(\pi_2)_*$ denotes integration along fibers using the counting measure of $\underline{\Lambda}^*$.  This gives equation (\ref{FT1}).  Conversely, given a function $\check{f}:\mathbf{T} \to \cpx$, we have the inverse transform
$$f := (\pi_1)_*\big((\pi_2^* \check{f}) \cdot \chi^{-1} \big)$$
where $(\pi_1)_*$ denotes integration along fibers using the volume form $\der\mathrm{Vol}$ of $\mathbf{T}$.  This gives equation (\ref{FT2}).

We will mainly focus on the subspace $C^{\infty}(\mathbf{T})$ of smooth functions on $\mathbf{T}$.  Via Fourier transform, one has
$$C^{\textrm{r.d.}}(\underline{\Lambda}^*) \cong C^{\infty}(T)$$
where $C^{\textrm{r.d.}}(\underline{\Lambda}^*)$ consists of rapid-decay functions $f$ on $\underline{\Lambda}^*$.  $f$ decays rapidly means that for all $k \in \nat$,
$$||\lambda||^k f(\lambda) \to 0$$
as $\lambda \to \infty$.  Here we have chosen a linear metric on $V$ and
$$||\lambda|| := \sup_{|v| = 1} |\pairing{\lambda}{v}|.$$
The notion of rapid decay is independent of the choice of linear metric on $V$.

\subsection{Family version of Fourier transform} \label{FT for family}
Now let's consider Fourier transform for families of tori.  We turn back to the setting described in Section \ref{Lag fib}: $\mu: X_0 \to B_0$ is a Lagrangian torus bundle which is associated with the dual torus bundle $\check{\mu}: \check{X}_0 \to B_0$.  $\Lambda^*$ is the lattice bundle over $B_0$ defined in Definition \ref{Lambda*}.  Notice that $\check{\mu}$ always has the zero section (while $\mu$ may not have a Lagrangian section in general), which is essential in the definition of Fourier transform.

Analogous to Section \ref{Mukai}, we have the following commutative diagram

$$
\begin{diagram} \label{FM diagram 2}
	\node[2]{\Lambda^* \times_{B_0} \check{X}_0} \arrow{sw,t}{\pi_1} \arrow{se,t}{\pi_2} \\
	\node{\Lambda^*} \arrow{se} \node[2]{\check{X}_0} \arrow{sw} \\
	\node[2]{B_0}
\end{diagram}
$$

Each fiber $\check{F}_r$ parametrizes the characters of $\Lambda^*_r$, and vice versa.
$\Lambda^* \times_{B_0} \check{X}_0$ admits the universal character function $\chi:\Lambda^* \times_{B_0} \check{X}_0 \to U(1)$ defined as follows.  For each $r \in B_0$, $\lambda \in \Lambda^*_r$ and $\conn \in \check{F}_r$,
$$\chi(\lambda, \conn) := \mathrm{Hol}_{\conn}(\lambda)$$
which is the holonomy of the flat $U(1)$-connection $\conn$ over $F_r$ around the loop $\lambda$.  Thus we have the corresponding Fourier transform between functions on $\Lambda^*$ and $\check{X}_0$ similar to Section \ref{Mukai}:
$$C^{\textrm{r.d.}}(\Lambda^*) \cong C^{\infty}(\check{X}_0)$$
where $C^{\textrm{r.d.}}(\Lambda^*)$ consists of smooth functions $f$ on $\Lambda^*$ such that for each $r \in B_0$, $f|_{\Lambda^*_r}$ is a rapid-decay function.  Explicitly, $f \in C^{\textrm{r.d.}}(\Lambda^*)$ is transformed to
\begin{eqnarray*}
\check{f}: \check{X}_0 & \to & \cpx, \\
\check{f}(F_r, \conn) & = & \sum_{\lambda \in \Lambda^*_r} f(\lambda)  \mathrm{Hol}_\conn (\lambda).
\end{eqnarray*}

Again, even without the condition of rapid-decay, the above series is well-defined when it is considered to be valued in the Novikov ring $\Lambda_0 (\cpx)$ (Definition \ref{Novikov}).

In Section \ref{mir_construct}, this family version of Fourier transform is applied to the generating functions $\mathcal{I}_{i}: \Lambda^*|_{B_0 - H} \to \real$ (Equation \eqref{I_i}) to get the holomorphic functions
$\tilde{z}_i: \check{\mu}^{-1} (B_0 - H) \to \cpx$ given by
\begin{align*}
\tilde{z}_i &= \sum_{\lambda \in \pi_1(X',F_r)} \mathcal{I}_i(\lambda) \mathrm{Hol}_\conn (\lambda) \\
&= \sum_{\beta \in \pi_2(X',F_r)} (\beta \cdot D_i) n_\beta \exp\left(-\int_\beta \omega\right) \mathrm{Hol}_\conn (\partial \beta).
\end{align*}
In Section \ref{mirror} this is applied to toric Calabi-Yau manifolds to construct their mirrors.

\section{Mirror construction for toric Calabi-Yau manifolds} \label{torCY}
Throughout this section, we'll always take $X$ to be a toric Calabi-Yau manifold.  For such manifolds M. Gross \cite{gross_examples} and E. Goldstein \cite{goldstein} have independently written down a non-toric proper Lagrangian fibration $\mu:X \to B$ which falls in the setting of Section \ref{Lag fib}, and we'll give a brief review of them.  These Lagrangian fibrations have interior discriminant loci of codimension two, leading to the wall-crossing of genus-zero open Gromov-Witten invariants which will be discussed in Section \ref{counting disks}.  Section \ref{mirror} is the main subsection, in which we apply the procedure given in Section \ref{mir_construct} to construct the mirror $\check{X}$.

\subsection{Gross fibrations on toric Calabi-Yau manifolds} \label{review}

Let $N$ be a lattice of rank $n$ and $\Sigma$ be a simplicial fan supported in $N_\real := N \otimes \real$.  We'll always assume that $\Sigma$ is `strongly convex', which means that its support $|\Sigma|$ is convex and does not contain a whole line through $0 \in N_\real$.  The toric manifold associated to $\Sigma$ is denoted by $X = X_\Sigma$.  The primitive generators of rays of $\Sigma$ are denoted by $v_i$ for $i = 0, \ldots, m-1$, where $m \in \integer_{\geq n}$ is the number of these generators.  Each $v_i$ corresponds to an irreducible toric divisor which we'll denote by $\mathscr{D}_i$.  These notations are illustrated by the fan picture of $K_{\proj^1}$ shown in Figure \ref{KP1_fan}.

\begin{definition}
A toric manifold $X = X_\Sigma$ is Calabi-Yau if there exists a toric linear equivalence between its canonical divisor $K_X$ and the zero divisor.  In other words, there exists a dual lattice point $\underline{\nu} \in M$ such that
$$\pairing{\underline{\nu}}{v_i} = 1$$
for all $i = 0, \ldots, m-1$.
\end{definition}

>From now on we'll always assume $X$ is a toric Calabi-Yau manifold, whose holomorphic volume form can be explicitly written down (Proposition \ref{vol form}).  An important subclass of toric Calabi-Yau manifolds is given by total spaces of canonical line bundles of compact toric manifolds.

\begin{figure}[htp]
\caption{The fan picture of $K_{\proj^1}$.}
\label{KP1_fan}
\begin{center}
\includegraphics{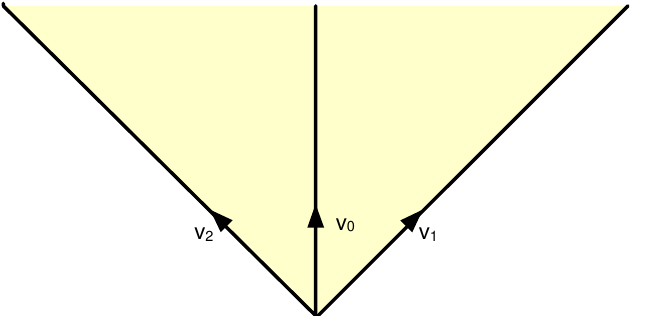}
\end{center}
\end{figure}

The following are some basic facts in toric geometry:
\begin{prop}[\cite{gross_examples}] \label{w}
The meromorphic function $w$ corresponding to $\underline{\nu} \in M$ is indeed holomorphic.  The corresponding divisor $(w)$ is $-K_X = \sum_{i=0}^{m-1} \mathscr{D}_i$.
\end{prop}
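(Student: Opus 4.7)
The plan is to reduce the statement to two standard facts from toric geometry. First, I would recall that each lattice point $\underline\nu \in M$ determines a character $\chi^{\underline\nu}: T_N \to \cpx^\times$ on the open dense torus $T_N = N \otimes \cpx^\times \subset X_\Sigma$, which extends uniquely to a meromorphic function $w$ on $X_\Sigma$. By the standard formula for the divisor of a character on a toric variety (see, e.g., Fulton's book on toric varieties), one has
\[
(w) \;=\; \sum_{i=0}^{m-1} \pairing{\underline\nu}{v_i}\, \mathscr{D}_i.
\]

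Next, I would apply the Calabi-Yau hypothesis $\pairing{\underline\nu}{v_i} = 1$ for all $i=0,\ldots,m-1$ to rewrite this as
\[
(w) \;=\; \sum_{i=0}^{m-1} \mathscr{D}_i.
\]
Since every coefficient is non-negative, the principal divisor $(w)$ is effective, so $w$ has no poles on the smooth (hence normal) variety $X_\Sigma$; combined with its meromorphicity this forces $w$ to be holomorphic.

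Finally, for the identification with $-K_X$, I would invoke the standard toric formula for the anticanonical class, namely $-K_X \sim \sum_{i=0}^{m-1} \mathscr{D}_i$ on any smooth toric variety (this is an immediate consequence of the description of the sheaf of top-degree K\"ahler differentials on a toric variety in terms of its rays). Comparing with the above computation of $(w)$ yields the claim $(w) = -K_X$.

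There is essentially no obstacle here: the proof is a direct assembly of textbook facts about toric varieties, and the only small subtlety is noting that an effective principal divisor on a smooth variety corresponds to a holomorphic function, which gives the first assertion.
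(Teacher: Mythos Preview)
Your proof is correct and is essentially the same argument as the paper's, just packaged globally rather than locally. The paper works chart by chart: on each maximal affine piece $U_C \cong \cpx^n$ with coordinates $\zeta_1,\ldots,\zeta_n$ dual to the generators $v_{i_1},\ldots,v_{i_n}$ of $C$, it observes that $\underline\nu = \sum_j \nu_j$ (since $\pairing{\underline\nu}{v_{i_j}}=1$) and hence $w|_{U_C} = \zeta_1\cdots\zeta_n$, visibly holomorphic with zero divisor the sum of the toric divisors meeting $U_C$. You instead invoke the standard global formula $(\chi^{\underline\nu}) = \sum_i \pairing{\underline\nu}{v_i}\mathscr{D}_i$ as a black box and read off the same conclusion. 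The paper's local computation is in effect a proof of that formula in this special case, so the two arguments have identical content; your version is slightly more streamlined, while the paper's has the mild advantage of exhibiting the explicit local expression $w = \zeta_1\cdots\zeta_n$, which is reused later (e.g.\ in Proposition~\ref{vol form}).
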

\begin{proof}
For each cone $C$ in $\Sigma$, let $v_{i_1}, \ldots, v_{i_n}$ be its primitive generators, which form a basis of $N$ because $C$ is simplicial by smoothness of $X_\Sigma$.  Let $\{\nu_j \in M\}_{j=1}^n$ be the dual basis, which corresponds to coordinate functions $\{\zeta_j\}_{j=1}^n$ on the affine piece $U_C$ corresponding to the cone $C$.  We have
$$ \underline{\nu} = \sum_{j=1}^n \nu_j $$
because $\pairing{\underline{\nu}}{v_{i_j}} = 1$ for all $j = 1, \ldots, n$.  Then
$$ w|_{U_C} = \prod_{j=1}^n \zeta_j $$
which is a holomorphic function whose zero divisor is exactly the sum of irreducible toric divisors of $U_C$.
\end{proof}

\begin{prop}[\cite{gross_examples}] \label{vol form}
Let $\{\nu_j\}_{j=0}^{n-1} \subset M$ be the dual basis of $\{v_0, \ldots, v_{n-1}\}$, and $\zeta_j$ be the meromorphic functions corresponding to $\nu_j$ for $j = 0, \ldots, n-1$.  Then
$$\der \zeta_0 \wedge \ldots \wedge \der \zeta_{n-1}$$
extends to a nowhere-zero holomorphic $n$-form $\Omega$ on $X$.
\end{prop}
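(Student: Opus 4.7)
The plan is to use the Calabi--Yau condition $\pairing{\underline\nu}{v_i}=1$ precisely to cancel the denominators that logarithmic differentiation would produce when we change affine charts.

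First I would observe that, on the affine chart $U_{C_0}$ corresponding to the smooth top-dimensional cone $C_0$ spanned by $v_0,\ldots,v_{n-1}$, the meromorphic functions $\zeta_0,\ldots,\zeta_{n-1}$ are precisely the standard coordinates (since $\{\nu_j\}$ is the dual basis), so $\Omega|_{U_{C_0}} := \der\zeta_0\wedge\cdots\wedge\der\zeta_{n-1}$ is manifestly a nowhere-zero holomorphic $n$-form on $U_{C_0}$. Since the toric chart $U_{C_0}$ together with the charts $U_C$ of all other maximal cones $C$ covers $X$, it suffices to show that for any maximal cone $C$ with primitive generators $v_{i_1},\ldots,v_{i_n}$, this form extends holomorphically and nowhere vanishes on $U_C$.

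Fix such a $C$; let $\{\mu_k\}_{k=1}^{n}\subset M$ be the basis of $M$ dual to $\{v_{i_k}\}_{k=1}^{n}$, with corresponding coordinate functions $\eta_1,\ldots,\eta_n$ on $U_C$. Writing $\nu_j=\sum_k a_{jk}\mu_k$ with $a_{jk}=\pairing{\nu_j}{v_{i_k}}\in\integer$, the meromorphic function $\zeta_j$ on $U_C$ is the Laurent monomial $\prod_k\eta_k^{a_{jk}}$. Taking logarithmic differentials and wedging, one gets
\begin{equation*}
\bigwedge_{j=0}^{n-1}\frac{\der\zeta_j}{\zeta_j}=\det(a_{jk})\,\bigwedge_{k=1}^{n}\frac{\der\eta_k}{\eta_k},
\end{equation*}
so that
\begin{equation*}
\bigwedge_{j=0}^{n-1}\der\zeta_j=\det(a_{jk})\,\Big(\prod_{j}\zeta_j\Big)\bigwedge_{k=1}^{n}\frac{\der\eta_k}{\eta_k}=\det(a_{jk})\,\Big(\prod_{k}\eta_k^{\,s_k-1}\Big)\bigwedge_{k=1}^{n}\der\eta_k,
\end{equation*}
where $s_k=\sum_j a_{jk}$.

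The key step is to show $s_k=1$ for every $k$, which is exactly where the Calabi--Yau hypothesis enters. Since $\pairing{\underline\nu}{v_j}=1$ for $j=0,\ldots,n-1$ and $\{v_0,\ldots,v_{n-1}\}$ is a $\integer$-basis of $N$ (smoothness of $C_0$), we have $\underline\nu=\sum_{j=0}^{n-1}\nu_j$ in $M$. Hence
\begin{equation*}
s_k=\sum_{j=0}^{n-1}\pairing{\nu_j}{v_{i_k}}=\pairing{\underline\nu}{v_{i_k}}=1,
\end{equation*}
again by the Calabi--Yau condition applied to the generator $v_{i_k}$. Thus the powers of $\eta_k$ disappear and
\begin{equation*}
\bigwedge_{j=0}^{n-1}\der\zeta_j=\det(a_{jk})\,\der\eta_1\wedge\cdots\wedge\der\eta_n
\end{equation*}
on $U_C\cap U_{C_0}$. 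Finally, the matrix $(a_{jk})$ is the change-of-basis matrix between two $\integer$-bases of $M$, so $\det(a_{jk})=\pm 1$. The right-hand side is therefore nowhere-zero holomorphic on the whole of $U_C$, which provides the desired extension. Gluing over all maximal cones produces a global nowhere-vanishing holomorphic $n$-form $\Omega$, completing the proof.

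The main obstacle is purely the bookkeeping in the change-of-chart calculation: one must package the Jacobian as a determinant of integer pairings $\pairing{\nu_j}{v_{i_k}}$ and recognize that the ``nuisance'' exponents $s_k-1$ vanish precisely because $\sum_j\nu_j=\underline\nu$ and $\pairing{\underline\nu}{v_{i_k}}=1$. Everything else is formal.
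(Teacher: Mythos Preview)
Your proof is correct and follows essentially the same approach as the paper's. Both arguments pass to logarithmic differentials, use $\det\in\{\pm1\}$ for the change of $\integer$-bases, and invoke the Calabi--Yau identity $\sum_j\nu_j=\underline\nu$ together with $\pairing{\underline\nu}{v_{i_k}}=1$ to kill the extra monomial factor; the paper packages your product $\prod_j\zeta_j=\prod_k\eta_k^{s_k}$ as the holomorphic function $w$ from the preceding proposition, whereas you compute the exponents $s_k$ directly, but the content is identical.
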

\begin{proof}
$\der \zeta_0 \wedge \ldots \wedge \der \zeta_{n-1}$ defines a nowhere-zero holomorphic $n$-form on the affine piece corresponding to the cone $\real_{\geq 0}\langle v_0, \ldots, v_{n-1} \rangle$.  Let $C$ be an $n$-dimensional cone in $\Sigma$, $\{\nu'_j\}_{j=0}^{n-1} \subset M$ be a basis of $M$ which generates the dual cone of $C$, and let $\zeta'_0, \ldots, \zeta'_{n-1}$ be the corresponding coordinate functions on the affine piece $U_{C}$ corresponding to $C$.  Then
\begin{eqnarray*}
\der \zeta_0 \wedge \ldots \wedge \der \zeta_{n-1} &=& \zeta_0 \ldots \zeta_{n-1} \der \log\zeta_0 \wedge \ldots \der \log\zeta_{n-1}\\
&=& w \,\der \log\zeta_0 \wedge \ldots \der \log\zeta_{n-1} \\
&=& (\det A) w\, \der \log\zeta'_0 \wedge \ldots \der \log\zeta'_{n-1}\\
&=& (\det A) \der \zeta'_0 \wedge \ldots \wedge \der \zeta'_{n-1}
\end{eqnarray*}
where $A$ is the matrix such that $\nu_i = \sum_j A_{ij}\nu'_j$.
Since the fan $\Sigma$ is simplicial, $A \in \GL(n, \integer)$ and hence $\det A = \pm 1$.  Thus $\der \zeta_0 \wedge \ldots \wedge \der \zeta_{n-1}$ extends to a nowhere-zero holomorphic $n$-form on $U_{C}$.  This proves the proposition because $X$ is covered by affine pieces.
\end{proof}

\begin{remark}
In Proposition \ref{vol form} we have chosen the basis $\{v_i\}_{i=0}^{n-1} \subset N$.  If we take another basis $\{u_0, \ldots, u_{n-1}\} \subset N$ which spans some cone of $\Sigma$, then the same construction gives
$$\der \zeta'_0 \wedge \ldots \wedge \der \zeta'_{n-1} = \pm \der \zeta_0 \wedge \ldots \wedge \der \zeta_{n-1} $$
where $\zeta'_j$'s are coordinate functions corresponding to the dual basis of $\{u_i\}$.  The reason is that both $\{v_i\}$ and $\{u_i\}$ are basis of $N$, and thus the basis change belongs to $\GL (n, \integer)$, and its determinant is $\pm 1$.  Thus the holomorphic volume form, up to a sign, is independent of the choice of the cone and its basis.
\end{remark}

Let $\omega$ be a toric K\"ahler form on $\proj_{\Sigma}$ and $\mu_0: \proj_{\Sigma} \to P$ be the corresponding moment map, where $P$ is a polyhedral set defined by the system of inequalities
\begin{equation} \label{c_i}
\pairing{v_j}{\cdot} \geq c_j
\end{equation}
for $j = 1, \ldots, m$ and constants $c_j \in \real$ as shown in Figure \ref{KP1_poly}.
\begin{figure}[htp]
\caption{The toric moment map image of $K_{\proj^1}$.}
\begin{center}
\includegraphics{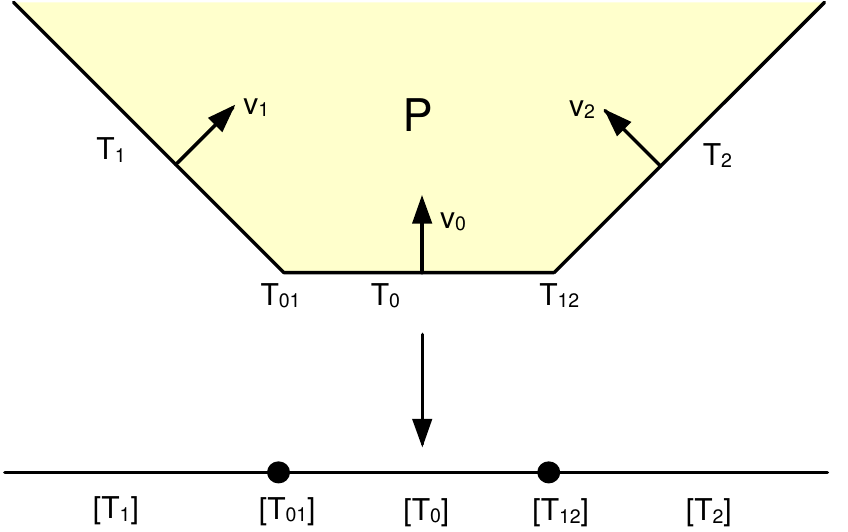}
\end{center}
\label{KP1_poly}
\end{figure}

The moment map corresponding to the action of the subtorus
$$ \torus{\perp \underline{\nu}} := N^{\perp \underline{\nu}}_\real / N^{\perp \underline{\nu}} \subset N_\real/N$$
on $X_\Sigma$ is
$$[\mu_0]: X_\Sigma \to M_\real / \real\langle \underline{\nu} \rangle$$
which is the composition of $\mu_0$ with the natural quotient map $M_\real \to M_\real / \real\langle \underline{\nu} \rangle$.

\begin{definition} \label{def_Gross_fib}
Fixing $K_2 > 0$, a Gross fibration is
$$
\begin{array}{rcccll}
\mu: X & \to & M_\real / \real \langle \underline{\nu} \rangle & \times & \real_{\geq -K_2^2} \\
x & \mapsto & \big([\mu_0(x)]& , & |w(x)- K_2|^2 - K_2^2 \big).&
\end{array}
$$
We'll always denote by $B$ the base $(M_\real / \real \langle \underline{\nu} \rangle) \times \real_{\geq -K_2^2}$.
\end{definition}

One has to justify the term `fibration' in the above definition, that is, $\mu: X \to B$ is surjective:
\begin{prop} \label{partialP}
Under the natural quotient $M_\real \to M_\real / \real\langle \underline{\nu} \rangle$, $\partial P$ is homeomorphic to $M_\real / \real\langle \underline{\nu} \rangle$.  Thus $\mu$ maps $X$ onto $B$.
\end{prop}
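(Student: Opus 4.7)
The plan is to construct an explicit continuous inverse to the restriction of the quotient $\pi: M_\real \to M_\real/\real\langle\underline{\nu}\rangle$ to $\partial P$, using the Calabi-Yau condition $\pairing{\underline{\nu}}{v_j}=1$ as the key geometric input. For any $\tilde{x}\in M_\real$, the line $\{\tilde{x}+t\underline{\nu}:t\in\real\}$ intersects $P$ precisely when $\pairing{v_j}{\tilde{x}}+t\geq c_j$ for all $j$, i.e.\ on the half-line $t\geq t_0(\tilde{x}):=\max_j(c_j-\pairing{v_j}{\tilde{x}})$. Define
$$q([\tilde{x}]):=\tilde{x}+t_0(\tilde{x})\,\underline{\nu}.$$
Replacing $\tilde{x}$ by $\tilde{x}+s\underline{\nu}$ changes $t_0$ by $-s$, so $q$ is well-defined on $M_\real/\real\langle\underline{\nu}\rangle$, and at $t=t_0(\tilde{x})$ at least one constraint is active, placing $q([\tilde{x}])\in\partial P$.

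Next I would verify the four standard items: (i) $q$ is continuous, since $t_0$ is a max of finitely many continuous linear functionals; (ii) $\pi\circ q=\mathrm{id}$ by construction; (iii) $\pi|_{\partial P}$ is injective: if $p_1,p_2\in\partial P$ satisfy $p_2=p_1+s\underline{\nu}$ with, say, $s\geq 0$, then for any active constraint $\pairing{v_{j_2}}{p_2}=c_{j_2}$ at $p_2$ we get $\pairing{v_{j_2}}{p_1}=c_{j_2}-s\geq c_{j_2}$, forcing $s=0$; (iv) surjectivity of $\pi|_{\partial P}$ follows from (ii). Together these identify $\pi|_{\partial P}$ as a continuous bijection with continuous inverse $q$, hence a homeomorphism. (Strong convexity of $|\Sigma|$ enters implicitly by ensuring $P$ is nonempty and the $v_j$ span $N_\real$ so the above max is finite.)

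For the final sentence, I would combine the homeomorphism with the behaviour of $w$. Given $(r,c)\in B=(M_\real/\real\langle\underline{\nu}\rangle)\times\real_{\geq -K_2^2}$, set $p:=q(r)\in\partial P$; then the fiber of $\mu_0$ over each $p+s\underline{\nu}\in P$ (with $s\geq 0$) is nonempty, and on these fibers $|w|^2$ depends only on $\pairing{\underline{\nu}}{\mu_0(x)}$ and sweeps out $[0,\infty)$ as $s$ varies (vanishing exactly on the toric divisors, i.e.\ when $s=0$), while the $\torus{\perp\underline{\nu}}$-action freely rotates the phase of $w$. Hence $|w(x)-K_2|^2$ attains every nonnegative value on $[\mu_0]^{-1}(r)$, and $\mu$ surjects onto $B$.

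The only mildly delicate point is the injectivity of $\pi|_{\partial P}$, where the Calabi-Yau condition is essential: without $\pairing{\underline{\nu}}{v_j}>0$ for every $j$, translating along $\underline{\nu}$ could leave $P$ in both directions, and the $\underline{\nu}$-ray argument would collapse. Everything else is straightforward from standard toric geometry and convex analysis.
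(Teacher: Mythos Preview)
Your argument is correct and follows essentially the same route as the paper: both locate the unique boundary point on the $\underline{\nu}$-line through $\tilde{x}$ via $t_0=\max_j(c_j-\pairing{v_j}{\tilde{x}})$ (the paper phrases this as the infimum of $t$ with $\tilde{x}+t\underline{\nu}\in P$, which is the same thing), and the paper then concludes the homeomorphism by observing the quotient map is open rather than by exhibiting the continuous inverse as you do.

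One small correction to your surjectivity paragraph (a part the paper leaves implicit): the subtorus $\torus{\perp\underline{\nu}}$ \emph{fixes} $w$, since $w$ is the character attached to $\underline{\nu}$ and $\pairing{\underline{\nu}}{v}=0$ for $v\in N^{\perp\underline{\nu}}$; it is the complementary circle in $\mathbf{T}_N$ that rotates $\arg w$. With that swap the argument that $|w(x)-K_2|^2$ attains every value in $[0,\infty)$ over $[\mu_0]^{-1}(r)$ goes through as you intend.
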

\begin{proof}
For any $\xi \in M_\real$, since $\pairing{v_j}{\underline{\nu}} = 1$ for all $j = 1, \ldots, m$, we may take $t \in \real$ sufficiently large such that $\xi + t \underline{\nu}$ satisfies the above system of inequalities
$$\pairing{v_j}{\xi + t \underline{\nu}} \geq c_j$$
and hence $\xi + t \underline{\nu} \in P$.  Let $t_0$ be the infimum among all such $t$.  Then $\xi + t_0 \underline{\nu}$ still satisfies all the above inequalities, and at least one of them becomes equality.  Hence $\xi + t_0 \underline{\nu} \in \partial P$, and such $t_0$ is unique.  Thus the quotient map gives a bijection between $\partial P$ and $M_\real / \real\langle \underline{\nu} \rangle$.  Moreover, the quotient map is continuous and maps open sets in $\partial P$ to open sets in $M_\real / \real\langle \underline{\nu} \rangle$, and hence it is indeed a homeomorphism.
\end{proof}

It is proved by Gross that the above fibration is special Lagrangian using techniques of symplectic reduction:
\begin{prop}[\cite{gross_examples}] \label{SLag fib}
With respect to the symplectic form $\omega$ and the holomorphic volume form $\Omega/(w-K_2)$ defined on $\mu^{-1}(B^{\mathrm{int}}) \subset X$,
$\mu$ is a special Lagrangian fibration, that is,  there exists $\theta_0 \in \real/2\pi\integer$ such that for every regular fiber $F$ of $\mu$, $\omega|_F = 0$ and $$\left.\mathrm{Re}\left( \frac{\conste^{2\pi\consti\theta_0}\,\Omega}{w-K_2}\right)\right|_{F} = 0.$$
\end{prop}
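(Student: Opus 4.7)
Both parts follow from the observation that the subtorus $\torus{\perp\underline\nu}\subset N_\real/N$ preserves the symplectic form $\omega$, the function $w$ (because $\pairing{\underline\nu}{\xi}=0$ for every $\xi\in N^{\perp\underline\nu}_\real$), and the holomorphic volume form $\Omega$ (by the Calabi-Yau condition and toric-invariance).

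For the Lagrangian property I would argue by symplectic reduction. On the open locus where $\torus{\perp\underline\nu}$ acts freely, $[\mu_0]$ is the moment map for this subtorus, so the reduction $[\mu_0]^{-1}(\bar\xi)\big/\torus{\perp\underline\nu}$ is a smooth symplectic surface. The function $w$ is $\torus{\perp\underline\nu}$-invariant, hence descends to a holomorphic function $\bar w$ on the reduction, and the fiber $F=\mu^{-1}(\bar\xi,s)$ is precisely the preimage of the circle $\{|\bar w-K_2|^2=s+K_2^2\}$ under the reduction quotient. Any $1$-dimensional submanifold of a symplectic surface is automatically isotropic, so pulling back shows that $\omega$ vanishes on the whole $((n-1)+1)$-dimensional submanifold $F$, establishing the Lagrangian condition.

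For the special Lagrangian property I would use the $\torus{\perp\underline\nu}$-invariance of $\Omega/(w-K_2)$ to decompose, on the open set where $dw\neq 0$,
\[
\frac{\Omega}{w-K_2} \;=\; d\log(w-K_2)\wedge\Omega',
\]
with $\Omega'$ a $\torus{\perp\underline\nu}$-invariant $(n-1)$-form. The $n-1$ tangent directions of $F$ coming from the subtorus orbit form a real compact subtorus of the complex $\torus{\perp\underline\nu}_\cpx$-orbit. Working in the logarithmic toric coordinates of an affine chart $U_C$, in which Proposition \ref{vol form} gives $\Omega = w\,d\log\zeta_0\wedge\cdots\wedge d\log\zeta_{n-1}$, a direct check shows that the restriction of $\Omega'$ to this real subtorus orbit equals $\mathbf{i}^{n-1}$ times a positive real Haar volume form. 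Along the remaining tangent direction of $F$, which is tangent to the circle $|w-K_2|=\text{const}$, $d\log(w-K_2)$ evaluates to $\mathbf{i}\,d\theta$. Combining,
\[
\left.\frac{\Omega}{w-K_2}\right|_F \;=\; \mathbf{i}^n \cdot \mathrm{vol}_F
\]
for a positive real volume form $\mathrm{vol}_F$, and choosing $\theta_0$ depending only on $n \bmod 4$ so that $\conste^{2\pi\consti\theta_0}\mathbf{i}^n$ is purely imaginary yields the required vanishing of the real part.

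The main obstacle is making the phase computation rigorous: one needs to identify the correct constant $\mathbf{i}^{n-1}$ contributed by the real subtorus orbit, which requires a careful unwinding of how $\torus{\perp\underline\nu}$ sits inside the full toric torus in the logarithmic coordinates and a check that the chart-dependent signs in Proposition \ref{vol form} do not spoil the universal phase. I would handle this within a single affine chart containing a generic regular fiber, and then extend to all regular fibers of $\mu$ by continuity of the pointwise identity together with the fact that fibers through loci where $\torus{\perp\underline\nu}$ fails to act freely still lie entirely in the locus where $\Omega/(w-K_2)$ is smooth (since $w=K_2$ is excluded).
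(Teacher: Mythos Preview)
The paper does not supply its own proof of this proposition; it simply cites Gross \cite{gross_examples} and remarks that the argument proceeds ``using techniques of symplectic reduction.'' Your proposal is correct and is precisely an implementation of that symplectic-reduction argument: the Lagrangian part is exactly the standard fact that a $G$-invariant submanifold of a moment-map level set is isotropic iff its image in the reduced space is, applied here with $G=\torus{\perp\underline\nu}$ and the reduced space a symplectic surface; and the phase computation is the expected one once you write $\Omega/(w-K_2)=\der\log(w-K_2)\wedge\der\log\zeta_1\wedge\cdots\wedge\der\log\zeta_{n-1}$ in an affine chart and note that $\der\log(w-K_2)$ annihilates the orbit directions.

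One small sharpening: you do not actually need a continuity argument to pass from a generic fiber to all regular fibers. The identity $\left.\mathrm{Re}\big(\conste^{2\pi\consti\theta_0}\Omega/(w-K_2)\big)\right|_{F}=0$ is pointwise, and every point of a regular fiber lies in \emph{some} affine chart $U_C$; the computation goes through verbatim in that chart (the chart-dependent sign from Proposition~\ref{vol form} is $\pm1$, hence does not affect the phase modulo $\pi$, and $\der w\neq0$ at any point of a regular fiber since such fibers avoid the codimension-two toric strata). The ``positive real Haar volume form'' assertion is also slightly stronger than you need: it suffices that $\det\big(\pairing{\nu_j}{\xi^{(k)}}\big)_{j,k=1}^{n-1}$ is a nonzero \emph{real} number, which follows immediately from the fact that $\nu_1,\ldots,\nu_{n-1}$ restrict to a basis of $(N^{\perp\underline\nu})^*$.
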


This gives a proper Lagrangian fibration $\mu: X \to B$ where the base $B$ is the upper half space, which is a manifold with corners in the sense of Definition \ref{mfd_corner}.

\subsection{Topological considerations for $X$} \label{top X}
In this section, we would like to write down the discriminant locus of $\mu$ and generators of $\pi_2(X,F)$, where $F \subset X$ is a regular fiber of $\mu$.

\subsubsection{The discriminant locus of $\mu$}
First, we give a notation for each face of the polyhedral set $P$:

\begin{definition}
For each index set $\emptyset \neq I \subset \{0,\ldots, m-1\}$ such that $\{v_i: i \in I\}$ generates some cone $C$ in $\Sigma$, let
\begin{equation} \label{T_I}
T_I := \big\{\xi \in P: \pairing{v_i}{\xi} = c_i \textrm{ for all } i \in I \big\}
\end{equation}
which is a codimension-$(|I|-1)$ face of $\partial P$.
\end{definition}
Via the homeomorphism described in Proposition \ref{partialP}, $[T_I]$ gives a stratification of $M_\real / \real\langle \underline{\nu} \rangle$.  This is demonstrated in Figure \ref{KP1_poly}.

We are now ready to describe the discriminant locus $\Gamma$ of $\mu$ (see Equation \eqref{Gamma} for the meaning of $\Gamma$):

\begin{prop}
Let $\mu$ be the Gross' fibration given in Definition \ref{def_Gross_fib}.
The discriminant locus of $\mu$ is
$$\Gamma = \partial B \cup \left(\left(\bigcup_{|I| = 2} [T_I]\right) \times \{0\}\right).$$
\end{prop}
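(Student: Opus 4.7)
The plan is to compute the critical locus of $\mu = ([\mu_0], F)$ directly, where $F(x) := |w(x) - K_2|^2 - K_2^2$, by stratifying $X$ according to its toric structure and examining the rank of $d\mu_x$ on each stratum. I will show that critical points occur precisely on $D := \{w = K_2\}$ and on the codimension-$\geq 2$ toric strata, with images $\partial B$ and $\bigcup_{|I| \geq 2}[T_I] \times \{0\}$ respectively; the latter collapses to $\bigcup_{|I| = 2}[T_I] \times \{0\}$ because $[T_I] \subset [T_J]$ whenever $J \subset I$.

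The two ``obvious'' sources of critical points are handled directly. On $D$, since $F$ attains its global minimum $-K_2^2$, we have $dF_x = \overline{(w-K_2)}\,dw + (w-K_2)\,d\bar w = 0$, so $d\mu_x$ lands in the codimension-one subspace $T_{[\mu_0(x)]}(M_\real/\real\langle\underline\nu\rangle) \times \{0\}$ and $\mu(D) = \partial B$. On a stratum $\bigcap_{i \in I}\mathscr{D}_i$ with $|I| \geq 2$, at least two of the local toric coordinates $\zeta_j$ vanish, so $w = \prod_j \zeta_j = 0$ and $dw|_x = 0$; consequently $dF|_x = 0$, while $d[\mu_0]_x$ drops rank because its image is the annihilator of $\mathrm{span}\{v_i : i \in I\}$ in $M_\real$, which projects non-surjectively to $M_\real/\real\langle\underline\nu\rangle$ (the cokernel is one-dimensional when $|I|=2$). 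The resulting critical value is $([\mu_0(x)], 0) \in [T_I] \times \{0\}$.

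It remains to rule out critical points on the other strata. On the open stratum of a single toric divisor $\mathscr{D}_i$, the Calabi-Yau condition $\pairing{\underline\nu}{v_i} = 1 \neq 0$ forces $\real\langle v_i\rangle \cap \underline\nu^\perp = 0$, so the stabilizer of $x$ in $\torus{\perp\underline{\nu}}$ is trivial and $d[\mu_0]_x$ is surjective. A local computation gives $dw|_x = c\,d\zeta_i$ with $c := \prod_{j \neq i}\zeta_j \neq 0$, whence $dF|_x = -2K_2\,\mathrm{Re}(c\,d\zeta_i)$; since $\mathrm{span}(\partial_{x_i}, \partial_{y_i}) \subset \ker d[\mu_0]_x$ and $dF_x$ is nonzero on this span, $dF_x$ does not lie in the image of $(d[\mu_0]_x)^*$, so $d\mu_x$ has full rank. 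On the open torus orbit with $w(x) \neq K_2$, write $w = \rho\,\conste^{\consti\phi}$ to obtain $dF = 2(\rho - K_2\cos\phi)\,d\rho + 2K_2\rho\sin\phi\,d\phi$; the image of $d[\mu_0]_x$ pulled back consists only of radial one-forms (linear combinations of $d\log|\zeta_j|$) whose coefficients sum to zero, so criticality first forces the angular part to vanish ($\sin\phi = 0$) and then the radial part to match ($\rho = K_2\cos\phi$), which together give $w = K_2$, contradicting $x \notin D$.

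The main obstacle is this last case: disentangling the angular and radial pieces of $dF$ against the image of the reduced moment map differential on the open orbit. Once this bookkeeping is done, noting that $\mu$ is surjective onto each claimed piece (every point of $\partial B$ is hit by some $x \in D$, and every point of $[T_I] \times \{0\}$ with $|I| = 2$ is hit by some point in the corresponding codimension-two toric stratum), the four cases combine to give the advertised description of $\Gamma$.
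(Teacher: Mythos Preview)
Your stratification strategy is sound, and steps 1--3 are carried out correctly. The paper's own proof is a two-line sketch that simply asserts ``the critical points are where $d[\mu_0]$ or $d(|w-K_2|^2)$ is not surjective,'' so your case analysis actually supplies the justification the paper omits.

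There is, however, a genuine gap in your step 4 (open orbit, $w\neq K_2$). You claim that the image of $(d[\mu_0]_x)^*$ consists of radial one-forms $\sum_j c_j\,d\log|\zeta_j|$ with $\sum_j c_j=0$. This is false for a general toric K\"ahler form. For $v\in N^{\perp\underline\nu}_\real$ one has $(d[\mu_0])^*(v)=d\pairing{v}{\mu_0}$, and writing $\mu_0$ as a function of the log-radii $\rho_j=\log|\zeta_j|$ via the K\"ahler potential $\varphi$, the coefficient of $d\rho_j$ is $\pairing{v}{\partial\mu_0/\partial\rho_j}$; their sum equals $\mathrm{Hess}\,\varphi\!\left(v,\textstyle\sum_j v_j\right)$, which has no reason to vanish. (Concretely: for the standard form on $\cpx^2$ with $v=v_0-v_1$, the sum is $|\zeta_0|^2-|\zeta_1|^2$.) Your criterion \emph{would} be correct if you used the action coordinates $d\xi_j$ (components of $\mu_0$) instead of $d\log|\zeta_j|$; these two bases differ by the Hessian of the symplectic potential.

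The clean fix is the following. On the open orbit, identify the radial tangent space with $N_\real$ via $\mathrm{Re}\log\zeta$. Then $d\mu_0|_{\mathrm{radial}}=\mathrm{Hess}\,\varphi:N_\real\to M_\real$ is positive definite, and $d|w||_{\mathrm{radial}}=|w|\,\underline\nu$ as an element of $M_\real$. The image of $(d[\mu_0])^*|_{\mathrm{radial}}$ is $\{\mathrm{Hess}\,\varphi(v,\cdot):v\in N^{\perp\underline\nu}_\real\}\subset M_\real$. If $\underline\nu$ lay in this image, say $\mathrm{Hess}\,\varphi(v,\cdot)=\underline\nu$ with $v\in N^{\perp\underline\nu}$, then $0<\mathrm{Hess}\,\varphi(v,v)=\pairing{\underline\nu}{v}=0$, a contradiction. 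Hence $d|w|\notin\mathrm{image}((d[\mu_0])^*)$, and criticality forces the scalar $|w|-K_2\cos\phi$ to vanish, which together with $\sin\phi=0$ gives $w=K_2$ as you wanted.
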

\begin{proof}
The critical points of $\mu = ([\mu_0], |w-K_2|^2 - K_2^2)$ are where the differential of $[\mu_0]$ or that of $|w-K_2|^2 - K_2^2$ is not surjective.  The first case happens at the codimension-two toric strata of $X$, and the second case happens at the divisor defined by $w = K_2$.  The images under $\mu$ of these sets are $\left(\bigcup_{|I| = 2} [T_I]\right) \times \{0\}$ and $\partial B$ respectively.
\end{proof}

An illustration of the discriminant locus is given by Figure \ref{B}.
\begin{figure}[htp]
\caption{The base of the fibration $\mu:X \to B$ when $X = K_{\proj^1}$.  In this example, $\Gamma = \{r_1, r_2\} \cup \real \times \{-K_2\}$.}
\begin{center}
\includegraphics{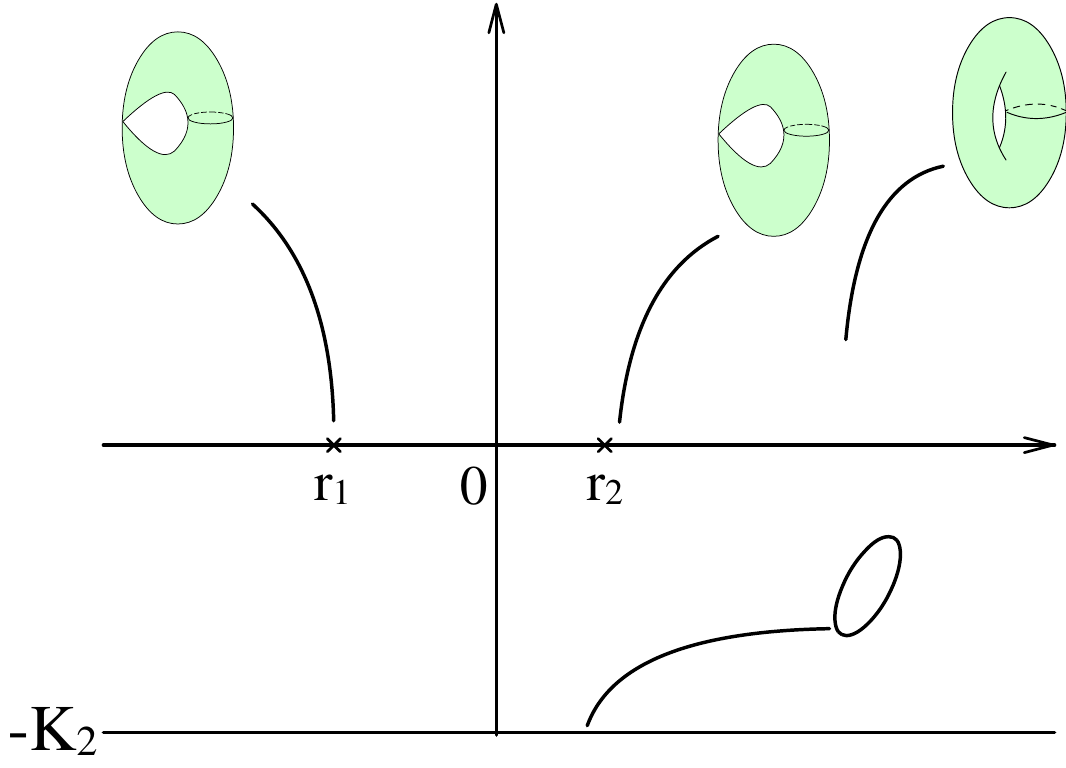}
\end{center}
\label{B}
\end{figure}

\subsubsection{Local trivialization}
As explained in Section \ref{Lag fib}, by removing the singular fibers, we obtain a torus bundle $\mu: X_0 \to B_0$ (see Equation \eqref{X_0} for the notations).  We now write down explicit local trivializations of this torus bundle, which will be used to make an explicit choice of generators of generators of $\pi_1(F)$ and $\pi_2(X,F)$.  Let
$$ U_i := B_0 - \bigcup_{k \neq i} \left([T_{k}] \times \{0\} \right) $$
for $i = 0, \ldots, m-1$, which are contractible open sets covering $B_0$, and hence $\mu^{-1} (U_i)$ can be trivialized.  Without loss of generality, we will always stick to the open set
$$U := U_0 = B_0 - \bigcup_{k \neq 0} \left([T_{k}] \times \{0\} \right) = \big\{(q_1, q_2) \in B_0: q_2 \not= 0 \textrm{ or } q_1 \in [T_{0}] \big\}.$$
\begin{prop} \label{T0}
$$[T_0] = \big\{q \in M_\real / \real\langle \underline{\nu} \rangle: \pairing{v'_j}{q} \geq c_j - c_0 \textrm{ for all } j = 1, \ldots, m-1\big\}$$
where
\begin{equation} \label{v'}
v'_j := v_j - v_0
\end{equation}
defines linear functions on $M_\real / \real\langle \underline{\nu} \rangle$ for $j = 1, \ldots, m-1$.
\end{prop}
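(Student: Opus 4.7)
The plan is to use the homeomorphism $\partial P \cong M_\real/\real\langle\underline{\nu}\rangle$ from Proposition \ref{partialP} to transfer the defining inequalities of $T_0 \subset \partial P$ to inequalities on the quotient, rewriting them in terms of the well-defined functionals $v'_j$.

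First, I would verify that each $v'_j = v_j - v_0$ descends to a linear functional on $M_\real/\real\langle\underline{\nu}\rangle$. This is immediate from the Calabi-Yau condition $\pairing{v_i}{\underline{\nu}} = 1$ for all $i$, which gives $\pairing{v'_j}{\underline{\nu}} = 0$, so pairing with $v'_j$ is constant along the fibers of the quotient map.

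Next, given $q \in M_\real/\real\langle\underline{\nu}\rangle$, I would choose any lift $\xi_0 \in M_\real$ of $q$ and consider the one-parameter family $\xi_0 + t\underline{\nu}$. The condition that this lift lies in $T_0 = \{\xi \in P : \pairing{v_0}{\xi} = c_0\}$ pins down $t$ uniquely, namely $t = c_0 - \pairing{v_0}{\xi_0}$, giving the candidate lift $\xi := \xi_0 + (c_0 - \pairing{v_0}{\xi_0})\underline{\nu}$. For the remaining inequalities defining $P$, compute
\[
\pairing{v_j}{\xi} = \pairing{v_j}{\xi_0} + (c_0 - \pairing{v_0}{\xi_0}) = \pairing{v'_j}{\xi_0} + c_0 = \pairing{v'_j}{q} + c_0,
\]
so $\pairing{v_j}{\xi} \geq c_j$ is equivalent to $\pairing{v'_j}{q} \geq c_j - c_0$. (The condition for $j=0$ is the tautology $c_0 \geq c_0$.)

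Hence $q \in [T_0]$ if and only if its unique lift $\xi$ with $\pairing{v_0}{\xi} = c_0$ satisfies all the defining inequalities of $P$, which by the above translates into $\pairing{v'_j}{q} \geq c_j - c_0$ for $j = 1, \ldots, m-1$. This gives the claimed description. There is no real obstacle here; the only subtle point is to confirm well-definedness of $\pairing{v'_j}{\cdot}$ on the quotient, which is precisely where the Calabi-Yau hypothesis enters.
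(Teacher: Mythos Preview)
Your proof is correct and follows essentially the same approach as the paper's own argument: both directions hinge on choosing the unique lift $\xi$ of $q$ with $\pairing{v_0}{\xi}=c_0$ (using $\pairing{v_0}{\underline{\nu}}=1$), and then observing that $\pairing{v_j}{\xi}\geq c_j$ is equivalent to $\pairing{v'_j}{q}\geq c_j-c_0$ because $\pairing{v'_j}{\underline{\nu}}=0$. Your write-up is a bit more explicit about the well-definedness of $\pairing{v'_j}{\cdot}$ on the quotient and about the formula for $t$, but the underlying argument is the same.
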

\begin{proof}
$T_0$ consists of all $\xi \in M_\real$ satisfying
$$\left\{\begin{array}{rcl}
\pairing{v_j}{\xi} & \geq & c_j \textrm{ for all } j = 1, \ldots, m-1;\\ \pairing{v_0}{\xi} & = & c_0.
\end{array}\right.$$
which implies $\pairing{v'_j}{q} \geq c_j - c_0$ for all $j = 1, \ldots, m-1$.

Conversely, if $q = [\xi] \in M_\real / \real\langle \underline{\nu} \rangle$ satisfies $\pairing{v'_j}{q} \geq c_j - c_0$ for all $j = 1, \ldots, m-1$, then since $\pairing{\underline{\nu}}{v_0} = 1$, there exists $t \in \real$ such that $\pairing{v_0}{\xi + t \underline{\nu}} = c_0$.  And we still have $\pairing{v'_j}{\xi + t \underline{\nu}} \geq c_j - c_0$ for all $j = 1, \ldots, m-1$ because $\pairing{v'_j}{\underline{\nu}} = 0$.  Then $\pairing{v_j}{\xi} \geq c_j$ for all $j = 1, \ldots, m-1$.  Hence the preimage of $q$ contains $\xi + t \underline{\nu} \in T_0$.
\end{proof}

Using the above proposition, the open set $U = U_0$ can be written as
$$\big\{(q_1,q_2) \in B^{\mathrm{int}}: q_2 \neq 0 \textrm{ or } \pairing{v'_j}{q_1} > c_j - c_0 \textrm{ for all } j = 1, \ldots, m-1 \big\}$$
where $v_j'$ is defined by Equation \ref{v'}.  Now we are ready to write down an explicit coordinate system on $\mu^{-1} (U)$.

\begin{definition} \label{coordinates}
Let
$$\mathbf{T}_N /\mathbf{T}\langle v_0 \rangle := \frac{N_\real/\real\langle v_0 \rangle}{N/\integer\langle v_0 \rangle}.$$
We have the trivialization
$$
\mu^{-1} (U) \isomto U \times \left(\mathbf{T}_N /\mathbf{T}\langle v_0 \rangle\right) \times (\real/2\pi\integer)
$$
given as follows.  The first coordinate function is simply given by $\mu$.

To define the second coordinate function, let $\{\nu_0, \ldots, \nu_{n-1}\} \subset M$ be the dual basis
to $\{v_0, \ldots, v_{n-1}\} \subset N$.  Let $\zeta_j$ be the meromorphic functions corresponding to $\nu_j$ for $j = 1, \ldots, n-1$.  Then the second coordinate function is given by
$$\left(\frac{\arg \zeta_1}{2\pi}, \ldots, \frac{\arg \zeta_{n-1}}{2\pi}\right): \mu^{-1} (U) \to (\real/2\pi\integer)^{n-1} \cong \left(\mathbf{T}_N /\mathbf{T}\langle v_0 \rangle\right)$$
which is well-defined because for each $j = 1, \ldots, n-1$, $\nu_j \in M^{\perp v_0}$, implying $\zeta_j$ is a nowhere-zero holomorphic function on $\mu^{-1}(U)$.

The third coordinate is given by $\arg(w-K_2)$, which is well-defined because $w \neq K_2$ on $\mu^{-1}(U)$.
\end{definition}

\subsubsection{Explicit generators of $\pi_1(F_r)$ and $\pi_2(X,F_r)$} \label{gen_disks_X}
Now we define explicit generators of $\pi_1(F_r)$ and $\pi_2(X,F_r)$ for $r \in U$ in terms of the above coordinates.  For $r \in U$, one has
$$F_r \cong (\mathbf{T}_N /\mathbf{T}\langle v_0 \rangle) \times (\real/2\pi\integer)$$
and hence
$$\pi_1(F_r) \cong (N / \integer\langle v_0 \rangle) \times \integer$$
which has generators $\{\lambda_i\}_{i=0}^{n-1}$, where $\lambda_0 = (0,1)$ and $\lambda_i = ([v_i],0)$ for $i = 1, \ldots, n-1$.  This gives a basis of $\pi_1(F_r)$.


We take explicit generators of $\pi_2(X,F_{r})$ in the following way.  First we write down the generators for $\pi_2(X,\mathbf{T})$, which are well-known in toric geometry.  Then we fix $r_0 = (q_1, q_2) \in U$ with $q_2 > 0$, and identify $\pi_2(X,\mathbf{T})$ with $\pi_2(X,F_{r_0})$ by choosing a Lagrangian isotopy between $F_{r_0}$ and $\mathbf{T}$.  (The choice $q_2>0$ seems arbitrary at this moment, but it will be convenient for the purpose of describing holomorphic disks in Section \ref{counting disks}.) Finally $\pi_2(X,F_{r})$ for every $r \in B_0$ is identified with $\pi_2(X,F_{r_0})$ by using the trivialization of $\mu^{-1}(U) \cong U \times F_{r_0}$.  In this way we have fixed an identification $\pi_2(X,F_{r}) \cong \pi_2(X,\mathbf{T})$.  The details are given below.

\noindent {\large 1. \textit{Generators for $\pi_2(X,\mathbf{T})$.}}
Let $\mathbf{T} \subset X$ be a Lagrangian toric fiber, which can be identified with the torus $\mathbf{T}_N$.  By \cite{cho06}, $\pi_2(X,\mathbf{T})$ is generated by the basic disk classes $\beta^{\mathbf{T}}_j$ corresponding to primitive generators $v_j$ of a ray in $\Sigma$ for $j = 0, \ldots, m-1$.  One has
$$\partial \beta^{\mathbf{T}}_j = v_j \in N \cong \pi_1(\mathbf{T}_N).$$

These basic disk classes $\beta^{\mathbf{T}}_i$ can be expressed more explicitly in the following way.  We take the affine chart $U_C \cong \cpx^n$ corresponding to the cone $C = \langle v_0, \ldots, v_{n-1} \rangle$ in $\Sigma$.  Let
$$\mathbf{T}_{\rho} := \{(\zeta_0, \ldots, \zeta_{n-1}) \in \cpx^n: |\zeta_j| = \conste^{\rho_j} \textrm{ for } j=0,\ldots,n-1\} \subset X$$
be a toric fiber at $\rho = (\rho_0, \ldots, \rho_{n-1}) \in \real^n$.  For $i = 0, \ldots, n-1$, $\beta^{\mathbf{T}}_i$ is represented by the holomorphic disk $u: (\Delta, \partial\Delta) \to (U_C, \mathbf{T}_{\rho})$,
$$u(\zeta) = (\conste^{\rho_0}, \ldots, \conste^{\rho_{i-1}}, \conste^{\rho_{i}} \zeta, \conste^{\rho_{i+1}},  \ldots, \conste^{\rho_{n-1}}).$$
By taking other affine charts, other disk classes can be expressed in a similar way.  Figure \ref{disks in pi_2(X, T)} gives a drawing for $\beta^{\mathbf{T}}_i$ when $X = K_{\proj^1}$.  Since every disk class $\beta^{\mathbf{T}}_i$ intersects the anti-canonical divisor $\sum_{i=0}^{m-1} \mathscr{D}_i$ exactly once, it has Maslov index two (Maslov index is twice the intersection number \cite{cho06}).

\begin{figure}[htp]
\caption{The basic disk classes in $\pi_2(X,\mathbf{T})$ for a toric fiber $\mathbf{T}_{\rho}$ of $X = K_{\proj^1}$.}
\begin{center}
\includegraphics{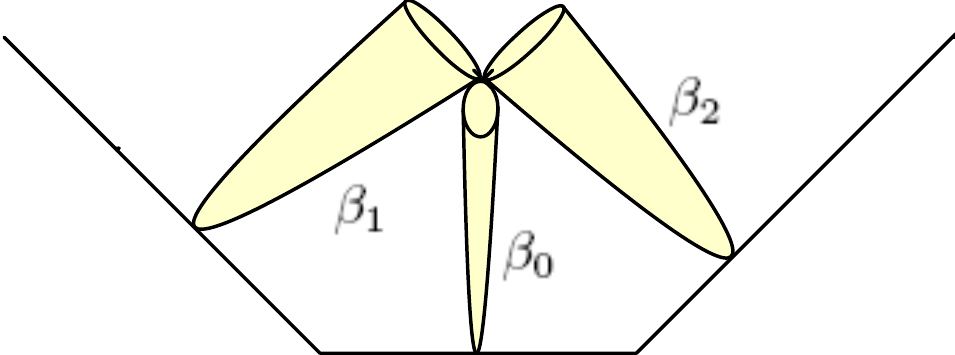}
\end{center}
\label{disks in pi_2(X, T)}
\end{figure}

\noindent {\large 2. \textit{Lagrangian isotopy between $F_{r_0}$ and $\mathbf{T}$.}}

Fix $r_0 = (q_1, q_2) \in B_0$ with $q_2 > 0$.  We have the following Lagrangian isotopy relating fibers of $\mu$ and Lagrangian toric fibers:
\begin{equation} \label{isotopy}
L_t := \{ x \in X: [\mu_0(x)] = q_1; |w(x) - t|^2 = K^2_2 + q_2 \}
\end{equation}
where $t \in [0, K_2]$.  $L_0$ is a Lagrangian toric fiber, and $L_{K_2} = F_{r_0}$.  (This is also true for $q_2 < 0$.  We fix $q_2 > 0$ for later purpose.)

The isotopy gives an identification between $\pi_2(X,F_{r_0})$ and $\pi_2(X,\mathbf{T})$.  Thus we may identify $\{\beta^{\mathbf{T}}_j\}_{j=0}^{m-1} \subset \pi_2(X,\mathbf{T})$ as a generating set of $\pi_2(X,F_{r_0})$, and we denote the corresponding disk classes by $\beta_j \in \pi_2(X,F_{r_0})$.  They are depicted in Figure \ref{disks in X}.

\begin{figure}[htp]
\caption{Disks generating $\pi_2(X,\mathbf{F_r})$ when $X = K_{\proj^1}$.}
\begin{center}
\includegraphics{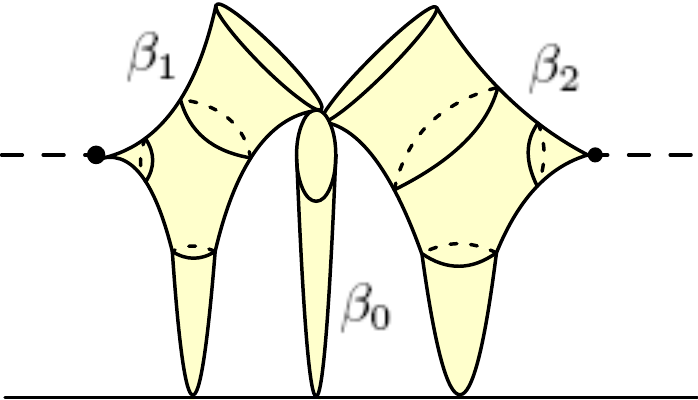}
\end{center}
\label{disks in X}
\end{figure}

Finally by the trivialization of $\mu^{-1}(U)$, every fiber $F_r$ at $r \in U$ is identified with $F_{r_0}$, and thus $\{\beta_j\}_{j=0}^{m-1}$ may be identified as a generating set of $\pi_2(X,F_r)$.

Notice that since Maslov index is invariant under Lagrangian isotopy, each $\beta_j \in \pi_2(X, F_r)$ remains to have Maslov index two.  We will need the following description for the boundary classes of $\beta_j$:

\begin{prop} \label{boundary of disks in X}
$$\partial \beta_j = \lambda_0 + \sum_{i=1}^{n-1} \pairing{\nu_i}{v_j}\lambda_i  \in (N / \integer\langle v_0 \rangle) \times \integer \cong \pi_1(F_r)$$
for all $j = 0, \ldots, m-1$, where $\{\nu_i\}_{i=0}^{n-1} \subset M$ is the dual basis of $\{v_i\}_{i=0}^{n-1} \subset N$.
\end{prop}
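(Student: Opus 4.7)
The plan is to determine $\partial\beta_j$ by checking its winding number in each of the $n$ natural $S^1$-valued coordinate functions on $F_{r_0}$, namely $\arg\zeta_1,\ldots,\arg\zeta_{n-1}$ and $\arg(w-K_2)$, and observing that all of these winding numbers are invariants of the Lagrangian isotopy \eqref{isotopy} from $\mathbf{T}$ to $F_{r_0}$.

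On the toric end $L_0=\mathbf{T}$, for any $\eta\in N\cong\pi_1(\mathbf{T}_N)$ the orbit of the corresponding $1$-parameter subgroup has $\arg\zeta_i$-winding $\pairing{\nu_i}{\eta}$, because $\zeta_i$ transforms by the character $\pairing{\nu_i}{\cdot}$. Applied to $\partial\beta^{\mathbf{T}}_j=v_j$, this yields $\arg\zeta_i$-winding $\pairing{\nu_i}{v_j}$ for $i=0,\ldots,n-1$; in particular the $\arg w$-winding is $\pairing{\underline{\nu}}{v_j}=1$ by the Calabi-Yau condition, consistent with $w=\prod_{i=0}^{n-1}\zeta_i$.

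The key technical point is that along the isotopy all these coordinate functions stay well-defined. Since $q_2>0$, we have $K_2^2+q_2>t^2$ for all $t\in[0,K_2]$, so $0$ lies strictly inside the circle $\{|w-t|=\sqrt{K_2^2+q_2}\}$ and hence $w\neq 0$ on $L_t$. Thus $L_t$ is disjoint from the toric divisor $\bigcup_i\mathscr{D}_i=\{w=0\}$, so $\zeta_1,\ldots,\zeta_{n-1}$ and $w-t$ are all nowhere zero on $L_t$, and their arguments are continuous $S^1$-valued functions whose winding numbers along any continuously deforming loop $\gamma_t\subset L_t$ are integer-valued continuous functions of $t$, hence constant. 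Therefore on $L_{K_2}=F_{r_0}$ the loop $\partial\beta_j$ has $\arg\zeta_i$-winding $\pairing{\nu_i}{v_j}$ for $i=1,\ldots,n-1$ and $\arg(w-K_2)$-winding $1$; unpacking the trivialization of Definition \ref{coordinates} then gives $\partial\beta_j=\lambda_0+\sum_{i=1}^{n-1}\pairing{\nu_i}{v_j}\lambda_i$. The only real obstacle is verifying that $L_t$ avoids $\{w=0\}$ and $\{w=t\}$ throughout the isotopy, both of which follow cleanly from the hypothesis $q_2>0$.
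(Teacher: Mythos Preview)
Your argument is correct and follows essentially the same approach as the paper's proof. Both compute $\partial\beta^{\mathbf{T}}_j=v_j$ on the toric side, rewrite it in the coordinates $(\mathbf{T}_N/\mathbf{T}\langle v_0\rangle)\times(\real/2\pi\integer)$ using the Calabi--Yau condition $\pairing{\underline{\nu}}{v_j}=1$, and then transport through the isotopy \eqref{isotopy}; your version simply makes the ``preserved under isotopy'' step more explicit by tracking winding numbers and verifying that $w$ and $w-t$ remain nonzero on $L_t$ thanks to $q_2>0$.
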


\begin{proof}
Under the identification
$$\mathbf{T}_N \stackrel{\cong}{\to} (\mathbf{T}_N /\mathbf{T}\langle v_0 \rangle) \times (\real/2\pi\integer)$$
where the last coordinate is given by $\pairing{\underline{\nu}}{\cdot}$, $\partial \beta^{\mathbf{T}}_j = v_j \in \pi_1(\mathbf{T}_N)$ is identified with
\begin{align*}
([v_j],1) = \left( \sum_{i=1}^{n-1} \pairing{\nu_i}{v_j} [v_i] , 1 \right)
&= \sum_{i=1}^{n-1} \pairing{\nu_i}{v_j}\lambda_i + \lambda_0 \\
&\in \pi_1\big((\mathbf{T}_N /\mathbf{T}\langle v_0 \rangle) \times (\real/2\pi\integer)\big)
\end{align*}
because $\pairing{\underline{\nu}}{v_j} = 1$ for all $j = 0, \ldots, m-1$.  Under the isotopy given in Equation \eqref{isotopy}, this relation is preserved.
\end{proof}

The following proposition gives the intersection numbers of the disk classes with various divisors:
\begin{prop} \label{intersection}
Let $r = (q_1, q_2) \in U$ with $q_2 \not= 0$, and $\beta_i \in \pi_2 (X, F_r)$ be the disk classes defined above.   Then
$$\beta_0 \cdot \mathscr{D}_j = 0$$
for all $j = 1, \ldots, m-1$;
$$\beta_i \cdot \mathscr{D}_j = \delta_{ij}$$
for all $i=1, \ldots, m-1$, $j = 1, \ldots, m-1$;
$$\beta_i \cdot D_0 = 1$$
for all $i=0, \ldots, m-1$, where
\begin{equation} \label{D_0}
D_0 := \{x \in X: w(x) = K_2 \}
\end{equation}
is the boundary divisor whose image under $\mu$ is $\partial B$,
and $\mathscr{D}_j$ are the irreducible toric divisors of $X$.
\end{prop}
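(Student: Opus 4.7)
My plan is to reduce everything to the standard intersection theory of basic toric disks via the Lagrangian isotopy $\{L_t\}_{t\in[0,K_2]}$ of Equation \eqref{isotopy}, and to handle $D_0$ using its linear equivalence with the toric boundary $\sum_k \mathscr{D}_k$. By the construction in Section \ref{gen_disks_X}, the class $\beta_j \in \pi_2(X,F_r)$ is identified, through the trivialization of $\mu^{-1}(U)$ followed by the isotopy $L_t$, with the basic toric disk class $\beta_j^{\mathbf{T}} \in \pi_2(X, L_0)$ associated to $v_j$. By Cho--Oh \cite{cho06}, one has $\beta_j^{\mathbf{T}} \cdot \mathscr{D}_k = \delta_{jk}$ for all $j,k \in \{0,\dots,m-1\}$.

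The crucial step is to check that the sweep $\bigcup_{t\in[0,K_2]} L_t$ avoids each divisor of interest, so that intersection numbers are preserved under the isotopy. By Proposition \ref{w}, $\bigcup_k \mathscr{D}_k = \{w = 0\}$, while $D_0 = \{w = K_2\}$ by definition. On $L_t$ we have $|w - t| = \sqrt{K_2^2+q_2}$, so the triangle inequality yields
\[
|w| \;\geq\; \sqrt{K_2^2+q_2} - t \;\geq\; \sqrt{K_2^2+q_2} - K_2 \;>\; 0
\quad\text{and}\quad
|w - K_2| \;\geq\; \sqrt{K_2^2+q_2} - (K_2 - t) \;>\; 0
\]
for every $t \in [0, K_2]$, using $q_2 > 0$. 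Hence $L_t$ meets neither $\mathscr{D}_k$ nor $D_0$ throughout the isotopy.

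Granted this disjointness, the standard gluing argument applies: represent $\beta_j$ by gluing a disk representative of $\beta_j^{\mathbf{T}}$ with the cylinder swept by its boundary through the isotopy; the cylinder lies in $\bigcup_t L_t$ and therefore contributes zero to the intersection with any divisor disjoint from the sweep. This yields $\beta_j \cdot \mathscr{D}_k = \beta_j^{\mathbf{T}} \cdot \mathscr{D}_k = \delta_{jk}$, which gives the first two identities. For the third, both $w$ and $w - K_2$ are global holomorphic functions on $X$, so $\sum_k \mathscr{D}_k = (w)_0$ and $D_0 = (w - K_2)_0$ are linearly equivalent; the family $\{w = s\}_{s \in [0, K_2]}$ furnishes an explicit $3$-chain realizing the homology. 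Consequently $\beta_i \cdot D_0 = \sum_k \beta_i \cdot \mathscr{D}_k = 1$.

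The only mildly technical point I anticipate is the justification of isotopy invariance of the intersection pairing, but once the disjointness estimate above is in place the gluing argument is routine. The essential role of the hypothesis $q_2 > 0$ (fixed when choosing the base point in Section \ref{gen_disks_X}) is precisely to make the estimate $\sqrt{K_2^2+q_2} - K_2 > 0$ work, and this is the entire content of the proof.
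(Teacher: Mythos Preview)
Your approach is essentially the paper's, with one organizational difference that creates a real gap in the case $q_2<0$.

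At the base point $r_0$ (where $q_2>0$) everything you write is correct: your triangle-inequality estimate shows $\bigcup_t L_t$ avoids both $\{w=0\}\supset\mathscr{D}_k$ and $\{w=K_2\}=D_0$, so $\beta_j(r_0)\cdot\mathscr{D}_k=\delta_{jk}$ for all $k$; and since on $F_{r_0}$ the function $w$ lies outside $\overline{D}(K_2,K_2)\supset[0,K_2]$, your connecting chain $\{w=s:s\in[0,K_2]\}$ is disjoint from $F_{r_0}$ and the linear-equivalence step gives $\beta_i(r_0)\cdot D_0=\sum_k\delta_{ik}=1$. The paper instead moves the Lagrangian and the divisor simultaneously via the pair $(L_t,S_t=\{w=t\})$ with $L_t\cap S_t=\emptyset$; your sequential version is equally valid at $r_0$.

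The gap is the passage to general $r\in U$ with $q_2<0$. You mention the trivialization isotopy from $F_r$ to $F_{r_0}$ but never verify disjointness for it, and your route through $\sum_k\beta_i\cdot\mathscr{D}_k$ breaks there in two ways. First, any path in $U$ from such an $r$ to $r_0$ must cross the wall $q_2=0$, where the fiber meets $\mathscr{D}_0$; so the trivialization isotopy does not preserve $\beta_i\cdot\mathscr{D}_0$, which your sum requires. Second, for $q_2<0$ the circle $|w-K_2|=\sqrt{K_2^2+q_2}$ on which $w|_{F_r}$ lives meets the segment $[0,K_2]$ at $K_2-\sqrt{K_2^2+q_2}$, so your connecting chain is \emph{not} disjoint from $F_r$ and the linear-equivalence step fails as written. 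The paper's cure is simply to establish all three identities at $r_0$ first, and only then observe that $\mu^{-1}(U)$ is disjoint from $D_0$ and from $\mathscr{D}_j$ for $j\geq 1$ --- so the trivialization isotopy preserves exactly the intersection numbers appearing in the statement, and one never needs $\beta_i\cdot\mathscr{D}_0$ at a general $r$. Reordering your argument this way closes the gap.
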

\begin{proof}
We need to use the following topological fact: Let $\{L_t: t \in [0,1]\}$ be an isotopy between $L_0$ and $L_1$, and $\{S_t: t \in [0,1]\}$ be an isotopy between the cycles $S_0$ and $S_1$.  Suppose that for all $t \in [0,1]$, $L_t \cap S_t = \emptyset$.  Then for $\beta \in \pi_2 (X, L_0)$, one has the following equality of intersection numbers:
$$\beta \cdot S_0 = \beta' \cdot S_1$$
where $\beta' \in \pi_2 (X, L_1)$ corresponds to $\beta$ under the isotopy $L_t$.

First consider the case that $r = r_0$.  The first and second equalities follow by using the isotopy $L_t$ given by Equation \eqref{isotopy} and the equalities
$$\beta^{\mathbf{T}}_0 \cdot \mathscr{D}_j = 0$$
for $j = 1, \ldots, m-1$ and
$$\beta^{\mathbf{T}}_i \cdot \mathscr{D}_j = \delta_{ij}$$
for $i=1, \ldots, m-1$, $j = 1, \ldots, m-1$ respectively.

We also have the isotopy
$$ S_t = \{x \in X: w(x) = t \}$$
for $t = [0, K_2]$ between the anti-canonical divisor $-K_X = \sum_{l=0}^{m-1} \mathscr{D}_l$ and $D_0$.  One has $S_t \cap L_t = \emptyset$ for all $t$, and so
$$\beta_i \cdot D_0 = \beta^{\mathbf{T}}_i \cdot (-K_X) = 1$$
for all $i=0, \ldots, m-1$.

For general $r \in U$, since $U \cap \mathscr{D}_j = \emptyset$ for all $j = 1, \ldots, m-1$ and $U \cap D_0 = \emptyset$, the isotopy between $F_r$ and $F_{r_0}$ never intersect $D_0$ and $\mathscr{D}_j$ for all $j = 1, \ldots, m-1$.  Thus the above equalities of intersection numbers are preserved.
\end{proof}

\subsection{Toric modification} \label{add boundary}
Our idea of constructing the mirror $\check{X}$ is to construct coordinate functions of $\check{X}$ by counting holomorphic disks emanating from boundary divisors of $X$.  The problem is that in our situation, $B$ has only one codimension-one boundary, while we need $n$ coordinate functions!  To resolve this, one may consider counting of holomorphic cylinders, which requires the extra work of defining rigorously the corresponding Gromov-Witten invariants.  Another way is to consider a one-parameter family of toric K\"ahler manifolds with $n$ toric divisors such that $X$ appears as the limit of this family when $n-1$ of the toric divisors move to infinity.  We adopt the second approach in this paper.

Choose a basis of $N$ which generate a cone in $\Sigma$, say, the one given by $v_0, \ldots, v_{n-1}$.  Since this is simplicial, $\{v_j\}_{j=0}^{n-1}$ forms a basis of $N$.  We denote its dual basis by $\{\nu_j\}_{j=0}^{n-1} \subset M$ as before.

\begin{remark}
While all the constructions from now on depend on the choice of this basis, we will see in Proposition \ref{basis_change} that the mirrors resulted from different choices of basis differ simply by a coordinate change.
\end{remark}

We define the following modification to $X_{\Sigma}$:
\begin{definition} \label{modify_mu}
Fix $K_1 > 0$.
\begin{enumerate}
\item Let
$$P^{(K_1)} := \big\{\xi \in P: \pairing{v'_j}{\xi} \geq -K_1 \textrm{ for all } j = 1, \ldots, n-1 \big\} \subset P$$
where $v'_j := v_j - v_0$ for $j = 1, \ldots, n-1$.
$K_1$ is assumed to be sufficiently large such that none of the defining inequalities of $P^{(K_1)}$ is redundant.

\item Let $\Sigma^{(K_1)}$ be the inward normal fan to $P^{(K_1)}$, which consists of rays generated by $v_0, \ldots, v_{m-1}, v'_1, \ldots, v'_{n-1}$.

\item Let $X^{(K_1)}$ be the toric K\"ahler manifold corresponding to $P^{(K_1)}$ and
$$\mu^{(K_1)}_0: X^{(K_1)} \to P^{(K_1)}$$
be the moment map.
\end{enumerate}
\end{definition}

Notice that $X^{(K_1)}$ is no longer a Calabi-Yau manifold.  For notation simplicity, we always suppress the dependency on $K_1$ and write $\Sigma'$ in place of $\Sigma^{(K_1)}$ and $\mu'_0: X' \to P'$ in place of $\mu^{(K_1)}_0: X^{(K_1)} \to P^{(K_1)}$ in the rest of this paper.  The fan $\Sigma'$ and toric moment map image $P'$ of $X'$ are demonstrated in Figure \ref{fan for X'} and \ref{P'} respectively.

\begin{figure}[htp]
\caption{The fan $\Sigma'$ of $X'$ when $X = K_{\proj^1}$.}
\begin{center}
\includegraphics{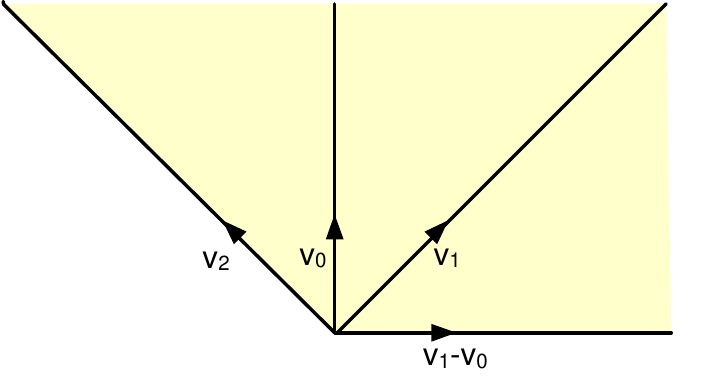}
\end{center}
\label{fan for X'}
\end{figure}

\begin{figure}[htp]
\caption{Toric moment map image $P'$ of $X'$ when $X = K_{\proj^1}$.}
\begin{center}
\includegraphics{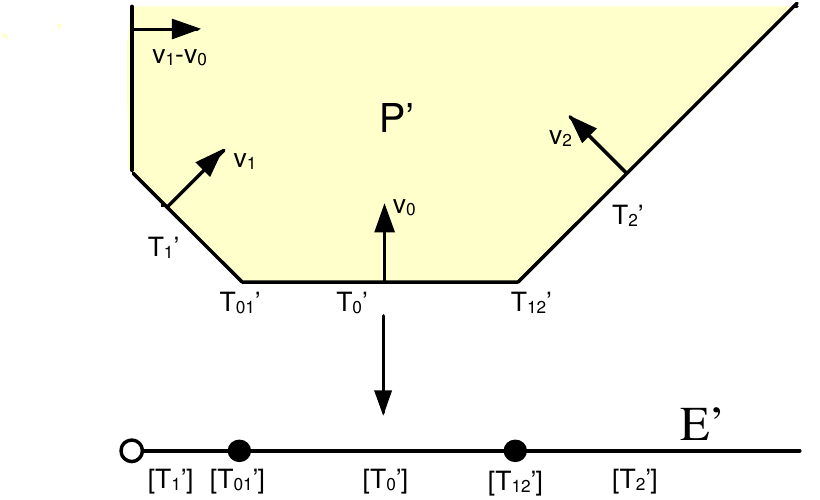}
\end{center}
\label{P'}
\end{figure}

Analogously, one has a special Lagrangian fibration on $X'$.  The definitions and propositions below are similar to that of Section \ref{review}, so we try to keep them brief.  The proofs are similar and thus omitted.

\begin{prop}
$\underline{\nu}$ corresponds to a holomorphic function $w'$ on $X'$, whose zero divisor is
$$(w') = \sum_{i=0}^{m-1} \mathscr{D}_i$$
where we denote each irreducible toric divisor corresponding to $v_i$ by $\mathscr{D}_i$, and that corresponding to $v'_j$ by $\mathscr{D}'_j$.  (Notice that $w'$ is non-zero on $\mathscr{D}'_j$ and so they do not appear in the above expression of $(w')$.)
\end{prop}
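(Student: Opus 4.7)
The plan is to mimic exactly the proof of Proposition \ref{w}, with the only new ingredient being the verification that $\underline{\nu}$ still pairs non-negatively with every ray generator of the enlarged fan $\Sigma'$. Recall the standard toric dictionary: a lattice point $\underline{\nu} \in M$ defines a meromorphic function on $X'$ whose order of vanishing along the toric divisor corresponding to a ray generator $v$ is $\pairing{\underline{\nu}}{v}$. Thus $\underline{\nu}$ gives a holomorphic (rather than merely meromorphic) function on $X'$ exactly when $\pairing{\underline{\nu}}{v} \geq 0$ for every ray generator $v$ of $\Sigma'$, and the zero divisor is then $\sum \pairing{\underline{\nu}}{v}\, \mathscr{D}_v$ summed over all ray generators.

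The ray generators of $\Sigma'$ (by Definition \ref{modify_mu}) are precisely $v_0,\ldots,v_{m-1}$ together with $v'_1,\ldots,v'_{n-1}$, where $v'_j = v_j - v_0$. First I would note that, by the Calabi-Yau condition on $X$, one has $\pairing{\underline{\nu}}{v_i}=1$ for $i=0,\ldots,m-1$. Then I would simply compute
\[
\pairing{\underline{\nu}}{v'_j} \;=\; \pairing{\underline{\nu}}{v_j} - \pairing{\underline{\nu}}{v_0} \;=\; 1 - 1 \;=\; 0
\]
for each $j=1,\ldots,n-1$. Hence $\pairing{\underline{\nu}}{\cdot}\geq 0$ on every ray generator, so $w'$ is holomorphic on $X'$.

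The zero divisor formula then drops out automatically: the coefficient of $\mathscr{D}_i$ in $(w')$ is $\pairing{\underline{\nu}}{v_i}=1$ while the coefficient of each $\mathscr{D}'_j$ is $\pairing{\underline{\nu}}{v'_j}=0$, yielding $(w') = \sum_{i=0}^{m-1} \mathscr{D}_i$ with the $\mathscr{D}'_j$ absent, as asserted. To make the argument fully self-contained (in parallel with the proof of Proposition \ref{w}), I would verify the local picture explicitly on each affine chart $U_C$ of $X'$: choose a basis of $N$ among the ray generators of the simplicial cone $C$, take the dual basis of $M$, write $\underline{\nu}$ as a non-negative integer combination of these dual vectors (using $\pairing{\underline{\nu}}{v}\geq 0$ established above), and conclude that $w'|_{U_C}$ is a monomial in the corresponding affine coordinates with the expected vanishing orders.

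There is no real obstacle here; the only subtle point is recognising that the new generators $v'_j$ contribute nothing to $(w')$ because they lie in the hyperplane $\{\underline{\nu}=0\}$ of $N_\real$, which is a direct consequence of the Calabi-Yau condition on $X$ together with the definition $v'_j=v_j-v_0$. In particular, this is precisely why the toric modification was set up with these specific extra rays: it keeps $w'$ holomorphic with the same zero divisor as on $X$, while adding the new boundary divisors $\mathscr{D}'_j$ needed to supply the missing coordinate functions for the SYZ mirror construction outlined in Section \ref{add boundary}.
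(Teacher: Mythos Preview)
Your proposal is correct and is precisely the argument the paper has in mind: the paper omits the proof, stating only that it is similar to that of Proposition~\ref{w}, and your write-up carries out exactly that adaptation. The one extra observation needed beyond Proposition~\ref{w}---that the new ray generators $v'_j=v_j-v_0$ satisfy $\pairing{\underline{\nu}}{v'_j}=0$---is the only new content, and you identify and justify it cleanly.
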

\begin{prop}
Let $\zeta_j$ be the meromorphic functions corresponding to $\nu_j$ for $j = 0, \ldots, n-1$.  Then
$$\Omega' := \der \zeta_0 \wedge \ldots \wedge \der \zeta_{n-1}$$
extends to a meromorphic $n$-form on $X'$ with
$$(\Omega') = - \sum_{j=1}^{n-1} \mathscr{D}'_j$$
where $\mathscr{D}'_j$ are the toric divisors corresponding to $v'_j$.
\end{prop}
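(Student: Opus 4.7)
The plan is to imitate the strategy of Proposition~4.6 (the analogous statement for the Calabi--Yau manifold $X$), working affine chart by affine chart in $X'$, and tracking how $\Omega'$ transforms under change of toric coordinates. The only new feature is that $\Sigma'$ contains the additional rays $v'_j = v_j - v_0$ ($j=1,\ldots,n-1$) with $\pairing{\underline{\nu}}{v'_j}=0$, and it is precisely these rays that will produce the poles.

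On the affine chart $U_{C_0}$ corresponding to $C_0 = \real_{\geq 0}\langle v_0,\ldots,v_{n-1}\rangle$, the functions $\zeta_0,\ldots,\zeta_{n-1}$ are the toric coordinates, so $\Omega' = \der\zeta_0 \wedge \cdots \wedge \der\zeta_{n-1}$ is a nowhere-zero holomorphic volume form there. For any other $n$-dimensional cone $C$ of $\Sigma'$ with primitive generators $u_0,\ldots,u_{n-1}$ (each $u_k$ being some $v_i$ or some $v'_j$), let $\{\nu'_k\} \subset M$ be the dual basis and $\zeta'_k$ the corresponding coordinates on $U_C$. I would write
\begin{equation*}
\der\zeta_0 \wedge \cdots \wedge \der\zeta_{n-1} \;=\; w'\, \der\log\zeta_0 \wedge \cdots \wedge \der\log\zeta_{n-1},
\end{equation*}
using that $\zeta_0\cdots\zeta_{n-1}$ is the monomial for $\sum_{i=0}^{n-1}\nu_i = \underline{\nu}$, which corresponds to $w'$. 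Because $\{u_k\}$ and $\{v_i\}_{i=0}^{n-1}$ are both $\integer$-bases of $N$ (the fan $\Sigma'$ is simplicial and smooth), the transition matrix $A$ with $\nu_i = \sum_k A_{ik}\nu'_k$ satisfies $\det A = \pm 1$, and therefore
\begin{equation*}
\der\log\zeta_0 \wedge \cdots \wedge \der\log\zeta_{n-1} \;=\; (\det A)\, \der\log\zeta'_0 \wedge \cdots \wedge \der\log\zeta'_{n-1}.
\end{equation*}

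Combining these and writing $w'|_{U_C} = \prod_k (\zeta'_k)^{\pairing{\underline{\nu}}{u_k}}$ (which makes sense on all of $U_C$ because $\pairing{\underline{\nu}}{u_k}\geq 0$ for every generator $u_k$ of $\Sigma'$), I obtain
\begin{equation*}
\Omega'|_{U_C} \;=\; \pm \prod_{k=0}^{n-1}(\zeta'_k)^{\pairing{\underline{\nu}}{u_k}-1}\, \der\zeta'_0 \wedge \cdots \wedge \der\zeta'_{n-1}.
\end{equation*}
Now I read off the order of $\Omega'$ along the toric divisor $\{\zeta'_k=0\}$: the exponent is $0$ if $u_k \in \{v_0,\ldots,v_{m-1}\}$ (since $\pairing{\underline{\nu}}{v_i}=1$ by the Calabi--Yau condition) and is $-1$ if $u_k \in \{v'_1,\ldots,v'_{n-1}\}$ (since $\pairing{\underline{\nu}}{v'_j}=0$). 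Hence $\Omega'$ extends meromorphically across every toric boundary stratum, is holomorphic and nonvanishing away from $\bigcup_j \mathscr{D}'_j$, and has a simple pole along each $\mathscr{D}'_j$, giving $(\Omega') = -\sum_{j=1}^{n-1}\mathscr{D}'_j$.

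The only mild obstacle is bookkeeping: one must make sure that \emph{every} maximal cone of $\Sigma'$ is covered by the argument, including mixed cones generated by some old $v_i$'s together with some new $v'_j$'s, and to verify $\det A = \pm 1$ in all cases. Both follow from smoothness of $X'$ (every maximal cone of $\Sigma'$ has a $\integer$-basis of $N$ as its generators), so no additional input is needed.
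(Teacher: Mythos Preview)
Your proof is correct and is precisely the argument the paper has in mind: the paper omits the proof of this proposition, stating that it is similar to the Calabi--Yau case (Proposition~\ref{vol form}), and your adaptation of that argument---computing $\Omega'|_{U_C} = \pm \prod_k (\zeta'_k)^{\pairing{\underline{\nu}}{u_k}-1}\,\der\zeta'_0\wedge\cdots\wedge\der\zeta'_{n-1}$ and reading off the orders from $\pairing{\underline{\nu}}{v_i}=1$ versus $\pairing{\underline{\nu}}{v'_j}=0$---is exactly the intended extension.
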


\begin{definition}
Let
$$E^{(K_1)} := \big\{q \in (M_\real / \real \langle \underline{\nu} \rangle): \pairing{v'_j}{q} \geq -K_1 \textrm{ for all } j = 1, \ldots, n-1 \big\}$$
and
$$ B^{(K_1)} := E^{(K_1)} \times \real_{\geq -K_2}.$$
We have the fibration
$$
\begin{array}{rcc}
\mu^{(K_1)}: X^{(K_1)} & \to & B^{(K_1)} := E^{(K_1)} \times \real_{\geq -K_2} \\
x & \mapsto & \big([\mu^{(K_1)}_0(x)], |w'(x)- K_2|^2 - K^2_2\big).
\end{array}
$$
Again we'll suppress the dependency on $K_1$ for notation simplicity and use the notations $E$ and  $\mu': X' \to B'$ instead.
\end{definition}
Figure \ref{P'} gives an illustration to the notation $E'$, and Figure \ref{mu'} depicts the fibration $\mu'$ by an example.

\begin{prop} \label{partialP'}
Under the natural quotient $M_\real \to M_\real / \real\langle \underline{\nu} \rangle$, the image of $P'$ is $E$.
Indeed, this map give a homeomorphism between $$\big\{\xi \in \partial P': \pairing{v'_j}{\xi} > -K_1 \textrm{ for all } j = 1, \ldots, n-1 \big\}$$
and
$$E^{\mathrm{int}} = \{q \in M_\real / \real\langle \underline{\nu} \rangle: \pairing{v'_j}{q} > -K_1 \textrm{ for all } j = 1, \ldots, n-1\}.$$
As a consequence, $\mu':X' \to B'$ is onto.
\end{prop}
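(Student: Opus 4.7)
My plan is to mimic the proof of Proposition \ref{partialP}, exploiting the fact that $\pairing{v'_j}{\underline{\nu}} = \pairing{v_j}{\underline{\nu}} - \pairing{v_0}{\underline{\nu}} = 0$, so each $v'_j$ descends to a well-defined linear functional on $M_\real/\real\langle \underline{\nu}\rangle$. As an immediate consequence, the extra defining inequalities $\pairing{v'_j}{\cdot} \geq -K_1$ of $P'$ are invariant under translation by $\underline{\nu}$, and the quotient map automatically sends $P'$ into $E$.

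For surjectivity of $P' \to E$, I would take $q \in E$ with any lift $\xi_0 \in M_\real$, and repeat the argument of Proposition \ref{partialP}: set $t_0 = \inf\{t \in \real : \xi_0 + t\underline{\nu} \in P\}$, so that $\xi_0 + t_0\underline{\nu} \in \partial P$. Since $\pairing{v'_j}{\xi_0 + t\underline{\nu}} = \pairing{v'_j}{q} \geq -K_1$ is independent of $t$, this point already satisfies the new constraints, hence lies in $\partial P \cap P' \subset \partial P'$. This simultaneously provides a preimage in $P'$ (giving surjectivity of $P' \to E$) and a distinguished preimage in $\partial P'$.

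For the homeomorphism statement, the key observation is that a point $\xi \in \partial P'$ with $\pairing{v'_j}{\xi} > -K_1$ strictly for all $j$ cannot lie on any of the new walls $\{\pairing{v'_j}{\cdot} = -K_1\}$, so at least one of the inequalities $\pairing{v_i}{\cdot} \geq c_i$ inherited from $P$ must be active, forcing $\xi \in \partial P$. Restricting the bijection $\partial P \isomto M_\real/\real\langle\underline{\nu}\rangle$ of Proposition \ref{partialP} to the subset $\{\xi \in \partial P : \pairing{v'_j}{\xi} > -K_1 \textrm{ for all } j\}$ then yields a bijection onto $E^{\mathrm{int}}$, and bicontinuity is inherited directly from that proposition. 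Surjectivity of $\mu' : X' \to B'$ follows by combining surjectivity in the first factor with the observation that $|w' - K_2|^2 - K_2^2$ realizes every value in $\real_{\geq -K_2^2}$ along each $\mu'_0$-fiber sitting over a point of $E^{\mathrm{int}}$.

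I do not anticipate any serious obstacle; essentially everything reduces to bookkeeping between the walls of $\partial P$ that are inherited by $\partial P'$ and the new walls introduced by the truncation, together with the observation that the latter are $\underline{\nu}$-translation invariant. The only point requiring slight care will be checking that the subset of $\partial P'$ cut out by the strict inequalities $\pairing{v'_j}{\cdot} > -K_1$ matches precisely the preimage of $E^{\mathrm{int}}$ under the restriction of the quotient map to $\partial P$, which is immediate once one has verified the inclusion into $\partial P$ above.
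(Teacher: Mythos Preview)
Your proposal is correct and follows exactly the approach the paper intends: the paper omits the proof of this proposition, saying only that it is similar to Proposition~\ref{partialP}, and your argument is precisely that adaptation, hinging on the observation that $\pairing{v'_j}{\underline{\nu}}=0$ so the new truncating walls are $\underline{\nu}$-invariant. The only minor addition you supply beyond the paper is the explicit check that a point of $\partial P'$ satisfying the strict inequalities must lie in $\partial P$, which is indeed the one bookkeeping step needed to reduce to Proposition~\ref{partialP}.
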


$B'$ is a manifold with corners with $n$ connected codimension-one boundary strata.  Using the notations given in the beginning of Section \ref{mir_construct}, the connected codimension-one boundary strata of $B'$ are
$$ \Psi_j := \{(q_1,q_2) \in B': \pairing{v'_j}{q_1} = -K_1 \} $$
for $j = 1, \ldots, n-1$ and
$$ \Psi_0 := \{(q_1,q_2) \in B': q_2 = -K_2 \}.$$
Moreover, the preimages
\begin{equation} \label{D_j}
D_j = (\mu')^{-1}(\Psi_j) \subset X'
\end{equation}
are divisors in $X'$.  Thus the assumptions needed in Section \ref{mir_construct} for the mirror construction are satisfied.  (Notice that these $D_j$ are NOT the toric divisors, which are denoted by $\mathscr{D}_i$ and $\mathscr{D}'_j$ instead.)

\begin{prop}
$\mu':X' \to  B'$ is a special Lagrangian fibration with respect to the toric K\"ahler form and the holomorphic volume form $\Omega' / (w' - K_2)$ defined on $X' - \bigcup_{j=0}^{n-1} D_j$.
\end{prop}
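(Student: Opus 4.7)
The plan is to mirror the symplectic reduction argument that establishes the analogous statement for the Gross fibration on $X$ (Proposition \ref{SLag fib}), exploiting the $(n-1)$-dimensional subtorus $T := \mathbf{T}^{\perp\underline{\nu}} \subset \mathbf{T}_N$. By construction, the moment map for the $T$-action on $X'$ is precisely $[\mu_0']$, since $[\mu_0']$ is the composition of $\mu_0'$ with the quotient $M_\real \to M_\real/\real\langle\underline{\nu}\rangle$. Because $w'$ corresponds to $\underline{\nu} \in M$, it transforms under $\mathbf{T}_N$ by the character $\chi_{\underline{\nu}}$, which restricts to the trivial character on $T$; hence $w'$ (and therefore $w'-K_2$) is $T$-invariant, and similarly $\Omega'$ transforms by $\chi_{\underline{\nu}}$, so $\Omega'/(w'-K_2)$ is $T$-invariant on $X' - \bigcup_{j=0}^{n-1}D_j$.

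For the Lagrangian condition, at a regular level $q_1$ the level set $[\mu_0']^{-1}(q_1)$ is coisotropic with null foliation given by the $T$-orbits, and the Marsden--Weinstein reduction yields a K\"ahler surface on which $w'$ descends to a holomorphic coordinate. The fiber $F := (\mu')^{-1}(q_1, q_2)$ is the $T$-saturation of the circle $\{|w'-K_2| = \sqrt{q_2+K_2^2}\}$ in the reduced surface. Since any real one-dimensional submanifold of a symplectic surface is automatically isotropic, pulling back along the reduction map yields $\omega|_F = 0$.

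For the special Lagrangian condition, I would work on the affine chart attached to the cone spanned by $\{v_0, \ldots, v_{n-1}\}$, with coordinates $\zeta_0, \ldots, \zeta_{n-1}$, and introduce $\xi_j := \zeta_j/\zeta_0$ for $j=1,\ldots,n-1$. Using $d\log w' = \sum_{j=0}^{n-1} d\log\zeta_j$, a short computation gives
\begin{equation*}
\frac{\Omega'}{w'-K_2} = \frac{1}{n}\cdot\frac{dw'}{w'-K_2}\wedge d\log\xi_1 \wedge\cdots\wedge d\log\xi_{n-1}.
\end{equation*}
The tangent space to $F$ is spanned by the $T$-orbit generators $\partial/\partial\arg\xi_j$ together with the circle-tangent $\partial/\partial\arg(w'-K_2)$; on these, $dw'/(w'-K_2)$ evaluates to $\consti$ on the circle-tangent and zero on the $T$-generators, while $d\log\xi_j$ evaluates to $\consti\delta_{jk}$ on $\partial/\partial\arg\xi_k$ and zero on the circle-tangent. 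Hence $(\Omega'/(w'-K_2))|_F$ is $\consti^n/n$ times the real volume form on $F$, and choosing $\theta_0$ so that $\conste^{2\pi\consti\theta_0}\consti^n$ is purely imaginary yields $\mathrm{Re}(\conste^{2\pi\consti\theta_0}\Omega'/(w'-K_2))|_F = 0$.

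The main subtlety is the global coherence of this chart-dependent computation: one must verify that the constant $\theta_0$ does not depend on the chart, which follows from the chart-independence (up to sign) of $\Omega'$ noted after Proposition \ref{vol form}, together with the observation that while the auxiliary coordinates $\xi_j$ involve transition by roots of unity across charts, the $1$-forms $d\log\xi_j$ and $d\log w'$ descend unambiguously to $X' - \bigcup_{j=0}^{n-1} D_j$.
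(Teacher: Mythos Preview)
The paper omits the proof entirely, noting only that it is analogous to Proposition~\ref{SLag fib}, which in turn is attributed to Gross's symplectic reduction argument. Your proposal fills in precisely those details and is essentially correct.

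There is one imprecision worth flagging. You assert that the tangent space to $F$ is spanned by $\partial/\partial\arg\xi_j$ together with $\partial/\partial\arg(w'-K_2)$. The first part is fine: a short computation shows that the infinitesimal $T$-action in the $(w',\xi)$ coordinates has no $|w'|$, $\arg w'$, or $|\xi_j|$ components, so the $T$-orbit directions are exactly the span of the $\partial/\partial\arg\xi_j$. However, $\partial/\partial\arg(w'-K_2)$ is \emph{not} tangent to $F$ in general: moving along the circle $|w'-K_2|=\text{const}$ changes $|w'|$, and since $[\mu'_0]$ depends on $|\zeta_k|$ (hence on $|w'|$ and $|\xi_j|$), the level set $[\mu'_0]=q_1$ is not preserved. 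The correct last tangent vector has the form $V=\partial/\partial\arg(w'-K_2)+\sum_j d_j\,\partial/\partial|\xi_j|$ with the $d_j$ determined by $d[\mu'_0](V)=0$. Consequently $d\log\xi_j(V)=d_j/|\xi_j|$ is real, not zero.

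This does not affect your conclusion: in the determinant of the $n\times n$ matrix formed by evaluating $\bigl(\tfrac{dw'}{w'-K_2}, d\log\xi_1,\ldots,d\log\xi_{n-1}\bigr)$ on $(V,\partial/\partial\arg\xi_1,\ldots,\partial/\partial\arg\xi_{n-1})$, the first row is $(\consti,0,\ldots,0)$ since $dw'$ vanishes on the $T$-orbit directions, so expansion along that row kills the real entries $d_j/|\xi_j|$ in the first column. The determinant is still $\consti\cdot\consti^{\,n-1}=\consti^{\,n}$ times a real number, and the phase $\theta_0$ is as you state.
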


See Figure \ref{mu'} for an illustration of the above notations.  As $K_1 \to +\infty$, the divisors $D_j$ for $j=1, \ldots, n-1$ move to infinity and hence $\mu'$ tends to $\mu$ as Lagrangian fibrations.  We infer that the mirror of $\mu$ should appear as the limit of mirror of $\mu'$ as $K_1 \to +\infty$.  We will construct the mirror of $\mu'$ in the later sections.

\subsection{Topological considerations for $X'$} \label{top X'}
In this section we write down the discriminant locus of $\mu'$ and generators of $\pi_2(X',F)$, where $F$ is a fiber of $\mu'$.  This is similar to the discussion for $X$ in Section \ref{top X}, except that we have more disk classes due to the additional toric divisors.  The proofs to the propositions are similar to that in Section \ref{top X} and thus omitted.

\subsubsection{The discriminant locus of $\mu'$}
\begin{definition}
For each $\emptyset \neq I \subset \{0,\ldots, m-1\}$ such that $\{ v_i: i \in I \}$ generates some cone in $\Sigma'$, we define
$$T'_I := T_I \cap \big\{\xi \in P': \pairing{v'_j}{\xi} > -K_1 \textrm{ for all } j = 1, \ldots, n-1 \big\} $$
where $T_I$ is a face of $P$ given by Equation \eqref{T_I}.  $T_I'$ is a codimension-$(|I|-1)$ face of
$$\big\{\xi \in \partial P': \pairing{v'_j}{\xi} > -K_1 \textrm{ for all } j = 1, \ldots, n-1 \big\}.$$
\end{definition}

\begin{prop}
The discriminant locus of $\mu'$ is
$$\Gamma' := \left(\left(\bigcup_{|I| = 2} [T'_I]\right) \times \{0\}\right) \cup \partial B'.$$
\end{prop}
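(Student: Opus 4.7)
The plan is to follow the strategy of the earlier proposition computing $\Gamma$ for $\mu$, but with the additional care needed because the fan $\Sigma'$ contains the new rays $v'_j = v_j - v_0$ in addition to the original $v_0, \ldots, v_{m-1}$. A point $x \in X'$ is critical for $\mu' = ([\mu'_0], |w' - K_2|^2 - K_2^2)$ iff the combined differential has rank less than $n$, and the two components can be analyzed largely independently.

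For the second component, $d(|w' - K_2|^2)$ vanishes exactly on the divisor $D_0 = \{w' = K_2\}$, which is mapped under $\mu'$ into $E \times \{-K_2\} \subset \partial B'$. For the first component $[\mu'_0]$, I would track how the rank of the moment map $\mu'_0$ drops on toric strata and how it interacts with the quotient $M_\real \to M_\real/\real\langle\underline{\nu}\rangle$. The key observation is that $\pairing{\underline{\nu}}{v_i} = 1$ for every original ray $v_i$, whereas $\pairing{\underline{\nu}}{v'_j} = 0$ for every new ray. Hence $\underline{\nu}$ lies in the annihilator of $v'_j$ but not of $v_i$. Consequently, at a generic point of a toric divisor $\mathscr{D}'_j$ the differential $d[\mu'_0]$ already has rank only $n-2$, while at a generic point of $\mathscr{D}_i$ the rank is still $n-1$. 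This means every point of $\mathscr{D}'_j$ is critical, while $\mathscr{D}_i$ contributes critical points only through its codimension-two intersections. Iterating this rank analysis on codimension-two toric strata of $X'$ shows that $d[\mu'_0]$ has rank at most $n-2$ on each such stratum; since $w' = 0$ there while $d|w'-K_2|^2 = -2K_2\,\mathrm{Re}(\der w')$ is nonzero in the normal direction, the combined rank is at most $n-1$.

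It remains to compute the images of the critical loci. Preimages of $\partial E \times \real_{\geq -K_2}$ lie in some $\mathscr{D}'_j$, and preimages of $E \times \{-K_2\}$ lie in $D_0$, so $\partial B' \subset \Gamma'$. For a codimension-two stratum corresponding to an index set $I = \{i,k\} \subset \{0,\ldots,m-1\}$ whose generators span a $2$-cone of $\Sigma$, the image under $\mu'_0$ is the face $T_I$ intersected with the new inequalities $\pairing{v'_j}{\cdot} \geq -K_1$, namely $T'_I$; quotienting by $\real\langle\underline{\nu}\rangle$ gives $[T'_I]$, and combined with the second coordinate $|w'-K_2|^2 - K_2^2 = 0$ (since $w' = 0$ on this stratum) we obtain $[T'_I] \times \{0\}$. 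Codimension-two strata containing at least one new ray $v'_j$ are mapped into $\{q_1: \pairing{v'_j}{q_1} = -K_1\} \times \real_{\geq -K_2} \subset \partial B'$. Taking the union of all critical values gives $\Gamma' = \left(\bigcup_{|I|=2}[T'_I]\right)\times\{0\} \cup \partial B'$, and submersivity at all remaining points follows from a direct check that the images of $d[\mu'_0]$ and $d(|w'-K_2|^2)$ together span $T B'$.

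The main technical obstacle is the rank tracking under the quotient $M_\real \to M_\real/\real\langle\underline{\nu}\rangle$, specifically isolating the effect of $\underline{\nu} \in (v'_j)^\perp$ which causes the additional rank drop at the new toric strata; once this is done, the rest reduces to combinatorial bookkeeping entirely analogous to the case of $\mu$.
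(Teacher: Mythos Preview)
Your approach is correct and matches the paper's strategy (the paper omits the proof as ``similar'' to the case of $\mu$, and your argument is precisely that analogue with the extra bookkeeping for the new rays $v'_j$). Your key observation that $\pairing{\underline{\nu}}{v'_j}=0$ forces the entire divisor $\mathscr{D}'_j$ into the critical locus is exactly the new ingredient needed, and the identification of images is correct.

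One small inaccuracy: on a codimension-two stratum $\mathscr{D}_i\cap\mathscr{D}_k$ with $i,k\in\{0,\ldots,m-1\}$, the differential $\der w'$ actually \emph{vanishes} (locally $w'$ has $\zeta_i\zeta_k$ as a factor, so every term of $\der w'$ carries at least one of $\zeta_i,\zeta_k$). Thus $d|w'-K_2|^2$ is zero there, not ``nonzero in the normal direction.'' This is harmless for your argument---the combined rank of $d\mu'$ is then at most $n-2$ rather than $n-1$, which still makes the stratum critical---but you should correct the sentence.
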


Figure \ref{mu'} gives an example for the base and discriminant locus of $\mu'$.

\begin{figure}[htp]
\caption{The base of $\mu'$ when $X = K_{\proj^1}$.  The discriminant locus is $\{r_1, r_2\} \cup \partial B'$.}
\begin{center}
\includegraphics{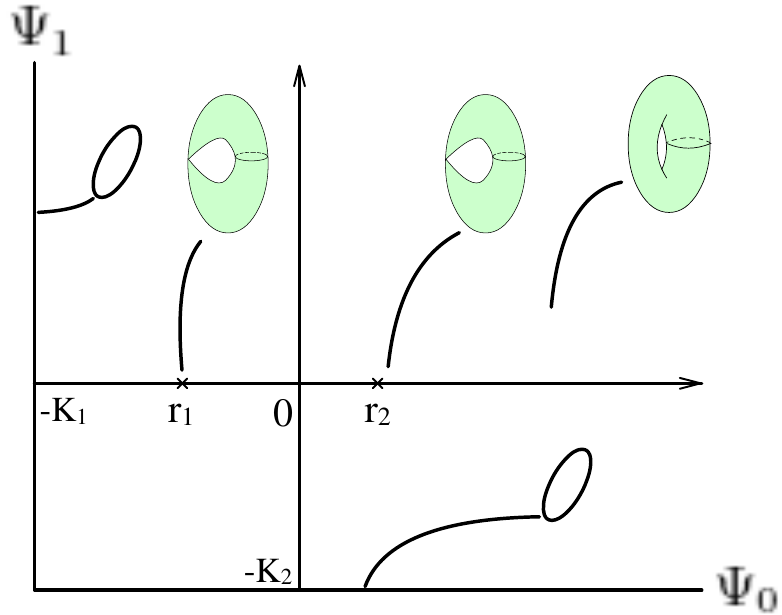}
\end{center}
\label{mu'}
\end{figure}

By removing the singular fibers of $\mu'$, we get a Lagrangian torus bundle $\mu': X'_0 \to B'_0$, where
\begin{eqnarray*}
B'_0 &:=& B' - \Gamma';\\
X'_0 &:=& (\mu')^{-1}(B'_0).
\end{eqnarray*}

\subsubsection{Local trivialization} \label{loc_trivial'}
We define
$$ U'_i := B'_0 - \bigcup_{k \neq i} \left([T'_{k}] \times \{0\} \right) $$
for $i = 0, \ldots, m-1$, so that $(\mu')^{-1} (U'_i)$ is trivialized.  Without loss of generality we stick to the trivialization over the open set
\begin{equation} \label{Uprime}
U' := U'_0 = B'_0 - \bigcup_{k \neq 0} \left([T'_{k}] \times \{0\} \right) = \big\{(q_1, q_2) \in B'_0: q_2 \not= 0 \textrm{ or } q_1 \in [T'_{0}] \big\}.
\end{equation}
Similar to Proposition \ref{T0}, one has
$$[T'_0] = \{q \in E^{\mathrm{int}}: \pairing{v'_j}{q} \geq c_j - c_0 \textrm{ for all } j = 1, \ldots, m-1\}.$$
Thus the open set $U' = U'_0$ can be written as
\begin{equation*}
\left\{
\begin{aligned}
&(q_1,q_2) \in E^{\mathrm{int}} \times \real_{> -K_2}:\\
&q_2 \neq 0 \textrm{ or } \pairing{v'_j}{q_1} > c_j - c_0 \textrm{ for all } j = 1, \ldots, m-1
\end{aligned}
\right\}.
\end{equation*}
Then the trivialization is explicitly written as
$$
(\mu')^{-1} (U') \stackrel{\cong}{\to} U' \times \left(\mathbf{T}_N /\mathbf{T}\langle v_0 \rangle\right) \times (\real/2\pi\integer)
$$
which is given in the same way as in Definition \ref{coordinates}.

\subsubsection{Explicit generators of $\pi_1(F_r)$ and $\pi_2(X,F_r)$} \label{gen_disks_X'}
For $r \in U'$, every $F_r$ is identified with the torus $\left(\mathbf{T}_N /\mathbf{T}\langle v_0 \rangle\right) \times (\real/2\pi\integer)$ via the above trivialization.  Then a basis of $\pi_1 (F_r)$ is given by $\{\lambda_i\}_{i=0}^{n-1}$, where $\lambda_0 = (0,1) \in N/\integer\langle v_0 \rangle \times \integer$ and $\lambda_i = ([v_i],0) \in N/\integer\langle v_0 \rangle \times \integer$ for $i = 1, \ldots, n-1$.

We use the same procedure as that given in Section \ref{gen_disks_X} to write down explicit generators of $\pi_2(X',F_r)$ for $r \in B'_0$.  First of all, $\pi_2(X', \mathbf{T})$ is generated by $\beta_i$ and $\beta'_j$ corresponding to $v_i$ and $v'_j$ respectively, where $i = 0, \ldots, m-1$ and $j = 1, \ldots, n-1$.  They are depicted in Figure \ref{disks in X'}.

\begin{figure}[htp]
\caption{Disks generating $\pi_2(X',T)$ for a regular moment-map fiber $T$ when $X = K_{\proj^1}$.}
\begin{center}
\includegraphics{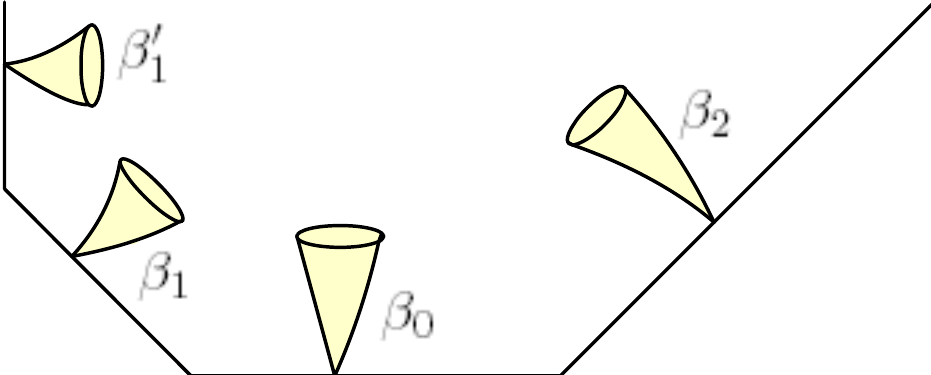}
\end{center}
\label{disks in X'}
\end{figure}

Then fixing a based point $r_0 = (q_1, q_2) \in U'$ with $q_2 > 0$, the isotopy
$$L_t := \{ x \in X: [\mu'_0(x)] = q_1; |w'(x) - t|^2 = K^2_2 + q_2 \} $$
between $F_{r_0}$ and a toric fiber $\mathbf{T}$ gives an identification $\pi_2(X', F_{r_0}) \cong \pi_2(X', \mathbf{T})$.  Finally the trivialization of $\mu^{-1}(U')$ gives an identification between $F_r$ and $F_{r_0}$ for any $r \in U'$.  Thus $\{\beta_i\}_{i=0}^{m-1} \cup \{\beta'_j\}_{j=1}^{n-1}$ can be regarded as a generating set of $\pi_2(X', F_{r})$.

\begin{prop} \label{boundary of disks in X'}
$$\partial \beta_j = \lambda_0 + \sum_{i=1}^{n-1} \pairing{\nu_i}{v_j}\lambda_i  \in (N / \integer\langle v_0 \rangle) \times \integer \cong \pi_1(F_r)$$
and
$$\partial \beta'_k = \lambda_k$$
for $j = 0, \ldots, m-1$ and $k = 1, \ldots, n-1$.
\end{prop}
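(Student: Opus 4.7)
The plan is to mimic the proof of Proposition \ref{boundary of disks in X} and just account for the extra disk classes $\beta'_k$ arising from the additional rays $v'_k = v_k - v_0$ in the fan $\Sigma'$. The first formula for $\partial\beta_j$ is already present for $\beta_j \in \pi_2(X, F_r)$, and nothing in the argument used there depended on which toric Kähler manifold the disks lived in: it used only the known boundary class $\partial\beta^{\mathbf{T}}_j = v_j \in \pi_1(\mathbf{T}_N)$ of the basic disks from \cite{cho06}, the Lagrangian isotopy between $F_{r_0}$ and a toric fiber $\mathbf{T}$, and the local trivialization over $U$. All three ingredients are available in the modified setting: $\pi_2(X', \mathbf{T})$ is generated by basic disks with $\partial\beta^{\mathbf{T}}_j = v_j$ for $j = 0, \ldots, m-1$ and $\partial(\beta'^{\mathbf{T}}_k) = v'_k$ for $k = 1, \ldots, n-1$ by Cho-Oh; the isotopy $L_t$ with $w$ replaced by $w'$ still gives the identification $\pi_2(X', F_{r_0}) \cong \pi_2(X', \mathbf{T})$; and $U'$ carries the trivialization described in Section \ref{loc_trivial'}. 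So the formula for $\partial\beta_j$ follows verbatim from the computation in Proposition \ref{boundary of disks in X}.

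For the new classes $\beta'_k$, the plan is to run the exact same identification. Under
\[
\mathbf{T}_N \;\isomto\; (\mathbf{T}_N/\mathbf{T}\langle v_0\rangle) \times (\real/2\pi\integer),
\]
where the last coordinate is $\pairing{\underline{\nu}}{\,\cdot\,}$ (equivalently, $\arg(w'-K_2)/2\pi$ after the isotopy, which reduces to $\pairing{\underline{\nu}}{\,\cdot\,}$ on a toric fiber), the element $v'_k = v_k - v_0 \in N$ maps to
\[
\big([v_k - v_0],\; \pairing{\underline{\nu}}{v_k - v_0}\big) \;=\; \big([v_k],\, 0\big) \;=\; \lambda_k,
\]
because $\pairing{\underline{\nu}}{v_k} = \pairing{\underline{\nu}}{v_0} = 1$ (so the $\integer$-component vanishes) and $[v_0] = 0$ in $N/\integer\langle v_0\rangle$. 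Transporting this identity along the Lagrangian isotopy $L_t$ (which preserves boundary classes) and then along the trivialization of $(\mu')^{-1}(U')$ yields $\partial\beta'_k = \lambda_k$ in $\pi_1(F_r)$.

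There is no real obstacle: the only points that require minor care are (i) checking that $w'$ is non-vanishing along the isotopy $L_t$ so that $\arg(w'-K_2)$ provides an honest $\real/2\pi\integer$-coordinate throughout — this is exactly the content of $q_2 > 0$ in the choice of basepoint $r_0$ — and (ii) making sure the identification $\pi_1(\mathbf{T}_N) \cong (N/\integer\langle v_0\rangle) \times \integer$ used in the $\beta'_k$ computation agrees with the identification used for $\beta_j$, which it does because both come from the same trivialization via the coordinate functions $(\arg\zeta_1, \ldots, \arg\zeta_{n-1}, \arg(w'-K_2))$ in Definition \ref{coordinates}. With these points noted, the proposition follows by the same one-line identification as in Proposition \ref{boundary of disks in X}.
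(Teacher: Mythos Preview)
Your proposal is correct and follows exactly the approach the paper intends: the paper explicitly omits the proof, stating that it is ``similar to that in Section~\ref{top X}'' (i.e., to the proof of Proposition~\ref{boundary of disks in X}), and your argument is precisely that extension, together with the one-line computation $v'_k = v_k - v_0 \mapsto ([v_k],0) = \lambda_k$ for the new classes. The care you take with points (i) and (ii) is appropriate but not strictly needed beyond what was already used for Proposition~\ref{boundary of disks in X}.
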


\begin{prop} \label{intersection'}
Let $r = (q_1, q_2) \in U'$ with $q_2 \not= 0$, and $\beta_i \in \pi_2 (X', F_r)$ be the disk classes defined above. Then
\begin{align*}
\beta_0 \cdot \mathscr{D}_j &= 0 \textrm{ for all } j = 1, \ldots, m-1;\\
\beta_i \cdot \mathscr{D}_j &= \delta_{ij} \textrm{ for all } i,j=1, \ldots, m-1;\\
\beta_i \cdot \mathscr{D}'_k &= 0 \textrm{ for all } 0 = 1, \ldots, m-1 \textrm{ and } k = 1, \ldots, n-1;\\
\beta_i \cdot D_0 &= 1 \textrm{ for all } i=0, \ldots, m-1;\\
\beta'_l \cdot D_0 &= 0 \textrm{ for all } l=1,\ldots,n-1;\\
\beta'_l \cdot D_k &= \delta_{lk} \textrm{ for all } l,k=1,\ldots,n-1.
\end{align*}
\end{prop}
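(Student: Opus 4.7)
The proof will closely mirror that of Proposition \ref{intersection}, using the elementary topological fact that intersection numbers are preserved under an isotopy of the Lagrangian boundary so long as the isotopy stays disjoint from the cycle being intersected, together with the formulas of Cho-Oh \cite{cho06} for basic toric disks. The first step is to reduce from $F_{r_0}$ to a toric fiber $\mathbf{T}$ via the isotopy $L_t = \{x \in X' : [\mu'_0(x)] = q_1, \, |w'(x)-t|^2 = K_2^2 + q_2\}$ for $t \in [0, K_2]$ introduced in Section \ref{gen_disks_X'}. This identifies $\beta_i$ with $\beta^{\mathbf{T}}_i$ and $\beta'_l$ with ${\beta'}^{\mathbf{T}}_l$ in $\pi_2(X', \mathbf{T})$, and Cho-Oh gives
\[
\beta^{\mathbf{T}}_i \cdot \mathscr{D}_j = \delta_{ij}, \quad \beta^{\mathbf{T}}_i \cdot \mathscr{D}'_k = 0, \quad {\beta'}^{\mathbf{T}}_l \cdot \mathscr{D}_j = 0, \quad {\beta'}^{\mathbf{T}}_l \cdot \mathscr{D}'_k = \delta_{lk}.
\]
Observing that the face $\Psi_k$ of $B'$ is precisely the moment image of the toric divisor $\mathscr{D}'_k$, one has $D_k = \mathscr{D}'_k$ for $k = 1, \ldots, n-1$, so the statements involving $\mathscr{D}_j$, $\mathscr{D}'_k$ and $D_k$ all follow at once; the disjointness of $L_t$ from all these codimension-one toric strata for $t \in [0, K_2]$ is clear from the condition $q_2 > 0$ together with the fact that $L_t$ lies over the interior point $q_1 \in [T'_0]$.

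For the intersections with $D_0 = \{w' = K_2\}$, I introduce the second isotopy $S_t = \{x \in X' : w'(x) = t\}$ for $t \in [0, K_2]$; the computation $|w'(L_t) - t|^2 = K_2^2 + q_2 > 0$ guarantees $L_t \cap S_t = \emptyset$. At $t = 0$, $S_0$ is the zero divisor $(w') = \sum_{i=0}^{m-1} \mathscr{D}_i$, so
\[
\beta_i \cdot D_0 = \beta^{\mathbf{T}}_i \cdot \sum_{j=0}^{m-1} \mathscr{D}_j = 1, \qquad \beta'_l \cdot D_0 = {\beta'}^{\mathbf{T}}_l \cdot \sum_{j=0}^{m-1} \mathscr{D}_j = 0,
\]
the second equality holding because $v'_l$ is not among $\{v_0, \ldots, v_{m-1}\}$.

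To extend to a general $r \in U'$ with $q_2 \neq 0$, the trivialization of $(\mu')^{-1}(U')$ from Section \ref{loc_trivial'} provides a Lagrangian isotopy inside $(\mu')^{-1}(U')$ from $F_{r_0}$ to $F_r$. From the description of $U'$ in Equation \eqref{Uprime}, $U'$ is disjoint from the strata $[T'_{k}] \times \{0\}$ for $k \neq 0$, so $(\mu')^{-1}(U')$ is disjoint from every $\mathscr{D}_j$ ($j \geq 1$) and every $\mathscr{D}'_k$; moreover $q_2 \neq 0$ forces $w' \neq K_2$, so $(\mu')^{-1}(U')$ also avoids $D_0$. Thus the intersection numbers computed above are preserved. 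The main obstacle is purely bookkeeping: one must verify, in each of the isotopies used, that the moving Lagrangian really does stay disjoint from the divisor it is being paired with. The presence of the new toric divisors $\mathscr{D}'_k = D_k$ is the only feature not already handled in Proposition \ref{intersection}, but the identification of $D_k$ with a toric divisor means no genuinely new argument is required.
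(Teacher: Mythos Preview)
Your proposal is correct and follows precisely the template the paper intends: the paper omits the proof of Proposition \ref{intersection'}, saying it is similar to that of Proposition \ref{intersection}, and your argument is exactly that adaptation. The one ingredient you make explicit that the paper leaves implicit is the identification $D_k = \mathscr{D}'_k$ for $k = 1, \ldots, n-1$ (which follows since $\pairing{v'_k}{\underline{\nu}} = 0$, so the moment-map facet $\{\pairing{v'_k}{\cdot} = -K_1\}$ descends to the quotient and its preimage under $\mu'$ is the toric divisor); with this in hand, all the $D_k$- and $\mathscr{D}'_k$-intersections reduce to Cho--Oh, and the $D_0$-intersections follow from the moving-divisor isotopy $S_t$ exactly as in Proposition \ref{intersection}. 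One small cleanup: your justification that $(\mu')^{-1}(U')$ avoids $D_0$ via ``$q_2 \neq 0$'' is not quite the right reason (points of $U'$ can have $q_2 = 0$); the correct reason is simply that $\Psi_0 \subset \partial B' \subset \Gamma'$, so $U' \subset B'_0$ never meets $\Psi_0$.
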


\subsection{Wall crossing phenomenon} \label{counting disks}
In Section \ref{open GW section} we give a review on the open Gromov-Witten invariants $n_\beta$ for a disk class $\beta \in \pi_2(X,L)$ bounded by a Lagrangian $L$ (Definition \ref{open_GW}).  We find that when $X$ is a toric Calabi-Yau manifold and $L = F_r$ is a Gross fiber, these invariants exhibit a wall-crossing phenomenon, which is the main topic of this section.  This is an application of the ideas and techniques introduced by Auroux \cite{auroux07, auroux09} to the case of toric Calabi-Yau manifolds.  The main results are Proposition \ref{disk counting + in X} and \ref{disk counting - in X}.  In Section \ref{blowup-flop} we will give methods to compute the open Gromov-Witten invariants.

Let's start with the Maslov index of disks (Definition \ref{Maslov_index}), which is important because it determines the expected dimension of the corresponding moduli (Equation \ref{exp_dim}).  The following lemma which appeared in \cite{auroux07} gives a formula for computing the Maslov index, which can be regarded as a generalization of the corresponding result by Cho-Oh \cite{cho06} for moment-map fibers in toric manifolds.

\begin{lemma}[Lemma 3.1 of \cite{auroux07}] \label{Maslov index}
Let $Y$ be a K\"ahler manifold of dimension $n$, $\sigma$ be a nowhere-zero meromorphic $n$-form on $Y$, and let $D$ denote its pole divisor.  If $L \subset Y - D$ is a compact oriented special Lagrangian submanifold with respect to $\sigma$, then for each $\beta \in \pi_2(Y, L)$,
$$\mu(\beta) =  2 \beta \cdot D.$$
\end{lemma}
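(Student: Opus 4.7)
The plan is to compute the Maslov index via an intrinsic formula on the squared anticanonical bundle, exploiting $\sigma^{-1}$ as a canonical holomorphic section of $K_Y^{-1}$ whose zero divisor is precisely $D$ and whose boundary phase is locked by the special Lagrangian condition.

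For any representative $u:(\Delta,\partial\Delta)\to(Y,L)$, set $\mathcal{L}:=u^\ast K_Y^{-1}$. The oriented real line $(\Lambda^n_\real TL)|_{u(\partial\Delta)}$ squares into a canonical positive real ray inside $\mathcal{L}^{\otimes 2}|_{\partial\Delta}$; pick a section $\ell$ in this ray. For any nowhere-vanishing smooth section $s_0$ of $\mathcal{L}^{\otimes 2}$ over the contractible disk $\Delta$, $\mu(\beta)$ is by definition the winding number on $\partial\Delta$ of the $U(1)$-valued phase of $\ell/s_0$. All the sign and factor-of-two subtleties sit in this step; I would confirm the convention against the model $(Y,L,\sigma)=(\cpx,S^1,\der z/z)$ with the identity disk, where $\mu=2=2\beta\cdot D$ gives a clean sanity check.

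Given this characterisation, the proof has two clean inputs. First, since $\sigma$ is nowhere-vanishing with pole divisor $D$, $\sigma^{-1}$ is a genuinely holomorphic section of $K_Y^{-1}$ with zero divisor $D$, so $(\sigma^{-1})^{\otimes 2}$ is a holomorphic section of $K_Y^{-2}$ with zero divisor $2D$. Its pullback $s:=u^\ast(\sigma^{-1})^{\otimes 2}$ is a smooth section of $\mathcal{L}^{\otimes 2}$ on $\Delta$; after a small perturbation of $u$ rel boundary to be transverse to $D$ (which does not change $\beta\cdot D$), $s$ has signed interior zero count $2(\beta\cdot D)$.

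Second, on $\partial\Delta\subset L\setminus D$ the special Lagrangian hypothesis says that $\sigma|_L$ has constant phase $e^{\consti\theta_0}$ relative to the real section of $K_Y|_L$ induced by the orientation of $L$; squaring and inverting, $s|_{\partial\Delta}$ equals $e^{-2\consti\theta_0}$ times a positive real multiple of $\ell$, so $s/\ell$ has winding number zero on $\partial\Delta$. Writing $s=f\cdot s_0$ with $s_0$ nowhere-vanishing, the argument principle (equivalently Poincar\'e--Hopf for smooth complex sections) yields $\deg(f|_{\partial\Delta})=2(\beta\cdot D)$, while the intrinsic formula gives $\deg(s_0/\ell)=-\mu(\beta)$, whence
\begin{equation*}
0 \;=\; \deg(s/\ell) \;=\; \deg(f|_{\partial\Delta}) + \deg(s_0/\ell) \;=\; 2(\beta\cdot D) - \mu(\beta),
\end{equation*}
which is the claim. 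The main obstacle is really just the bookkeeping in the first step, namely identifying $\mu(\beta)$ with a squared-anticanonical winding number and fixing its sign; once the conventions are pinned down, the rest is a direct line-bundle degree calculation.
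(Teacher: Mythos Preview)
The paper does not supply its own proof of this lemma; it is quoted verbatim as Lemma~3.1 of Auroux \cite{auroux07} and used as a black box. Your argument is essentially Auroux's original proof: trivialize $u^\ast K_Y^{-2}$ via the holomorphic section $(\sigma^{-1})^{\otimes 2}$, observe that the special Lagrangian condition pins the boundary phase of this section against the squared real volume line of $TL$ to a constant, and read off the Maslov index as the interior zero count $2(\beta\cdot D)$ by the argument principle. The reasoning is correct, and your explicit sanity check on $(\cpx,S^1,\der z/z)$ is a good way to nail down the sign convention.
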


Recall that the regular fibers $F_r$ of $\mu: X \to B$ are special Lagrangian with respect to $\Omega / (w - K_2)$ whose pole divisor is $D_0$ (see Equation \eqref{D_0} for the definition of $D_0$).  Using the above lemma, the Maslov index of $\beta \in \pi_2(X, F_r)$ is
$$\mu(\beta) = 2 \beta \cdot D_0.$$
Similarly $\mu': X' \to B'$ are special Lagrangian with respect to $\Omega' / (w' - K_2)$ whose pole divisor is $\sum_{j=0}^{n-1} D_j$.  Thus the Maslov index of $\beta \in \pi_2(X', F_r)$ is
\begin{equation} \label{Maslov_X'}
\mu(\beta) = 2 \beta \cdot \left(\sum_{j=0}^{n-1} D_j\right).
\end{equation}

>From this we deduce the following corollary:

\begin{corollary} \label{Maslov>=0}
For every $\beta \in \pi_2(X, F_r)$, if $\beta$ is represented by stable disks, then $\mu(\beta) \geq 0$.
\end{corollary}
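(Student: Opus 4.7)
The plan is to reduce to the intersection-theoretic formula for the Maslov index and then invoke positivity of intersection between holomorphic curves and the divisor $D_0$. By Lemma \ref{Maslov index}, applied to $Y = X$ with the holomorphic volume form $\Omega/(w - K_2)$ whose pole divisor is exactly $D_0 = \{w = K_2\}$, one has
$$\mu(\beta) = 2\,\beta \cdot D_0.$$
So the task reduces to showing that if $\beta$ is represented by a stable disk $u:(\Sigma,\partial\Sigma) \to (X,F_r)$, then $\beta \cdot D_0 \geq 0$.

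Decompose $\Sigma = \left(\bigcup_a \Sigma_a^{\mathrm{disk}}\right) \cup \left(\bigcup_b \Sigma_b^{\mathrm{sph}}\right)$ into its disk and sphere components. Then $\beta \cdot D_0$ is the sum over components of $[u|_{\Sigma_\bullet}] \cdot D_0$, so it suffices to show each term is non-negative. First consider a disk component $u_a: (\Delta,\partial\Delta) \to (X, F_r)$. Since $r$ lies in the interior base $B_0$ and $D_0 = \mu^{-1}(\partial B)$, the Lagrangian $F_r$ is disjoint from $D_0$; in particular $u_a(\Delta)$ is not contained in $D_0$. Hence $w \circ u_a - K_2$ is a holomorphic function on $\Delta$ which is non-zero on $\partial\Delta$, and the intersection number $[u_a] \cdot D_0$ equals the number of zeros of $w \circ u_a - K_2$ counted with multiplicity, which is non-negative by positivity of intersection.

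Next consider a sphere component $u_b: \cpx\proj^1 \to X$. If $u_b(\cpx\proj^1) \not\subset D_0$, then positivity of intersection for holomorphic curves gives $[u_b] \cdot D_0 \geq 0$ directly. If $u_b(\cpx\proj^1) \subset D_0$, then $[u_b] \cdot D_0$ equals the degree of $u_b^* N_{D_0/X}$; but $D_0$ is a smooth fiber of the globally defined holomorphic function $w$, so its normal bundle $N_{D_0/X}$ is trivial (the global section $\partial/\partial w$ trivializes it). Therefore $[u_b] \cdot D_0 = 0$ in this case as well. Summing over components yields $\beta \cdot D_0 \geq 0$, hence $\mu(\beta) \geq 0$.

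The only subtle point is handling sphere components that might fall entirely inside $D_0$, which would a priori break naive positivity of intersection; however, this is controlled precisely by the fact that $D_0$ is a smooth level set of the holomorphic function $w$, making its normal bundle trivial. No issue arises for disk components because their boundary condition in $F_r$ already precludes them from being swallowed by $D_0$.
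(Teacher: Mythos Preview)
Your proof is correct, but it handles the sphere components differently from the paper. The paper's argument observes that the Maslov index of a stable disk decomposes as the sum of the Maslov indices of its disk components plus twice the first Chern numbers of its sphere components; it then invokes the Calabi-Yau condition $c_1(X)=0$ to conclude that every sphere contributes zero, so only the (non-negative) disk contributions remain. You instead stay with the intersection number $\beta\cdot D_0$ throughout and argue positivity component-by-component, dealing with the edge case of a sphere contained in $D_0$ via the triviality of $N_{D_0/X}$ (since $D_0$ is a smooth fiber of $w$). The paper's route is a bit cleaner and pinpoints exactly where the Calabi-Yau hypothesis is used; your route is more self-contained, does not need the general formula relating Maslov index to Chern number for sphere bubbles, and would in fact work whenever the pole divisor of the meromorphic volume form happens to be a smooth level set of a global holomorphic function, regardless of whether $X$ is Calabi-Yau.
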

\begin{proof}
>From the above formulae, it follows that the Maslov index of any holomorphic disks in $\beta \in \pi_2(X, F_r)$ or $\beta \in \pi_2(X', F_r)$ is non-negative.

Every stable disk consists of holomorphic disk components and holomorphic sphere components, and its Maslov index is the sum of Maslov indices of its disk components and two times Chern numbers of its sphere components.  The disk components have non-negative Maslov index as mentioned above.  Since $X$ is Calabi-Yau, every holomorphic sphere in $X$ has Chern number zero.  Thus the sum is non-negative.
\end{proof}

\subsubsection{Stable disks in $X$}
First we consider the toric Calabi-Yau $X$.  The lemma below gives an expression of the wall (see Definition \ref{wall}).

\begin{lemma} \label{Maslov-zero disk}
For $r = (q_1, q_2) \in B_0$, a Gross fiber $F_r$ bounds some non-constant stable disks of Maslov index zero in $X$ if and only if $q_2 = 0$.
\end{lemma}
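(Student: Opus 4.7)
My plan is to use the identity $\mu(\beta) = 2\, \beta \cdot D_0$ established earlier (from Lemma \ref{Maslov index} applied to $\Omega/(w-K_2)$, whose pole divisor is $D_0 = \{w = K_2\}$) together with the non-negativity of Maslov index for stable disks in a CY manifold (Corollary \ref{Maslov>=0}), and to reduce everything to a maximum-modulus argument for the holomorphic function $w$ along disk components.

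For the forward direction $(\Rightarrow)$, I would take a non-constant stable disk $u$ bounded by $F_r$ with $\mu(u) = 0$. Decomposing $u$ into disk and sphere components and using Corollary \ref{Maslov>=0}, each disk component $u_\alpha$ must itself have Maslov index $0$, hence $u_\alpha \cdot D_0 = 0$, so $u_\alpha(\Delta)$ avoids $D_0$. Then $w \circ u_\alpha - K_2$ is a nowhere-vanishing holomorphic function on $\Delta$ whose modulus on $\partial\Delta$ equals the constant $\sqrt{q_2 + K_2^2}$ (because $\partial u_\alpha \subset F_r$); the maximum modulus principle applied to $w \circ u_\alpha - K_2$ and to its reciprocal forces $w \circ u_\alpha \equiv c$ for some constant $c$ with $|c - K_2| = \sqrt{q_2 + K_2^2}$. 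If $c \neq 0$, then $u_\alpha$ lands in the smooth level set $w^{-1}(c) \subset (\cpx^\times)^n$, which is isomorphic to $(\cpx^\times)^{n-1}$ (since $w$ restricted to the open orbit is the character $\underline{\nu}$); the intersection $F_r \cap w^{-1}(c)$ is a moment-map torus in this $(\cpx^\times)^{n-1}$, and any holomorphic disk in $(\cpx^\times)^{n-1}$ bounded by a real torus is constant by applying max modulus to the coordinate functions. Hence any non-constant disk component forces $c = 0$, which combined with $|{-K_2}| = \sqrt{q_2 + K_2^2}$ gives $q_2 = 0$ as required.

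For the reverse direction $(\Leftarrow)$, I would construct an explicit non-constant Maslov-$0$ disk when $q_2 = 0$. The hypothesis $q_2 = 0$ means the circle $\{|w - K_2| = K_2\}$ passes through the origin, so the image of $w$ on $F_r$ contains $0$, and in particular $F_r$ meets the anti-canonical divisor $\{w = 0\} = \bigcup_i \mathscr{D}_i$. Pick an irreducible toric divisor $\mathscr{D}_i$ intersected by $F_r$: then $F_r \cap \mathscr{D}_i$ is a moment-map torus fiber inside $\mathscr{D}_i$ (which is itself a toric manifold of dimension $n-1$), so by Cho-Oh \cite{cho06} it bounds Maslov-two holomorphic disks in $\mathscr{D}_i$. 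Viewed as disks in $X$, these disks stay inside $\{w = 0\}$, so they have zero intersection with $D_0 = \{w = K_2\}$, hence Maslov index $0$ in $X$ by Lemma \ref{Maslov index}, and they are non-constant by construction.

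I expect the main technical annoyance to be the bookkeeping of the components of a reducible stable disk in the forward direction; in particular I need to argue that sphere bubbles cannot by themselves produce a non-constant Maslov-$0$ stable disk with nontrivial class in $\pi_2(X, F_r)$ when $q_2 \ne 0$. The key observation here is that a sphere bubble is attached at an interior nodal point of some disk component, which (by the argument above, for $q_2 \neq 0$) is constant at a point $p \in F_r$; so the sphere must pass through $p \in F_r$, but for $q_2 \neq 0$ the fiber $F_r$ lies in $X \setminus \{w = 0\}$ and any non-constant holomorphic sphere in the toric Calabi-Yau $X$ is contained in the toric boundary $\{w = 0\}$ (effective curve classes are represented by toric-boundary curves), so no such sphere meets $F_r$. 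Thus sphere bubbling is vacuous for $q_2 \ne 0$, reducing the forward direction to the disk-component analysis already carried out.
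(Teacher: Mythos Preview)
Your proof is correct and follows essentially the same approach as the paper's: both use $\mu(\beta)=2\,\beta\cdot D_0$, positivity of intersection to force the disk away from $D_0$, the maximum modulus principle on $w-K_2$ to make $w$ constant along each disk component, and the observation that $w^{-1}(c)\cong(\cpx^\times)^{n-1}$ for $c\neq 0$ supports no non-constant disks bounded by the real subtorus.  Your converse is likewise the same as the paper's.  The only substantive difference is that you handle the possibility of a stable disk whose disk components are all constant while non-constant sphere bubbles carry the class; the paper simply writes ``we can restrict our attention to a holomorphic disk'' without justifying that reduction, whereas you close this gap by noting that any sphere component in $X$ lies in $\{w=0\}$ and hence cannot meet $F_r$ when $q_2\neq 0$.
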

\begin{proof}
Since $X$ is Calabi-Yau, sphere bubbles in a stable disk have Chern number zero and hence do not affect the Maslov index.  We can restrict our attention to a holomorphic disk $u: (\Delta,\partial\Delta) \to (X,F_r)$ whose Maslov index is zero.  By Lemma \ref{Maslov index}, $u$ has intersection number zero with the boundary divisor $D_0 = \{w = K_2\}$.  But since $u$ is holomorphic and $D_0$ is a complex submanifold, the multiplicity for each intersection point between them is positive.  This implies
$$\mathrm{Im}(u) \subset \mu^{-1}(B^{\mathrm{int}}).$$
Then $w \circ u - K_2$ is a nowhere-zero holomorphic function on the disk.  Moreover, $|w \circ u - K_2|$ is constant on $\partial\Delta$.  By applying maximum principle on $|w \circ u - K_2|$ and $|w \circ u - K_2|^{-1}$, $w \circ u$ must be constant with value $z_0$ in the circle
$$\big\{|z - K_2|^2 = K_2^2 + q_2\big\} \subset \cpx.$$
Unless $z_0 = 0$, $w^{-1}(z_0)$ is topologically $\real^{n-1} \times \torus{n-1}$, which contains no non-constant holomorphic disks whose boundary lies in $F_r \cap w^{-1}(z_0) \cong T^{n-1} \subset \real^{n-1} \times \torus{n-1}$.  Hence $z_0 = 0$, which implies $q_2 = 0$.
Conversely, if $q_2 = 0$, $F_r$ intersects a toric divisor along a (degenerate) moment map fiber, and hence bounds holomorphic disks which are part of the toric divisor.  They have Maslov index zero because they never intersect $D_0$.
\end{proof}

Combining the above lemma with Corollary \ref{Maslov>=0}, one has
\begin{corollary} \label{min_Maslov_2}
For $r = (q_1, q_2) \in B_0$ with $q_2 \not= 0$, $F_r$ has minimal Maslov index two.
\end{corollary}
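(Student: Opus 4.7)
My plan is to observe that this corollary is immediate from the three ingredients already assembled just before the statement, so the proof is essentially a one-line synthesis. The setup is: $F_r$ is special Lagrangian with respect to the meromorphic volume form $\Omega/(w-K_2)$, whose pole divisor is exactly $D_0$.

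First, I would invoke Lemma \ref{Maslov index} to get the identity $\mu(\beta)=2\,\beta\cdot D_0$ for every $\beta\in\pi_2(X,F_r)$. Since $D_0$ is a complex hypersurface disjoint from $F_r$ and any stable disk is $J$-holomorphic, each intersection of a disk component with $D_0$ contributes positively, so $\beta\cdot D_0\in\integer_{\geq 0}$ whenever $\beta$ is represented by a stable disk. In particular, $\mu(\beta)$ is a non-negative even integer; this is a sharpening of Corollary \ref{Maslov>=0}, which we can also cite directly.

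Next, I would rule out the value zero using Lemma \ref{Maslov-zero disk}: under the hypothesis $q_2\neq 0$, no non-constant stable disk with boundary on $F_r$ has Maslov index zero. Combined with the previous paragraph, every non-constant stable disk in a class $\beta\neq 0$ satisfies $\mu(\beta)\geq 2$. Hence the minimal Maslov index of $F_r$ is at least two; since the Maslov index is automatically even on stable disks by the formula above, and since the basic disk class $\beta_0\in\pi_2(X,F_r)$ satisfies $\beta_0\cdot D_0=1$ by Proposition \ref{intersection} and thus realizes the value $2$ (this class being representable by a stable disk after the Lagrangian isotopy connecting $F_r$ to a toric fiber, where Cho--Oh disks exist), the minimum is exactly two.

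There is essentially no obstacle here beyond keeping track of which side of the wall we are on; the only subtlety is that we are using the sign condition $q_2\neq 0$ to exclude the wall $H=\{q_2=0\}$ identified in Lemma \ref{Maslov-zero disk}, and the positivity of intersection with the pole divisor $D_0$, which relies on $D_0$ being a complex submanifold and disks being pseudo-holomorphic. Both inputs are already in hand, so the corollary follows by assembly.
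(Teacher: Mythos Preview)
Your proof is correct and follows the paper's approach exactly: the paper simply says the corollary follows by combining Lemma~\ref{Maslov-zero disk} with Corollary~\ref{Maslov>=0} (the evenness $\mu(\beta)=2\,\beta\cdot D_0$ being implicit), which is precisely your synthesis.

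One remark: your final paragraph, arguing that the minimum is \emph{exactly} two by exhibiting $\beta_0$, goes beyond what the paper proves at this point and beyond what is needed for the applications (where ``minimal Maslov index at least two'' is the relevant hypothesis). Your justification there --- that $\beta_0$ is represented by a stable disk via the Lagrangian isotopy to a toric fiber --- is clean only for $r\in B_+$; for $r\in B_-$ the isotopy~\eqref{isotopy} crosses the wall, so one cannot directly transport the Cho--Oh disk this way. The representability of $\beta_0$ when $r\in B_-$ is established later (Lemma~\ref{C^n} and Proposition~\ref{disk counting - in X}), so the claim is true but not yet available at this point in the paper.
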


Using the terminology introduced in Definition \ref{wall}, the wall is
$$H = M_\real / \real\langle \underline{\nu} \rangle \times \{0\}.$$
$B_0 - H$ consists of two connected components
\begin{equation} \label{B_+}
B_+ := M_\real / \real\langle \underline{\nu} \rangle \times (0, +\infty)
\end{equation}
and
\begin{equation} \label{B_-}
B_- := M_\real / \real\langle \underline{\nu} \rangle \times (-K_2, 0).
\end{equation}

For $r \in B_0 - H$, the fiber $F_r$ has minimal Maslov index two, and thus $n_\beta$ is well-defined for $\beta \in \pi_2(X,F_r)$ (see Section \ref{open GW section}).  There are two cases: $r \in B_+$ and $r \in B_-$.

\vspace{5pt}
\noindent 1. \textit{$r \in B_+$.}

One has the following lemma relating a Gross fiber $F_r$ to a Lagrangian toric fiber $\mathbf{T}$:

\begin{lemma} \label{Lag iso}
For $r \in B_+$, the Gross fiber $F_r$ is Lagrangian-isotopic to a Lagrangian toric fiber $\mathbf{T}$, and all the Lagrangians in this isotopy do not bound non-constant holomorphic disks of Maslov index zero.
\end{lemma}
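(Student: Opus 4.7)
The plan is to use the explicit isotopy $L_t$ already defined in Equation~(4.15), interpret each $L_t$ as a fiber of a ``shifted Gross fibration'' with parameter $t$ in place of $K_2$ (so that the structural results of Section~\ref{review} carry over), and then run the maximum-modulus argument of Lemma~\ref{Maslov-zero disk} uniformly in $t \in [0, K_2]$.

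First I would verify that each $L_t$ is a smooth Lagrangian submanifold, with $L_0$ a toric fiber (since $|w|^2$ is a function of $\mu_0$) and $L_{K_2} = F_r$. Lagrangianity reduces to showing the defining functions $[\mu_0]$ and $|w - t|^2$ Poisson-commute: the components of $[\mu_0]$ pairwise commute as they are the components of the moment map of the subtorus $\mathbf{T}^{\perp \underline{\nu}}$, while $w$ (and hence $|w - t|^2$) is invariant under $\mathbf{T}^{\perp \underline{\nu}}$ because $\underline{\nu}$ annihilates $N^{\perp \underline{\nu}}$. Smoothness follows because $r \in B_+$ (i.e.\ $q_2 > 0$) forces $L_t \cap \{w = t\} = \emptyset$, and $q_1$ lying in the interior of $M_\real/\real\langle\underline{\nu}\rangle$ keeps $L_t$ disjoint from the codimension-two toric strata; concretely, $L_t$ is the fiber of the shifted fibration $([\mu_0],|w-t|^2-t^2)$ over the regular value $(q_1, K_2^2+q_2-t^2)$, which is positive for all $t \in [0,K_2]$. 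In fact, the same computation as in Proposition~\ref{SLag fib} shows $L_t$ is special Lagrangian with respect to the meromorphic volume form $\Omega/(w - t)$, whose pole divisor is the smooth hypersurface $D^{(t)} := \{w = t\}$.

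Next, to rule out non-constant Maslov-zero holomorphic disks bounded by any $L_t$, I would apply Lemma~\ref{Maslov index} with $\sigma = \Omega/(w-t)$ to identify the Maslov index of $\beta \in \pi_2(X, L_t)$ with $2\,\beta \cdot D^{(t)}$. By positivity of intersection between holomorphic disks and complex hypersurfaces, any holomorphic representative $u : (\Delta, \partial\Delta) \to (X, L_t)$ of a Maslov-zero class misses $D^{(t)}$, so $w \circ u - t$ is a nowhere-vanishing holomorphic function on $\Delta$ whose modulus equals the constant $\sqrt{K_2^2 + q_2}$ on $\partial\Delta$. The maximum and minimum modulus principles then force $w \circ u$ to take a constant value $z_0$ with $|z_0 - t|^2 = K_2^2 + q_2$. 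If $z_0 \neq 0$, the image of $u$ lies in $w^{-1}(z_0) \cong (\cpx^\times)^{n-1}$, and writing the boundary torus as $\{|\zeta_j| = c_j\}_{j=1}^{n-1}$ in toric coordinates dual to a basis of $N^{\perp \underline{\nu}}$, a second application of max/min modulus to each holomorphic component $\zeta_j \circ u$ shows $u$ is constant. If $z_0 = 0$, then $t^2 = K_2^2 + q_2 > K_2^2$ contradicts $t \leq K_2$. Sphere bubbles contribute zero to the Maslov index since $X$ is Calabi-Yau, so the disk-bubbling components satisfy the same dichotomy and the argument extends to stable disks.

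The main obstacle I anticipate is making sure Lemma~\ref{Maslov index} can be legitimately applied at every stage of the isotopy, which hinges on realizing each $L_t$ as a special Lagrangian fiber of its own shifted Gross fibration with its own meromorphic volume form $\Omega/(w - t)$ and boundary divisor $D^{(t)}$; once this is in place, the $t$-independent strict inequality $\sqrt{K_2^2 + q_2} > K_2 \geq t$ coming from $q_2 > 0$ uniformly obstructs Maslov-zero disks throughout the isotopy.
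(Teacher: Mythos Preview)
Your proposal is correct and follows essentially the same route as the paper: both use the explicit isotopy $L_t$ of Equation~(4.15) and the observation that $q_2>0$ forces $|w-t|^2=K_2^2+q_2>K_2^2\geq t^2$, so $w\neq 0$ on every $L_t$, whence the maximum-modulus argument of Lemma~\ref{Maslov-zero disk} rules out Maslov-zero disks. The paper's proof simply invokes Lemma~\ref{Maslov-zero disk} directly for each $L_t$, whereas you (more carefully) spell out why that invocation is legitimate, namely that each $L_t$ is itself a special Lagrangian fiber of a shifted Gross fibration with volume form $\Omega/(w-t)$ and pole divisor $D^{(t)}=\{w=t\}$, so Lemma~\ref{Maslov index} applies uniformly in $t$; this extra care is warranted, since Lemma~\ref{Maslov-zero disk} as literally stated concerns only fibers of $\mu$ with the fixed parameter $K_2$.
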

\begin{proof}
Let $r = (q_1,q_2)$ with $q_2 > 0$.  The Lagrangian isotopy has already been given in Equation \eqref{isotopy}, which is
$$L_t := \{ x \in X: [\mu_0(x)] = q_1; |w(x) - t|^2 = K_2^2 + q_2 \} $$
where $t \in [0, K_2]$.  Since $q_2 > 0$, for each $t \in [0, K_2]$, $w$ is never zero on $L_t$.  By Lemma \ref{Maslov-zero disk}, $L_t$ does not bound non-constant holomorphic disks of Maslov index zero.
\end{proof}

Using the above lemma, one shows that the open Gromov-Witten invariants of $F_r$ when $r \in B_+$ are the same as that of $\mathbf{T}$:

\begin{prop} \label{disk counting + in X}
For $r \in B_+$ and $\beta \in \pi_2(X,F_r)$, let $\beta^{\mathbf{T}} \in \pi_2 (X,\mathbf{T}) \cong \pi_2(X,F_r)$ be the corresponding class under the isotopy given in Lemma \ref{Lag iso}.  Then
$$ n_\beta = n_{\beta^{\mathbf{T}}}.$$

$n_\beta \not= 0$ only when
$$\beta = \beta_j + \alpha$$
where $\alpha \in H_2(X)$ is represented by rational curves, and $\beta_j \in \pi_2(X,F_r)$ are the basic disk classes given in Section \ref{gen_disks_X}.  Moreover, $n_{\beta_j} = 1$ for all $j=0, \ldots, m-1$.
\end{prop}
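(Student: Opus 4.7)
The plan is to split the proposition into three assertions: (a) the isotopy invariance $n_\beta = n_{\beta^{\mathbf{T}}}$; (b) the classification of those classes $\beta^{\mathbf{T}}$ for which $n_{\beta^{\mathbf{T}}}$ can be nonzero; and (c) the normalization $n_{\beta_j} = 1$.

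For (a), I would apply the Lagrangian isotopy $\{L_t\}_{t\in[0,K_2]}$ of Lemma \ref{Lag iso}. Since $r=(q_1,q_2)\in B_+$ has $q_2>0$, the identity $|w-t|^2 = K_2^2+q_2 > 0$ holds throughout the isotopy, so $w$ never vanishes on $L_t$. Rerunning the maximum-principle argument from Lemma \ref{Maslov-zero disk}, any non-constant Maslov-zero holomorphic disk bounded by $L_t$ would be forced into $w^{-1}(0)$, which $L_t$ avoids. Hence every $L_t$ has minimal Maslov index at least two, Proposition \ref{no_boundary} applies uniformly in $t$, and the Fukaya-Oh-Ohta-Ono cobordism argument for one-pointed open Gromov-Witten invariants delivers $n_\beta = n_{\beta^{\mathbf{T}}}$.

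For (b), Corollary \ref{min_Maslov_2} together with the virtual dimension formula \eqref{exp_dim} forces $\mu(\beta^{\mathbf{T}})=2$ whenever $n_{\beta^{\mathbf{T}}}\neq 0$. A stable Maslov-two disk bounded by $\mathbf{T}$ decomposes into disk and sphere components; because $X$ is Calabi-Yau, $c_1$ vanishes on every holomorphic sphere, so sphere components carry Maslov index zero while collectively representing some $\alpha\in H_2(X)$ which is a sum of rational-curve classes. The single disk component must then have Maslov index two and, by Cho-Oh's classification of holomorphic disks bounded by a toric moment-map fiber \cite{cho06}, must lie in a basic class $\beta^{\mathbf{T}}_j$. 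Thus $\beta^{\mathbf{T}} = \beta^{\mathbf{T}}_j + \alpha$, which translates under the isotopy identification to $\beta = \beta_j + \alpha$ in $\pi_2(X, F_r)$.

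For (c), reducing via (a) to computing $n_{\beta^{\mathbf{T}}_j}$, I would use that the basic class has minimal symplectic area among all Maslov-two disk classes with boundary $v_j$: any other such class differs from $\beta^{\mathbf{T}}_j$ by a non-constant rational curve and hence has strictly greater area. Consequently no sphere or disk bubbling can occur at this energy, $\mathcal{M}_1(\mathbf{T},\beta^{\mathbf{T}}_j)$ is a genuine smooth manifold admitting Cho's explicit Blaschke-product parameterization, and the boundary evaluation is an orientation-preserving diffeomorphism onto $\mathbf{T}$, yielding $n_{\beta^{\mathbf{T}}_j}=1$. The main obstacle I expect is part (a): one must verify that along $t\in[0,K_2]$ the Kuranishi structures on $\mathcal{M}_1(L_t,\beta)$ assemble into a cobordism with no codimension-one boundary, which in turn reduces to the uniform minimal-Maslov-index-two condition established above.
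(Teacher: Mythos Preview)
Your proposal is correct and follows essentially the same route as the paper: invoke the Lagrangian isotopy of Lemma~\ref{Lag iso} (all of whose fibers have minimal Maslov index at least two) to obtain a cobordism giving $n_\beta = n_{\beta^{\mathbf{T}}}$, then use the Cho--Oh classification of Maslov-two disks bounded by a toric fiber to force $\beta^{\mathbf{T}} = \beta_j^{\mathbf{T}} + \alpha$, and finally cite Cho--Oh for $n_{\beta_j}=1$. One small point: in your part (b) you pass directly to ``the single disk component'' without saying why there is only one; the paper makes this explicit by noting that every non-constant holomorphic disk bounded by $\mathbf{T}$ intersects a toric divisor and hence has Maslov index at least two, so a Maslov-two stable disk can have only one disk component.
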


\begin{proof}
It suffices to consider those $\beta \in \pi_2(X,F_r)$ with $\mu (\beta) = 2$, or otherwise $n_\beta = 0$ due to dimension reason.

The Lagrangian isotopy given in Lemma \ref{Lag iso} gives an identification between $\pi_2(X, F_r)$ and $\pi_2(X,\mathbf{T})$, where $\mathbf{T}$ is a regular fiber of $\mu_0$.  Moreover, since every Lagrangian in the isotopy has minimal Maslov index two, the isotopy gives a cobordism between $\mathcal{M}_1(F_r, \beta)$ and $\mathcal{M}_1(\mathbf{T}, \beta^{\mathbf{T}})$, where $\beta^{\mathbf{T}} \in \pi_2 (X,\mathbf{T})$ is the disk class corresponding to $\beta \in \pi_2(X,F_r)$ under the isotopy.  Hence $n_\beta$ keeps constant along this isotopy, which implies
$$ n_\beta = n_{\beta^{\mathbf{T}}}. $$

By dimension counting of the moduli space, $n_{\beta^{\mathbf{T}}}$ is non-zero only when $\beta^{\mathbf{T}}$ is of Maslov index two (see Equation \ref{exp_dim} and the explanation below Definition \ref{open_GW}).

Using Theorem 11.1 of \cite{FOOO1}, $\mathcal{M}_1(\mathbf{T}, \beta^{\mathbf{T}})$ is non-empty only when $\beta^{\mathbf{T}} = \beta_j + \alpha$, where $\alpha \in H_2(X)$ is represented by rational curves, and $\beta_j \in \pi_2 (X,\mathbf{T}) \cong \pi_2(X,F_r)$ are the basic disk classes given in Section \ref{gen_disks_X}.   For completeness we also give the reasoning here.  Let $u \in \mathcal{M}_1(\mathbf{T}, \beta^{\mathbf{T}})$ be a stable disk of Maslov index two.  $u$ is composed of holomorphic disk components and sphere components.  Since every holomorphic disk bounded by a toric fiber ${\mathbf{T}} \subset X$ must intersect some toric divisors, which implies that it has Maslov index at least two, $u$ can have only one disk component.  Moreover a holomorphic disk of Maslov index two must belong to a basic disk class $\beta_j$ \cite{cho06}.  Thus $\beta = [u]$ is of the form $\beta_j + \alpha$.

Moreover, by Cho-Oh's result \cite{cho06}, $n_{\beta_j} = 1$ for all $j=0, \ldots, m-1$.
\end{proof}

\vspace{5pt}
\noindent 2. \textit{$r \in B_-$.}

When $r \in B_-$, the open Gromov-Witten invariants behave differently compared to the case $r \in B_+$ (see Equation \ref{B_-} for the definition of $B_-$).  For $X = \cpx^n$, $n_\beta$ has been studied by Auroux \cite{auroux07,auroux09} (indeed he considered the cases $n = 2, 3$, but there is no essential difference for general $n$).  We give the detailed proof here for readers' convenience:

\begin{lemma}[\cite{auroux07}] \label{C^n}
When the toric Calabi-Yau manifold is $X = \cpx^n$ and $F_r \subset X$ is a Gross fiber at $r \in B_-$, we have
$$ n_\beta = \left \{
\begin{array}{ll}
1 & \textrm{ when } \beta = \beta_0;\\
0 & \textrm{ otherwise.}
\end{array} \right.
$$
\end{lemma}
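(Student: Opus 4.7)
The proof will follow Auroux's strategy in \cite{auroux07, auroux09}, adapted to $X = \cpx^n$ with $w = z_1 \cdots z_n$. The key observation is that for $r = (q_1, q_2) \in B_-$ we have $|w - K_2|^2 = K_2^2 + q_2 < K_2^2$ on $F_r$, so $w(F_r)$ is a circle of radius strictly less than $K_2$ about $K_2 \in \cpx$ and does not encircle the origin. My first step would be to show that for any holomorphic disk $u: (\Delta, \partial\Delta) \to (\cpx^n, F_r)$, the composition $w \circ u$ is nowhere zero on $\overline{\Delta}$: the function $f := (w \circ u - K_2)/\sqrt{K_2^2 + q_2}$ satisfies $|f| \equiv 1$ on $\partial \Delta$, so by Schwarz reflection it is a finite Blaschke product, and zeros of $w \circ u$ correspond to preimages of $-K_2/\sqrt{K_2^2 + q_2}$ under $f$; since $|{-K_2}/{\sqrt{K_2^2 + q_2}}| > 1$ while $|f| \leq 1$ on $\overline{\Delta}$, no such preimages exist. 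Because $\cpx^n$ carries no non-constant holomorphic spheres, the same conclusion propagates componentwise to every stable disk.

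Writing $u = (u_1, \ldots, u_n)$, the identity $w \circ u = u_1 \cdots u_n$ combined with the previous step forces each $u_i$ to be nowhere zero on $\overline{\Delta}$, so $u_i|_{\partial \Delta}$ has winding number zero about the origin of $\cpx$. In the basis $\{\lambda_i\}_{i=0}^{n-1}$ of $\pi_1(F_r) \cong (N/\integer\langle v_0 \rangle) \times \integer$ introduced in Section \ref{gen_disks_X}, the vanishing of these toric-coordinate windings means $\partial u \in \integer \lambda_0$. Comparing with Proposition \ref{boundary of disks in X}, which gives $\partial \beta_j = \lambda_0 + \sum_{i=1}^{n-1} \pairing{\nu_i}{v_j} \lambda_i$, shows that the only classes realized by stable disks emanating from $F_r$ are the multiples $k \beta_0$ with $k \geq 1$; the Maslov-index formula $\mu(k\beta_0) = 2k$ from Lemma \ref{Maslov index}, combined with the Maslov-two condition implicit in the definition of $n_\beta$, then forces $k = 1$, so $\beta = \beta_0$ and $n_\beta = 0$ whenever $\beta \neq \beta_0$.

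To establish $n_{\beta_0} = 1$, I would show that the evaluation map $\mathrm{ev}: \mathcal{M}_1(F_r, \beta_0) \to F_r$ is an orientation-preserving diffeomorphism by constructing the unique disk through each generic $x_0 \in F_r$. The composition $w \circ u$ must be a M\"obius biholomorphism $\Delta \to \{|w - K_2| \leq \sqrt{K_2^2 + q_2}\}$ sending the marked point $p_0$ to $w(x_0)$, unique modulo $\Aut(\Delta, p_0)$. Given $w \circ u$, the moment-map conditions $|u_i|^2 - |u_n|^2 = c_i$ on $\partial \Delta$ together with the now-fixed value of $|u_1 \cdots u_n|$ on $\partial\Delta$ determine each $|u_i||_{\partial \Delta}$; solving a Dirichlet problem and taking a harmonic conjugate extends $u_i$ holomorphically to $\overline{\Delta}$ up to a single phase constant, and imposing $u_i(p_0) = (x_0)_i$ pins these phases down uniquely under the product constraint $\prod_i u_i = w \circ u$. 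This exhibits $\mathrm{ev}$ as a degree-one map between oriented $n$-manifolds.

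The principal obstacle I anticipate is confirming in the last step that this geometric count coincides with the virtual count defined via Kuranishi structures in Section \ref{open GW section}, along with an orientation check. This should be tractable because Corollary \ref{min_Maslov_2} shows that $F_r$ has minimal Maslov index two for $r \in B_-$, so by Proposition \ref{no_boundary} the moduli $\mathcal{M}_1(F_r, \beta_0)$ carries no codimension-one boundary stratum; the explicit parametrization above then exhibits this moduli as a smooth manifold cut out transversally, forcing the virtual and geometric fundamental classes to coincide.
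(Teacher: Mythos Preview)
Your proposal is correct and follows Auroux's strategy, as does the paper, but your execution differs from the paper's in two places worth noting.

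For the vanishing part ($n_\beta = 0$ unless $\beta = \beta_0$), the paper takes a shorter path: since $|w\circ u - K_2| = \sqrt{K_2^2+q_2} < K_2$ on $\partial\Delta$, the maximum principle applied directly to $|w\circ u - K_2|$ gives $|w\circ u - K_2| < K_2$ on all of $\Delta$, so $w\circ u$ never vanishes. No Blaschke factorisation is needed. Then, rather than reading off winding numbers and invoking injectivity of $\partial:\pi_2(\cpx^n,F_r)\to\pi_1(F_r)$, the paper writes $\beta = \sum k_i\beta_i$ and uses Proposition~\ref{intersection} to compute $\beta\cdot\mathscr{D}_j = k_j = 0$ for $j\geq 1$ directly. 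The two arguments are equivalent, but the paper's avoids the detour through the long exact sequence.

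For $n_{\beta_0}=1$, the paper's route is more geometric and simpler than your Dirichlet-problem construction. It first uses Lagrangian isotopy invariance of $n_{\beta_0}$ within $B_-$ to reduce to the \emph{symmetric} fiber $r=(0,q_2)$, on which $|\zeta_0|=\cdots=|\zeta_{n-1}|$. There, the maximum principle applied to each ratio $|\zeta_i/\zeta_1|\circ u$ (which equals $1$ on $\partial\Delta$) forces the image of $u$ to lie on a single complex line $l$ through the origin. On $l$, $w$ is an $n$-to-$1$ cover of $\cpx^\times$, and the disk is recovered as the unique branch of $w^{-1}$ over the small disk $\{|w-K_2|\leq\sqrt{K_2^2+q_2}\}$ passing through the marked point. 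This sidesteps solving $n$ coupled equations for the $|u_i|$ on $\partial\Delta$ and the subsequent harmonic-conjugate and phase-matching steps in your argument. Your approach has the advantage of working at an arbitrary fiber in $B_-$ without appealing to isotopy invariance, but the paper's reduction to the symmetric fiber makes both existence and uniqueness transparent.
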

\begin{proof}
Let $(\zeta_0, \ldots, \zeta_{n-1})$ be the standard complex coordinates of $\cpx^n$.  In these coordinates the Gross fibration is written as
$$ \mu = (|\zeta_0|^2 - |\zeta_1|^2, \ldots, |\zeta_{n-2}|^2 - |\zeta_{n-1}|^2, |\zeta_0 \ldots \zeta_{n-1} - K_2|^2 - K_2^2).$$

Due to dimension reason, $n_\beta = 0$ whenever $\mu(\beta) \not= 2$.  Thus it suffices to consider the case $\mu(\beta) = 2$.  Write $\beta = \sum_{i=0}^{n-1} k_i \beta_i$, where $\beta_i \in \pi_2 (X, F_r)$ are the basic disk classes defined in Section \ref{gen_disks_X}.  We claim that $k_0 = 1$ and $k_i = 0$ for all $i = 1, \ldots, n-1$ if the moduli space $\mathcal{M}_1(F_r, \beta)$ is non-empty.

Let $u$ be a stable disk in $\cpx^n$ representing $\beta$ with $\mu(\beta) = 2$.  Since $\cpx^n$ supports no non-constant holomorphic sphere, $u$ has no sphere component.  Also by Corollary \ref{min_Maslov_2}, $F_r$ has minimal Maslov index two, and so $u$ consists of only one disk component (see Proposition \ref{no_boundary}).  Thus $u$ is indeed a holomorphic map $\Delta \to \cpx^n$.

Since $q_2 < 0$, one has $|(\zeta_0\ldots\zeta_{n-1}) \circ u - K_2| < K_2$ on $\partial\Delta$.  By maximum principle this inequality holds on the whole disk $\Delta$.  In particular, $\zeta_0\ldots\zeta_{n-1}$ is never zero on $\Delta$, and so $u$ never hits the toric divisors $\mathscr{D}_i = \{\zeta_i = 0\}$ for $i = 0, \ldots, n-1$.  Thus $\beta \cdot \mathscr{D}_i = 0$ for all $i = 0, \ldots, n-1$.  By Proposition \ref{intersection}, $\pairing{\beta_0}{\mathscr{D}_j} = 0$ for all $j = 1, \ldots, n-1$, and $\pairing{\beta_i}{\mathscr{D}_j} = \delta_{ij}$ for $i = 1, \ldots, n-1$ and $j = 0, \ldots, n-1$.  Thus
$$\pairing{\beta}{\mathscr{D}_j} = k_j = 0$$
for $j = 1, \ldots, n-1.$
Thus $\beta = k_0 \beta_0$.  But $\mu(\beta) = k_0 \mu (\beta_0) = 2$ and $\mu (\beta_0) = 2$, and so $k_0 = 1$.

This proves that $n_\beta \not= 0$ only when $\beta = \beta_0$.  Now we prove that $n_{\beta_0} = 1$.  Since every fiber $F_r$ is Lagrangian isotopic to each other for $r \in B_-$ and the Lagrangian fibers have minimal Maslov index $2$, $n_{\beta_0}$ keeps constant as $r \in B_-$ varies.  Hence it suffices to consider $r = (0,q_2)$ for $q_2 < 0$, which means that $|\zeta_0| = |\zeta_1| = \ldots = |\zeta_{n-1}|$ for every $(\zeta_0, \ldots, \zeta_{n-1}) \in F_r$.

In the following we prove that for every $p \in F_r \subset (\cpx^\times)^n$, the preimage of $p$ under the evaluation map $\mathrm{ev}_0: \mathcal{M}_1(F_r, \beta_0) \to F_r$ is a singleton, and so $n_{\beta_0} = 1$.

Write $p = (p_0, \ldots, p_{n-1}) \in (\cpx^\times)^n$.  $p \in F_r$ implies that $|p_0| = |p_1| = \ldots = |p_{n-1}|$.  Consider the line
$$l := \{ (\zeta p_0, \zeta p_1, \ldots, \zeta p_{n-1}) \in (\cpx^\times)^n : \zeta \in \cpx^\times \}$$
spanned by $p$.  Then $w = \zeta_0 \ldots \zeta_{n-1}$ gives an $n$-to-one covering $l \to \cpx^\times$.  The disk
$$\Delta_{K_2} := \{\zeta \in \cpx: |\zeta - K_2| \leq (K_2^2 + q_2)^{1/2} \}$$
never intersects the negative real axis $\{\textrm{Re}(\zeta) \leq 0 \}$, and hence we may choose a branch to obtain a holomorphic map $\tilde{u}: \Delta_{K_2} \to l$ (There are $n$ such choices).  Moreover there is a unique choice such that $\tilde{u}\left(\prod_{j=0}^{n-1} p_j\right) = (p_0, \ldots, p_{n-1})$.  The image of $\partial \Delta_{K_2}$ under $\tilde{u}$ lies in $F_r$: Let $\zeta \in \partial \Delta_{K_2}$ and $z = \tilde{u}(\zeta)$.  Then $w(z) = \zeta$ satisfies $|w(z) - K_2|^2 = K_2^2 + q_2$.  Moreover $z \in l$, and so $|z_0| = |z_1| = \ldots = |z_{n-1}|$.  $\tilde{u}$ represents $\beta_0$ because it never intersects the toric divisors $\mathscr{D}_j$ for $j = 0, \ldots, n-1$ and it intersect with $D_0 = \{w = 0\}$ once.

The above proves that there exists a holomorphic disk representing $\beta_0$ such that its boundary passes through $p$.  In the following we prove that indeed this is unique.

Let $u \in \mathcal{M}_1(F_r, \beta_0)$ such that $\mathrm{ev}_0 (u) = p$.  By the above consideration $u$ is a holomorphic disk.  Since $\beta_0 \cdot \mathscr{D}_i = 0$, $u$ never hits the toric divisors $\{\zeta_i = 0\}$ for $i = 0, \ldots, n-1$, and hence $\zeta_i \circ u: \Delta \to \cpx$ are nowhere-zero holomorphic functions.  By applying maximum principle on $|\zeta_i/\zeta_1 \circ u|$ and $|\zeta_1/\zeta_i \circ u|^{-1}$ for each $i = 2, \ldots, n$, which has value $1$ on $\partial \Delta$, we infer that $u$ must lie on the complex line
$$\{ (\zeta, c_1\zeta, \ldots, c_{n-1} \zeta) \in (\cpx^\times)^n : \zeta \in \cpx^\times \}$$
where $|c_i| = 1$ are some constants for $i = 1, \ldots, n-1$.  Moreover, the line passes through $p$, and so this is the line $l$ defined above.

Consider the holomorphic map $w \circ u:\Delta \to \cpx^\times$.  Since $u$ has Maslov index two, it has intersection number one with the divisor $\{w - K_2 = 0\}$, implying that $w \circ u|_{\partial\Delta}$ winds around $K_2$ only once.  Hence $w \circ u$ gives a biholomorphism
$\Delta \stackrel{\cong}{\to} \Delta_{K_2}$ defined above.
One has $\tilde{u} \circ (w \circ u) = (\tilde{u} \circ w) \circ u = u$, where $\tilde{u}$ is the one-side inverse of $w$ defined above.  This means $u$ is the same as $\tilde{u}$ up to the biholomorphism $w \circ u$.  Thus $\tilde{u}$ is unique.
\end{proof}

Indeed the same statement holds for all toric Calabi-Yau manifolds:

\begin{prop} \label{disk counting - in X}
For $r \in B_-$ and $\beta \in \pi_2(X,F_r)$,
$$ n_\beta = \left \{
\begin{array}{ll}
1 & \textrm{ when } \beta = \beta_0;\\
0 & \textrm{ otherwise.}
\end{array} \right.
$$
\end{prop}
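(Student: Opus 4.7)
The plan is to generalize the argument of Lemma \ref{C^n} by showing that, for $r \in B_-$ and any disk class of Maslov index two, every stable disk is confined to the open torus orbit $(\cpx^\times)^n = X \setminus \bigcup_i \mathscr{D}_i$, reducing the analysis to the $\cpx^n$ case. Since $r = (q_1, q_2)$ has $q_2 < 0$, on $F_r$ we have $|w - K_2|^2 = K_2^2 + q_2 < K_2^2$, and the maximum modulus principle applied to the holomorphic function $w - K_2$ on any disk component $u : (\Delta, \partial \Delta) \to (X, F_r)$ gives $|w \circ u - K_2| < K_2$ throughout $\Delta$; hence $w \circ u$ is nowhere zero and $u(\Delta) \subset (\cpx^\times)^n$.

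Next I would rule out bubbling: Corollary \ref{min_Maslov_2} and Proposition \ref{no_boundary} preclude disk bubbles, while any non-constant holomorphic sphere $v : \sphere{2} \to X$ has $w \circ v$ constant by compactness of $\sphere{2}$, and this constant must be zero since $w^{-1}(c) \cong (\cpx^\times)^{n-1}$ admits no non-constant rational curves for $c \neq 0$; consequently $v(\sphere{2}) \subset \bigcup_i \mathscr{D}_i$ and cannot share a node with a disk component in the disjoint subset $(\cpx^\times)^n$, so connectedness of stable disks excludes any sphere bubble. Thus every element of $\mathcal{M}_1(F_r, \beta)$ with $\mu(\beta)=2$ is a single holomorphic disk $u : \Delta \to (\cpx^\times)^n$. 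From $\pi_2((\cpx^\times)^n) = 0$ and the observation that the loop $\lambda_0$ is contractible in $(\cpx^\times)^n$ (shrink $|w - K_2|$ down to zero while keeping $[\mu_0]$ fixed, staying in $\overline{\Delta_{K_2}} \setminus \{0\}$), the long exact sequence of the pair yields $\pi_2((\cpx^\times)^n, F_r) \cong \integer$, generated by a class $\tilde\beta_0$ with $\partial\tilde\beta_0 = \lambda_0$ and $\mu(\tilde\beta_0) = 2$; hence $[u] = \tilde\beta_0$. Its push-forward to $\pi_2(X, F_r)$ differs from $\beta_0$ by some $\alpha \in H_2(X)$, and the intersection data $\tilde\beta_0 \cdot \mathscr{D}_j = 0$ (from confinement), $\beta_0 \cdot \mathscr{D}_j = 0$ for $j \geq 1$ (Proposition \ref{intersection}), together with the Calabi-Yau relation $\sum_j \alpha \cdot \mathscr{D}_j = \alpha \cdot (-K_X) = 0$ and the non-degeneracy of the pairing between $H_2(X)$ and the toric divisors, force $\alpha = 0$. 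Therefore $[u] = \beta_0$, and $n_\beta = 0$ whenever $\beta \neq \beta_0$.

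The main obstacle is the remaining count $n_{\beta_0} = 1$, which amounts to showing that $\mathrm{ev}_0 : \mathcal{M}_1(F_r, \beta_0) \to F_r$ is a bijection. Given $p \in F_r$, the plan is to reconstruct a disk through $p$ from the outside in: $w \circ u : \Delta \to \overline{\Delta_{K_2}}$ is a degree-one holomorphic map with boundary on $\partial\Delta_{K_2}$, hence a biholomorphism, pinned down uniquely by $u(1) = p$; each $u_j = \zeta_j \circ u$ is nowhere zero and $\log u_j$ is a well-defined holomorphic function whose real part satisfies a Dirichlet problem with boundary data determined by $[\mu_0(u)] = q_1$, while the identity $\sum_j \log u_j = \log(w \circ u)$ and the marked point then determine the imaginary constants. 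In the $\cpx^n$ case the boundary condition degenerates to $|\zeta_j|$'s all equal, which permits a clean maximum principle argument on the ratios $\zeta_i/\zeta_j$; for a general toric K\"ahler form the boundary data are more intricate, and the crux is to verify solvability and uniqueness of this overdetermined holomorphic boundary value problem---or, alternatively, to deform $F_r$ through Lagrangians of minimal Maslov two inside $(\cpx^\times)^n$ to one for which Lemma \ref{C^n} applies directly, appealing to the deformation invariance of $n_{\beta_0}$.
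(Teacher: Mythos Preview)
Your proposal follows essentially the same route as the paper's proof: confinement of disks to the region $\{|w-K_2|<K_2\}\subset(\cpx^\times)^n$ via the maximum principle, elimination of sphere bubbles (which are forced into $w^{-1}(0)$ and hence cannot meet any disk component), reduction to a single disk component, identification of the class as $\beta_0$, and finally reduction of the count $n_{\beta_0}=1$ to the $\cpx^n$ case. Two remarks on how the paper streamlines this.

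For the class identification, your detour through $\pi_2((\cpx^\times)^n,F_r)\cong\integer$ and the Calabi--Yau relation on $\alpha$ is correct but heavier than needed. The paper simply writes $\beta=\sum_{i=0}^{m-1}k_i\beta_i$ in the generating set of $\pi_2(X,F_r)$; confinement in $(\cpx^\times)^n$ gives $\beta\cdot\mathscr{D}_j=0$ for $j=1,\ldots,m-1$, and Proposition~\ref{intersection} then yields $k_j=0$ for $j\geq 1$ directly. The Maslov-two condition pins down $k_0=1$.

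For the count $n_{\beta_0}=1$, your caution is well placed, and the paper's resolution is precisely the second alternative you sketch. Since every holomorphic disk representing $\beta_0$ already lies in the affine chart $V\cong\cpx^n$ associated to the cone $\langle v_0,\ldots,v_{n-1}\rangle$, one has $\mathcal{M}_1^X(F_r,\beta_0)\cong\mathcal{M}_1^V(F_r,\beta_0)$ as Kuranishi spaces, whence $n_{\beta_0}^X=n_{\beta_0}^V$, and the latter is computed by Lemma~\ref{C^n}. Your concern that the restricted K\"ahler form on $V$ need not be standard is legitimate; it is absorbed by the deformation invariance of $n_{\beta_0}$ under Lagrangian isotopies through tori of minimal Maslov index two, which lets one move within $B_-$ to a fiber where the explicit line-confinement argument of Lemma~\ref{C^n} (at $q_1=0$, where $|\zeta_0|=\cdots=|\zeta_{n-1}|$) applies verbatim. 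There is no need to solve the general holomorphic boundary value problem you describe.
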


\begin{proof}
Due to dimension reason, $n_\beta = 0$ if $\mu(\beta) \not= 2$, and so it suffices to assume $\mu(\beta) = 2$.  Let $r = (q_1, q_2)$ with $q_2 < 0$.

First of all, one observes that when $r \in B_-$, every holomorphic disk $u:(\Delta,\partial\Delta) \to (X,F_r)$ has image
$$\Image (u) \subset S_- := \mu^{-1}(\{(q_1,q_2) \in B:q_2 < 0\}).$$
This is because $(w-K_2) \circ u$ defines a holomorphic function on $\Delta$.  Since $r \in B_-$, $|w-K_2|$ is constant with value less than $K_2$ on $F_r$.  By maximum principle, $|w-K_2| \circ u < K_2$.  This proves the observation.

Notice that $(S_-,F_r)$ is homeomorphic to $((\cpx^\times)^{n-1} \times \cpx, T)$, where
$$T = \{(\zeta_1, \ldots, \zeta_n) \in (\cpx^\times)^{n-1} \times \cpx: |\zeta_1| = \ldots = |\zeta_n| = c \} $$
for $c > 0$.  In particular, $\pi_2(S_-) = 0$ which implies that $S_-$ supports no non-constant holomorphic sphere.  Moreover, every non-constant holomorphic disk bounded by $F_r$ with image lying in $S_-$ must intersect $D_0$, and thus it has Maslov index at least two.

Now let $v \in \mathcal{M}_1(F_r, \beta)$ be a stable disk of Maslov index two, where $r \in B_-$.  By the above observation, each disk component of $v$ has Maslov index at least two, and so $v$ has only one disk component.

Moreover, the image of a non-constant holomorphic sphere $h: \cpx\proj^1 \to X$ does not intersect $S_-$: Consider $w \circ h$, which is a holomorphic function on $\cpx\proj^1$ and hence must be constant.  Thus image of $h$ lies in $w^{-1}(c)$ for some $c$.  But for $c \not= 0$, $w^{-1}(c)$ is $(\cpx^\times)^{n-1}$ which supports no non-constant holomorphic sphere.  Thus $c = 0$.  But $w$ is never zero on $S_-$, implying that $w^{-1}(0) \cap S_- = \emptyset$.

Thus $v$ does not have any sphere component, because any non-constant holomorphic sphere in $X$ never intersect its disk component.  This proves for all $\beta \in \pi_2(X, F_r)$, $\mathcal{M}_1(\beta, F_r)$ consists of holomorphic maps $u: (\Delta,\partial\Delta) \to (X,F_r)$, that is, neither disk nor sphere bubbling never occurs.

In particular, all elements in $\mathcal{M}_1(\beta, F_r)$ have images in $S_-$ and never intersect the toric divisors.  Writing $\beta = \sum_{i=0}^{m-1} k_i \beta_i$, one has
$$\pairing{\beta}{\mathscr{D}_j} = k_j = 0$$
(see Proposition \ref{intersection}).  Moreover, $\mu(\beta) = 2$ forces $k_0 = 1$.  Thus $\mathcal{M}_1(\beta, F_r)$, where $\beta$ has Maslov index two, is non-empty only when $\beta = \beta_0$.  Thus $n_\beta = 0$ whenever $\beta \not= \beta_0$.

Let $V = \cpx^n \hookrightarrow X$ be the complex coordinate chart corresponding to the cone $\langle v_0, \ldots, v_{n-1} \rangle$.  We have $F_r \subset S_0 \subset V$, and since $\beta_0 \cdot \mathscr{D} = 0$ for every toric divisor $\mathscr{D} \subset X$, any holomorphic disk representing $\beta_0$ in $X$ is indeed contained in $V$.  Thus
$$\mathcal{M}^X_1(\beta_0, F_r) \cong \mathcal{M}^V_1(\beta_0, F_r).$$
Then $n^X_{\beta_0} = n^V_{\beta_0}$, where the later has been proven to be $1$ in Lemma \ref{C^n}.
\end{proof}

>From the above propositions, one sees that $n_\beta$ for $\beta \in \pi_2(X, F_r)$ changes dramatically as $r$ crosses the wall $H$, and this is the so-called wall-crossing phenomenon.

\subsubsection{Stable disks in $X'$}
Now we consider open Gromov-Witten invariants of $X'$.  The statements are very similar, except that there are more disk classes due to the additional toric divisors.  The proofs are also very similar and thus omitted.

\begin{lemma}
For $r = (q_1, q_2) \in B'_0$, a fiber $F_r$ of $\mu'$ bounds some non-constant stable disks of Maslov index zero in $X'$ if and only if $q_2 = 0$.
\end{lemma}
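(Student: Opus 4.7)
The proof closely parallels that of Lemma~\ref{Maslov-zero disk} for $X$, with one extra ingredient needed to control sphere bubbles, since $X'$ is no longer Calabi-Yau. For the ``only if'' direction, suppose $F_r$ bounds a non-constant stable disk $u$ with $\mu([u]) = 0$, and decompose $u$ into disk components $u_i$ and sphere components $v_j$; then
\[
0 \;=\; \mu([u]) \;=\; \sum_i \mu([u_i]) \;+\; 2 \sum_j c_1(X') \cdot [v_j].
\]
Applying Lemma~\ref{Maslov index} to the meromorphic volume form $\Omega'/(w'-K_2)$, whose pole divisor is $\sum_{k=0}^{n-1} D_k$, together with positivity of intersection of holomorphic curves with complex hypersurfaces, gives $\mu([u_i]) = 2\,[u_i]\cdot \sum_k D_k \geq 0$ for each $i$. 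The first step of the plan is to verify from the combinatorics of $\Sigma'$ that every compact torus-invariant rational curve in $X'$ pairs non-negatively with $c_1(X') = \sum_i [\mathscr{D}_i] + \sum_j [\mathscr{D}'_j]$; since the Mori cone of $X'$ is generated by such curves, this yields $c_1(X')\cdot [v_j] \geq 0$ for every sphere bubble, forcing each $\mu([u_i]) = 0$ and each $c_1(X')\cdot [v_j] = 0$. In particular $[u_i]\cdot \mathscr{D}'_k = 0$ as well.

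The next step is to apply the maximum-principle argument on a single non-constant Maslov-zero disk component $u$. Since $[u]\cdot D_0 = 0$ and $D_0 = \{w'=K_2\}$ is a complex hypersurface, $u$ avoids $D_0$, so $w'\circ u - K_2$ is a nowhere-zero holomorphic function on $\Delta$ whose modulus is constant on $\partial\Delta$ (as $F_r \subset \{|w'-K_2|^2 = K_2^2 + q_2\}$). Applying the maximum principle to $w'\circ u - K_2$ and to its reciprocal then forces $w'\circ u \equiv z_0$ for some constant $z_0$ on the circle $\{|z-K_2|^2 = K_2^2 + q_2\}$. Since $u$ also avoids the $\mathscr{D}'_k$ by the preceding bubble analysis, when $z_0 \neq 0$ the map $u$ takes values in the open subset of $(w')^{-1}(z_0)$ disjoint from all toric divisors of $X'$, which is topologically $\real^{n-1}\times \torus{n-1}$; its intersection with $F_r$ is an $(n-1)$-torus of the form $\{|\zeta_1|=r_1,\ldots,|\zeta_{n-1}|=r_{n-1}\}$, and a coordinatewise application of the maximum principle shows $u$ is constant. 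Hence $z_0 = 0$, forcing $q_2 = 0$.

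For the converse, if $q_2 = 0$ then $|w'-K_2| = K_2$ on $F_r$, so $F_r$ meets the zero locus of $w'$, i.e.\ some toric divisor $\mathscr{D}_i$, along a (possibly degenerate) moment-map torus fiber, which in turn bounds a non-constant holomorphic disk lying inside $\mathscr{D}_i$. This disk has Maslov index zero because it never intersects $D_0$ or any $\mathscr{D}'_k$.

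The hard part is the sphere-bubble positivity step. In contrast with the Calabi-Yau setting of Lemma~\ref{Maslov-zero disk}, one cannot dispose of spheres by invoking $c_1 = 0$, and must verify directly that the toric modification $X \leadsto X'$ produces no compact rational curve of negative Chern number. I expect this to follow from an explicit computation on the fan $\Sigma'$, using that the added rays $v'_j = v_j - v_0$ create only new compact curves whose Chern numbers, read off from self-intersection and adjunction relations, turn out to be non-negative.
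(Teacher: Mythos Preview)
The paper omits this proof entirely, saying only that it is ``very similar'' to that of Lemma~\ref{Maslov-zero disk}. Your write-up is therefore already more detailed than the paper's, and your overall strategy---reduce to a single non-constant Maslov-zero holomorphic disk component and then rerun the maximum-principle argument on $w'\circ u - K_2$---is exactly the intended one. The disk-component argument and the converse direction are both correct as written.

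You are right to isolate the sphere-bubble step as the one place where the argument does not carry over verbatim: in Lemma~\ref{Maslov-zero disk} the Calabi--Yau condition gives $c_1(X)\cdot\alpha=0$ for every sphere class, whereas $X'$ is not Calabi--Yau. Your proposed fix, namely checking $c_1(X')\cdot C\ge 0$ on all torus-invariant curves via wall relations in $\Sigma'$, is a valid plan but you have not carried it out, and it is the only substantive gap. One simplification worth recording: since $w'$ is a global holomorphic function on $X'$ with $(w')=\sum_i\mathscr{D}_i$, one has $\sum_i[\mathscr{D}_i]=0$ in $\mathrm{Pic}(X')$ and hence
\[
c_1(X') \;=\; \sum_{i}[\mathscr{D}_i]+\sum_{j}[\mathscr{D}'_j] \;=\; \sum_{j=1}^{n-1}[\mathscr{D}'_j].
\]
Coupled with the observation (same argument as for $X$) that any holomorphic sphere in $X'$ has $w'\equiv 0$ on its image, this reduces the positivity check to showing that such a sphere pairs non-negatively with each $\mathscr{D}'_j$. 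If the sphere is not contained in $\mathscr{D}'_j$ this is immediate from positivity of intersection; the residual case where the image sits inside some $\mathscr{D}'_j$ (hence inside the lower-dimensional toric stratum $\mathscr{D}'_j\cap(w')^{-1}(0)$) can then be handled by iterating the same argument on that stratum, rather than by a bare fan computation. Either route closes the gap; as written, your proposal identifies the issue correctly but leaves this verification as an assertion.
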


Thus for every $r = (q_1, q_2) \in B_0$ with $q_2 \not= 0$, $F_r$ has minimal Maslov index two.  The wall (see Definition \ref{wall}) is
$$H' = E^{\mathrm{int}} \times \{0\}.$$
The two connected components of $B'_0 - H'$ are denoted by
$$B'_+ := E^{\mathrm{int}} \times (0, +\infty)$$
and
$$B'_- := E^{\mathrm{int}} \times (-K_2, 0)$$
respectively.  Again we have two cases to consider:

\vspace{5pt}
\noindent 1. \textit{$r \in B'_+$.}

\begin{lemma}
For $r \in B'_+$, the fiber $F_r$ is Lagrangian-isotopic to a Lagrangian toric fiber, and all the Lagrangians in this isotopy do not bound non-constant holomorphic disks of Maslov index zero.
\end{lemma}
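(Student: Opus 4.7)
The plan is to imitate verbatim the argument used for Lemma \ref{Lag iso} in the toric Calabi-Yau case, with $w'$, $\mu'_0$ and the preceding lemma on $X'$ replacing $w$, $\mu_0$ and Lemma \ref{Maslov-zero disk} respectively. The only geometric input needed is already in place: we have the holomorphic function $w'$ on $X'$ corresponding to $\underline{\nu}$, the toric moment map $\mu'_0$ on $X'$, and the preceding lemma characterizing the wall on $B'_0$ as $\{q_2 = 0\}$.

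Concretely, write $r = (q_1, q_2) \in B'_+$, so $q_2 > 0$. The isotopy to consider is
$$L_t := \{ x \in X' : [\mu'_0(x)] = q_1,\ |w'(x) - t|^2 = K_2^2 + q_2 \}, \quad t \in [0, K_2].$$
At $t = K_2$ this is exactly the Gross fiber $F_r = (\mu')^{-1}(r)$, and at $t = 0$ the condition $|w'(x)|^2 = K_2^2 + q_2$ together with $[\mu'_0(x)] = q_1$ cuts out a moment-map fiber (a Lagrangian toric fiber of $\mu'_0$), since fixing $|w'|$ together with the $[\mu'_0]$-coordinate fixes all $n$ moment-map components of $\mu'_0$. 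Each $L_t$ is Lagrangian by the symplectic-reduction argument used for the original Gross fibration, adapted to $X'$ just as in the proof of the special Lagrangian property of $\mu'$.

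For the Maslov-zero disk statement, observe that for every $t \in [0, K_2]$, every point $x \in L_t$ satisfies $|w'(x) - t| = (K_2^2 + q_2)^{1/2} > K_2 \geq t$, so $w'$ is nowhere zero on $L_t$. This means $L_t$ is contained in the locus where $w' \neq 0$, and the same maximum principle argument as in the preceding lemma (the $X'$-analog of Lemma \ref{Maslov-zero disk}) applies verbatim: any Maslov-zero holomorphic disk with boundary in $L_t$ must avoid the boundary divisor $D_0 = \{w' = K_2\}$ by Lemma \ref{Maslov index}, forcing $w'$ to be constant on the disk with value on the circle $\{|\zeta - t|^2 = K_2^2 + q_2\}$, which does not contain $0$; but a level set $(w')^{-1}(z_0)$ with $z_0 \neq 0$ is topologically $\real^{n-1} \times \torus{n-1}$ and supports no non-constant holomorphic disks bounded by the corresponding torus slice.

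No genuine new difficulty arises; the only point to double-check is that the preceding lemma on $X'$ really does apply to each intermediate $L_t$ (not just to Gross fibers of $\mu'$). This is fine because that lemma's proof only used that the Lagrangian in question is cut out by fixing $[\mu'_0]$ and $|w' - K_2|$, and the same argument works with $K_2$ replaced by any $t \in [0, K_2]$ as long as the relevant circle in the $w'$-plane avoids the origin, which is exactly guaranteed by $q_2 > 0$. This completes the plan.
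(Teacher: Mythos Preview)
Your proposal is correct and follows exactly the approach the paper intends (the paper omits this proof, saying it is ``very similar'' to Lemma~\ref{Lag iso}, and your argument is precisely that adaptation). One small slip: for the intermediate $L_t$ the relevant pole divisor in Lemma~\ref{Maslov index} is $\{w'=t\}+\sum_{j\geq 1}D_j$ rather than $D_0=\{w'=K_2\}$, so a Maslov-zero disk avoids $\{w'=t\}$; this is in fact what you use in the next line when applying the maximum principle to $w'\circ u - t$, so the argument goes through unchanged once you correct the label.
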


\begin{prop} \label{disk counting +}
For $r \in B'_+$ and $\beta \in \pi_2(X',F_r)$, $n_\beta \not= 0$ only when
$$\beta = \beta'_k \textrm{ for } k=1, \ldots, n-1$$
or
$$\beta = \beta_j + \alpha \textrm{ for } j=0, \ldots, m-1$$
where $\alpha \in H_2 (X)$ is represented by rational curves of Chern number zero.  Moreover, $n_{\beta} = 1$ when $\beta = \beta_0, \ldots, \beta_{m-1}$ or $\beta'_1, \ldots, \beta'_{n-1}$.
\end{prop}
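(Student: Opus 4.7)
The plan is to transfer the argument of Proposition \ref{disk counting + in X} to the toric modification $X'$, keeping track of Chern numbers since $X'$ is no longer Calabi-Yau. By the virtual-dimension formula \eqref{exp_dim}, $n_\beta$ vanishes unless $\mu(\beta)=2$, so I would restrict to this case throughout.

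First, for $r=(q_1,q_2)\in B'_+$ (so $q_2>0$), I would use the Lagrangian isotopy
$$ L_t := \{x\in X' : [\mu'_0(x)]=q_1,\ |w'(x)-t|^2 = K_2^2+q_2\},\qquad t\in[0,K_2], $$
connecting $F_r = L_{K_2}$ to a Lagrangian toric fiber $\mathbf{T}=L_0$. Since $q_2>0$ the function $w'$ never vanishes along the isotopy, so the analog of Lemma \ref{Maslov-zero disk} (stated as the first lemma of the current subsection) guarantees that every $L_t$ has minimal Maslov index at least two. Combined with Proposition \ref{no_boundary}, the parameterized moduli spaces $\mathcal{M}_1(L_t,\beta_t)$ form a cobordism with no codimension-one boundary, yielding $n_\beta = n_{\beta^{\mathbf{T}}}$ where $\beta^{\mathbf{T}}\in\pi_2(X',\mathbf{T})$ is the class corresponding to $\beta$ under the isotopy.

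Next, I would classify Maslov-index-two stable disks bounded by the toric fiber $\mathbf{T}\subset X'$. Since $\mathbf{T}$ has minimal Maslov index two, any such stable disk has exactly one disk component, which by Cho-Oh \cite{cho06} must represent a basic class associated to a primitive ray generator of $\Sigma'$: one of $\beta_j$ for $j=0,\ldots,m-1$ or $\beta'_k$ for $k=1,\ldots,n-1$. Cho-Oh's calculation also gives the count $1$ for each basic class. The remaining sphere-bubble component forms an effective class $\alpha\in H_2(X',\integer)$ with total Chern number zero, so that $\mu(\beta^{\mathbf{T}}) = 2 + 2c_1(\alpha) = 2$.

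The main obstacle is to identify these bubble classes with effective rational-curve classes in $X\subset X'$ and to verify that no nontrivial bubbles attach to the $\beta'_k$ disks. For the first point, I would use $-K_{X'} = \sum_i\mathscr{D}_i + \sum_k\mathscr{D}'_k$ together with toric positivity: each irreducible component $C$ of $\alpha$ not contained in a toric divisor satisfies $C\cdot D\geq 0$ for every toric divisor $D$, and $c_1(\alpha)=0$ forces $C\cdot\mathscr{D}'_k=0$ for every $k$, so $C$ is supported in $X = X'\setminus\bigcup_k\mathscr{D}'_k$. For the second point, I would use Cho-Oh's explicit holomorphic parameterization of the disk in class $\beta'_k$ inside the coordinate chart in which $\mathscr{D}'_k$ is a coordinate hyperplane and argue, via a maximum-principle and location argument in the spirit of the proof of Proposition \ref{disk counting - in X}, that no effective rational-curve class $\alpha\in H_2(X)$ can attach to it, so that $n_{\beta'_k+\alpha}=0$ whenever $\alpha\neq 0$.
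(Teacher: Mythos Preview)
The main thrust of your argument—reducing to a toric fiber via the Lagrangian isotopy $L_t$ and invoking the Cho--Oh/FOOO classification of Maslov-index-two stable disks on a toric fiber—is exactly what the paper intends; indeed the paper omits this proof entirely, remarking only that it is ``similar'' to the corresponding argument for $X$ (Proposition~\ref{disk counting + in X}). So for the core of the proposition your approach matches the paper's.

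You go further than the paper by attempting to justify two refinements that appear in the statement but are not covered by the ``similar'' proof: (a) that the sphere-bubble class $\alpha$ lies in $H_2(X)$ rather than merely in $H_2(X')$, and (b) that no nontrivial bubble attaches to a $\beta'_k$ disk. Your toric-positivity argument for (a) has a gap: a sphere component $C$ may well be \emph{contained} in some $\mathscr{D}'_k$, in which case $C\cdot\mathscr{D}'_k$ need not be nonnegative, so the equation $c_1(\alpha)=0$ does not directly force each $C\cdot\mathscr{D}'_k=0$. Your sketch for (b) is too vague to evaluate. A cleaner route handles both points at once using $w'$ instead of positivity: since $w'$ is holomorphic on $X'$, it is constant along every holomorphic sphere, so the entire bubble tree (being connected to the basic disk at its unique intersection point $p$ with a toric divisor) lies in the level set $(w')^{-1}(w'(p))$. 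If the basic disk is $\beta_j$ then $w'(p)=0$, and the bubbles lie in $\bigcup_i\mathscr{D}_i\subset X$, giving $\alpha\in H_2(X)$. If the basic disk is $\beta'_k$ then $w'(p)\neq 0$; on $(w')^{-1}(c)$ for $c\neq 0$ the functions $\zeta_1,\ldots,\zeta_{n-1}$ are holomorphic with zero divisors $\mathscr{D}'_j\cap(w')^{-1}(c)$, so each is constant on every sphere component, forcing any such sphere into the open orbit—a contradiction. The paper itself only gestures at this step later, in the proof of Proposition~\ref{FT_I}, with the phrase ``which implies that $\alpha\in H_2(X)\subset H_2(X')$''.
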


\vspace{5pt}
\noindent 2. \textit{$r \in B'_-$.}

\begin{prop} \label{disk counting -}
For $r \in B'_-$ and $\beta \in \pi_2(X',F_r)$,
$$ n_\beta = \left \{
\begin{array}{ll}
1 & \textrm{ when } \beta = \beta_0 \textrm{ or } \beta'_1, \ldots, \beta'_{n-1};\\
0 & \textrm{ otherwise.}
\end{array} \right.
$$
\end{prop}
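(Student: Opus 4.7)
The proof follows the template of Proposition \ref{disk counting - in X}, enlarged to account for the additional basic disk classes $\beta'_k$ coming from the new toric divisors $D_k = \mathscr{D}'_k$. By the virtual dimension formula we restrict to $\mu(\beta) = 2$; the Maslov index formula \eqref{Maslov_X'} translates this into $\beta \cdot (\sum_{j=0}^{n-1} D_j) = 1$, and positivity of intersection with complex hypersurfaces forces exactly one of $\beta \cdot D_j$ to equal $1$ and the rest to vanish.

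For $r = (q_1, q_2) \in B'_-$ and any holomorphic disk component $u: (\Delta, \partial\Delta) \to (X', F_r)$ of a stable disk representing $\beta$, the function $(w' - K_2) \circ u$ is holomorphic with $|w' - K_2|^2 = K_2^2 + q_2 < K_2^2$ on $\partial\Delta$, so the maximum principle gives $|w' - K_2| < K_2$ throughout $\Delta$; in particular $w' \neq 0$ on the image, so $u$ misses the anticanonical divisor $\sum_{i=0}^{m-1} \mathscr{D}_i$. Bubbling is then ruled out as in Proposition \ref{disk counting - in X}: each disk component has Maslov index $\geq 2$ by the preceding minimal-Maslov-index statement, so only one disk component is possible in a Maslov-$2$ class; any holomorphic sphere $h: \cpx\proj^1 \to X'$ has $w' \circ h$ constant (being a holomorphic function on $\cpx\proj^1$), so the sphere lies in some level set $(w')^{-1}(c)$, and an analysis of the cases $c = 0$ versus $c \neq 0$ combined with the Maslov-$2$ constraint shows that such sphere bubbles cannot contribute to the count.

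It remains to split into two cases. In Case A ($\beta \cdot D_0 = 1$ and $\beta \cdot D_k = 0$ for $k \geq 1$), the disk avoids every $\mathscr{D}'_k$ and factors through the inclusion $X \hookrightarrow X'$, so $\mathcal{M}^{X'}_1(F_r, \beta) \cong \mathcal{M}^X_1(F_r, \beta)$ and Proposition \ref{disk counting - in X} immediately gives $n_\beta = 1$ when $\beta = \beta_0$ and $n_\beta = 0$ otherwise. In Case B ($\beta \cdot D_k = 1$ for a unique $k \in \{1, \ldots, n-1\}$), Proposition \ref{intersection'} together with the Maslov-$2$ constraint pins down $\beta = \beta'_k$, and the disk is confined to the toric coordinate chart containing $\mathscr{D}'_k$ (the complement of all other toric divisors and of $\sum_{i=0}^{m-1}\mathscr{D}_i$), which is biholomorphic to $\cpx^n$ equipped with a Gross-type fibration in which $\mathscr{D}'_k$ plays the role of the distinguished boundary divisor. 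Lemma \ref{C^n} then applies after a renaming of coordinates to give $n_{\beta'_k} = 1$.

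The main technical hurdle is the sphere-bubble analysis in Case B: unlike the Calabi-Yau manifold $X$, the compactification $X'$ admits holomorphic spheres of positive Chern number (arising from the additional toric divisors $\mathscr{D}'_j$), so one must verify that no such sphere attaches to a Maslov-$2$ disk representative in the class $\beta'_k$. This is handled by tracking the constant value of $w'$ on any sphere component, together with the maximum-principle constraint that the disk image lies in the open locus $\{w' \neq 0\}$, so that spheres lying in $\sum_{i=0}^{m-1}\mathscr{D}_i$ cannot touch the disk while spheres with $w' \equiv c \neq 0$ are ruled out by the disk-class intersection numbers.
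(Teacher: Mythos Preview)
The paper omits this proof entirely, remarking only that it is ``similar'' to Proposition~\ref{disk counting - in X}; so the comparison here is between your argument and the natural adaptation of that proof.

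Your overall architecture is sound: the maximum-principle confinement of holomorphic disks to $\{|w'-K_2|<K_2\}$, the absence of sphere bubbles via the constant-$w'$ argument and the boundary-class constraint $\partial\gamma=\partial\beta$, and the reduction in Case~A to Proposition~\ref{disk counting - in X} all go through as written.

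There is, however, a genuine gap in Case~B. Your claim that the disk is confined to an affine chart ``biholomorphic to $\cpx^n$ equipped with a Gross-type fibration in which $\mathscr{D}'_k$ plays the role of the distinguished boundary divisor'' and that Lemma~\ref{C^n} then applies is not correct. The disk avoids every $\mathscr{D}_i$ and every $\mathscr{D}'_j$ with $j\neq k$, so its image lies in the open set $(\cpx^\times)^{n-1}\times\cpx$ (the union of the open orbit and the open stratum of $\mathscr{D}'_k$), not in an affine chart $\cpx^n$ on which $w'$ vanishes along all coordinate hyperplanes. In any maximal chart $U_C\cong\cpx^n$ containing $\mathscr{D}'_k$, the function $w'$ is identically nonzero along the $\eta_k$-axis (since $\pairing{\underline\nu}{v'_k}=0$), so the induced fibration on $U_C$ is \emph{not} the Gross fibration of Lemma~\ref{C^n}, and $\mathscr{D}'_k$ does not play the role of $D_0=\{w=K_2\}$.

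The correct argument for $n_{\beta'_k}=1$ is in fact simpler than Lemma~\ref{C^n}. Since $\beta'_k\cdot D_0=0$, the nowhere-zero function $(w'-K_2)\circ u$ has constant modulus on $\partial\Delta$, hence is constant; say $w'\circ u\equiv c$ with $|c-K_2|^2=K_2^2+q_2$. The slice $F_r\cap\{w'=c\}$ is a single $\mathbf{T}^{\perp\underline\nu}$-orbit, on which every toric coordinate $|\zeta_j|$ (equivalently $|\eta_j|$ in a chart adapted to $v'_k$) is constant. Applying the maximum principle to the nowhere-vanishing functions $\eta_j\circ u$ for $j\neq k$ forces each to be constant, so $u$ is a standard coordinate disk $\zeta\mapsto(\eta_0,\dots,R\zeta,\dots,\eta_{n-1})$ in the $\eta_k$-line. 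This is exactly the Cho--Oh picture for a basic disk bounded by a Clifford torus, is Fredholm regular, and is unique through each boundary point; hence $n_{\beta'_k}=1$.
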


These invariants contribute to the `quantum correction terms' of the complex structure of the mirror, as we will discuss in the next section.  In Section \ref{period}, we'll give two ways to compute these invariants: one is by relating to closed Gromov-Witten invariants, and one is by predictions from complex geometry of the mirror.

\subsection{Mirror construction} \label{mirror}
In this section, we use the procedure given in Section \ref{mir_construct} to construct the mirror $\check{X}$ of a Calabi-Yau $n$-fold $X$ with the Gross fibration $\mu: X \to B$.  The following is the main theorem:

\begin{theorem}
\label{mir_thm}
Let $\mu: X \to B$ be the Gross fibration over a toric Calabi-Yau $n$-fold $X$, and $\mu': X' \to B'$ be the modified fibration given by Definition \ref{modify_mu}.

\begin{enumerate}
\item Applying the construction procedure given in Section \ref{mir_construct} on the Lagrangian fibration $\mu': X' \to B'$, one obtains a complex manifold
\begin{equation}
Y = \left\{(u,v,z_1, \ldots, z_{n-1}) \in (\cpx^\times)^2 \times (\cpx^\times)^{n-1}: uv = G(z_1, \ldots, z_{n-1}) \right\}
\end{equation}
which admits a partial compactification
\begin{equation} \label{eq_mir}
\check{X} = \left\{(u,v,z_1, \ldots, z_{n-1}) \in \cpx^2 \times (\cpx^\times)^{n-1}: uv = G(z_1, \ldots, z_{n-1}) \right\}.
\end{equation}
Here $G$ is a polynomial given by
\begin{equation} \label{G}
G(z_1, \ldots, z_{n-1}) = (1+\delta_0) + \sum_{j=1}^{n-1} (1 + \delta_j) z_j + \sum_{i=n}^{m-1} (1 + \delta_i) q_{i-n+1} z^{v_i}
\end{equation}
The notations $\delta_j, q_{a}$ and $z^{v_i}$ appeared above are explained in the end of this theorem.

\item Let $H$ be the wall given in Definition \ref{wall}.  There exists a canonical map
$$\rho: \check{\mu}^{-1}(B_0 - H) \to \check{X}$$
such that the holomorphic volume form
\begin{equation}
\check{\Omega} := \mathrm{Res} \left( \frac{1}{uv - G(z_1, \ldots, z_{n-1})} \der\log z_1 \wedge \ldots \wedge \der\log z_{n-1} \wedge \der u \wedge \der v \right)
\end{equation}
defined on $\check{X} \subset \cpx^2 \times (\cpx^\times)^{n-1}$ is pulled back to the semi-flat holomorphic volume form (see Section \ref{semi-flat} below) on $\check{\mu}^{-1}(B_0 - H)$ under $\rho$.  In this sense the semi-flat holomorphic volume form extends to $\check{X}$.

\item Let $\mathcal{F}_X$ be the generating function given in Definition \ref{Lambda*}.  The Fourier transform of $\mathcal{F}_X$ (see Definition \ref{Lambda*}) is given by $\rho^* (C_0 u)$, where $C_0$ is some constant (defined by Equation \eqref{C_j}).  In this sense the Fourier transform of $\mathcal{F}_X$ extends to a function on $\check{X}$, which is called the superpotential.

\end{enumerate}

Explanation of the new notations $\delta_i$, $q_a$ and $z^{v_i}$ are as follows:

\begin{itemize}
\item $\delta_i$'s are constants defined by
\begin{equation} \label{corr_term}
\delta_i := \sum_{\alpha \not= 0} n_{\beta_i + \alpha} \exp\left(- \int_\alpha \omega \right)
\end{equation}
for $i = 0, \ldots, m-1$, in which the summation is over all $\alpha \in H_2 (X, \integer) - \{0\}$ represented by rational curves.  (The basic disk classes $\beta_i \in \pi_2 (X, F_r)$ are defined previously in Section \ref{gen_disks_X}.)

\item $z^{v_i}$ denotes the monomial
$$\prod_{j=1}^{n-1} z_j^{\pairing{\nu_j}{v_i}}$$
where $\{\nu_j\}_{j=0}^{n-1} \subset M$ is the dual basis of $\{v_j\}_{j=0}^{n-1} \subset N$.

\item For $a = 1, \ldots, m-n$, $q_a$ are K\"ahler parameters defined as follows.  Let $S_a \in H_2(X,\integer)$ be the classes defined by
\begin{equation} \label{theta}
S_a := \beta_{a+n-1} - \sum_{j=0}^{n-1} \pairing{\nu_j}{v_{a+n-1}} \beta_j
\end{equation}
Then $q_a := \exp (-\int_{S_a} \omega)$.
\end{itemize}
\end{theorem}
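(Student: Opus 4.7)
I will carry out the procedure of Section \ref{mir_construct} for the modified Lagrangian fibration $\mu': X' \to B'$ (Definition \ref{modify_mu}), and recover the mirror of $\mu$ by letting $K_1 \to +\infty$. The $n$ codimension-one boundary divisors $D_0,\ldots,D_{n-1}$ of $X'$ from Equation \eqref{D_j} give generating functions $\mathcal{I}_{D_j}$, and the first task is to Fourier-transform each of these on both chambers $\check{\mu}^{-1}(B'_+)$ and $\check{\mu}^{-1}(B'_-)$ of the semi-flat mirror. The disk-counting results of Propositions \ref{disk counting +} and \ref{disk counting -}, together with the intersection numbers in Proposition \ref{intersection'}, identify which classes contribute on each chamber.

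For $j = 1, \ldots, n-1$, only the basic class $\beta'_j$ contributes to $\mathcal{I}_{D_j}$ on either chamber, so the Fourier transform produces a single holomorphic function $z_j$ well-defined on all of $\check{\mu}^{-1}(B'_0 - H')$. For $j = 0$, by contrast, the Fourier transform takes different forms on the two chambers: writing $W_i := \exp(-\int_{\beta_i}\omega)\,\chi(\partial \beta_i)$ for the semi-flat monomial attached to the basic class $\beta_i \in \pi_2(X',F_r)$, one obtains
$$\tilde{z}_0 = \sum_{i=0}^{m-1}(1+\delta_i)\,W_i \quad \textrm{on } \check{\mu}^{-1}(B'_+), \qquad \tilde{z}_0 = W_0 \quad \textrm{on } \check{\mu}^{-1}(B'_-).$$
The central algebraic identity is
$$\tilde{z}_0|_{B'_+} \big/ W_0 = G(z_1,\ldots,z_{n-1}),$$
proved by expanding $\partial\beta_i$ via Proposition \ref{boundary of disks in X'}, setting $z_k := W_k/W_0$, and using the Calabi-Yau identity $\sum_l \pairing{\nu_l}{v_i} = 1$ together with the definition of the K\"ahler parameters in Equation \eqref{theta} to rewrite $W_i/W_0 = q_{i-n+1}\,z^{v_i}$ for $i \geq n$ and $W_k/W_0 = z_k$ for $1\le k \le n-1$.

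Setting $u := \tilde{z}_0$ and $v := G(z)/u$ on each chamber then defines a holomorphic map $\rho: \check{\mu}^{-1}(B'_0 - H') \to Y$: on $B'_+$, $(u,v) = (W_0\,G(z),\,1/W_0)$, and on $B'_-$, $(u,v) = (W_0,\,G(z)/W_0)$; in both cases $uv = G(z)$ so the image lies in $Y$. Allowing $u = 0$ or $v = 0$ gives the partial compactification $\check{X}$ of Equation \eqref{eq_mir}, into which $\rho$ extends to include the singular fibers over $H'$; letting $K_1 \to +\infty$ removes the auxiliary divisors $D_1,\ldots,D_{n-1}$ and produces the mirror of the original fibration $\mu$. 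For part (2), a residue computation on the patch $u \neq 0$ gives $\check{\Omega} = \der\log u \wedge \bigwedge_{k=1}^{n-1}\der\log z_k$; pulling back under $\rho|_{B'_+}$ with $u = W_0 G(z)$ and $z_k = W_k/W_0$, the term $\der\log G(z)$ lies in the span of the $\der\log z_k$ and drops out, leaving the semi-flat holomorphic volume form $\bigwedge_{i=0}^{n-1} \der\log W_i$; the patch $v \neq 0$ is handled symmetrically. For part (3), Propositions \ref{disk counting + in X} and \ref{disk counting - in X} reproduce exactly the contributions to $\mathcal{I}_{D_0}$ computed above on $X'$, since the additional classes $\beta'_k$ in $X'$ satisfy $\beta'_k \cdot D_0 = 0$; hence the Fourier transform of $\mathcal{F}_X$ equals $C_0 \tilde{z}_0 = \rho^*(C_0 u)$ for the constant $C_0 = \exp(-\int_{\beta_0}\omega)$.

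The principal obstacle is the algebraic manipulation identifying $\tilde{z}_0|_{B'_+}/W_0$ with the polynomial $G(z)$ of Equation \eqref{G}: converting the sum of semi-flat monomials into the K\"ahler parameters $q_a$ requires careful use of several different bases (the generators $\lambda_i$ of $\pi_1(F_r)$, the basic disk classes $\beta_i$ and $\beta'_k$, the semi-flat monomials $W_i$, and the K\"ahler parameters $q_a$), with the Calabi-Yau hypothesis intervening decisively so that the monomials collapse into the compact expression for $G$.
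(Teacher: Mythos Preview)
Your proposal is correct and follows essentially the same route as the paper's proof, which is spread over Sections 4.6.1--4.6.6: compute the Fourier transforms $\tilde{z}_i$ of $\mathcal{I}_{D_i}$ using Propositions \ref{disk counting +} and \ref{disk counting -} (the paper's Proposition \ref{FT_I}), glue the two chambers via the relation $uv = G(z)$, verify the residue form pulls back to the semi-flat volume form (the paper's Proposition \ref{hol_vol}), and identify the superpotential. Your use of the monomials $W_i$ and the direct choice $z_k := W_k/W_0$ neatly combines into one step what the paper does in two (first obtaining $g(z)$ with constants $C_i$ in Proposition \ref{FT_I}, then applying the coordinate change $\hat{z}_j = (C_j/C_0)z_j$ in the subsequent proposition to reach $G$); this is a slight streamlining but not a different argument.

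One small slip: in part (3) you write that the Fourier transform of $\mathcal{F}_X$ equals $C_0\tilde{z}_0$, but it equals $\tilde{z}_0$ itself (this \emph{is} the Fourier transform of $\mathcal{I}_{D_0}$, and for $X$ there is only the single boundary divisor $D_0$). With your normalization $u := \tilde{z}_0$, the conclusion reads $\rho^*(u)$; the $C_0$ in the theorem statement appears because the paper's final $u$-coordinate is $\tilde{z}_0/C_0$ after the rescaling $\hat{u} = u/C_0$, which you have already absorbed. This is a bookkeeping point, not a gap.
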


$\check{X}$ is the complex manifold mirror to $X$.  We need to check that the above expression \eqref{theta} of $S_a$ does define classes in $H_2(X,\integer)$:

\begin{prop} \label{gen_H_2}
$\{S_a\}_{a=1}^{m-n}$ is a generating subset of $H_2(X,\integer)$.
\end{prop}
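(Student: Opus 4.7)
The plan is to work with the long exact sequence of the pair $(X, \mathbf{T})$, where $\mathbf{T}$ is a Lagrangian toric fiber. Since $X$ is a smooth simply-connected toric variety and $\mathbf{T}$ is a torus, the relevant portion of the sequence, combined with the Hurewicz isomorphism $\pi_2(X) \isomto H_2(X, \integer)$, yields a short exact sequence
$$0 \to H_2(X, \integer) \to \pi_2(X, \mathbf{T}) \stackrel{\partial}{\to} N,$$
where $\pi_2(X, \mathbf{T}) \cong \integer^m$ is freely generated by the basic disk classes $\beta_0, \ldots, \beta_{m-1}$ and $\partial$ sends $\beta_i$ to $v_i \in N \cong \pi_1(\mathbf{T})$, as recalled in Section \ref{gen_disks_X}. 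Equivalently, $H_2(X, \integer)$ is identified with the kernel
$$\Big\{\textstyle\sum_{i=0}^{m-1} k_i \beta_i : \sum_{i=0}^{m-1} k_i v_i = 0 \text{ in } N\Big\}.$$

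The first step is to verify that each $S_a$ actually lies in $H_2(X, \integer)$. Applying $\partial$ to the defining expression \eqref{theta} gives
$$\partial S_a = v_{a+n-1} - \sum_{j=0}^{n-1} \pairing{\nu_j}{v_{a+n-1}} v_j,$$
which vanishes because $\{v_0, \ldots, v_{n-1}\}$ is a basis of $N$ with dual basis $\{\nu_0, \ldots, \nu_{n-1}\}$, so $v_{a+n-1}$ is precisely this linear combination.

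Next, I will show that the $S_a$ generate $H_2(X, \integer)$ by exhibiting an explicit expression for any kernel element. Given $\gamma = \sum_{i=0}^{m-1} k_i \beta_i$ with $\sum_{i=0}^{m-1} k_i v_i = 0$, set $l_a := k_{a+n-1}$ for $a = 1, \ldots, m-n$. Expanding
$$\sum_{a=1}^{m-n} l_a S_a = \sum_{i=n}^{m-1} k_i \beta_i - \sum_{j=0}^{n-1} \Big(\sum_{i=n}^{m-1} k_i \pairing{\nu_j}{v_i}\Big) \beta_j,$$
the coefficient of $\beta_j$ for $j < n$ is $-\sum_{i=n}^{m-1} k_i \pairing{\nu_j}{v_i}$, which equals $k_j$ by pairing the relation $\sum_{i=0}^{m-1} k_i v_i = 0$ with $\nu_j$. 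Hence $\sum_a l_a S_a = \gamma$, so $\{S_a\}_{a=1}^{m-n}$ generates $H_2(X,\integer)$.

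There is no serious obstacle here; the only subtlety is justifying the short exact sequence at the outset, which rests on the simple connectivity of $X$ (guaranteed by the convex support assumption on $\Sigma$ together with smoothness) and the standard identification of $\pi_2(X, \mathbf{T})$ with $\integer^m$ via the basic disk classes. One may also observe as a bonus that the $l_a$ produced above are uniquely determined by $\gamma$ (since $S_a$ is the unique element of the constructed combination with a nonzero coefficient on $\beta_{a+n-1}$), so $\{S_a\}_{a=1}^{m-n}$ is in fact a $\integer$-basis of $H_2(X, \integer)$, consistent with its rank being $m-n$.
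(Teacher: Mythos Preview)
Your proof is correct and follows essentially the same approach as the paper: both use the exact sequence $0 \to H_2(X,\integer) \to \pi_2(X,\mathbf{T}) \xrightarrow{\partial} N$ with $\partial\beta_i = v_i$, and both verify $\partial S_a = 0$ via the dual basis identity. The only minor difference is in the final step: the paper concludes by noting the $S_a$ are linearly independent (since the $\beta_i$ are) and invoking the known fact that $H_2(X,\integer)\cong\integer^{m-n}$, whereas you give the explicit decomposition $\gamma = \sum_a k_{a+n-1} S_a$ directly from the kernel condition; both arguments are equivalent and equally short.
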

\begin{proof}
One has the short exact sequence
$$ 0 \to H_2 (X) \to \pi_2 (X,\mathbf{T}) \to \pi_1 (\mathbf{T}) \to 0$$
where $\mathbf{T}$ is a Lagrangian toric fiber, and the second to last arrow is given by the boundary map $\partial$.  For $i = n, \ldots, m-1$,
\begin{align*}
\partial \left(\beta^{\mathbf{T}}_i - \sum_{j=0}^{n-1} \pairing{\nu_j}{v_i} \beta^{\mathbf{T}}_j\right)
&= \partial \beta^{\mathbf{T}}_i - \sum_{j=0}^{n-1} \pairing{\nu_j}{v_i} \partial \beta^{\mathbf{T}}_j \\
&= v_i - \sum_{j=0}^{n-1} \pairing{\nu_j}{v_i} v_j \\
&= v_i - v_i = 0
\end{align*}
where $\beta^{\mathbf{T}}_i$'s are the basic disk classes given in Section \ref{gen_disks_X}.  Thus
$$\beta^{\mathbf{T}}_i - \sum_{j=0}^{n-1} \pairing{\nu_j}{v_i} \beta^{\mathbf{T}}_j \in H_2 (X,\integer).$$
Moreover, they are linearly independent for $i = n, \ldots, m-1$, because $\beta^{\mathbf{T}}_i$'s are linearly independent.  But $H_2 (X,\integer) \cong \integer^{m-n}$, and so they form a basis of $H_2 (X,\integer)$.

$\beta_i$'s are identified with $\beta^{\mathbf{T}}_i$'s under the Lagrangian isotopy between $F_r$ and $\mathbf{T}$ given in Section \ref{gen_disks_X}.  Thus $\{S_a\}_{a=1}^{m-n}$ is a generating subset of $H_2(X,\integer)$.
\end{proof}

By the above proposition, $\delta_i$, and so $\check{X}$, can be expressed in terms of K\"ahler parameters $q_a$ and open GW invariants $n_\beta$.

While throughout the construction we have fixed a choice of ordered basis $\{v_i\}_{i=0}^{n-1}$ of $N$ which generates a cone of $\Sigma$,  in Proposition \ref{basis_change} we will see that another choice of the basis amounts to a coordinate change of the mirror.  In this sense the mirror $\check{X}$ is independent of choice of this ordered basis.

We now apply the construction procedure given in Section \ref{mir_construct} on the Lagrangian fibration $\mu':X' \to B'$ and prove Theorem \ref{mir_thm}.

\subsubsection{Semi-flat complex coordinates and semi-flat holomorphic volume form} \label{semi-flat}
First let's write down the semi-flat complex coordinates on the chart $(\check{\mu}')^{-1}(U') \subset \check{X}_0$, where $U' \subset B'$ is given in Equation \eqref{Uprime}, and $\check{\mu}': \check{X}'_0 \to B'_0$ is the dual torus bundle to $\mu': X'_0 \to B'_0$ (see Definition \ref{dual torus bundle}).

Fix a base point $r_0 \in U'$.  For each $r \in U'$, let $\lambda_i \subset \pi_1 (F_r)$ be the loop classes given in Section \ref{gen_disks_X'}.  Moreover define the cylinder classes $[h_i(r)] \in \pi_2((\mu')^{-1}(U'),F_{r_0},F_r)$ as follows.  Recall that we have the trivialization
$$(\mu')^{-1}(U') \cong U' \times \left(\mathbf{T}_N /\mathbf{T}\langle v_0 \rangle\right) \times (\real/2\pi\integer)$$
given in Section \ref{loc_trivial'}.  Let $\gamma:[0,1] \to U'$ be a path with $\gamma(0) = r_0$ and $\gamma(1) = r$.  For $j = 1, \ldots, n-1$,
$$h_j: [0,1] \times \real/\integer \to U' \times \left(\mathbf{T}_N /\mathbf{T}\langle v_0 \rangle\right) \times (\real/2\pi\integer)$$
is defined by
$$h_j(R,\Theta) := \left(\gamma(R), \frac{\Theta}{2\pi} [v_k], 0 \right)$$
and
$$h_0(R, \Theta) := (\gamma(R), 0, 2\pi\Theta).$$
The classes $[h_i(r)]$ is independent of the choice of $\gamma$.

Then the semi-flat complex coordinates $z_i$ on $(\check{\mu}')^{-1}(U')$ for $i = 0, \ldots, n-1$ are defined as
\begin{equation} \label{sf_coord}
z_i(F_r, \conn) := \exp(\rho_i + 2\pi \consti \check{\theta}_i)
\end{equation}
where $\conste^{2\pi\consti \check{\theta}_i} := \mathrm{Hol}_\conn (\lambda_i (r))$ and
$\rho_i := - \int_{[h_i(r)]} \omega$.

$\der z_1 \wedge \ldots \wedge \der z_{n-1} \wedge \der z_0$ defines a nowhere-zero holomorphic $n$-form on $(\check{\mu}')^{-1}(U')$, which is called the semi-flat holomorphic volume form.  It was shown in \cite{chan08} that this holomorphic volume form can be obtained by taking Fourier transform of $\exp (-\omega)$.  In this sense it encodes some symplectic information of $X$.

\subsubsection{Fourier transform of generating functions}
Next we correct the semi-flat complex structure by open Gromov-Witten invariants.  The corrected complex coordinate functions $\tilde{z}_i$ are expressed in terms of Fourier series whose coefficients are FOOO's disk-counting invariants of $X$.  The leading terms of these Fourier series give the original semi-flat complex coordinates.  In this sense the semi-flat complex structure is an approximation to the corrected complex structure.  The corrected coordinates have the following expressions:

\begin{prop} \label{FT_I}
Let $\mathcal{I}_i$ be the generating functions defined by Equation \eqref{I_i}.  The Fourier transforms of $\mathcal{I}_i$'s are holomorphic functions $\tilde{z}_i$ on $(\check{\mu}')^{-1}(B'_0 - H')$.  For $i = 1, \ldots, n-1$,
$$\tilde{z}_i = C'_i z_i$$
where $C'_i$ are constants defined by
\begin{equation} \label{C'_i}
C'_i = \exp\left(-\int_{\beta'_i (r_0)} \omega\right)>0.
\end{equation}
For $i = 0$,
$$
\tilde{z}_0 := \left\{
\begin{array}{ll}
C_0 z_0 & \textrm{ on } (\check{\mu}')^{-1}(B'_-) \\
z_0 g(z_1, \ldots, z_{n-1}) & \textrm{ on } (\check{\mu}')^{-1}(B'_+)
\end{array}
\right.
$$
where
$g(z_1, \ldots, z_{n-1})$ is the Laurent polynomial
\begin{equation} \label{g}
g(z_1, \ldots, z_{n-1}) := \sum_{i=0}^{m-1} C_i (1 + \delta_i) \prod_{j=1}^{n-1} z_j^{\pairing{\nu_j}{v_i}},
\end{equation}
$C_i$ are constants defined by
\begin{equation} \label{C_j}
C_i := \exp\left(-\int_{\beta_i (r_0)} \omega\right) > 0
\end{equation}
for $i = 0, \ldots, m-1$, and $\delta_i$ are constants previously defined by Equation \eqref{corr_term}.  Recall that $r_0$ is the based point chosen to define the semi-flat complex coordinates $z_0, \ldots, z_{n-1}$ in Section \ref{semi-flat}.
\end{prop}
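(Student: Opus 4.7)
The plan is to start from the definition of Fourier transform in Section \ref{FT for family}, which on the chart $(\check\mu')^{-1}(U')$ gives
$$\tilde z_i(F_r,\conn) = \sum_{\beta\in\pi_2(X',F_r)} (\beta\cdot D_i)\, n_\beta\, \exp\!\left(-\int_\beta\omega\right)\mathrm{Hol}_\conn(\partial\beta),$$
and then to successively collapse this sum using the topological bookkeeping already established for $\mu'$ in Sections \ref{top X'}--\ref{counting disks}.

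First I would apply Proposition \ref{intersection'} together with Propositions \ref{disk counting +} and \ref{disk counting -} to pin down exactly which disk classes contribute. For $i=1,\ldots,n-1$, the pairings $\beta_j\cdot D_i=0$ and $\beta'_l\cdot D_i=\delta_{li}$ kill every basic class except $\beta'_i$, while classes of the form $\beta'_l+\alpha$ are ruled out by Proposition \ref{disk counting +}, and classes of the form $\beta_j+\alpha$ are ruled out because $\alpha\cdot D_i=0$ for every rational-curve class $\alpha\in H_2(X)\hookrightarrow H_2(X')$ (the compact rational curves of $X$ lie in the toric divisors of $X$, which miss the newly added divisor $\mathscr{D}'_i=D_i$). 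Hence on both chambers the sum collapses to the single term $\beta=\beta'_i$. For $i=0$, the dual pairings $\beta_j\cdot D_0=1$, $\beta'_l\cdot D_0=0$, together with $\alpha\cdot D_0=0$ (justified by applying the maximum principle to $w'$ restricted to each compact rational curve, which forces such a curve into $\{w'=0\}$, disjoint from $D_0=\{w'=K_2\}$ since $K_2\neq0$), restrict the sum to $\{\beta_j+\alpha\}$; on $B'_-$ Proposition \ref{disk counting -} further contracts this to $\beta=\beta_0$ alone.

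Next I would translate each surviving term into the semi-flat coordinates $z_i=\exp(\rho_i+2\pi\consti\check\theta_i)$ of Section \ref{semi-flat}. The holonomy factor is handled by Proposition \ref{boundary of disks in X'}, which yields
$$\mathrm{Hol}_\conn(\partial\beta_j)=\mathrm{Hol}_\conn(\lambda_0)\prod_{k=1}^{n-1}\mathrm{Hol}_\conn(\lambda_k)^{\pairing{\nu_k}{v_j}},\qquad\mathrm{Hol}_\conn(\partial\beta'_i)=\mathrm{Hol}_\conn(\lambda_i).$$
The symplectic-area factor at a varying base point $r$ is handled by the cylinder classes $[h_i(r)]\in\pi_2(X',F_{r_0},F_r)$: matching boundaries under the trivialization of Section \ref{loc_trivial'} gives the identities
$$\beta_j(r)-\beta_j(r_0)=[h_0(r)]+\sum_{k=1}^{n-1}\pairing{\nu_k}{v_j}[h_k(r)],\qquad\beta'_i(r)-\beta'_i(r_0)=[h_i(r)]$$
in relative homotopy, so integrating $\omega$ produces $\exp(-\int_{\beta_j(r)}\omega)=C_j e^{\rho_0}\prod_k e^{\rho_k\pairing{\nu_k}{v_j}}$ and $\exp(-\int_{\beta'_i(r)}\omega)=C'_i e^{\rho_i}$. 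Multiplying area against holonomy then gives precisely $C_j z_0\prod_k z_k^{\pairing{\nu_k}{v_j}}$ and $C'_i z_i$. Finally the sphere-bubble sum $\sum_\alpha n_{\beta_j+\alpha}\exp(-\int_\alpha\omega)=1+\delta_j$ by the definition \eqref{corr_term} repackages the $B'_+$ contribution into $\tilde z_0=z_0\,g(z_1,\ldots,z_{n-1})$, while the $B'_-$ computation and the $i\geq 1$ computation give $\tilde z_0=C_0 z_0$ and $\tilde z_i=C'_i z_i$ on the nose.

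The main obstacle is the cylinder-class identification $\beta_j(r)-\beta_j(r_0)=[h_0(r)]+\sum_k\pairing{\nu_k}{v_j}[h_k(r)]$: it requires checking that the boundary loops match in the trivialization $(\mu')^{-1}(U')\cong U'\times(\mathbf{T}_N/\mathbf{T}\langle v_0\rangle)\times(\real/2\pi\integer)$ and that the resulting area is independent of the path $\gamma$ chosen in Section \ref{semi-flat}; both reduce to the contractibility of $U'$ together with the closedness of $\omega$. A secondary subtlety, already addressed above, is verifying $\alpha\cdot D_0=\alpha\cdot D_i=0$ for rational-curve classes $\alpha\in H_2(X)$; once these are settled, the remainder of the argument is a direct unpacking of Propositions \ref{intersection'}, \ref{disk counting +}, \ref{disk counting -}, and \ref{boundary of disks in X'}.
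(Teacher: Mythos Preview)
Your proposal is correct and follows essentially the same route as the paper's own proof: write out the Fourier transform as a sum over $\pi_2(X',F_r)$, use Propositions \ref{disk counting +} and \ref{disk counting -} to restrict to $\beta'_j$ and $\beta_k+\alpha$, apply the intersection numbers of Proposition \ref{intersection'} to cut the sum down, and then unwind the surviving terms via the cylinder-class/area decomposition $\int_{\beta_j(r)}\omega=\int_{\beta_j(r_0)}\omega+\int_{[h_0(r)]}\omega+\sum_k\pairing{\nu_k}{v_j}\int_{[h_k(r)]}\omega$ together with Proposition \ref{boundary of disks in X'} for the holonomy. If anything, you are slightly more careful than the paper in spelling out why $\alpha\cdot D_i=0$ (via the location of compact rational curves) and in making the cylinder identity explicit, but these are exactly the ingredients the paper's computation uses implicitly.
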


\begin{proof}
The Fourier transform of each $\mathcal{I}_i$ is a complex-valued function $\tilde{z}_i$ on $(\check{\mu}')^{-1}(B'_0 - H')$ given by
\begin{align*}
\tilde{z}_i &= \sum_{\lambda \in \pi_1(X',F_r)} \mathcal{I}_i(\lambda) \mathrm{Hol}_\conn (\lambda) \\
&= \sum_{\beta \in \pi_2(X',F_r)} (\beta \cdot D_i) n_\beta \exp\left(-\int_\beta \omega\right) \mathrm{Hol}_\conn (\partial \beta).
\end{align*}

By Proposition \ref{disk counting +} and \ref{disk counting -}, $n_{\beta} = 0$ unless $\beta = \beta'_j$ for $j = 1, \ldots, n-1$ or $\beta = \beta_k + \alpha$ for $k = 0, \ldots, m-1$ and $\alpha \in H_2(X')$ represented by rational curves with Chern number zero, which implies that $\alpha \in H_2(X) \subset H_2(X')$.

Now consider $\tilde{z}_i$ for $i = 1, \ldots, n-1$.  Using Proposition \ref{intersection'}, $(\beta_k + \alpha) \cdot D_i = 0$ for all $k=0,\ldots,m-1$, $i=1,\ldots,n-1$ and $\alpha \in H_2(X)$.  Also $\beta'_j \cdot D_i = \delta_{ji}$.  Thus $\tilde{z}_i$ consists of only one term:
\begin{align*}
\tilde{z}_i &= \exp\left(-\int_{\beta'_i(r)} \omega\right) \mathrm{Hol}_\conn (\partial \beta'_i)
= \exp\left(-\int_{\beta'_i(r_0)} \omega - \int_{[h_i(r)]} \omega \right) \mathrm{Hol}_\conn (\lambda_i) \\
&= C'_i z_i.
\end{align*}

Now consider $\tilde{z}_0$.  One has $\beta'_j \cdot D_0 = 0$ and $(\beta_k + \alpha) \cdot D_0 = 1$.  There are two cases: When $r \in B'_-$,
$$n_\beta =
\left\{
\begin{array}{ll}
1 & \textrm{ for } \beta = \beta_0;\\
0 & \textrm{ otherwise.}
\end{array}
\right.
$$
In this case
\begin{align*}
\tilde{z}_0 &= \exp\left(-\int_{\beta_0 (r)} \omega\right)\mathrm{Hol}_\conn (\partial \beta_0)
= \exp\left(-\int_{\beta_0(r_0)} \omega - \int_{[h_0(r)]} \omega \right) \mathrm{Hol}_\conn (\lambda_0) \\
&= C_0 z_0.
\end{align*}

When $r \in B'_+$,
\begin{align*}
\tilde{z}_0
&= \sum_{j=0}^{m-1} \sum_{\alpha} n_{\beta_j(r) + \alpha} \exp\left(-\int_{\beta_j(r) + \alpha} \omega\right) \mathrm{Hol}_\conn (\partial \beta_j(r)) \\
&= \sum_{j=0}^{m-1} \left[ \left(\sum_{\alpha} n_{\beta_j(r) + \alpha} \exp\left( -\int_{\alpha} \omega \right) \right) \right. \\
&\hspace{20pt} \cdot \exp\left(-\int_{\beta_j (r_0)} - \int_{[h_0(r)]} - \sum_{i=1}^{n-1} \pairing{\nu_i}{v_j} \int_{[h_i(r)]} \right)\omega \\
&\hspace{20pt} \cdot \left. \mathrm{Hol}_\conn \left(\lambda_0 + \sum_{i=1}^{n-1} \pairing{\nu_i}{v_j}\lambda_i\right) \right]\\
&= \sum_{j=0}^{m-1} C_j \left(\sum_{\alpha} n_{\beta_j(r) + \alpha} \exp\left( -\int_{\alpha} \omega \right) \right) z_0 \prod_{i=1}^{n-1} z_i^{\pairing{\nu_i}{v_j}} \\
&= z_0 \sum_{j=0}^{m-1} C_j(1 + \delta_j) \prod_{i=1}^{n-1} z_i^{\pairing{\nu_i}{v_j}}.
\end{align*}
\end{proof}

\begin{remark} \label{approx_rem}
Let $r_0 \in U'$ be chosen such that $C_0$ equals to a specific constant, say, $2$.  One may also choose the toric K\"ahler form such that the symplectic sizes of the disks $\beta_i$ are very large for $i = 1, \ldots, m-1$, and so $C_i \ll 1$ (under this choice every non-zero two-cycle in $X'$ has large symplectic area, so this K\"ahler structure is said to be near the large K\"ahler limit).  According to the above expression of $\tilde{z}_0$, $C_0 z_0 = 2 z_0$ gives an approximation to $\tilde{z}_0$.  Thus the semi-flat complex coordinates of $\check{X}_0$ are approximations to the corrected complex coordinates.  The correction terms encode the enumerative data of $X'$.
\end{remark}

\subsubsection{The mirror $\check{X}$} \label{mirror equation}
Now we use $\{\tilde{z_i}^{\pm 1}\}_{i=0}^{n-1}$ derived from the previous subsection to generate a subring of functions on $\mu^{-1} (X' - H')$ and obtain
$$R = R_- \times_{R_0} R_+$$
where $R_- = R_+ := \cpx[z_0^{\pm 1},\ldots, z_{n-1}^{\pm 1}]$ and $R_0$ is the localization of $\cpx[z_0^{\pm 1},\ldots, z_{n-1}^{\pm 1}]$ at $g = \sum_{i=0}^{m-1} C_i (1 + \delta_i) z^{v_i}$ (see Equation \eqref{g}).  The gluing homomorphisms are given by $[Id]:R_- \to R_0$ and
\begin{eqnarray*}
R_+ &\to& R_0, \\
z_k &\mapsto& [z_k] \textrm{ for } k = 1, \ldots, n-1 \\
z_0 &\mapsto& \left[g^{-1} z_0\right].
\end{eqnarray*}
$\tilde{z_0}$ is identified with $u = (C_0 z_0, z_0 g) \in R$, and $\tilde{z_j}$ is identified with $(C'_j z_j, C'_j z_j) \in R$.
Setting
$$v := \left( C_0^{-1} z_0^{-1} g, z_0^{-1}\right) \in R$$
one has
$$R \cong \frac{\cpx[u^{\pm 1},v^{\pm 1},z_1^{\pm 1}, \ldots, z_{n-1}^{\pm 1}]}{\left\langle uv - g \right\rangle}.$$
Thus $\mathrm{Spec}(R)$ is geometrically realized as
$$Y = \left\{(u,v,z_1, \ldots, z_{n-1}) \in (\cpx^\times)^2 \times (\cpx^\times)^{n-1}: uv = g(z_1, \ldots, z_{n-1}) \right\}$$
which admits an obvious partial compactification
$$\check{X} = \left\{(u,v,z_1, \ldots, z_{n-1}) \in \cpx^2 \times (\cpx^\times)^{n-1}: uv = g(z_1, \ldots, z_{n-1}) \right\}.$$

One has the canonical map
\begin{equation} \label{rho_0}
\rho_0: \check{\mu}^{-1}(B_0 - H) \to \check{X}
\end{equation}
by setting
$$
u := \left\{
\begin{array}{ll}
C_0 z_0  & \textrm{ on } (\check{\mu}')^{-1}(B_-); \\
z_0 g & \textrm{ on } (\check{\mu}')^{-1}(B_+)
.
\end{array}
\right.
$$
and
$$
v := \left\{
\begin{array}{ll}
C_0^{-1} z_0^{-1} g & \textrm{ on } (\check{\mu}')^{-1}(B_-); \\
z_0^{-1} & \textrm{ on } (\check{\mu}')^{-1}(B_+)
.
\end{array}
\right.
$$

By a change of coordinates, the defining equation of $\check{X}$ can be transformed to the form appeared in Theorem \ref{mir_thm}:
\begin{prop}
By a coordinate change on $\cpx^2 \times (\cpx^\times)^{n-1}$, the defining equation
$$uv = \sum_{i=0}^{m-1} C_i (1 + \delta_i) z^{v_i}$$
can be transformed to
$$uv = (1 + \delta_0) + \sum_{j=1}^{n-1} (1 + \delta_j) z_j + \sum_{i=n}^{m-1} (1 + \delta_i) q_{i-n+1} z^{v_i}$$
where $C_i$'s are the constants defined by Equation \eqref{C_j}.
\end{prop}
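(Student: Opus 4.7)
The plan is to exhibit an explicit toric rescaling of the coordinates. Since the first $n$ vectors $v_0,\ldots,v_{n-1}$ form a $\integer$-basis of $N$ with dual basis $\{\nu_j\}$, and since $z^{v_i}=\prod_{j=1}^{n-1}z_j^{\pairing{\nu_j}{v_i}}$, one checks that $z^{v_0}=1$ (because $\pairing{\nu_j}{v_0}=0$ for $j\geq 1$) and $z^{v_k}=z_k$ for $k=1,\ldots,n-1$. So the right hand side of the original defining equation has constant term $C_0(1+\delta_0)$ and linear terms $C_k(1+\delta_k)z_k$. This suggests the substitution
\begin{equation*}
u=C_0\,\tilde u,\qquad v=\tilde v,\qquad z_j=\frac{C_0}{C_j}\,\tilde z_j\ (j=1,\ldots,n-1),
\end{equation*}
which is a biholomorphism of $\cpx^2\times(\cpx^\times)^{n-1}$ since $C_0,C_j>0$.

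After this substitution and dividing through by $C_0$, the constant term becomes $1+\delta_0$ and the coefficient of $\tilde z_k$ becomes $1+\delta_k$ for $k=1,\ldots,n-1$, as desired. For $i\geq n$ the coefficient of $\tilde z^{v_i}$ equals $(1+\delta_i)$ times
\begin{equation*}
\frac{C_i}{C_0}\prod_{j=1}^{n-1}\left(\frac{C_0}{C_j}\right)^{\pairing{\nu_j}{v_i}}.
\end{equation*}
The remaining task is to identify this factor with $q_{i-n+1}$.

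For that, I would use the Calabi-Yau relation. Since $\underline\nu=\sum_{j=0}^{n-1}\nu_j$ (as was shown in the proof of Proposition \ref{w}) and $\pairing{\underline\nu}{v_i}=1$, one gets $\sum_{j=0}^{n-1}\pairing{\nu_j}{v_i}=1$, so the exponent of $C_0$ in the displayed factor collapses to $-\pairing{\nu_0}{v_i}$ and the factor becomes $C_i\prod_{j=0}^{n-1}C_j^{-\pairing{\nu_j}{v_i}}$. On the other hand, by the definitions $C_j=\exp(-\int_{\beta_j}\omega)$ and $S_{i-n+1}=\beta_i-\sum_{j=0}^{n-1}\pairing{\nu_j}{v_i}\beta_j$ from Equation \eqref{theta}, we have
\begin{equation*}
q_{i-n+1}=\exp\left(-\int_{S_{i-n+1}}\omega\right)=\frac{C_i}{\prod_{j=0}^{n-1}C_j^{\pairing{\nu_j}{v_i}}},
\end{equation*}
so the two expressions agree.

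There is no real obstacle here beyond careful bookkeeping; the only subtle point is recognizing that the Calabi-Yau condition $\sum_j\pairing{\nu_j}{v_i}=1$ is exactly what makes the $C_0$-exponents balance so that the factor depends only on the class $S_{i-n+1}\in H_2(X,\integer)$ (which it must, since the right hand side $q_{i-n+1}$ does). Once the identification is made, dropping tildes from the new coordinates yields the claimed equation.
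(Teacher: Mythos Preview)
Your proof is correct and follows essentially the same approach as the paper's: both perform the rescaling $z_j\mapsto (C_0/C_j)z_j$ together with $u\mapsto u/C_0$, and both invoke the Calabi--Yau identity $\sum_{j=0}^{n-1}\pairing{\nu_j}{v_i}=1$ to collapse the $C_0$-exponents and identify the remaining factor $C_i\prod_{j=0}^{n-1}C_j^{-\pairing{\nu_j}{v_i}}$ with $q_{i-n+1}$.
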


\begin{proof}
Consider the coordinate change
$$ \hat{z}_j = \frac{C_j}{C_0} z_j $$
for $j = 0, \ldots, n-1$ on $(\cpx^\times)^{n-1}$.  Recall that $z^{v_i}$ denotes the monomial $\prod_{j=1}^{n-1} z_j^{\pairing{\nu_j}{v_i}}$, where $\{\nu_j\}_{j=0}^{n-1}$ is the dual basis to $\{v_j\}_{j=0}^{n-1}$.  Thus the $i=0$ term in the original equation is simply $C_0 (1 + \delta_0) z^{v_0} = C_0 (1 + \delta_0)$.

For $i = 1,\ldots,m-1$,
\begin{align*}
C_i z^{v_i} &= C_i \hat{z}^{v_i} \prod_{j=0}^{n-1} \left( \frac{C_0}{C_j} \right)^{\pairing{\nu_j}{v_i}} \\
&= C_0 C_i \hat{z}^{v_i} \left( \prod_{j=0}^{n-1} C_j^{\pairing{\nu_j}{v_i}} \right)^{-1}.
\end{align*}
The last equality in the above follows from the equality
$$ \sum_{j=0}^{n-1} \pairing{\nu_j}{v_i} = \pairing{\underline{\nu}}{v_i} = 1. $$

Thus for $i = 1, \ldots, n-1$,
$$C_i z^{v_i} = C_0 \hat{z}^{v_i}.$$

For i = $n, \ldots, m-1$,
$$C_i \left( \prod_{j=0}^{n-1} C_j^{\pairing{\nu_j}{v_i}} \right)^{-1}$$
is $\exp (-A_{i-n+1})$, where $A_{i-n+1}$ is the symplectic area of
$$S_{i-n+1} = \beta_{i} - \sum_{j=0}^{n-1} \pairing{\nu_j}{v_{i}} \beta_j.$$
Thus it equals to $q_{i-n+1}$.

Now set $\hat{u} = u/C_0$, the equation
$$uv = \sum_{i=0}^{m-1} C_i (1 + \delta_i) z^{v_i}$$
is transformed to
$$\hat{u}v = (1 + \delta_0) + \sum_{j=1}^{n-1} (1 + \delta_j) \hat{z}_j +  \sum_{i=n}^{m-1} (1 + \delta_i) q_{i-n+1} \hat{z}^{v_i}.$$
\end{proof}

This proves part (1) of Theorem \ref{mir_thm} that the construction procedure given in Section \ref{mir_construct} produces the mirror as stated.

\begin{remark}
In our case we have an obvious candidate serving as the partial compactification.  In general the technique of toric degenerations developed by Gross-Siebert \cite{gross07, gross08} is needed to ensure the existence of compactification.
\end{remark}

Notice that the defining equation of $\check{X}$ is independent of the parameter $K_1$ used to define the modification $X'$ in Section \ref{add boundary}, while the toric Calabi-Yau manifold $X$ appears as the limit of $X'$ as $K_1 \to \infty$.  Thus the mirror manifold of $X$ is also taken to be $\check{X}$.

\begin{remark}
Hori-Iqbal-Vafa \cite{HIV00} has written down the mirror of a toric Calabi-Yau manifold $X$ as
$$uv = 1 + \sum_{j=1}^{n-1} z_j + \sum_{i=n}^{m-1} q_{i-n+1} z^{v_i}$$
by physical considerations.  They realize that the above equation needs to be `quantum corrected', but they did not write down the correction in terms of the symplectic geometry of $X$.  From the SYZ consideration, now we see that the corrections can be expressed in terms of open Gromov-Witten invariants of $X$ (which are the factors $(1 + \delta_i)$).
\end{remark}

Composing the canonical map $\rho_0$ with the coordinate changes given above, one obtains a map
\begin{equation} \label{rho}
\rho: \check{\mu}^{-1}(B_0 - H) \to \check{X}
\end{equation}
where
$$
u := \left\{
\begin{array}{ll}
z_0  & \textrm{ on } (\check{\mu}')^{-1}(B_-); \\
z_0 G(z_1, \ldots, z_{n-1}) & \textrm{ on } (\check{\mu}')^{-1}(B_+)
.
\end{array}
\right.
$$
and
$$
v := \left\{
\begin{array}{ll}
z_0^{-1} G(z_1, \ldots, z_{n-1}) & \textrm{ on } (\check{\mu}')^{-1}(B_-); \\
z_0^{-1} & \textrm{ on } (\check{\mu}')^{-1}(B_+)
.
\end{array}
\right.
$$
Recall that $G$ is the Laurent polynomial defined by Equation \eqref{G}.

In the following we consider part (2) of Theorem \ref{mir_thm}.

\subsubsection{Holomorphic volume form.}
Recall that one has the semi-flat holomorphic volume form on $\check{X}_0$, which is written as $\der\log z_1 \wedge \ldots \wedge \der\log z_{n-1} \wedge \der\log z_0$ in Section \ref{semi-flat}.  Under the natural map $\rho$ (see Equation \eqref{rho}) this semi-flat holomorphic volume form extends to a holomorphic volume form $\check{\Omega}$ on $\check{X}$ which is exactly the one appearing in previous literatures (for example, see P.3 of \cite{konishi09}):

\begin{prop} \label{hol_vol}
There exists a holomorphic volume form $\check{\Omega}$ on $\check{X}$ which has the property that $\rho^*\check{\Omega} = \der\log z_0 \wedge \ldots \wedge \der\log z_{n-1}$.  Indeed in terms of the coordinates of $\cpx^2 \times (\cpx^\times)^{n-1}$,
$$\check{\Omega} = \mathrm{Res} \left( \frac{1}{uv - G(z_1, \ldots, z_{n-1})} \der\log z_1 \wedge \ldots \wedge \der\log z_{n-1} \wedge \der u \wedge \der v \right)$$
where $G$ is the polynomial defined by Equation \eqref{G}.
\end{prop}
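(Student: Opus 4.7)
The plan is to first write down $\check{\Omega}$ explicitly on $\check{X}$ via the Poincar\'e residue construction, then verify directly on the two charts $\check{\mu}^{-1}(B_\pm)$ that $\rho^*\check{\Omega}$ equals the semi-flat volume form $\der\log z_0\wedge\ldots\wedge\der\log z_{n-1}$ constructed in Section \ref{semi-flat}.

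First I would define $\check{\Omega}$ as the residue of the meromorphic $(n+1)$-form
$$\frac{\der\log z_1\wedge\ldots\wedge\der\log z_{n-1}\wedge\der u\wedge\der v}{uv-G(z_1,\ldots,z_{n-1})}$$
along the smooth hypersurface $\check{X}=\{uv-G=0\}\subset\cpx^2\times(\cpx^\times)^{n-1}$. On the open set where $u\neq 0$ one may use $(z_1,\ldots,z_{n-1},u)$ as local coordinates (solving $v=G/u$), giving
$$\check{\Omega}\bigl|_{\{u\neq 0\}}=\frac{1}{u}\,\der\log z_1\wedge\ldots\wedge\der\log z_{n-1}\wedge\der u,$$
and similarly on $\{v\neq 0\}$ one gets a corresponding expression with $-\der v/v$. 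These two expressions glue on the overlap because on $\check{X}$ one has $u\,\der v+v\,\der u=\der G$, so $\der u/u+\der v/v=\der G/G$, and wedging with $\der\log z_1\wedge\ldots\wedge\der\log z_{n-1}$ kills the $\der G/G$ term modulo the relations. This yields a nowhere-zero holomorphic $n$-form on $\check{X}$.

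Next I would compute $\rho^*\check{\Omega}$ on each chart. Over $B_-$ the map $\rho$ is given by $u=z_0$, $v=z_0^{-1}G(z_1,\ldots,z_{n-1})$, so $u\neq 0$ and
$$\rho^*\check{\Omega}=\frac{1}{z_0}\,\der\log z_1\wedge\ldots\wedge\der\log z_{n-1}\wedge\der z_0=\der\log z_1\wedge\ldots\wedge\der\log z_{n-1}\wedge\der\log z_0.$$
Over $B_+$ the map is $u=z_0G$, $v=z_0^{-1}$, so $v\neq 0$, and using the expression $-\der v/v$ on $\{v\neq 0\}$, together with $\der v=-z_0^{-2}\der z_0$, gives exactly the same form. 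Hence $\rho^*\check{\Omega}$ equals the semi-flat holomorphic volume form on both $\check{\mu}^{-1}(B_+)$ and $\check{\mu}^{-1}(B_-)$, proving the desired extension property.

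The only genuinely delicate point is checking that the two local expressions for $\check{\Omega}$ glue; this is standard for Poincar\'e residues on a smooth hypersurface, but one must verify smoothness of $\check{X}$ (which follows because $\der(uv-G)=v\,\der u+u\,\der v-\der G$ cannot vanish on $\check{X}$ when $G\neq 0$, and when $G=0$ the partials $\der u, \der v$ still provide nonzero components). The remaining computations are routine bookkeeping with $\der\log$ of the explicit formulas defining $\rho$, so I do not expect further obstacles.
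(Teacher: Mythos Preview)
Your approach is essentially the same as the paper's: define $\check{\Omega}$ via the Poincar\'e residue, express it locally as $\der\log z_1\wedge\ldots\wedge\der\log z_{n-1}\wedge\der\log u$ (or the analogous expression in $v$), and pull back through the explicit formulas for $\rho$. The paper proceeds in the reverse order (start from $\der\log z_1\wedge\ldots\wedge\der\log z_{n-1}\wedge\der\log u$, extend, then identify with the residue), but the content is identical.

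One small slip: your parenthetical smoothness check is wrong at points with $u=v=0$. There $v\,\der u+u\,\der v=0$, so the partials in $u,v$ contribute nothing to $\der(uv-G)$; smoothness at such points requires $\der G\neq 0$, not nonvanishing of the $\der u,\der v$ components. Relatedly, your two local expressions on $\{u\neq 0\}$ and $\{v\neq 0\}$ do not cover the locus $\{u=v=0\}=\{G=0\}\cap\check{X}$, so ``these glue'' does not by itself give a form on all of $\check{X}$. You can either appeal to the general Poincar\'e residue theorem for a smooth hypersurface (once smoothness is properly established), or, as the paper does, rewrite the form as $(\partial F/\partial z_k)^{-1}\,\der v\wedge\der\log z_1\wedge\ldots\wedge\widehat{\der\log z_k}\wedge\ldots\wedge\der\log z_{n-1}\wedge\der u$ near such points to see holomorphicity directly. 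This is a minor patch; the rest of your argument is fine.
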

\begin{proof}
Let $F = uv - G(z_1, \ldots, z_{n-1})$ be the defining function of $\check{X}$.  On $\check{X} \cap (\cpx^\times)^{n+1}$, we have the nowhere-zero holomorphic $n$-form
$$\der \log z_1 \wedge \ldots \wedge \der \log z_{n-1} \wedge \der \log u$$
whose pull-back by $\rho$ is $\der\log z_1 \wedge \ldots \wedge \der\log z_{n-1} \wedge \der\log z_0$.  It suffices to prove that this form extends to
$$\check{X} = \left\{(u,v,z_1, \ldots, z_{n-1}) \in \cpx^2 \times (\cpx^\times)^{n-1}: F = 0 \right\}.$$
It is clear that the form extends to the open subset of $\check{X}$ where $u \neq 0$.  By writing the form as
$$- \der \log z_1 \wedge \ldots \wedge \der \log z_{n-1} \wedge \der \log v$$
we see that it also extends to the open subset where $v \neq 0$.  Since
$$
u \der v + v \der u = \sum_{i=1}^{m-1} Q_i (1 + \delta_i) \prod_{j=1}^{n-1} z_j^{\pairing{\nu_j}{v_i}} \left(\sum_{k=1}^{n-1} \pairing{\nu_k}{v_i}\der\log z_k \right)
$$
where $Q_i := 1$ for $i = 0, \dots, n-1$ and $Q_i = q_i$ for $i = n, \ldots, m-1$, the above $n$-form can also be written as
\begin{align*}
&\frac{u\der v + v \der u}{\sum_{i=1}^{m-1} Q_i (1 + \delta_i) \prod_{j=1}^{n-1} z_j^{\pairing{\nu_j}{v_i}} \pairing{\nu_1}{v_i}} \wedge \der \log z_2 \wedge \ldots \wedge \der \log z_{n-1} \wedge \der \log u \\
=& \left(\frac{\partial F}{\partial z_1} \right)^{-1} \der v \wedge \der \log z_2 \wedge \ldots \wedge \der \log z_{n-1} \wedge \der u
\end{align*}
which is holomorphic when $\frac{\partial F}{\partial z_1} \neq 0$.  By similar change of variables, we see that the form is holomorphic whenever $\der F \neq 0$, which is always the case because $\check{X}$ is smooth.

For $u \not= 0$,
\begin{align*}
&\frac{1}{F} \der\log z_1 \wedge \ldots \wedge \der\log z_{n-1} \wedge \der u \wedge \der v \\
=& \der\log z_1 \wedge \ldots \wedge \der\log z_{n-1} \wedge \der \log u \wedge \frac{u \der v}{F} \\
=& \der\log z_1 \wedge \ldots \wedge \der\log z_{n-1} \wedge \der \log u \wedge \frac{\der F}{F} \\
\end{align*}
whose residue is $\der\log z_1 \wedge \ldots \wedge \der\log z_{n-1} \wedge \der \log u$.
\end{proof}

This proves part (2) of Theorem \ref{mir_thm}.

\subsubsection{Independence of choices of cones in $\Sigma$}

If in the beginning we have chosen another ordered basis which generates a cone of $\Sigma$ to construct the mirror, the complex manifold given in Theorem \ref{mir_thm} differs the original one by a biholomorphism which preserves the holomorphic volume form:

\begin{prop} \label{basis_change}
Let $\{u_0, \ldots, u_{n-1}\} \subset N$ and $\{v_0, \ldots, v_{n-1}\} \subset N$ be two ordered basis, each generates a cone of $\Sigma$.
Let $(\widetilde{X},\tilde{\Omega})$ and $(\check{X},\Omega)$ be the two mirror complex manifolds constructed from these two choices respectively.  Then there exists a biholomorphism $\phi:\widetilde{X} \to \check{X}$ with the property that
$ \phi^* \Omega = \pm \tilde{\Omega}.$
\end{prop}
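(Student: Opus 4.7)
The plan is to construct $\phi$ as an explicit monomial change of coordinates induced by the $\GL(n,\integer)$ lattice isomorphism between $\{v_j\}$ and $\{u_j\}$. First I would track how this basis change acts on the semi-flat coordinates of Section \ref{semi-flat}, then push it through the Fourier transform to obtain the transformation of $u,v,z_1,\ldots,z_{n-1}$ appearing in the defining equation of $\check{X}$, and finally check both that the defining equations match and that the volume form is preserved up to sign.

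The loop $\lambda_0\in\pi_1(F_r)$ representing the $\arg(w-K_2)$ direction is intrinsic to $\mu$, so it is common to both trivializations. The transverse loops correspond, via the canonical isomorphism $N/\integer\langle v_0\rangle\cong N^{\perp\underline{\nu}}$, to $v'_j = v_j-v_0$ and $u'_k = u_k-u_0$ respectively. Since both families are $\integer$-bases of $N^{\perp\underline{\nu}}$, there is $A\in\GL(n-1,\integer)$ with $v'_j = \sum_k A_{jk}u'_k$, and dualizing yields $z_j^v = c_j\prod_k(z_k^u)^{A_{jk}}$, where the nonzero constants $c_j$ arise from the difference in symplectic areas of the cylinder classes $h_j^v$ and $\sum_k A_{jk}h_k^u$.

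Next, for each primitive generator $v_i$ ($i=0,\ldots,m-1$), expanding $v_i$ in both bases and using the Calabi-Yau identity $\pairing{\underline{\nu}}{v_i}=1$ yields the key monomial relation
\[
\prod_{j=1}^{n-1}(z_j^v)^{\pairing{\nu_j}{v_i}} \;=\; d_i\, W^{-1}\prod_{k=1}^{n-1}(z_k^u)^{\pairing{\mu_k}{v_i}},
\]
where $W = \prod_k(z_k^u)^{\pairing{\mu_k}{v_0}}$ is an $i$-independent monomial and $d_i$ is a scalar built from the $c_j$'s. Since the coefficients $C_i(1+\delta_i)$ are intrinsic to $v_i$, rescaling $u^v = W^{-1}u^u$, $v^v = v^u$ together with $z_j^v = c_j\prod_k(z_k^u)^{A_{jk}}$ maps the defining equation of $\widetilde{X}$ to that of $\check{X}$; the scalars $d_i$ are absorbed into the redefinition of the $q_a$'s under the basis change on $H_2(X,\integer)$ (cf.\ Proposition \ref{gen_H_2}). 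Because $W$ and the $c_j$'s are everywhere nonzero on $(\cpx^\times)^{n-1}$, the loci $\{u=0\}$ and $\{v=0\}$ are preserved, so $\phi$ extends across the partial compactifications.

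On the dense open subset $\{u\neq 0\}$, both $\Omega$ and $\tilde\Omega$ restrict to $\der\log z_1\wedge\cdots\wedge\der\log z_{n-1}\wedge\der\log u$ in their respective coordinates (Proposition \ref{hol_vol}). Under the monomial transformation one has $\der\log z_j^v = \sum_k A_{jk}\,\der\log z_k^u$, so the wedge of transverse logarithmic forms is multiplied by $\det A\in\{\pm 1\}$; the $W$-rescaling of $u^v$ contributes only $\der\log z_k^u$ terms which annihilate in the wedge. Hence $\phi^*\Omega = \pm\tilde\Omega$ with sign $\det A$. The main technical obstacle is the bookkeeping around the constants $c_j$ and the redefinition of the $q_a$'s when $u_0\neq v_0$, where the shift of the distinguished direction mixes $z_0$ into the transverse coordinates; the Calabi-Yau condition $\sum_k\pairing{\mu_k}{v_i}=1$ is what forces this correction to collapse into a single $i$-independent monomial $W$.
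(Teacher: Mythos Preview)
Your approach is essentially the same as the paper's: both construct an explicit monomial change of coordinates and use the Calabi-Yau identity $\pairing{\underline{\nu}}{v_i}=1$ to factor out a single $i$-independent monomial $W$ that is absorbed into the $u$-coordinate, and both compute $\phi^*\Omega$ on $\{u\neq 0\}$ by wedging the logarithmic differentials to pick up the determinant $\pm 1$. The paper works dually in $M$, choosing the bases $\{\underline{\nu},\nu_1,\ldots,\nu_{n-1}\}$ and $\{\underline{\nu},\mu_1,\ldots,\mu_{n-1}\}$ and writing the change of basis as $\mu_j = a_{j,0}\underline{\nu} + \sum_k a_{jk}\nu_k$; it then applies this directly to the intrinsic form $uv = \sum_i C_i(1+\delta_i)\,z^{v_i}$ of the defining equation (i.e.\ the form from Section~\ref{mirror equation} before passing to the $q_a$'s), giving the clean coordinate change $z_k = \prod_j \zeta_j^{a_{jk}}$, $u = \tilde u\,\bigl(\prod_p \zeta_p^{a_{p,0}}\bigr)^{-1}$, $v=\tilde v$.

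The one place your write-up is weaker than the paper's is the bookkeeping with the constants $c_j$ and $d_i$. Since the loops $\lambda_j^v$ and $\lambda_k^u$ are genuinely the same elements of $\pi_1(F_r)$ once both families are identified with the basis $\{v'_j\}$, $\{u'_k\}$ of $N^{\perp\underline{\nu}}$, the cylinder classes satisfy $[h_j^v] = \sum_k A_{jk}[h_k^u]$ and hence $c_j=1$; likewise the coefficients $C_i(1+\delta_i)$ are intrinsic to the ray $v_i$ (they involve only $\beta_i$, $\omega$, $r_0$ and the $n_{\beta_i+\alpha}$, none of which depend on the chosen cone), so $d_i=1$ as well. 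Your appeal to ``absorbing the $d_i$ into the redefinition of the $q_a$'s'' is therefore unnecessary and, as stated, not justified --- the $q_a$'s are intrinsic symplectic areas and do not change. The paper avoids this entirely by never passing to the $q_a$-form: staying with the $C_i$-form makes the comparison a pure monomial identity with no stray scalars, which is the cleaner route.
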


\begin{proof}
Consider the mirror manifold given in Section \ref{mirror equation},
\begin{equation*}
\left\{
\begin{aligned}
&(u,v,z_1,\ldots,z_{n-1}) \in (\cpx^\times)^2 \times \cpx^{n-1}: \\
&uv = g(z) := \sum_{i=0}^{m-1} C_i (1 + \delta_i) \prod_{j=1}^{n-1} z_j^{\pairing{\nu_j}{v_i}}
\end{aligned}
\right\}
\end{equation*}
where $C_i := \exp\left(-\int_{\beta_i (r_0)} \omega\right) > 0$ and $\delta_i := \sum_{\alpha \not= 0} n_{\beta_i + \alpha} \exp\left(- \int_\alpha \omega \right)$ in which the summation is over all $\alpha \in H_2 (X, \integer) - \{0\}$ represented by rational curves.  If we choose another basis $\{u_0, \ldots, u_{n-1}\} \subset N$ whose dual basis is denoted by $\{\mu_0, \ldots, \mu_{n-1}\}$, then our mirror construction gives another equation
$$  \left\{ (\tilde{u},\tilde{v},\zeta_1,\ldots,\zeta_{n-1}) \in (\cpx^\times)^2 \times \cpx^{n-1}: \tilde{u} \tilde{v} = \sum_{i=0}^{m-1} C_i (1 + \delta_i) \prod_{j=1}^{n-1} \zeta_j^{\pairing{\mu_j}{v_i}} \right\} $$

Recall that $\underline{\nu} = \sum_{i=0}^{n-1} \nu_i = \sum_{i=0}^{n-1} \mu_i$.  Both $\{\underline{\nu}, \nu_1, \ldots, \nu_{n-1} \}$ and $\{\underline{\nu}, \mu_1, \ldots, \mu_{n-1} \}$ are basis of $M$.  Let $a \in \GL (n, \integer)$ be the change of basis, and so $\mu_j = a_{j,0} \underline{\nu} + \sum_{k=1}^{n-1} a_{jk} \nu_k$ for $j=1,\ldots,n-1$.  Then since $\pairing{\underline{\nu}}{v_i} = 1$ for all $i=0,\ldots,m-1$,
\begin{align*}
&\sum_{i=0}^{m-1} C_i (1 + \delta_i) \prod_{j=1}^{n-1} \zeta_j^{\pairing{\mu_j}{v_i}}\\
=& \sum_{i=0}^{m-1} C_i (1 + \delta_i) \prod_{j=1}^{n-1} \zeta_j^{a_{j,0} \pairing{\underline{\nu}}{v_i}} \prod_{j=1}^{n-1} \prod_{k=1}^{n-1} \zeta_j^{a_{jk} \pairing{\nu_k}{v_i}}\\
=& \left(\prod_{p=1}^{n-1} \zeta_p^{a_{p,0}}\right) \sum_{i=0}^{m-1} C_i (1 + \delta_i)  \prod_{k=1}^{n-1} \left(\prod_{j=1}^{n-1}\zeta_j^{a_{jk}}\right)^{\pairing{\nu_k}{v_i}}.
\end{align*}
Thus the coordinate change
$$ z_k = \prod_{j=1}^{n-1}\zeta_j^{a_{jk}}; u = \tilde{u}\left(\prod_{p=1}^{n-1} \zeta_p^{a_{p,0}}\right)^{-1}; v = \tilde{v} $$
gives the desired biholomorphism.  Moreover under this coordinate change
\begin{align*}
\Omega &= \der \log z_1 \wedge \ldots \wedge \der \log z_{n-1} \wedge \der \log u \\
&= \left( \sum_{j=1}^{n-1} a_{j,1} \log \zeta_j \right) \wedge \ldots \wedge \left( \sum_{j=1}^{n-1} a_{j,n-1} \log \zeta_j \right) \wedge \left( \log \tilde{u} - \sum_{j=1}^{n-1} a_{j,0} \log \zeta_j \right) \\
&= (\det A) \, \der \log \zeta_1 \wedge \ldots \wedge \der \log \zeta_{n-1} \wedge \der \log u \\
&= \pm \tilde{\Omega}.
\end{align*}

\end{proof}

\subsubsection{The superpotential.}
Recall that we have defined the generating function $\mathcal{F}_X$ of open Gromov-Witten invariants (Definition \ref{Lambda*}).  By taking Fourier transform, we obtain the superpotential, which is a holomorphic function on $(\check{\mu})^{-1}(B_0 - H)$, and it extends to be a holomorphic function on $\check{X}$:

\begin{prop}
Let $\tilde{z}_i$ be the holomorphic functions on $(\check{\mu}')^{-1}(B'_0 - H)$ given in Proposition \ref{FT_I}.
\begin{enumerate}
\item The Fourier transform of $\mathcal{F}_{X'}$ is the function
$$W' = \sum_{i=0}^{n-1} \tilde{z}_i$$
on $(\check{\mu}')^{-1}(B'_0 - H)$.

\item The Fourier transform of $\mathcal{F}_{X}$ is the function
$$W = \tilde{z}_0$$
on $(\check{\mu}')^{-1}(B'_0 - H)$, which equals to $\rho^* (C_0 u)$.  ($C_0$ is a constant defined by Equation \eqref{C_j}.)
\end{enumerate}
\end{prop}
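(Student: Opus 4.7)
The plan is to prove both statements by a direct comparison of the Fourier transform formula with the formulas for $\tilde{z}_i$ given in Proposition \ref{FT_I}, combined with the Maslov index formula of Lemma \ref{Maslov index}. Unrolling the definitions of $\mathcal{F}_{X'}$ (Equation \eqref{F_X}) and of $\mathcal{I}_i$ (Equation \eqref{I_i}) and applying the family Fourier transform developed in Section \ref{FT for family}, one gets
\begin{align*}
\widehat{\mathcal{F}_{X'}}(F_r,\conn) &= \sum_{\beta \in \pi_2(X',F_r)} n_\beta \exp\left(-\int_\beta \omega\right) \mathrm{Hol}_\conn(\partial\beta), \\
\sum_{i=0}^{n-1} \tilde{z}_i &= \sum_{\beta \in \pi_2(X',F_r)} \left(\beta\cdot \sum_{i=0}^{n-1} D_i \right) n_\beta \exp\left(-\int_\beta \omega\right) \mathrm{Hol}_\conn(\partial\beta).
\end{align*}
So the first statement reduces to showing that $\beta \cdot \sum_{i=0}^{n-1} D_i = 1$ for every $\beta$ appearing with $n_\beta \neq 0$.

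That step is the content of the Maslov-index computation. By Lemma \ref{Maslov index} applied to $X'$ with the meromorphic volume form $\Omega'/(w'-K_2)$, whose pole divisor is $\sum_{j=0}^{n-1} D_j$ (recorded in Equation \eqref{Maslov_X'}), we have $\mu(\beta) = 2\,\beta \cdot \sum_{j=0}^{n-1} D_j$. By Definition \ref{open_GW} and the remark following it, $n_\beta = 0$ whenever $\mu(\beta) \neq 2$, so any $\beta$ contributing to the sum above satisfies $\beta \cdot \sum_{j=0}^{n-1} D_j = 1$. This gives $\widehat{\mathcal{F}_{X'}} = \sum_{i=0}^{n-1} \tilde{z}_i$. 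The same argument applied to $X$, whose Gross fibration is special Lagrangian with respect to $\Omega/(w-K_2)$ with pole divisor only $D_0$, shows $\mu(\beta) = 2\,\beta \cdot D_0$, hence $\beta\cdot D_0 = 1$ for all $\beta$ with $n_\beta \neq 0$, and so $\widehat{\mathcal{F}_X} = \tilde{z}_0$.

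It remains to identify $\tilde{z}_0$ with $\rho^*(C_0 u)$. Using the explicit formulas in Proposition \ref{FT_I}, $\tilde{z}_0 = C_0 z_0$ on $(\check{\mu}')^{-1}(B'_-)$ and $\tilde{z}_0 = z_0\, g(z_1,\ldots,z_{n-1})$ on $(\check{\mu}')^{-1}(B'_+)$, where $g$ is as in Equation \eqref{g}. On the other hand, by the definition of $\rho_0$ in Equation \eqref{rho_0}, $\rho_0^* u$ takes exactly these values, so $\tilde{z}_0 = \rho_0^* u$. After the coordinate rescaling $\hat u = u/C_0$ and $\hat z_j = (C_j/C_0) z_j$ used to pass from $\rho_0$ to $\rho$ (Equation \eqref{rho}), the Calabi-Yau identity $\sum_{j=0}^{n-1}\pairing{\nu_j}{v_i}=1$ yields $g = C_0\, G$ in the new coordinates, so that $\rho^*(C_0 u) = \rho_0^* u = \tilde{z}_0$, as claimed.

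The only subtle points are bookkeeping ones: one must either verify convergence of the series defining the Fourier transforms or simply read them in the Novikov ring $\Lambda_0(\cpx)$ as indicated in Section \ref{FT for family}, and one must check that the two local formulas for $\tilde{z}_0$ on $(\check{\mu}')^{-1}(B'_\pm)$ glue consistently through $\rho$, which is precisely the wall-crossing content already encoded in Propositions \ref{disk counting +} and \ref{disk counting -}. The main (and essentially only) conceptual step is the Maslov-index identification $\mu(\beta)=2\beta\cdot\sum_j D_j$ combined with the dimension reason forcing $\mu(\beta)=2$; everything else is a direct rewriting.
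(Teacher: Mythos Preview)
Your proposal is correct and follows essentially the same approach as the paper. For part (1) both you and the paper invoke the Maslov index formula $\mu(\beta)=2\beta\cdot\sum_j D_j$ to conclude that each contributing $\beta$ hits exactly one $D_i$ once, so $\mathcal{F}_{X'}=\sum_i\mathcal{I}_i$ and hence $W'=\sum_i\tilde{z}_i$. For part (2) the paper proceeds slightly more concretely---it plugs in the disk-counting results of Propositions \ref{disk counting + in X} and \ref{disk counting - in X} to write out $W$ explicitly on $B_\pm$ and then recognizes the resulting expressions as the formulas for $\tilde{z}_0$ from Proposition \ref{FT_I}---whereas you argue abstractly via $\mu(\beta)=2\beta\cdot D_0$; both routes rely on the (implicit) identification of the $X$- and $X'$-invariants $n_{\beta_j+\alpha}$, which is what those disk-counting propositions provide. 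Your unwinding of the coordinate change to verify $\tilde{z}_0=\rho^*(C_0 u)$ is more explicit than the paper's one-line citation of Equation \eqref{rho}, but amounts to the same computation.
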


\begin{proof}
Recall that (in Definition \ref{Lambda*})
$$\mathcal{F}_{X'}(\lambda) = \sum_{\beta \in \pi_2(X',\lambda)} n_\beta \exp\left(-\int_\beta \omega\right).$$
The sum is over all $\beta$ with $\mu(\beta) = 2$, which implies that $\beta$ intersect exactly one of the boundary divisors $D_i$ once (see Equation \ref{Maslov_X'}).  Thus
$$\mathcal{F}_{X'}(\lambda) = \sum_{i=0}^{n-1} \mathcal{I}_i (\lambda)$$
and so its Fourier transform $W'$ is $\sum_{i=0}^{n-1} \tilde{z}_i$.  This proves (1).

The Fourier transform of $\mathcal{F}_{X}$ is
$$W = \sum_{\lambda \in \pi_1(X,F_r)} \mathcal{F}_{X}(\lambda) \mathrm{Hol}_\conn (\lambda) = \sum_{\beta \in \pi_2(X,F_r)}  n_\beta \exp\left(-\int_\beta \omega\right) \mathrm{Hol}_\conn (\partial \beta).$$

For $r \in B_+$, by Proposition \ref{disk counting + in X}, $n_{\beta} = 0$ unless $\beta = \beta_k + \alpha$ for $k = 0, \ldots, m-1$ and $\alpha \in H_2(X)$ represented by rational curves.  Moreover, $n_{\beta_k} = 1$.  Thus
\begin{align*}
W
&= \sum_{j=0}^{m-1} \sum_{\alpha} n_{\beta_j(r) + \alpha} \exp\left(-\int_{\beta_j(r) + \alpha} \omega\right) \mathrm{Hol}_\conn (\partial \beta_j(r)) \\
&= \tilde{z}_0.
\end{align*}

For $r \in B_-$, by Proposition \ref{disk counting - in X}, $n_{\beta} = 0$ unless $\beta = \beta_0$, and $n_{\beta_0} = 1$.  Thus
\begin{align*}
W
&= \exp\left(-\int_{\beta_0 (r)} \omega\right)\mathrm{Hol}_\conn (\partial \beta_0)\\
&= \tilde{z}_0.
\end{align*}
By Equation \eqref{rho}, $\tilde{z}_0 = \rho^* (C_0 u)$.
\end{proof}

This ends the proof of Theorem \ref{mir_thm}.

\section{Enumerative meanings of (inverse) mirror maps} \label{period}

For a pair $(X$, $\check{X})$ of mirror Calabi-Yau manifolds, mirror symmetry asserts that there is a local isomorphism between the moduli space $\mathcal{M}_C(\check{X})$ of complex structures of $\check{X}$ and the complexified K\"ahler moduli space $\mathcal{M}_K(X)$ of $X$ near the large complex structure limit and large volume limit respectively, such that the Frobenius structures over the two moduli spaces get identified.  This is called the \textit{mirror map}. It gives canonical flat coordinates on $\mathcal{M}_C(\check{X})$ by transporting the natural flat structure on $\mathcal{M}_K(X)$. A remarkable feature of the instanton-corrected mirror family for a toric Calabi-Yau manifold we construct via SYZ is that it is inherently written in these canonical flat coordinates. In this section, we shall formulate this feature as a conjecture, and then give evidence for it for some 2- and 3-dimensional examples by applying the results in \cite{Chan10} and \cite{LLW10}.

\subsection{The conjecture}
Let $X = X_\Sigma$ be a toric Calabi-Yau $n$-fold.  We adopt the notation used in Section \ref{torCY}: $\{v_i\}_{i=0}^{m-1} \subset N$ are primitive generators of rays in the fan $\Sigma$, and $\{\nu_j\}_{i=0}^{n-1} \subset M$ is the dual basis of $\{v_j\}_{j=0}^{n-1} \subset N$.  Moreover, $H_2(X, \integer)$ is of rank $l = m-n$ generated by $\{S_a\}_{i=1}^{m-n}$ (see Equation \eqref{theta} and Proposition \ref{gen_H_2}).

\subsubsection{The complexified K\"ahler moduli}
Let $\mathcal{K}(X)$ be the K\"ahler cone of $X$, i.e. $\mathcal{K}(X)\subset H^2(X,\real)$ is the space of K\"ahler classes on $X$. Then let
$$\mathcal{M}_K(X)=\mathcal{K}(X)+2\pi\sqrt{-1}H^2(X,\real)/H^2(X,\integer).$$
This is the complexified K\"ahler moduli space of $X$. An element in $\mathcal{M}_K(X)$ is represented by a complexified K\"ahler class $\omega^\cpx=\omega+2\pi\sqrt{-1}B$, where $\omega\in\mathcal{K}(X)$ and $B\in H^2(X,\real)$. $B$ is usually called the B-field.  We have the map $\mathcal{M}_K(X) \to (\Delta^*)^l$ defined by
$$ q_i=\exp\left(-\int_{S_{n+i-1}} \omega^\cpx \right)$$
for $i = 1, \ldots, l$.
This map is a local biholomorphism from an open subset $U\subset\mathcal{M}_K(X)$ to $(\Delta^*)^l$, where $\Delta^*=\{z\in\cpx:0<|z|<1\}$ is the punctured unit disk.  The inclusion $(\Delta^*)^l \hookrightarrow \Delta^l$, where $\Delta=\{z\in\cpx:|z|<1\}$ is the unit disk, gives an obvious partial compactification, and the origin $0\in\Delta^l$ is called a \textit{large radius limit} point.  From now on, by abuse of notation we will take $\mathcal{M}_K(X)$ to be this open neighborhood of large radius limit.

\subsubsection{The mirror complex moduli}
On the other hand, let $\mathcal{M}_C(\check{X})=(\Delta^*)^l$.  We have a family of noncompact Calabi-Yau manifolds $\{\check{X}_{\check{q}}\}$ parameterized by $\check{q}\in\mathcal{M}_C(\check{X})$ defined as follows.  For $\check{q}=(\check{q}_1,\ldots,\check{q}_l)\in\mathcal{M}_C(\check{X})$,
\begin{equation}\label{HIV_mirror}
\check{X}_{\check{q}} := \left\{(u,v,z_1,\ldots,z_{n-1})\in\cpx^2\times(\cpx^\times)^{n-1}:
uv=\sum_{i=0}^{m-1}C_iz^{v_i}\right\},
\end{equation}
where $C_i \in \cpx$ are subject to the constraints
\begin{equation}\label{constraints}
C_{n+a-1} \prod_{i=0}^{n-1} C_i^{-\pairing{\nu_i}{v_{a+n-1}}} = \check{q}_a,\ a=1,\ldots,l.
\end{equation}

The origin $0\in\Delta^l$ in the partial compactification $\mathcal{M}_C(\check{X})\hookrightarrow\Delta^l$ is called a \textit{large complex structure limit} point.  Each $\{\check{X}_{\check{q}}\}$ is equipped with a holomorphic volume form $\check{\Omega}_{\check{q}}$ (see Proposition \ref{hol_vol}).

\subsubsection{The mirror map}
The mirror map $\psi:\mathcal{M}_C(\check{X}) \to \mathcal{M}_K(X)$ is defined by periods:
$$\psi(\check{q}):=\left(\int_{\gamma_1}\check{\Omega}_{\check{q}},
\ldots,\int_{\gamma_l}\check{\Omega}_{\check{q}}\right),$$
where $\{\gamma_1,\ldots,\gamma_l\}$ is a suitable basis of $H_n(\check{X},\integer)$.

Local mirror symmetry asserts that $\psi:\mathcal{M}_C(\check{X})\to\mathcal{M}_K(X)$
is an isomorphism onto $U\subset\mathcal{M}_K(X)$ (if $U$ is small enough), and this gives canonical flat coordinates on $\mathcal{M}_C(\check{X})$.

On the other hand, our construction of the instanton-corrected mirror gives a natural map $\phi:\mathcal{M}_K(X)\to\mathcal{M}_C(\check{X})$ as follows: Recall from Theorem \ref{mir_thm} that $\check{X}$ is defined by the equation
\begin{equation*}
uv = (1+\delta_0) + \sum_{j=1}^{n-1} (1 + \delta_j) z_j + \sum_{i=n}^{m-1} (1 + \delta_i) q_{i-n+1} z^{v_i}.
\end{equation*}
Comparing this with Equation \eqref{HIV_mirror} and \eqref{constraints}, one defines a map
$$\phi:\mathcal{M}_K(X)\to\mathcal{M}_C(\check{X}),\
(\check{q}_1,\ldots,\check{q}_l) = \phi(q_1,\ldots,q_l)$$
by
\begin{equation}\label{phi}
\check{q}_a = q_a (1 + \delta_{a+n-1}) \prod_{j=0}^{n-1} (1+\delta_j)^{-\pairing{\nu_j}{v_{a+n-1}}},\ a=1,\ldots,l.
\end{equation}

We claim that this gives the inverse of the mirror map:

\begin{conjecture} \label{can_coords2}
The map $\phi$ is an isomorphism locally near the large radius limit and gives the inverse of the mirror map $\psi$. In other words, there exists a basis $\gamma_1,\ldots,\gamma_l$ of $H_n(\check{X},\integer)$ such that
$$q_a=\exp\left(-\int_{\gamma_a}\check{\Omega}_{\check{q}}\right),$$
for $a=1,\ldots,l$, where $\check{q}=\phi(q)$ is defined as above. Hence, $\check{q}_1(q),\ldots,\check{q}_l(q)$ are flat coordinates on $\mathcal{M}_C(\check{X})$.
\end{conjecture}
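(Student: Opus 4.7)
The plan is to identify a distinguished basis $\gamma_1, \ldots, \gamma_l \in H_n(\check{X}, \integer)$ whose periods give the single-log solutions of the A-hypergeometric system associated to $\check{X}$, and then show by direct computation that the exponentiated periods $\exp(-\int_{\gamma_a}\check{\Omega}_{\check{q}})$ reproduce the ratio $q_a(1+\delta_{a+n-1})\prod_j(1+\delta_j)^{-\pairing{\nu_j}{v_{a+n-1}}}$ appearing in the definition of $\phi$. Equivalently, writing the period as $\int_{\gamma_a}\check{\Omega}_{\check{q}} = -\log \check{q}_a + f_a(\check{q})$ for an analytic $f_a$ vanishing at the large complex structure limit, the conjecture amounts to the identity $f_a(\phi(q)) = \log(1+\delta_{a+n-1}) - \sum_{j=0}^{n-1} \pairing{\nu_j}{v_{a+n-1}} \log(1+\delta_j)$.

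First I would reduce the $n$-dimensional period integrals to $(n-1)$-dimensional ones. Using the conic-bundle structure $uv = G(z)$ and Proposition \ref{hol_vol}, the holomorphic volume form is a residue along $\{F=0\}$ of an ambient form on $\cpx^2 \times (\cpx^\times)^{n-1}$. Standard Leray-residue arguments turn the period over a tubular $n$-cycle $\gamma_a$ wrapping the hypersurface $\{G=0\} \subset (\cpx^\times)^{n-1}$ once into an $(n-1)$-fold integral of $\log G(z)$-type data over a cycle in $(\cpx^\times)^{n-1}$. After this reduction one expands the integrand as a series in the $C_i$ (equivalently, in the $\check{q}_a$) and picks out the logarithmic contributions; the resulting series are precisely the single-log solutions of the GKZ/$A$-hypergeometric system attached to the toric data $\{v_i\}$, which coincides with the Picard-Fuchs system for $\check{X}$ by Hosono \cite{hosono06}. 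This step is mostly combinatorial once the residue reduction is set up, and it determines $f_a(\check{q})$ explicitly as a hypergeometric-type series.

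The main obstacle is the enumerative side: one must prove the nontrivial identity that $f_a(\phi(q))$, a combination of hypergeometric series in $\check{q}_1,\ldots,\check{q}_l$, equals the explicit combination of open Gromov-Witten generating functions $\log(1+\delta_{a+n-1}) - \sum_j \pairing{\nu_j}{v_{a+n-1}} \log(1+\delta_j)$ once one substitutes $\check{q}_a = q_a(1+\delta_{a+n-1}) \prod_j (1+\delta_j)^{-\pairing{\nu_j}{v_{a+n-1}}}$. The $\delta_i$ are transcendental series in the K\"ahler parameters defined by symplectic data, whereas $f_a$ is produced by complex geometry on $\check{X}$, so the identity is genuine mirror symmetry and cannot be verified without some direct computation of the $n_{\beta_i+\alpha}$. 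My plan is to attack this in stages: (i) for toric Calabi-Yau 3-folds $X = K_Z$ with $Z$ toric Fano, apply Theorem \ref{cptification} to replace the open invariants $n_{\beta_0+\alpha}$ by closed Gromov-Witten invariants of a compactification $\bar X$, then use the blow-up/flop arguments of \cite{LLW10} to rewrite these as local BPS invariants of $K_{\tilde Z}$, whose generating functions were computed by Chiang-Klemm-Yau-Zaslow \cite{CKYZ}; (ii) match the resulting series against the hypergeometric $f_a$ obtained from step one, using Graber-Zaslow-type computations \cite{graber-zaslow01} of the Picard-Fuchs solutions.

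To keep the strategy honest, I would first carry out the full program for $X = K_{\proj^2}$ and $X = K_{\proj^1 \times \proj^1}$, where the reduction of the period to a one-variable hypergeometric series is explicit, the open invariants are the sup-diagonal BPS numbers, and all ingredients can be compared order-by-order in $q$. Once these test cases are matched the general 3-fold case $X = K_Z$ would follow by the same template, since both sides depend only on toric data and BPS invariants of $K_{\tilde Z}$. A uniform proof for arbitrary $n$ would likely require a more structural input — e.g.\ an open/closed comparison at the level of $J$-functions identifying the generating series of the $\delta_i$ with the non-logarithmic part of the $I$-function — and I would expect this, rather than the residue computation or the form of the cycles $\gamma_a$, to be the true technical bottleneck.
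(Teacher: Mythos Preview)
Your proposal is essentially the same program the paper carries out, with one important caveat: the statement is a \emph{conjecture} in the paper, not a theorem, and the paper does not prove it.  The paper only provides evidence in low-dimensional examples, by exactly the route you sketch in stages (i) and (ii): for $X=K_Z$ with $Z$ a toric del Pezzo surface it uses Theorem~\ref{cptification} and the blow-up/flop argument of \cite{LLW10} to express the $n_{\beta_0+\alpha}$ as local BPS invariants of $K_{\tilde Z}$, reads those off from the Chiang--Klemm--Yau--Zaslow tables, and then compares order-by-order with the inverse of the mirror map obtained by solving the Picard--Fuchs equations (e.g.\ Graber--Zaslow for $K_{\proj^2}$).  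The verifications are carried out only up to finite order in $q$ for $K_{\proj^1}$, $\mathcal{O}(-1)\oplus\mathcal{O}(-1)$, $K_{\proj^2}$ and $K_{\proj^1\times\proj^1}$.

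Your first step (the residue/Leray reduction of the $n$-dimensional period to an $(n-1)$-dimensional integral of $\log G$ type) is correct and standard, but note that the paper itself does not spell this out; it simply quotes the known Picard--Fuchs equations and their single-log solutions.  You are right that the real obstruction is the enumerative identity equating the hypergeometric series with the open Gromov--Witten generating functions, and your final remark is exactly on target: a uniform proof requires an open/closed comparison at the level of $J$-functions, which is precisely what the later paper \cite{CLT} (mentioned in the Remark after Conjecture~\ref{can_coords}) does for $X=K_Y$ with $Y$ toric Fano.  So your proposal does not overreach --- it correctly identifies both what can be done with the tools at hand (evidence in examples) and what is missing for a proof.
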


In the literature, various integrality properties of mirror maps and their inverses (see e.g. \cite{Z10}) have been established. This suggests that the coefficients in the Taylor expansions of these maps have enumerative meanings. This is exactly what the above conjecture says for the inverse mirror map, namely, it can be expressed in terms of the open Gromov-Witten invariants $n_{\beta_i+\alpha}$ for $X$.\footnote{As we mentioned in the introduction, Gross and Siebert were the first to conjecture such a relation between canonical flat coordinates and disk counting invariants. More precisely, they found that canonical coordinates can be obtained by imposing a normalization condition on slabs, which are in-turn believed to be related to the counting of tropical disks. See Conjecture 0.2 in \cite{gross07}.} See Remark \ref{Int} below for a geometric reason why we have integrality for the inverse mirror map in case $X$ is a toric Calabi-Yau $3$-fold of the form $K_S$, where $S$ is a toric Fano surface.

In practice, one computes the mirror map by solving a system of linear differential equations associated to the toric Calabi-Yau manifold $X$. For $i=0,1,\ldots,m-1$, denote by $\theta_i$ the differential operator $C_i\frac{\partial}{\partial C_i}$. For $j=1,\ldots,n$, let
$$\mathcal{T}_j=\sum_{i=0}^{m-1}v_i^j\theta_i,$$
where $v_i^j = \pairing{\nu_j}{v_i}$.  For $a=1,\ldots,l$, let
$$\Box_a=\prod_{i:Q^a_i>0}\left(\frac{\partial}{\partial C_i}\right)^{Q^a_i}-\prod_{i:Q^a_i<0}\left(\frac{\partial}{\partial C_i}\right)^{-Q^a_i}$$
where $Q^a_j = - \pairing{\nu_j}{v_{a+n-1}}$ for $j = 0, \ldots, n-1$, and $Q^a_i = \delta_{i,a+n-1}$ for $i = n, \ldots, m-1$.
Then, the $A$-hypergeometric system (also called GKZ system) of linear differential equations associated to $X$ is given by
\begin{eqnarray*}
\mathcal{T}_j\Phi(C)=0\ (j=1,\ldots,n),\ \ \Box_a\Phi(C)=0\ (a=1,\ldots,l).
\end{eqnarray*}
If we denote by $\check{X}_C$ the noncompact Calabi-Yau manifold (\ref{HIV_mirror}) parameterized by $C=(C_0,C_1,\ldots,C_{m-1})\in\cpx^m$ and $\check{\Omega}_C$ a holomorphic volume form on it, then, for any n-cycle $\gamma\in H_3(\check{X},\integer)$, the period
$$\Pi_\gamma(C)=\int_\gamma\check{\Omega}_C,$$
as a function of $C=(C_0,C_1,\ldots,C_{m-1})$, satisfies the above $A$-hypergeometric system (see e.g. \cite{hosono06} and \cite{konishi09}).

By imposing the constraints (\ref{constraints}), the $A$-hypergeometric system is reduced to a set of Picard-Fuchs equations (see the examples in Subsection 5.3), which are satisfied by the periods
$$\Pi_\gamma(\check{q})=\int_\gamma\check{\Omega}_{\check{q}},\ \gamma\in H_n(\check{X},\integer),$$
as functions of $\check{q}\in\mathcal{M}_C(\check{X})$. Now, let $\Phi_1(\check{q}),\ldots,\Phi_l(\check{q})$ be a basis of the solutions of this set of Picard-Fuchs equations with a single logarithm. Then there is a basis $\gamma_1,\ldots,\gamma_l$ of $H_n(\check{X},\integer)$ such that
$$\Phi_a(\check{q})=\int_{\gamma_a}\check{\Omega}_{\check{q}}$$
for $a=1,\ldots,l$, and the mirror map $\psi:\mathcal{M}_C(\check{X})\to\mathcal{M}_K(X)$ is given by
$$\psi(\check{q})=(\exp(-\Phi_1(\check{q})),\ldots,\exp(-\Phi_l(\check{q}))).$$

In the literature, the mirror map is computed by solving the Picard-Fuchs equations (\cite{aganagic-klemm-vafa}, \cite{graber-zaslow01}). One can then also compute (the Taylor series expansion of) its inverse. To give evidences for Conjecture \ref{can_coords2}, we need to compute the open Gromov-Witten invariants $n_{\beta_i+\alpha}$ and then compare the map $\phi$ define by (\ref{phi}) with the inverse mirror map.

\subsection{Computation of open Gromov-Witten invariants}\label{blowup-flop}

In this subsection, we compute the open Gromov-Witten invariants $n_{\beta_i+\alpha}$ for a class of examples using the results in \cite{Chan10} and \cite{LLW10}. We first establish some general basic properties for these invariants.

\begin{lemma}\label{hol_sphere}
Let $X_\Sigma$ be a toric manifold defined by a fan $\Sigma$. Suppose that there exists $\nu\in M$ such that $\nu$ defines a holomorphic function on $X_\Sigma$ whose zero set contains all the toric prime divisors $\mathscr{D}_i\subset X_\Sigma$. Then the image of any non-constant holomorphic map $u:\proj^1\to X_\Sigma$ lies entirely inside the union $\bigcup_i\mathscr{D}_i$ of the toric prime divisors in $X_\Sigma$. In particular this holds for a toric Calabi-Yau manifold $X$.
\end{lemma}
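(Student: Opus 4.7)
\textbf{Proof plan for Lemma \ref{hol_sphere}.} Let $w$ denote the holomorphic function on $X_\Sigma$ corresponding to $\nu \in M$, and let $u:\proj^1 \to X_\Sigma$ be a non-constant holomorphic map. The plan is to argue by contradiction: suppose the image of $u$ is \emph{not} contained in $\bigcup_i \mathscr{D}_i$, and derive that $u$ must be constant.

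First, I would consider the composition $w \circ u: \proj^1 \to \cpx$. Since $w$ is holomorphic on $X_\Sigma$ and $\proj^1$ is compact, $w \circ u$ is a holomorphic function on $\proj^1$ and hence a constant $c \in \cpx$. If the image of $u$ misses $\bigcup_i \mathscr{D}_i$ at some point $p \in \proj^1$, then $u(p)$ lies in the open dense torus orbit $X_\Sigma \setminus \bigcup_i \mathscr{D}_i \cong (\cpx^\times)^n$, on which $w$ is nowhere zero by hypothesis (its zero divisor is contained in $\bigcup_i \mathscr{D}_i$). Thus $c = w(u(p)) \neq 0$, and so $w \circ u$ is nowhere zero on $\proj^1$.

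Next, I would argue that this forces the entire image of $u$ to lie in the open torus orbit. Indeed, if $u(q) \in \mathscr{D}_i$ for some $q \in \proj^1$ and some $i$, then $w(u(q)) = 0$ by assumption, contradicting $w \circ u \equiv c \neq 0$. Hence $u$ factors through the inclusion $(\cpx^\times)^n \hookrightarrow X_\Sigma$. But any holomorphic map $\proj^1 \to (\cpx^\times)^n$ is constant, since each coordinate function is a nowhere-zero holomorphic function on the compact Riemann surface $\proj^1$, hence constant by the maximum principle applied to its modulus and its reciprocal. This contradicts our assumption that $u$ is non-constant.

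The ``in particular'' statement for toric Calabi-Yau manifolds follows immediately from Proposition \ref{w}: there, $\underline{\nu} \in M$ produces a holomorphic function $w$ on $X$ whose zero divisor is exactly $-K_X = \sum_{i=0}^{m-1} \mathscr{D}_i$, so the hypothesis of the lemma is satisfied. There is no real obstacle here — the only step requiring a moment's thought is the assertion that $(\cpx^\times)^n$ admits no non-constant holomorphic spheres, which is a standard application of the maximum principle and needs no elaboration in the paper.
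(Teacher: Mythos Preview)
Your proof is correct and follows essentially the same approach as the paper's own proof: compose $u$ with the holomorphic function $w$ to get a constant function on $\proj^1$, observe that if this constant is nonzero then the image of $u$ lies in the open orbit $(\cpx^\times)^n$ and hence $u$ is constant, and conclude by contradiction that $w\circ u\equiv 0$. The paper's argument is organized as a direct proof that $w\circ u\equiv 0$ rather than as a contradiction, but the logical content is identical, and your justification of the ``in particular'' clause via Proposition~\ref{w} matches the paper's reasoning.
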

\begin{proof}
Denote the holomorphic function corresponding to $\nu\in M$ by $f$. Then $f\circ u$ gives a holomorphic function on $\proj^1$, which must be a constant by the maximal principle. $f\circ u$ cannot be constantly non-zero since otherwise, the image of $u$ lies entirely inside the open orbit $(\cpx^\times)^n\subset X_\Sigma$, which forces $u$ to be a constant map. Thus $f\circ u\equiv 0$, which implies that the image of $u$ lies in the union of the toric prime divisors in $X_\Sigma$.

For a toric Calabi-Yau manifold $X$, we have $\pairing{\underline{\nu}}{v_i}=1>0$ for $i=0,1,\ldots,m-1$. This implies that the meromorphic function corresponding to $\underline{\nu}$ has no poles and its zero set is exactly $\bigcup_i\mathscr{D}_i$.
\end{proof}

It is known that $n_{\beta_i}=1$ by the results of Cho-Oh \cite{cho06}. In addition, we have
\begin{prop}\label{no_cpt_div}
Suppose that $\alpha\in H_2^\textrm{eff}(X,\integer)-\{0\}$, where $H_2^\textrm{eff}(X,\integer)$ is the semi-group of all classes of holomorphic curves in $X$.  Then $n_{\beta_i+\alpha}=0$ unless the toric prime divisor $\mathscr{D}_i\subset X$ corresponding to $v_i$ is compact.
\end{prop}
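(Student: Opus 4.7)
The plan is to reduce, via the Lagrangian isotopy of Proposition \ref{disk counting + in X}, to counting stable disks representing $\beta^{\mathbf{T}}_i + \alpha$ with boundary on a Lagrangian toric fiber $\mathbf{T}$, and then to show the moduli space is actually empty when $\mathscr{D}_i$ is non-compact. Since $\mathbf{T}$ has minimal Maslov index two (by Cho-Oh \cite{cho06}), Proposition \ref{no_boundary} rules out disk bubbling; together with Corollary \ref{Maslov>=0}, a stable representative of the Maslov index-two class $\beta^{\mathbf{T}}_i + \alpha$ must consist of a single Maslov index-two disk $u_0$, which by Cho-Oh's classification lies in the class $\beta^{\mathbf{T}}_i$, plus a tree of holomorphic sphere bubbles whose total class is $\alpha \neq 0$. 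From the explicit description of $u_0$ in Section \ref{gen_disks_X}, the image of $u_0$ meets $\bigcup_j \mathscr{D}_j$ at exactly one point $p$, which lies in the open torus orbit $\mathscr{D}_i \setminus \bigcup_{j\neq i}\mathscr{D}_j$ of $\mathscr{D}_i$. At least one sphere component must be attached to $u_0$ at the preimage of $p$, so some non-constant holomorphic sphere in $X$ passes through $p$.

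The heart of the argument is a strengthening of Lemma \ref{hol_sphere} under the non-compactness hypothesis on $\mathscr{D}_i$. Non-compactness of $\mathscr{D}_i$ is equivalent to $v_i$ being a vertex of the polytope $\mathrm{conv}(v_0,\ldots,v_{m-1})$ sitting in the affine hyperplane $\{\pairing{\underline{\nu}}{\cdot}=1\}\subset N_\real$. Separation of this vertex gives $\tilde{\mu}\in M$ with $\pairing{\tilde{\mu}}{v_i}<\pairing{\tilde{\mu}}{v_j}$ for all $j\neq i$; setting $\mu:=\tilde{\mu}-\pairing{\tilde{\mu}}{v_i}\underline{\nu}$ yields $\pairing{\mu}{v_i}=0$ and $\pairing{\mu}{v_j}>0$ for $j\neq i$ (using $\pairing{\underline{\nu}}{v_k}=1$). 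The corresponding character $f_\mu$ is therefore a holomorphic function on $X$ whose zero divisor is $\sum_{j\neq i}\pairing{\mu}{v_j}\mathscr{D}_j$.

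For any non-constant holomorphic sphere $u':\proj^1\to X$, the composition $f_\mu\circ u'$ is a bounded (indeed constant) holomorphic function on $\proj^1$. Combined with Lemma \ref{hol_sphere}, if this constant is non-zero then the image of $u'$ lies entirely in $\mathscr{D}_i\setminus\bigcup_{j\neq i}\mathscr{D}_j\cong(\cpx^\times)^{n-1}$, which admits no non-constant holomorphic spheres; consequently $f_\mu\circ u'\equiv 0$, i.e.\ every non-constant holomorphic sphere in $X$ has image contained in $\bigcup_{j\neq i}\mathscr{D}_j$. In particular, no such sphere can pass through the point $p\in\mathscr{D}_i\setminus\bigcup_{j\neq i}\mathscr{D}_j$ where the disk component meets the toric boundary, a direct contradiction with the paragraph above.

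Therefore $\mathcal{M}_1(\mathbf{T},\beta^{\mathbf{T}}_i+\alpha)=\emptyset$ when $\mathscr{D}_i$ is non-compact, giving $n_{\beta_i+\alpha}=0$ as claimed. The only non-routine step is producing the dual vector $\mu$ with the required sign pattern, for which I expect to simply invoke the Calabi-Yau condition $\pairing{\underline{\nu}}{v_k}=1$ to turn a hyperplane-separation of the vertex $v_i$ into the needed divisorial statement about $f_\mu$; the rest is maximum principle and the explicit Cho-Oh description of basic disks.
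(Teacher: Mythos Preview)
Your overall strategy is sound and closely parallels the paper's, but there is a genuine gap in your characterization of non-compactness. You assert that $\mathscr{D}_i$ is non-compact if and only if $v_i$ is a \emph{vertex} of $\Delta:=\mathrm{conv}(v_0,\ldots,v_{m-1})$; the correct criterion is that $v_i$ lie on the \emph{boundary} of $\Delta$. When $v_i$ sits in the relative interior of a positive-dimensional face of $\Delta$, your separation step fails outright: $v_i$ is then a nontrivial convex combination of other $v_j$ lying on that face, so no $\mu\in M$ can satisfy $\pairing{\mu}{v_i}=0$ while $\pairing{\mu}{v_j}>0$ for all those $j$. A concrete instance is $X=K_{\mathbb{F}_2}$ with $v_1=(1,0,1)$, $v_2=(0,1,1)$, $v_3=(-1,2,1)$: here $v_2=\tfrac12(v_1+v_3)$ is an edge midpoint, $\mathscr{D}_2\cong\proj^1\times\cpx$ is non-compact, and your character $f_\mu$ cannot exist. (Indeed $\proj^1\times\cpx$ contains the spheres $\proj^1\times\{c\}$ through its open orbit, so your conclusion ``every non-constant holomorphic sphere lies in $\bigcup_{j\neq i}\mathscr{D}_j$'' is simply false in this example.)

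The paper takes a different route at the key step: rather than seeking a global character on $X$ that isolates $\mathscr{D}_i$, it applies Lemma~\ref{hol_sphere} to the toric variety $\mathscr{D}_i$ itself, arguing that its fan is incomplete and hence that any sphere in $\mathscr{D}_i$ must lie in the toric boundary of $\mathscr{D}_i$. When $v_i$ really is a vertex---as in all of the paper's worked examples---your argument is correct and pleasantly explicit; the gap is only in the edge/face case, where one must work on $\mathscr{D}_i$ rather than on $X$.
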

\begin{proof}
Suppose that $\mathcal{M}_1(F,\beta_i+\alpha)$ is non-empty. Then $\alpha\neq0$ is realized by a non-constant genus zero stable map to $X$, whose image $Q$ must lie inside $\bigcup_{i=0}^{m-1}\mathscr{D}_i$ by Lemma \ref{hol_sphere}. $Q$ has non-empty intersection with the holomorphic disk representing $\beta_i\in\pi_2(X,\mathbf{T})$ for generic toric fiber $\mathbf{T}$. This implies that there must be some components of $Q$ which lie inside $\mathscr{D}_i$ and have non-empty intersection with the open orbit $(\cpx^\times)^{n-1}\subset\mathscr{D}_i$. But if $\mathscr{D}_i$ is non-compact, then the fan of $\mathscr{D}_i$ is simplicial convex incomplete, and so $\mathscr{D}_i$ itself is a toric manifold satisfying the condition of Lemma \ref{hol_sphere}. This forces $Q$ to have empty intersection with $(\cpx^\times)^{n-1}\subset\mathscr{D}_i$, which is a contradiction.
\end{proof}

>From now on, we shall restrict ourselves to the case where $X$ is the total space of the canonical line bundle of a toric Fano manifold, i.e. $X=K_Z$, where $Z$ is a toric Fano manifold. Note that there is only one compact toric prime divisor $\mathscr{D}_0\subset X$ (the zero section $Z\hookrightarrow K_Z$) which corresponds to the primitive generator $v_0$. By the above proposition, it suffices to compute the numbers $n_{\beta_0+\alpha}$ for $\alpha\in H_2^\textrm{eff}(X,\integer)-\{0\}$.

Let $\bar{\Sigma}$ be the refinement of $\Sigma$ by adding the ray generated by $v_\infty:=-v_0$ (and then completing it into a convex fan), and let $\bar{X}=\proj_{\bar{\Sigma}}$. This gives a toric compactification of $X$, and $v_\infty$ corresponds to the toric prime divisor $D_\infty=\bar{X}-X$.

\begin{theorem}[Theorem 1.1 in Chan \cite{Chan10}]\label{cptification}
Let $X=K_Z$, where $Z$ is a toric Fano manifold. Fix a toric fiber $\mathbf{T}\subset X$. For $\alpha\in H_2^\textrm{eff}(X,\integer)-\{0\}\subset H_2^\textrm{eff}(\bar{X},\integer)-\{0\}$, we have the following equality between open and closed Gromov-Witten invariants
$$n_{\beta_0+\alpha}=\mathrm{GW}_{0,1}^{\bar{X},h+\alpha}(\mathrm{P.D.}[\mathrm{pt}]).$$
Here, $h\in H_2(\bar{X},\integer)$ is the fiber class of the $\proj^1$-bundle $\bar{X}\to Z$, $\mathrm{P.D.}[\mathrm{pt}]\in H^{2n}(\bar{X},\cpx)$ is the Poincar\'e dual of a point in $\bar{X}$, and the 1-point genus zero Gromov-Witten invariant $\mathrm{GW}_{0,1}^{\bar{X},h+\alpha}(\mathrm{P.D.}[\mathrm{pt}])$ is defined by
$$\mathrm{GW}_{0,1}^{\bar{X},h+\alpha}(\mathrm{P.D.}[\mathrm{pt}])
=\int_{[\overline{\mathcal{M}}_{0,1}(\bar{X},h+\alpha)]}\textrm{ev}^*(\mathrm{P.D.}[\mathrm{pt}]),$$
where $[\overline{\mathcal{M}}_{0,1}(\bar{X},h+\alpha)]$ is the virtual fundamental cycle of the moduli space $\overline{\mathcal{M}}_{0,1}(\bar{X},h+\alpha)$ of genus zero stable maps to $\bar{X}$ with 1 marked point in the class $h+\alpha$ and $\textrm{ev}:\overline{\mathcal{M}}_{0,1}(\bar{X},h+\alpha)\to\bar{X}$ is the evaluation map.
\end{theorem}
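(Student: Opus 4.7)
The plan is to identify both sides with the same geometric count: a stable sphere in the zero section $Z = \mathscr{D}_0$ representing $\alpha$, attached at a point through which a distinguished rational curve (a disk, or its compactified $\proj^1$-fiber) passes. To set this up I would first choose the cycle representing $\mathrm{P.D.}[\mathrm{pt}] \in H^{2n}(\bar X, \cpx)$ to be a generic point $p \in D_\infty$, invoking deformation invariance of closed Gromov-Witten invariants. Since $(h+\alpha) \cdot D_\infty = 1$, any stable map representing $h+\alpha$ through $p$ meets $D_\infty$ with multiplicity one at $p$. Applying Lemma \ref{hol_sphere} to the components of such a stable map (or to their limits in $\bar X$, using the holomorphic function corresponding to $\underline{\nu}$ on $X$) shows that components meeting the open orbit $(\cpx^\times)^n$ are ruled out; the only component whose image touches $D_\infty$ must then be a fiber of the $\proj^1$-bundle $\bar X \to Z$, namely the unique fiber through $p$, landing at some point $q \in Z$. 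The remaining components, carrying the class $\alpha$, must lie in $\bigcup_i \mathscr{D}_i \cap X$, and the argument of Proposition \ref{no_cpt_div} confines them to the unique compact toric divisor $\mathscr{D}_0 = Z$. Thus the closed moduli reduces to pairs $(\text{sphere in } Z \text{ through } q,\; q \in Z)$.

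In parallel I would analyze the open side. For the toric fiber $\mathbf{T}$ chosen as in Section \ref{gen_disks_X}, stable disks representing $\beta_0 + \alpha$ have Maslov index $2$, so by Proposition \ref{no_boundary} there is a single disk component, whose class must be $\beta_0 + \alpha'$ for some effective $\alpha'$. The sphere-bubble components again lie in $Z$ by Lemma \ref{hol_sphere} and the compactness argument of Proposition \ref{no_cpt_div}. Since the disk component must have Maslov index $2$ (all bubble spheres in $Z \subset X$ have Chern number zero), Cho-Oh's classification \cite{cho06} forces the disk component to be, up to automorphism, the basic holomorphic disk in class $\beta_0$ sitting inside a single $\cpx$-fiber of $K_Z \to Z$; it meets $Z$ at a point $q \in Z$, and the sphere bubbles attach there and represent $\alpha$. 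So the open moduli is also parameterized by pairs $(\text{sphere in } Z \text{ through } q,\; q \in Z)$. The set-theoretic identification is then obvious: capping the basic disk component off by the complementary half-fiber running from $\mathbf{T}$ to $p \in D_\infty$ produces the $\proj^1$-fiber on the closed side, and this is a bijection at the level of underlying stable objects.

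The hard part is to match the virtual fundamental cycles under this bijection. On the closed side the obstruction theory is the standard one; on the open side one uses the Kuranishi structure of Fukaya-Oh-Ohta-Ono. Both linearizations decompose along the components, so I would argue component-by-component. The basic disk and the $\proj^1$-fiber components are automatically regular (they are the Cho-Oh basic disks, respectively sections in a toric $\proj^1$-bundle), so they contribute trivially. For the sphere bubbles in $Z$, the normal bundle $N_{Z/X} \cong K_Z$ and $N_{Z/\bar X} \cong K_Z$ agree, so the relevant obstruction bundles $H^1(u^* T\bar X)$ and $H^1(u^* TX)$ pulled back to the sphere components coincide. The gluing/node-smoothing directions likewise match: on the open side the node between the disk and the sphere is a boundary node on the disk component but a smooth marked point on the sphere component; on the closed side it is an interior node glued to a smooth marked point on a sphere component — the local gluing parameter is $\cpx$-valued in both cases, consistent with the fact that on the open side the disk is rigid so the boundary marked point at the node contributes a real one-dimensional parameter and the attachment to the sphere fills up the remaining $S^1$.

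The main obstacle is precisely this last comparison of obstruction theories at the nodes and the verification that the point constraint on the open side (the boundary passing through a fixed point of $\mathbf{T}$) translates exactly into the closed point constraint through $p \in D_\infty$. Concretely, the image of the disk boundary sweeps out a $T^{n-1}$-family inside the $\proj^1$-fiber over $q$, and moving $q$ over $Z$ exhibits $\mathbf{T}$ as fibered over $Z$ with fibers circles; fixing a boundary marked point on $\mathbf{T}$ selects a unique half-disk in that fiber and a unique point on $D_\infty$, so the two cycle-level constraints are in fact Poincaré dual to each other under the correspondence. Once this identification of Kuranishi/virtual structures is made precise, integrating $\mathrm{ev}^*[\mathrm{pt}]$ on the closed side and intersecting with the point class on $L$ on the open side give the same rational number, yielding $n_{\beta_0 + \alpha} = \mathrm{GW}_{0,1}^{\bar X, h+\alpha}(\mathrm{P.D.}[\mathrm{pt}])$.
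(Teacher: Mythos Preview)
Your overall strategy---decompose both moduli into a rigid ``main'' component plus sphere bubbles confined to $Z$, then match Kuranishi structures---is the same as the paper's. The key technical difference is your choice of point constraint: you place $p \in D_\infty$, whereas the paper places $p \in \mathbf{T}$. The paper's choice makes the bijection direct: given a stable disk with boundary marked point at $p \in \mathbf{T}$, one caps the $\beta_0$-disk with the unique $\beta_\infty$-disk sharing the same boundary circle (with reversed orientation), producing a stable sphere in class $h+\alpha$ whose marked point is still $p$. Both fibers $M_{\mathrm{op}}^{\mathrm{ev}=p}$ and $M_{\mathrm{cl}}^{\mathrm{ev}=p}$ are then literally parameterized by the same data, and the Kuranishi comparison is a component-by-component check exactly as you outline.

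Your choice $p \in D_\infty$ introduces two complications that you do not fully resolve. First, your decomposition of the closed moduli asserts that the component through $p$ is a $\proj^1$-fiber, but Lemma~\ref{hol_sphere} does not apply to components meeting $D_\infty$ (the function $w$ has a pole there), and you must separately rule out components lying inside $D_\infty$; this uses that $(h+\alpha)\cdot D_\infty = 1$ together with the Fano condition on $Z$ and genericity of $p$, but the argument is not spelled out. Second, and more seriously, the open marked point lives in $\mathbf{T}$ while the closed one lives in $D_\infty$, so your capping construction does not carry one marked-point constraint to the other; you acknowledge this in the last paragraph but the heuristic you give does not yield an honest bijection of the constrained moduli. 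The paper's choice of $p\in\mathbf{T}$ sidesteps both issues.
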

\begin{proof}[Sketch of proof]
Denote by $M_\textrm{op}$ and $M_\textrm{cl}$ the moduli spaces $\mathcal{M}_1(\mathbf{T},\beta_0+\alpha)$ and $\overline{\mathcal{M}}_{0,1}(\bar{X},h+\alpha)$ respectively. Fix a point $p\in\mathbf{T}\subset\bar{X}$. Then let
$$M_\textrm{op}^{\textrm{ev}=p}:=\textrm{ev}^{-1}(p),\ M_\textrm{cl}^{\textrm{ev}=p}:=\textrm{ev}^{-1}(p)$$
be the fibers of the evaluation maps $\textrm{ev}:M_\textrm{op}\to\mathbf{T}$ and $\textrm{ev}:M_\textrm{cl}\to\bar{X}$ respectively.

$M_\textrm{op}^{\textrm{ev}=p}, M_\textrm{cl}^{\textrm{ev}=p}$ have Kuranishi structures induced naturally from those on $M_\textrm{op}, M_\textrm{cl}$ respectively. We have (trivial) evaluation maps $\textrm{ev}:M_\textrm{op}^{\textrm{ev}=p}\to\{p\}$, $\textrm{ev}:M_\textrm{cl}^{\textrm{ev}=p}\to\{p\}$ and virtual fundamental cycles
$$[M_\textrm{op}^{\textrm{ev}=p}], [M_\textrm{cl}^{\textrm{ev}=p}]\in H_0(\{p\},\rat)=\rat.$$
Moreover, by Lemma A1.43 in \cite{FOOO_I}, we have
$$n_{\beta_0+\alpha}=[M_\textrm{op}^{\textrm{ev}=p}]\textrm{ and }
\mathrm{GW}_{0,1}^{\bar{X},h+\alpha}(\mathrm{P.D.}[\mathrm{pt}])=[M_\textrm{cl}^{\textrm{ev}=p}].$$
Hence, to prove the desired equality, it suffices to show that $M_\textrm{op}^{\textrm{ev}=p}, M_\textrm{cl}^{\textrm{ev}=p}$ are isomorphic as Kuranishi spaces.

Let $\sigma^\textrm{op}=((\Sigma^\textrm{op},z),u)$ be a point in $M_\textrm{op}^{\textrm{ev}=p}$. This consists of a genus 0 nodal Riemann surface $\Sigma^\textrm{op}$ with nonempty connected boundary and a boundary marked point $z\in\partial\Sigma^\textrm{op}$ and a stable holomorphic map $u:(\Sigma^\textrm{op},\partial\Sigma^\textrm{op})\to(\bar{X},\mathbf{T})$ with $u(z)=p$ representing the class $\beta_0+\alpha$. By applying the results of Cho-Oh \cite{cho06}, we see that $\Sigma^\textrm{op}$ must be singular and can be decomposed as $\Sigma^\textrm{op}=\Sigma^\textrm{op}_0\cup\Sigma_1$, where $\Sigma^\textrm{op}_0=\Delta$ is the unit disk and $\Sigma_1$ is a genus zero nodal curve, such that the restrictions of $u$ to $\Sigma^\textrm{op}_0$ and $\Sigma_1$ represent the classes $\beta_0$ and $\alpha$ respectively (Proposition 4.2 in \cite{Chan10}).

Now, there exists a unique holomorphic disk $u_\infty:(\Delta,\partial\Delta)\to(\bar{X},\mathbf{T})$ with class $\beta_\infty$ (which corresponds to $v_\infty=-v_0$ and intersects $\mathscr{D}_\infty$ at one point) such that its boundary $\partial u_\infty$ coincides with $\partial u$ but with opposite orientations (Proposition 4.3 in \cite{Chan10}). We can then glue the maps $u:(\Sigma^\textrm{op},\partial\Sigma^\textrm{op})\to(\bar{X},\mathbf{T})$, $u_\infty:(\Delta,\partial\Delta)\to(\bar{X},\mathbf{T})$ along the boundary to give a map $u':\Sigma\to\bar{X}$, where $\Sigma$ is the union of $\Sigma^\textrm{op}$ and $\Delta$ by identifying their boundaries. The map $u'$ has class $\beta_0+\beta_\infty+\alpha=h+\alpha\in H_2(\bar{X},\integer)$.

This defines a map $j:M_\textrm{op}^{\textrm{ev}=p}\to M_\textrm{cl}^{\textrm{ev}=p}$. To see that $j$ is a bijective map, let $\sigma^\textrm{cl}=((\Sigma,z),u)$ be representing a point in $M_\textrm{cl}^{\textrm{ev}=p}$, which consists of a genus 0 nodal curve $\Sigma$ with a marked point $z\in\Sigma$ and a stable holomorphic map $u:\Sigma\to\bar{X}$ such that $u(z)=p$. One can show that $\Sigma$ must be singular and decomposes as $\Sigma=\Sigma_0\cup\Sigma_1$, where $\Sigma_0=\proj^1$ and $\Sigma_1$ is genus 0 nodal curve such that the restrictions of $u$ to $\Sigma_0$ and $\Sigma_1$ represent the classes $h$ and $\alpha$ respectively (Proposition 4.4 in \cite{Chan10}). Now, the Lagrangian torus $\mathbf{T}$ cuts the image of $u|_{\Sigma_0}$ into two halves, one representing $\beta_0$ and the other representing $\beta_\infty$. We can then reverse the above construction and defines the inverse of $j$.

Furthermore, from these descriptions of the structures of the maps in $M_\textrm{op}^{\textrm{ev}=p}, M_\textrm{cl}^{\textrm{ev}=p}$, it is evident that they have the same Kuranishi structures. We refer the reader to Proposition 4.5 in \cite{Chan10} for a rigorous proof of this assertion.
\end{proof}

By the above theorem, we can use techniques for computing closed Gromov-Witten invariants (e.g. localization) to compute the open Gromov-Witten invariants $n_{\beta_0+\alpha}$. When $\dim X=3$, blow-up and flop arguments can be employed to relate the 1-point invariant $\mathrm{GW}_{0,1}^{\bar{X},h+\alpha}(\mathrm{P.D.}[\mathrm{pt}])$ to certain local BPS invariants of another toric Calabi-Yau manifold. This idea is developed in more details in \cite{LLW10}. As a special case of the results in \cite{LLW10}, we have the following
\begin{theorem}[Theorem 1.2 in Lau-Leung-Wu \cite{LLW10}]\label{thm_flop}
Let $X=K_S$, where $S$ is a toric Fano surface, and let $Y=K_{\tilde{S}}$ where $\tilde{S}$ is the toric blow up of $S$ at a toric fixed point $q$. Let $\bar{X}$ and $\bar{Y}$ be the toric compactifications of $X$ and $Y$ respectively as before. Then we have
$$\mathrm{GW}_{0,1}^{\bar{X},h+\alpha}(\mathrm{P.D.}[\mathrm{pt}])=\mathrm{GW}_{0,0}^{\bar{Y},\tilde{\alpha}-e},$$
where $h\in H_2(\bar{X},\integer)$ is the fiber class as before, $e\in H_2(\tilde{S},\integer)\subset H_2(\bar{Y},\integer)$ is the class of the exceptional divisor, and $\tilde{\alpha}\in H_2(\tilde{S},\integer)$ is the total transform of $\alpha\in H_2(S,\integer)$ under the blowing up $\tilde{S}\to S$. If $\tilde{S}$ is also Fano, then we further have
$$\mathrm{GW}_{0,1}^{\bar{X},h+\alpha}(\mathrm{P.D.}[\mathrm{pt}])=\mathrm{GW}_{0,0}^{Y,\tilde{\alpha}-e}.$$
Here, the invariant on the right-hand-side is the local BPS invariant of the toric Calabi-Yau 3-fold $Y$ defined by
$$\mathrm{GW}_{0,0}^{Y,\tilde{\alpha}-e}=\int_{[\overline{\mathcal{M}}_{0,0}(\tilde{S},\tilde{\alpha}-e)]}
c_{\textrm{top}}(R^1\textrm{forget}_*\textrm{ev}^*K_{\tilde{S}}),$$
where $[\overline{\mathcal{M}}_{0,0}(\tilde{S},\tilde{\alpha}-e)]$ is the virtual fundamental cycle of the moduli space $\overline{\mathcal{M}}_{0,0}(\tilde{S},\tilde{\alpha}-e)$ of genus zero stable maps to $\tilde{S}$ in the class $\tilde{\alpha}-e$, $\textrm{forget}:\overline{\mathcal{M}}_{0,1}(\tilde{S},\tilde{\alpha}-e)\to
\overline{\mathcal{M}}_{0,0}(\tilde{S},\tilde{\alpha}-e)$ is the map forgetting the marked point, $\textrm{ev}:\overline{\mathcal{M}}_{0,1}(\tilde{S},\tilde{\alpha}-e)\to\tilde{S}$ is the evaluation map and $c_{\textrm{top}}$ denotes top Chern class.
\end{theorem}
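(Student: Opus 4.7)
I follow the strategy of \cite{LLW10}, combining a blow-up formula for $1$-point Gromov--Witten invariants with the invariance of Gromov--Witten invariants under Atiyah flops of $(-1,-1)$-curves. Geometrically, $\bar{X}=\proj(\mathcal{O}_S\oplus K_S)$ and $\bar{Y}=\proj(\mathcal{O}_{\tilde{S}}\oplus K_{\tilde{S}})$ are $\proj^1$-bundles over their respective bases, and since $\tilde{S}\to S$ is a toric blow-up at the toric fixed point $q$, the two total spaces are connected by an explicit toric birational modification.

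First, I would use the Hu--Gathmann blow-up formula to rewrite the $1$-point invariant on $\bar{X}$ as a $0$-point invariant on a point-blow-up of $\bar{X}$. By deformation invariance, the blown-up point may be moved freely; I would place it at the intersection $p = S_0\cap f_q$ of the zero section $S_0\cong S$ with the fiber $f_q$ over $q$. Writing $\pi:\widehat{\bar{X}}\to\bar{X}$ for this blow-up, with exceptional $\proj^2$-divisor $E_p$ and line class $L\subset E_p$, the formula gives
$$\mathrm{GW}_{0,1}^{\bar{X},\,h+\alpha}(\mathrm{P.D.}[\mathrm{pt}]) = \mathrm{GW}_{0,0}^{\widehat{\bar{X}},\,\pi^*(h+\alpha)-L}.$$
A short Chern class check confirms that both sides are genuine $0$-dimensional invariants, using $c_1(\bar{X})\cdot(h+\alpha)=2$ (which follows from $c_1(\bar{X})\cdot h=2$ and $c_1(\bar{X})|_{S_0}=c_1(S)+c_1(K_S)=0$).

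Next, I would identify $\widehat{\bar{X}}$ with $\bar{Y}$ via a flop. The fiber $f_q\subset\bar{X}$ is a $(0,0)$-curve (its normal bundle is trivial as a $\proj^1$-bundle fiber), so its proper transform $\tilde{f}_q\subset\widehat{\bar{X}}$ acquires normal bundle $\mathcal{O}(-1)\oplus\mathcal{O}(-1)$ after blowing up a point of $f_q$. Performing the Atiyah flop of $\tilde{f}_q$ produces a smooth $3$-fold whose fan agrees with that of $\bar{Y}$: the proper transform of $S_0$ becomes the blow-up $\tilde{S}$ of $S$ at $q$, embedded as the zero section of the $\proj^1$-bundle $\bar{Y}\to\tilde{S}$. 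Tracing curve classes through the blow-up and flop, $\pi^*(h+\alpha)-L$ is carried to $\tilde{\alpha}-e$. Invoking the invariance of Gromov--Witten invariants under Atiyah flops of $(-1,-1)$-curves (A.-M.~Li--Ruan) then yields
$$\mathrm{GW}_{0,0}^{\widehat{\bar{X}},\,\pi^*(h+\alpha)-L} = \mathrm{GW}_{0,0}^{\bar{Y},\,\tilde{\alpha}-e},$$
which together with the blow-up formula proves the first equality.

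For the Fano refinement, a maximum-principle argument analogous to Lemma \ref{hol_sphere} and Proposition \ref{no_cpt_div} shows that every stable map in $\bar{Y}$ representing $\tilde{\alpha}-e$ factors through the zero section $\tilde{S}\subset\bar{Y}$, since the composition with the projection to the affine fiber direction must be constant, and the class $\tilde{\alpha}-e$ is not realized by any curve supported in the non-compact toric divisors. A standard excess intersection/virtual class comparison then identifies the ambient obstruction contribution with $R^1\,\text{forget}_*\,\text{ev}^*K_{\tilde{S}}$, yielding $\mathrm{GW}_{0,0}^{\bar{Y},\,\tilde{\alpha}-e}=\mathrm{GW}_{0,0}^{Y,\,\tilde{\alpha}-e}$. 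The main obstacle is the birational step: verifying that the point blow-up at $p=S_0\cap f_q$ followed by the specified Atiyah flop of $\tilde{f}_q$ indeed produces $\bar{Y}$ up to isomorphism, together with the precise matching of classes $\pi^*(h+\alpha)-L\mapsto\tilde{\alpha}-e$. This amounts to a careful fan-theoretic calculation in toric geometry and constitutes the technical heart of \cite{LLW10}.
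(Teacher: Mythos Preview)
Your overall strategy—apply the Hu–Gathmann blow-up formula to trade the point insertion for a line class in an exceptional $\proj^2$, then invoke Li–Ruan flop invariance for the $(-1,-1)$ proper transform of the fiber—is exactly the paper's. However, you place the blow-up point at $p=S_0\cap f_q$ on the \emph{zero} section, whereas the paper blows up at the toric fixed point $p\in D_\infty$ lying on the \emph{infinity} section. This is not cosmetic: with your choice the flop does \emph{not} produce $\bar{Y}$. Torically, your blow-up inserts the ray $w'=v_0+u_1+u_2$, which has $\pairing{\underline{\nu}}{w'}=3$, and after the flop the resulting fan is not $\GL(n,\integer)$-equivalent to that of $\bar{Y}$, whose new ray $w=v_\infty+u_1+u_2$ satisfies $\pairing{\underline{\nu}}{w}=1$. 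Concretely, in your flopped variety the proper transform $\tilde{S}_0$ of $S_0$ (which the flop does not touch, since $\tilde f_q$ misses it) has normal bundle $\sigma^*K_S\otimes\mathcal{O}(-E)=K_{\tilde S}\otimes\mathcal{O}(-2E)$, not $K_{\tilde S}$; hence your claim that it sits as the zero section of $\bar{Y}=\proj(\mathcal{O}_{\tilde S}\oplus K_{\tilde S})$ fails.

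With the corrected choice $p\in D_\infty$, your argument goes through verbatim and coincides with the paper's sketch: one checks on fans that the blow-up followed by the flop of the wall $\langle u_1,u_2\rangle$ yields precisely the fan of $\bar{Y}$, and under this identification the class $\pi^*(h+\alpha)-L$ is carried to $\tilde\alpha-e$. For the Fano refinement, the paper's argument is simply that curves in class $\tilde\alpha-e\in H_2(\tilde S)\subset H_2(\bar Y)$ have zero intersection with $D_\infty$, so by positivity of intersection they avoid $D_\infty$ and hence lie in $Y=K_{\tilde S}$; your maximum-principle phrasing is in the right spirit but should be replaced by this intersection-number argument, since $\bar Y$ is compact and the union of its toric divisors is all of $\bar Y$.
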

\begin{proof}[Sketch of proof]
A toric fixed point $q \in S$ corresponds to a toric fixed point $p \in D_\infty \subset \bar{X}$. First we blow up $p$ to get $X_1$, whose defining fan $\Sigma_1$ is obtained by adding the ray generated by $w=v_\infty+u_1+u_2$ to $\bar{\Sigma}$, where $v_\infty$, $u_1$ and $u_2$ are the normal vectors to the three facets adjacent to $p$. Now $\langle u_1,u_2,w\rangle_\real$ and $\langle u_1,u_2,v_0\rangle_\real$ form two adjacent simplicial cones in $\Sigma_1$, and we may employ a flop to obtain a new toric variety $\bar{Y}$, whose fan contains the adjacent cones $\langle w,v_0,u_1\rangle_\real$ and $\langle w,v_0,u_2\rangle_\real$ (see Figure \ref{fig_flop}). In fact $\bar{Y}$ is the toric compactification of $Y=K_{\tilde{S}}$, where $\tilde{S}$ is the toric blow up of $S$ at the torus-fixed point $q$. By using the equalities of Gromov-Witten invariants for blowing up \cite{Hu00, gathmann} and flop \cite{li-ruan01}, one has
$$\mathrm{GW}_{0,1}^{\bar{X},h+\alpha}([\mathrm{pt}])=\mathrm{GW}^{X_1,h+\alpha}_{0,0}
=\mathrm{GW}_{0,0}^{\bar{Y},\tilde{\alpha}-e}.$$
If we further assume that $\tilde{S}$ is Fano, then any rational curve representing $\tilde{\alpha}-e\in H_2(\tilde{S},\integer)\subset H_2(\bar{Y},\integer)$ never intersects $\mathscr{D}_\infty$. Thus
$$\mathrm{GW}_{0,0}^{\bar{Y},\tilde{\alpha}-e}=\mathrm{GW}_{0,0}^{Y,\tilde{\alpha}-e}.$$

\begin{figure}[htp]
\begin{center}
\includegraphics[scale=0.7]{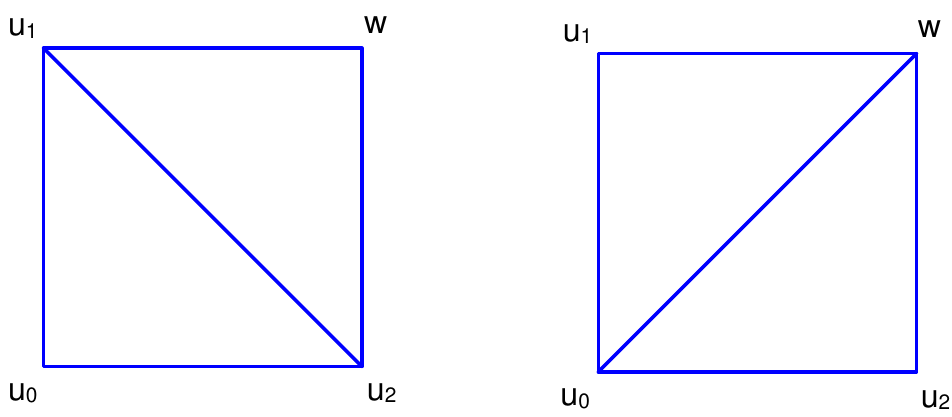}
\end{center}
\caption{A flop.} \label{fig_flop}
\end{figure}
\end{proof}

Combining the above two theorems, we get
\begin{corollary} \label{mirror_KS}
Let $S$ be a smooth toric Fano surface and $X=K_S$. Fix $\alpha\in H_2^\textrm{eff}(X,\integer)-\{0\}=H_2^\textrm{eff}(S,\integer)-\{0\}$. Suppose the toric blow-up $\tilde{S}$ of $S$ at a toric fixed point is still a toric Fano surface. Then we have
\begin{equation} \label{open-closed}
n_{\beta_0+\alpha}=\mathrm{GW}_{0,0}^{Y,\tilde{\alpha}-e}
\end{equation}
where $Y=K_{\tilde{S}}$, $e\in H_2(\tilde{S},\integer)$ is the class of the exceptional divisor, and $\tilde{\alpha} \in H_2(\tilde{S},\integer)$ is the pull-back (via Poincar\'e duality) of $\alpha\in H_2(S,\integer)$ under the blowing up $\tilde{S}\to S$.
\end{corollary}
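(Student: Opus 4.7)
The plan is to prove Corollary \ref{mirror_KS} by directly chaining Theorem \ref{cptification} and Theorem \ref{thm_flop}, both of whose statements are quoted in full just above. Since $X = K_S$ with $S$ a smooth toric Fano surface, $X$ is a toric Calabi-Yau $3$-fold of the form required by Theorem \ref{cptification}. By Proposition \ref{no_cpt_div}, among all $n_{\beta_i+\alpha}$ with $\alpha \ne 0$ only the ones with $\mathscr{D}_i$ compact can be non-zero, and in our setting the unique compact toric prime divisor in $X$ is the zero section $\mathscr{D}_0 \cong S$. Via that zero section we also identify $H_2^{\mathrm{eff}}(X,\integer) = H_2^{\mathrm{eff}}(S,\integer)$, which makes the statement of the corollary well-posed.

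Step one is to apply Theorem \ref{cptification} to the class $\alpha \in H_2^{\mathrm{eff}}(X,\integer)-\{0\}$, yielding
$$ n_{\beta_0+\alpha} = \mathrm{GW}_{0,1}^{\bar{X},h+\alpha}(\mathrm{P.D.}[\mathrm{pt}]), $$
where $\bar{X}$ is the toric compactification of $X$ obtained by adding the ray generated by $v_\infty := -v_0$, and $h$ is the fibre class of the $\proj^1$-bundle $\bar{X} \to S$. Step two is to apply Theorem \ref{thm_flop}: its standing assumption is precisely that the toric blow-up $\tilde{S} \to S$ at a toric fixed point remain Fano, which is exactly the hypothesis of the corollary, and under this assumption the theorem upgrades its generic identity all the way to
$$ \mathrm{GW}_{0,1}^{\bar{X},h+\alpha}(\mathrm{P.D.}[\mathrm{pt}]) = \mathrm{GW}_{0,0}^{Y,\tilde{\alpha}-e}, $$
with $Y = K_{\tilde{S}}$, $e$ the exceptional class, and $\tilde{\alpha}$ the total transform of $\alpha$. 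Composing the two equalities is the assertion \eqref{open-closed}.

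There is really no obstacle at this level, because all the analytic work has already been absorbed into the two cited theorems: in \cite{Chan10} one matches the Kuranishi structures on the evaluation-fibre of $\mathcal{M}_1(\mathbf{T},\beta_0+\alpha)$ with those on $\overline{\mathcal{M}}_{0,1}(\bar{X},h+\alpha)$ by gluing in the unique basic disk in the class $\beta_\infty$ opposite to $\beta_0$, and in \cite{LLW10} one performs a toric blow-up at the fixed point $p \in D_\infty$ followed by a flop, then uses Fano-ness of $\tilde{S}$ to discard contributions from curves meeting $\mathscr{D}_\infty \subset \bar{Y}$ and descend from $\bar{Y}$ to $Y$. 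Our proposal is therefore a packaging statement whose sole purpose is to put $n_{\beta_0+\alpha}$ into a form directly computable from the Chiang-Klemm-Yau-Zaslow tables, which is exactly how it will be used in Section 5 to verify Conjecture \ref{can_coords2} for $K_{\proj^2}$ and $K_{\proj^1 \times \proj^1}$.
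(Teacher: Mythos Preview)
Your proposal is correct and follows exactly the paper's approach: the paper simply states that the corollary follows by ``combining the above two theorems,'' and you have spelled out precisely that chain, applying Theorem~\ref{cptification} and then Theorem~\ref{thm_flop} (using the Fano hypothesis on $\tilde{S}$ for the second equality).
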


We conclude that the instanton-corrected mirror $\check{X}$ of $X=K_S$ is given by
$$\check{X}=\left\{(u,v,z_1,z_2)\in\cpx^2\times(\cpx^\times)^2:uv=1+\delta_0(q)+ \sum_{j=1}^{m-1}e^{c_i}z^{w_i}\right\}$$
where
$$\delta_0(q)=\sum_{\alpha\in H_2^\textrm{eff}(X,\integer)-\{0\}}\mathrm{GW}_{0,0}^{Y,\tilde{\alpha}-e}q^\alpha.$$

\begin{remark} \label{Int}
Since the class $\tilde{\alpha}-e\in H_2(\tilde{S},\integer)=H_2(Y,\integer)$ is primitive, there is no multiple-cover contribution and hence $\mathrm{GW}_{0,0}^{Y,\tilde{\alpha}-e}$ is indeed an integer. Hence, the coefficients of the Taylor series expansions of $\delta_0$ and hence the map $\psi$ we define are all integers. This explains why we have integrality properties for the inverse mirror maps.
\end{remark}

The invariants on the right hand side of the formula (\ref{open-closed}) have been computed by Chiang-Klemm-Yau-Zaslow \cite{CKYZ}. Making use of their results, we can now give supportive evidences for Conjecture \ref{can_coords2} in various examples.

\subsection{Examples}\label{verification}

In this subsection, we shall use the results in the previous section to give evidences for Conjecture \ref{can_coords2} in various examples.

\subsubsection{$K_{\proj^1}$}
Consider our familiar example $X=K_{\proj^1}$. The generators of the 1-dimensional cones of the defining fan $\Sigma$ are $v_0=(0,1), v_1=(1,1)$ and $v_2=(-1,1)$, as shown in Figure \ref{KP1_fan}. We equip $X$ with a toric K\"{a}hler structure $\omega$ so that the associated moment polytope $P$ is given by
$$P=\{(x_1,x_2)\in\real^2:x_2\geq0,x_1+x_2\geq0,-x_1+x_2\geq-t_1\},$$
where $t_1=\int_l\omega>0$ and $l\in H_2(X,\integer)$ is the class of the zero section in $K_{\proj^1}$. To complexify the K\"ahler class, we set $\omega^\cpx=\omega+2\pi\sqrt{-1}B$, for some real two-form $B$ (the $B$-field). We let $t=\int_l\omega^\cpx\in\cpx$.

Since $\mathscr{D}_0$, the zero section of $K_{\proj^1}\to\proj^1$, is the only compact toric prime divisor, by Proposition \ref{no_cpt_div} and Theorem \ref{mir_thm}, the instanton-corrected mirror is given by
$$\check{X}=\{(u,v,z)\in\cpx^2\times\cpx^\times: uv=1+\sum_{k=1}^\infty n_{\beta_0+kl}q^k+z+\frac{q}{z}\},$$
where $q=\exp(-t)$.

Now, the toric compactification of $X$ is $\bar{X}=\proj(K_{\proj^1}\oplus\mathcal{O}_{\proj^1})=\mathbb{F}_2$ (a Hirzebruch surface). Using Theorem \ref{cptification}, the open Gromov-Witten invariants $n_{\beta_0+kl}$ can easily be computed as $\mathbb{F}_2$ is symplectomorphic to $\mathbb{F}_0=\proj^1\times\proj^1$ (see \cite{Chan10} and also \cite{auroux09} and \cite{FOOO10}). The result is
$$n_{\beta_0+kl}=\begin{cases}
				  1 & \textrm{ if }k=0,1;\\
				  0 & \textrm{ otherwise.}
				 \end{cases}$$
Hence, the corrected mirror $\check{X}$ can be written as
$$\check{X}=\{(u,v,z)\in\cpx^2\times\cpx^\times: uv=(1+\frac{q}{z})(1+z)\}.$$
We remark that this agrees with the formula written down by Hosono (See Proposition 3.1 and the following remark in \cite{hosono06}). We have $Q=(-2,1,1)$, and both $\mathcal{M}_C(\check{X})$ and $\mathcal{M}_K(X)$ can be identified with the punctured unit disk $\Delta^*$. The map $\phi:\Delta^*\to\Delta^*$ we define is thus given by $q\mapsto\check{q}(q)=q(1+q)^{-2}$.

In this example, the period of $\check{X}$ can be computed directly. Recall that the holomorphic volume form on $\check{X}$ is given by $\check{\Omega}=d\log u\wedge d\log z$. There is an embedded $S^2\subset\check{X}$ given by $\{(u,v,-1+(1-q)t)\in\check{X}:|u|=|v|,0\leq t\leq1\}$. Let $\gamma\in H_2(\check{X},\integer)$ be its class. Then
$$\int_\gamma\check{\Omega}=-\log q=t.$$
This verifies Conjecture \ref{can_coords2} for $K_{\proj^1}$.\hfill $\square$

\subsubsection{$\mathcal{O}_{\proj^1}(-1)\oplus\mathcal{O}_{\proj^1}(-1)$}
For $X =\mathcal{O}_{\proj^1}(-1)\oplus\mathcal{O}_{\proj^1}(-1)$, the generators of the 1-dimensional cones of the defining fan $\Sigma$ are $v_0=(0,0,1), v_1=(1,0,1), v_2=(0,1,1)$ and $v_3=(1,-1,1)$. We equip $X$ with a toric K\"{a}hler structure $\omega$ so that the associated moment polytope $P$ is given by
$$P=\{(x_1,x_2,x_3)\in\real^3:x_3\geq0,x_1+x_3\geq0,x_2+x_3\geq0,x_1-x_2+x_3\geq-t_1\},$$
where $t_1=\int_l\omega>0$ and $l\in H_2(X,\integer)$ is the class of the embedded $\proj^1\subset X$. To complexify the K\"ahler class, we set $\omega^\cpx=\omega+2\pi\sqrt{-1}B$, for some real two-form $B$ (the $B$-field). We let $t=\int_l\omega^\cpx\in\cpx$.

Since there is no compact toric prime divisors in $X$ (see Figure \ref{O(-1)+O(-1)} below), by Proposition \ref{no_cpt_div} and Theorem \ref{mir_thm}, the instanton-corrected mirror is given by
$$\check{X}=\{(u,v,z_1,z_2)\in\cpx^2\times(\cpx^\times)^2:uv=1+z_1+z_2+qz_1z_2^{-1}\},$$
where $q=\exp(-t)$.

\begin{figure}[htp]
\begin{center}
\includegraphics{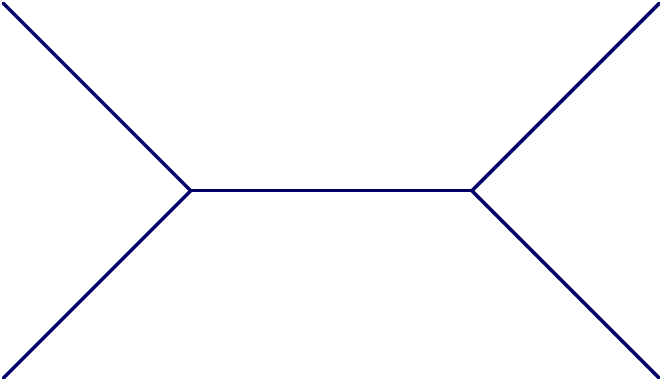}
\end{center}
\caption{$\mathcal{O}_{\proj^1}(-1)\oplus\mathcal{O}_{\proj^1}(-1)$.} \label{O(-1)+O(-1)}
\end{figure}

Both $\mathcal{M}_C(\check{X})$ and $\mathcal{M}_K(X)$ can be identified with the punctured unit disk $\Delta^*$ and the map $\phi:\Delta^*\to\Delta^*$ we define in (\ref{phi}) is the identity map. This agrees with the fact $\Phi(\check{q})=-\log\check{q}$ is the unique (up to addition and multiplication by constants) solution with a single logarithm of the Picard-Fuchs equation
$$((1-\check{q})\theta_{\check{q}}^2)\Phi(\check{q})=0,$$
where $\theta_{\check{q}}$ denotes $\check{q}\frac{\partial}{\partial\check{q}}$, which implies that the mirror map $\psi$ is the identity. Hence, Conjecture \ref{can_coords2} also holds for this example. \hfill $\square$

\subsubsection{$K_{\proj^2}$} \label{K_P2}
The primitive generators of the 1-dimensional cones of the fan $\Sigma$ defining $X=K_{\proj^2}$ can be chosen to be
$v_0=(0,0,1), v_1=(1,0,1), v_2=(0,1,1)$ and $v_3=(-1,-1,1)$. We equip $X$ with a toric K\"{a}hler structure $\omega$ associated to the moment polytope
\begin{align*}
P=\{&(x_1,x_2,x_3)\in\real^3:\\
&x_3\geq0,x_1+x_3\geq0,x_2+x_3\geq0,-x_1-x_2+x_3\geq-t_1\},
\end{align*}
where $t_1=\int_l\omega>0$ and $l\in H_2(X,\integer)=H_2(\proj^2,\integer)$ is the class of a line in $\proj^2\subset X$. To complexify the K\"ahler class, we set $\omega^\cpx=\omega+2\pi\sqrt{-1}B$, where $B$ is a real two-form (the $B$-field). We let $t=\int_l\omega^\cpx\in\cpx$.

There is only one compact toric prime divisor $\mathscr{D}_0$ which is the zero section $\proj^2\hookrightarrow K_{\proj^2}$ and it corresponds to $v_0$. By Proposition \ref{no_cpt_div} and Theorem \ref{mir_thm}, the instanton-corrected mirror $\check{X}$ is given by
$$\check{X}=\left\{
\begin{aligned}
&(u,v,z_1,z_2)\in\cpx^2\times(\cpx^\times)^2:\\
&uv=\left(1+\sum_{k=1}^\infty n_{\beta_0+kl}q^k\right)+z_1+z_2+\frac{q}{z_1z_2}
\end{aligned}
\right\},$$
where $q=\exp(-t)$.

By Corollary \ref{mirror_KS}, we have
$$n_{\beta_0+kl}=\mathrm{GW}_{0,0}^{Y,kf+(k-1)e},$$
where $Y=K_{\mathbb{F}_1}$, $\mathbb{F}_1$ is the blowup of $\proj^2$ at a point and $e,f\in H_2(\mathbb{F}_1,\integer)$ are the classes of the exceptional divisor and the fiber of the blowup $\mathbb{F}_1\to\proj^2$. See Figure \ref{KP2_KF1} below.

\begin{figure}[htp]
\begin{center}
\includegraphics[scale=0.75]{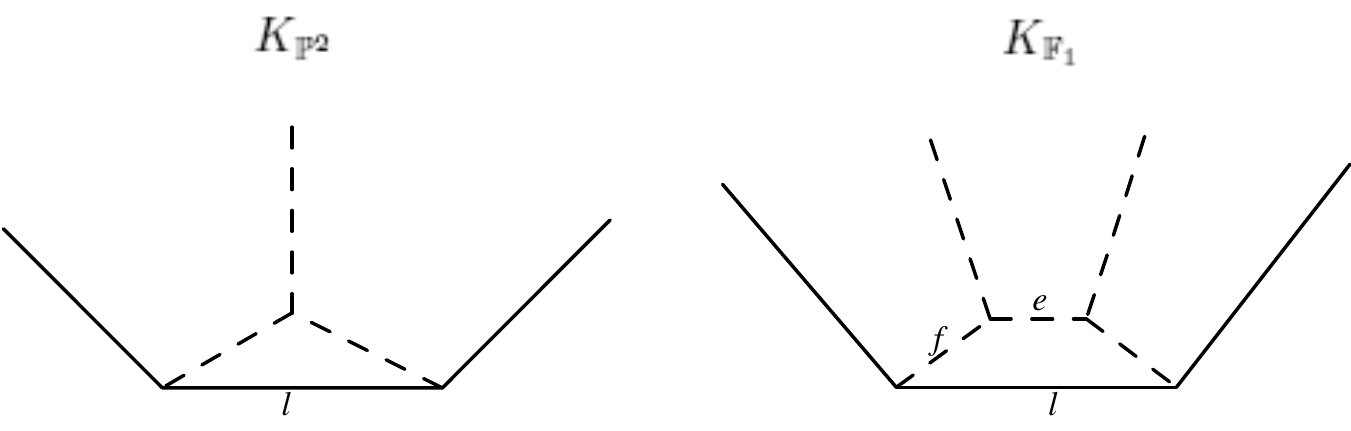}
\end{center}
\caption{Polytope picture for $K_{\proj^2}$ and $K_{\mathbb{F}_1}$.} \label{KP2_KF1}
\end{figure}

The local BPS invariants $\mathrm{GW}_{0,0}^{Y,kf+(k-1)e}$ have been computed by Chiang-Klemm-Yau-Zaslow and the results can be found on the "sup-diagonal" of Table 10 in \cite{CKYZ}:
\begin{align*}
n_{\beta_0+l}  & = -2,\\
n_{\beta_0+2l} & = 5,\\
n_{\beta_0+3l} & = -32,\\
n_{\beta_0+4l} & = 286,\\
n_{\beta_0+5l} & = -3038,\\
n_{\beta_0+6l} & = 35870,\\
               &\vdots&
\end{align*}
Using these results, we can write the instanton-corrected mirror explicitly as
$$\check{X}=\left\{(u,v,z_1,z_2)\in\cpx^2\times(\cpx^\times)^2:uv=1+\delta_0(q)+z_1+z_2+\frac{q}{z_1z_2}\right\},$$
where
$$\delta_0(q)=-2q+5q^2-32q^3+286q^4-3038q^5+\ldots.$$

Now, both $\mathcal{M}_C(\check{X})$ and $\mathcal{M}_K(X)$ can be identified with the punctured unit disk $\Delta^*$. Our map $\phi:\Delta^*\to\Delta^*$ is therefore given by
$$q\mapsto\check{q}(q):=q(1-2q+5q^2-32q^3+286q^4-3038q^5+\ldots)^{-3}.$$

On the other hand, the mirror map and its inverse have been computed by Graber-Zaslow in \cite{graber-zaslow01}. First of all, the Picard-Fuchs equation associated to $K_{\proj^2}$ is
$$[\theta_{\check{q}}^3+3\check{q}\theta_{\check{q}}(3\theta_{\check{q}}+1)(3\theta_{\check{q}}+2)]\Phi(\check{q})=0,$$
where $\theta_{\check{q}}$ denotes $\check{q}\frac{\partial}{\partial\check{q}}$, the solution of which with a single logarithm is given by
$$\Phi(\check{q})=-\log\check{q}-\sum_{k=1}^\infty\frac{(-1)^k}{k}\frac{(3k)!}{(k!)^3}\check{q}^k.$$
Hence, the mirror map $\psi:\Delta^*\to\Delta^*$ can be written explicitly as
$$\check{q}\mapsto q(\check{q})=\exp(-\Phi(\check{q}))=\check{q}\exp\left(\sum_{k=1}^\infty\frac{(-1)^k}{k}\frac{(3k)!}{(k!)^3}
\check{q}^k\right).$$
The inverse mirror map can be computed and is given by
\begin{eqnarray*}
q & \mapsto & q+6q^2+9q^3+56q^4+300q^5+3942q^6+\ldots\\
  & = & q(1-2q+5q^2-32q^3+286q^4-3038q^5+\ldots)^{-3}.
\end{eqnarray*}
This shows that $\phi$ coincides with the inverse mirror map up to degree $5$ which provides evidence to Conjecture \ref{can_coords2} for $K_{\proj^2}$.

\subsubsection{$K_{\proj^1\times\proj^1}$}

For $X=K_{\proj^1\times\proj^1}$, the primitive generators of the 1-dimensional cones of the defining fan $\Sigma$ can be chosen to be $v_0=(0,0,1), v_1=(1,0,1), v_2=(0,1,1), v_3=(-1,0,1)$ and $v_4=(0,-1,1)$. We equip $X$ with a toric K\"{a}hler structure $\omega$ so that the associated moment polytope $P$ is defined by the following inequalities
$$x_3\geq0,x_1+x_3\geq0,x_2+x_3\geq0,-x_1+x_3\geq-t_1',-x_2+x_3\geq-t_2'.$$
Here, $t_1'=\int_{l_1}\omega, t_2'=\int_{l_2}\omega>0$ and $l_1,l_2\in H_2(X,\integer)=H_2(\proj^1\times\proj^1,\integer)$ are the classes of the $\proj^1$-factors in $\proj^1\times\proj^1$. To complexify the K\"ahler class, we set $\omega^\cpx=\omega+2\pi\sqrt{-1}B$, where $B$ is a real two-form (the $B$-field). We let $t_1=\int_{l_1}\omega^\cpx, t_2=\int_{l_2}\omega^\cpx\in\cpx$.

There is only one compact toric prime divisor $\mathscr{D}_0$ which is the zero section $\proj^1\times\proj^1\hookrightarrow K_{\proj^1\times\proj^1}$. By Proposition \ref{no_cpt_div} and Theorem \ref{mir_thm}, the instanton-corrected mirror $\check{X}$ is given by
$$\check{X}=\left\{(u,v,z_1,z_2)\in\cpx^2\times(\cpx^\times)^2:uv=1+\delta_0(q_1,q_2)+z_1+z_2
+\frac{q_1}{z_1}+\frac{q_2}{z_2}\right\},$$
where $q_a=\exp(-t_a)$ ($a=1,2$) and
$$1+\delta_0(q_1,q_2)=\sum_{k_1,k_2\geq0} n_{\beta_0+k_1l_1+k_2l_2}q_1^{k_1}q_2^{k_2}.$$

For simplicity, denote $n_{\beta_0+k_1l_1+k_2l_2}$ by $n_{k_1,k_2}$.  By Corollary \ref{mirror_KS}, we have
$$n_{k_1,k_2}=\mathrm{GW}_{0,0}^{Y,k_1L_1+k_2L_2+(k_1+k_2-1)e},$$
where $Y=K_{dP_2}$, $dP_2$ is the blowup of $\proj^1\times\proj^1$ at one point or, equivalently, the blowup of $\proj^2$ at two points, $e\in H_2(dP_2,\integer)$ is the class of the exceptional divisor of the blowup $dP_2\to\proj^1\times\proj^1$ and $L_1,L_2\in H_2(dP_2,\integer)$ are the strict transforms of $l_1,l_2\in H_2(\proj^1\times\proj^1,\integer)$ respectively. See Figure \ref{K_P1^2} below.

\begin{figure}[htp]
\begin{center}
\includegraphics{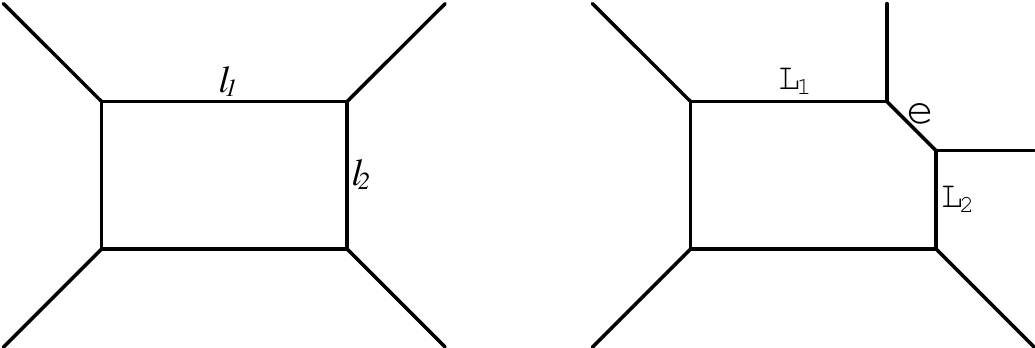}
\end{center}
\caption{Polytope picture for $K_{\proj^1\times\proj^1}$ and $K_{dP_2}$.} \label{K_P1^2}
\end{figure}

The local BPS invariants $\mathrm{GW}_{0,0}^{Y,k_1L_1+k_2L_2+(k_1+k_2-1)e}$ have again been computed by Chiang-Klemm-Yau-Zaslow and the results can be read from "anti-diagonals" of Table 3 on p. 42 in \cite{CKYZ}:
\begin{align*}
n_{0,0}=1,\\
n_{1,0}=n_{0,1}=1,\\
n_{2,0}=n_{0,2}=0, n_{1,1}=3,\\
n_{3,0}=n_{0,3}=0, n_{2,1}=n_{1,2}=5,\\
n_{4,0}=n_{0,4}=0, n_{3,1}=n_{1,3}=7, n_{2,2}=35, \\
n_{5,0}=n_{0,5}=0, n_{4,1}=n_{1,4}=9, n_{3,2}=n_{2,3}=135,\\
&\vdots&
\end{align*}
Hence,
\begin{eqnarray*}
\delta_0(q_1,q_2)& = & q_1+q_2+3q_1q_2+5q_1^2q_2+5q_1q_2^2+7q_1^3q_2+35q_1^2q_2^2+7q_1q_2^3\\
& & +9q_1^4q_2+135q_1^3q_2^2+135q_1^2q_2^3+9q_1q_2^4+\ldots.
\end{eqnarray*}

Now, both $\mathcal{M}_C(\check{X})$ and $\mathcal{M}_K(X)$ can be identified with $(\Delta^*)^2$, and we have $Q^1=(-2,1,0,1,0), Q^2=(-2,0,1,0,1)$. So our map $\phi:(\Delta^*)^2\to(\Delta^*)^2$ is given by $$(q_1,q_2)\mapsto(q_1(1+\delta_0(q_1,q_2))^{-2},q_2(1+\delta_0(q_1,q_2))^{-2}).$$

On the other hand, we can compute the mirror map and its inverse by solving the following Picard-Fuchs equations:
\begin{eqnarray*}
(\theta_1^2-2\check{q}_1(\theta_1+\theta_2)(1+2\theta_1+2\theta_2))\Phi(\check{q}_1,\check{q}_2)=0,\\
(\theta_2^2-2\check{q}_2(\theta_1+\theta_2)(1+2\theta_1+2\theta_2))\Phi(\check{q}_1,\check{q}_2)=0,
\end{eqnarray*}
where $\theta_a$ denotes $\check{q}_a\frac{\partial}{\partial\check{q}_a}$ for $a=1,2$. The two solutions to these equations with a single logarithm are given by
$$\Phi_1(\check{q}_1,\check{q}_2)=-\log\check{q}_1-f(\check{q}_1,\check{q}_2),\
\Phi_2(\check{q}_1,\check{q}_2)=-\log\check{q}_2-f(\check{q}_1,\check{q}_2),$$
where
\begin{eqnarray*}
&&f(\check{q}_1,\check{q}_2)\\
& = &
2\check{q}_1+2\check{q}_2+3\check{q}_1^2+12\check{q}_1\check{q}_2+3\check{q}_2^2
+\frac{20}{3}\check{q}_1^3+60\check{q}_1^2\check{q}_2+60\check{q}_1\check{q}_2^2+\frac{20}{3}\check{q}_2^3\\
& &
+\frac{35}{2}\check{q}_1^4+280\check{q}_1^3\check{q}_2+630\check{q}_1^2\check{q}_2^2
+280\check{q}_1\check{q}_2^3+\frac{35}{2}\check{q}_2^4\\
& &
+\frac{252}{5}\check{q}_1^5+1260\check{q}_1^4\check{q}_2+5040\check{q}_1^3\check{q}_2^2
+5040\check{q}_1^2\check{q}_2^3+1260\check{q}_1\check{q}_2^4+\frac{252}{5}\check{q}_2^5\\
& &+\ldots.
\end{eqnarray*}
This gives the mirror map $\psi:(\Delta^*)^2\to(\Delta^*)^2$:
$$(\check{q}_1,\check{q}_2)\mapsto(\check{q}_1\exp(f(\check{q}_1,\check{q}_2)),
\check{q}_2\exp(f(\check{q}_1,\check{q}_2))).$$
We can then invert this map and the result is given by
$$(q_1,q_2)\mapsto(q_1(1-F(q_1,q_2)),q_2(1-F(q_1,q_2)))$$
where
\begin{eqnarray*}
F(q_1,q_2) & = &
2q_1+2q_2-3q_1^2-3q_2^2+4q_1^3+4q_1^2q_2+4q_1q_2^2+4q_2^3\\
& & -5q_1^4+25q_1^2q_2^2-5q_2^4+\ldots.
\end{eqnarray*}
Now, we compute
\begin{eqnarray*}
(1-F(q_1,q_2))^{-1/2} & = &
1+q_1+q_2+3q_1q_2+5q_1^2q_2+5q_1q_2^2\\
& & +7q_1^3q_2+35q_1^2q_2^2+7q_1q_2^3\\
& & +9q_1^4q_2+135q_1^3q_2^2+135q_1^2q_2^3+9q_1q_2^4+\ldots\\
& = & 1+\delta_0(q_1,q_2).
\end{eqnarray*}
This shows that the inverse mirror map agrees with the map $\phi$ we define up to degree 5, and this gives evidence to Conjecture \ref{can_coords2} for $K_{\proj^1\times\proj^1}$.


\begin{thebibliography}{10}

\bibitem{aganagic-klemm-vafa}
M.~Aganagic, A.~Klemm, and C.~Vafa, \emph{Disk instantons, mirror symmetry and
  the duality web}, Z. Naturforsch. A \textbf{57} (2002), no.~1-2, 1--28, MR1906661, Zbl 1203.81153.

\bibitem{arnold_book}
V.~I. Arnold, \emph{Mathematical methods of classical mechanics}, Graduate
  Texts in Mathematics, vol.~60, Springer-Verlag, New York, 1991, MR1345386, Zbl 0692.70003.

\bibitem{auroux07}
D.~Auroux, \emph{Mirror symmetry and {$T$}-duality in the complement of an
  anticanonical divisor}, J. G\"okova Geom. Topol. GGT \textbf{1} (2007),
  51--91 ,MR2386535, Zbl 1181.53076.

\bibitem{auroux09}
\bysame, \emph{Special {L}agrangian fibrations, wall-crossing, and mirror
  symmetry}, Surv. Differ. Geom., vol.~13, Int. Press, Somerville, MA, 2009,
  pp.~1--47, MR2537081, Zbl 1184.53085.

\bibitem{Chan10}
K.~Chan, \emph{A formula equating open and closed {G}romov-{W}itten invariants
  and its applications to mirror symmetry}, to appear in Pacific J. Math.,
  \href{http://arxiv.org/abs/1006.3827}{arXiv:1006.3827}.

\bibitem{chan08}
K.~Chan and N.C. Leung, \emph{Mirror symmetry for toric {F}ano manifolds via
  {SYZ} transformations}, Adv. Math. \textbf{223} (2010), no.~3, 797--839, MR2565550, Zbl 1201.14029.

\bibitem{CKYZ}
T.-M. Chiang, A.~Klemm, S.-T. Yau, and E.~Zaslow, \emph{Local mirror symmetry:
  calculations and interpretations}, Adv. Theor. Math. Phys. \textbf{3} (1999),
  no.~3, 495--565, MR1797015, Zbl 0976.32012.

\bibitem{cho06}
C.-H. Cho and Y.-G. Oh, \emph{{F}loer cohomology and disc instantons of
  {L}agrangian torus fibers in {F}ano toric manifolds}, Asian J. Math.
  \textbf{10} (2006), no.~4, 773--814, MR2282365, Zbl 1130.53055.

\bibitem{fukaya05}
K.~Fukaya, \emph{Multivalued {M}orse theory, asymptotic analysis and mirror
  symmetry}, Graphs and patterns in mathematics and theoretical physics, Proc.
  Sympos. Pure Math., vol.~73, Amer. Math. Soc., Providence, RI, 2005,
  pp.~205--278, MR2131017, Zbl 1085.53080.

\bibitem{FOOO10}
K.~Fukaya, Y.-G. Oh, H.~Ohta, and K.~Ono, \emph{Toric degeneration and
  non-displaceable {L}agrangian tori in {$S^2 \times S^2$}}, preprint,
  \href{http://arxiv.org/abs/1002.1660}{arXiv:1002.1660}.

\bibitem{FOOO_I}
\bysame, \emph{Lagrangian intersection {F}loer theory: anomaly and obstruction.
  {P}art {I}}, AMS/IP Studies in Advanced Mathematics, vol.~46, American
  Mathematical Society, Providence, RI, 2009, MR2553465, Zbl 1181.53002.

\bibitem{FOOO_II}
\bysame, \emph{Lagrangian intersection {F}loer theory: anomaly and obstruction.
  {P}art {II}}, AMS/IP Studies in Advanced Mathematics, vol.~46, American
  Mathematical Society, Providence, RI, 2009, MR2548482, Zbl 1181.53003.

\bibitem{FOOO1}
\bysame, \emph{{L}agrangian {F}loer theory on compact toric manifolds. {I}},
  Duke Math. J. \textbf{151} (2010), no.~1, 23--174, MR2573826, Zbl 1190.53078.

\bibitem{gathmann}
A.~Gathmann, \emph{Gromov-{W}itten invariants of blow-ups}, J. Algebraic Geom.
  \textbf{10} (2001), no.~3, 399--432, MR1832328, Zbl 1080.14064.

\bibitem{goldstein}
E.~Goldstein, \emph{Calibrated fibrations on noncompact manifolds via group
  actions}, Duke Math. J. \textbf{110} (2001), no.~2, 309--343, MR1865243, Zbl 1021.53030.

\bibitem{graber-zaslow01}
T.~Graber and E.~Zaslow, \emph{Open-string {G}romov-{W}itten invariants:
  calculations and a mirror ``theorem''}, Orbifolds in mathematics and physics
  ({M}adison, {WI}, 2001), Contemp. Math., vol. 310, Amer. Math. Soc.,
  Providence, RI, 2002, pp.~107--121, MR1950943, Zbl 1085.14518.

\bibitem{gross_examples}
M.~Gross, \emph{Examples of special {L}agrangian fibrations}, Symplectic
  geometry and mirror symmetry ({S}eoul, 2000), World Sci. Publ., River Edge,
  NJ, 2001, pp.~81--109, MR1882328, Zbl 1034.53054.

\bibitem{gross07}
M.~Gross and B.~Siebert, \emph{From real affine geometry to complex geometry},
  preprint, \href{http://arxiv.org/abs/0703822}{arXiv:0703822}.

\bibitem{gross08}
\bysame, \emph{An invitation to toric degenerations}, preprint,
  \href{http://arxiv.org/abs/0808.2749}{arXiv:0808.2749}.

\bibitem{HIV00}
K.~Hori, A.~Iqbal, and C.~Vafa, \emph{D-branes and mirror symmetry}, preprint
  2000, \href{http://arxiv.org/abs/hep-th/0005247}{arXiv:hep-th/0005247}.

\bibitem{hosono06}
S.~Hosono, \emph{Central charges, symplectic forms, and hypergeometric series
  in local mirror symmetry}, Mirror symmetry. {V}, AMS/IP Stud. Adv. Math.,
  vol.~38, Amer. Math. Soc., Providence, RI, 2006, pp.~405--439, MR2282969, Zbl 1114.14025.

\bibitem{Hu00}
J.~Hu, \emph{Gromov-{W}itten invariants of blow-ups along points and curves},
  Math. Z. \textbf{233} (2000), no.~4, 709--739, MR1759269, Zbl 0948.53046.

\bibitem{KKV97}
S.~Katz, A.~Klemm, and C.~Vafa, \emph{Geometric engineering of quantum field
  theories}, Nuclear Phys. B \textbf{497} (1997), no.~1-2, 173--195, MR1467889, Zbl 0935.81058.

\bibitem{konishi09}
Y.~Konishi and S.~Minabe, \emph{Local {B}-model and {M}ixed {H}odge
  {S}tructure}, preprint,
  \href{http://arxiv.org/abs/0907.4108}{arXiv:0907.4108}.

\bibitem{kontsevich00}
M.~Kontsevich and Y.~Soibelman, \emph{Homological mirror symmetry and torus
  fibrations}, Symplectic geometry and mirror symmetry (Seoul, 2000), World
  Sci. Publ., River Edge, NJ, 2001, pp.~203--263, MR1882331, Zbl 1072.14046.

\bibitem{kontsevich-soibelman04}
\bysame, \emph{Affine structures and non-{A}rchimedean analytic spaces}, The
  unity of mathematics, Progr. Math., vol. 244, Birkh\"auser Boston, Boston,
  MA, 2006, pp.~321--385, MR2181810, Zbl 1114.14027.

\bibitem{LLW10}
S.-C. Lau, N.C. Leung, and B.~Wu, \emph{A relation for {G}romov-{W}itten
  invariants of local {C}alabi-{Y}au threefolds}, to appear in Math. Res.
  Lett., \href{http://arxiv.org/abs/1006.3828}{arXiv:1006.3828}.

\bibitem{boss01}
N.C. Leung, \emph{Mirror symmetry without corrections}, Comm. Anal. Geom.
  \textbf{13} (2005), no.~2, 287--331, MR2154821, Zbl 1086.32022.

\bibitem{LYZ}
N.C. Leung, S.-T. Yau, and E.~Zaslow, \emph{From special {L}agrangian to
  {H}ermitian-{Y}ang-{M}ills via {F}ourier-{M}ukai transform}, Adv. Theor.
  Math. Phys. \textbf{4} (2000), no.~6, 1319--1341, MR1894858, Zbl 1033.53044.

\bibitem{li-ruan01}
A.-M. Li and Y.-B. Ruan, \emph{Symplectic surgery and {G}romov-{W}itten
  invariants of {C}alabi-{Y}au 3-folds}, Invent. Math. \textbf{145} (2001),
  no.~1, 151--218, MR1839289, Zbl 1062.53073.

\bibitem{syz96}
A.~Strominger, S.-T. Yau, and E.~Zaslow, \emph{Mirror symmetry is
  {$T$}-duality}, Nuclear Phys. B \textbf{479} (1996), no.~1-2, 243--259, MR1429831, Zbl 0896.14024.

\bibitem{Z10}
J.~Zhou, \emph{Some integrality properties in local mirror symmetry}, preprint
  2010, \href{http://arxiv.org/abs/1005.3243}{arXiv:1005.3243}.

\end{thebibliography}
\end{document}